\begin{document}
\pagenumbering{arabic}

\newtheorem{theorem}{Theorem}[section]
\newtheorem{lemma}[theorem]{Lemma}
\newtheorem{proposition}[theorem]{Proposition}
\newtheorem{corollary}[theorem]{Corollary}
\newtheorem{definition}[theorem]{Definition}
\newtheorem{remark}[theorem]{Remark}
\newtheorem{notation}[theorem]{Notation}

\newcommand{\vs}[0]{\vspace{2mm}}

\newcommand{\mcal}[1]{\mathcal{#1}}
\newcommand{\ul}[1]{\underline{#1}}
\newcommand{\ol}[1]{\overline{#1}}
\newcommand{\til}[1]{\widetilde{#1}}
\newcommand{\wh}[1]{\widehat{#1}}

\address{School of Mathematics, Korea Institute for Advanced Study (KIAS), 85 Hoegiro Dongdaemun-gu, Seoul 130-722, Republic of Korea}

\email[H.~Kim]{hkim@kias.re.kr, ~ hyunkyu87@gmail.com}

\author{Hyun Kyu Kim}
\thanks{This work is a substantial re-organization of part of the author's Ph.D. thesis \cite{Ki}.}

\numberwithin{equation}{section}

\title[Central extension of Ptolemy-Thompson group via Kashaev quantization]{The dilogarithmic central extension of the Ptolemy-Thompson group via the Kashaev quantization}

\begin{abstract}
Quantization of universal Teichm\"uller space provides projective representations of the Ptolemy-Thompson group, which is isomorphic to the Thompson group $T$. This yields certain central extensions of $T$ by $\mathbb{Z}$, called dilogarithmic central extensions. We compute a presentation of the dilogarithmic central extension $\widehat{T}^{\rm Kash}$ of $T$ resulting from the Kashaev quantization, and show that it corresponds to $6$ times the Euler class in $H^2(T;\mathbb{Z})$.
Meanwhile, the braided Ptolemy-Thompson groups $T^*$, $T^\sharp$ of Funar-Kapoudjian are extensions of $T$ by the infinite braid group $B_\infty$, and by abelianizing the kernel $B_\infty$ one constructs central extensions $T^*_{\rm ab}$, $T^\sharp_{\rm ab}$ of $T$ by $\mathbb{Z}$, which are of topological nature.
We show $\widehat{T}^{\rm Kash}\cong T^\sharp_{\rm ab}$. Our result is analogous to that of Funar and Sergiescu, who computed a presentation of another dilogarithmic central extension $\widehat{T}^{\rm CF}$ of $T$ resulting from the Chekhov-Fock(-Goncharov) quantization and thus showed that it corresponds to $12$ times the Euler class and that $\wh{T}^{\rm CF} \cong T^*_{\rm ab}$. In addition, we suggest a natural relationship between the two quantizations in the level of projective representations.
\end{abstract}

\maketitle

\tableofcontents

\date{November 2012}

\section{Introduction and overview}
\label{sec:introduction}

Quantum Teichm\"uller theory has appealed to mathematicians and physicists in the recent couple of decades as an approach to quantization of $(2+1)$-gravity theory in physics. The main construction was established mathematically by Kashaev \cite{Kash98} and by Chekhov-Fock \cite{Fo} \cite{FC} independently, in slightly different ways based on some nice coordinate systems \cite{Th} \cite{Penner}, and they used a common main ingredient, namely, a special function called the {\em quantum dilogarithm} introduced by Faddeev and Kashaev  \cite{FK} \cite{F}. Later, the Chekhov-Fock construction was generalized to quantization of cluster varieties by Fock-Goncharov \cite{FG}. 

\vs

The two basic objects in the formulation of quantum Teichm\"uller theory are the {\em Teichm\"uller space} $\mcal{T}(\Sigma)$ and the {\em mapping class group} $M(\Sigma)$ of a Riemann surface $\Sigma$. They are defined as the space of all complete hyperbolic metrics on $\Sigma$ modulo isotopy, and the group of all orientation-preserving diffeomorphisms $\Sigma \to \Sigma$ modulo isotopy (i.e. homotopy), respectively. One of the main tasks and the main goals of the construction of quantum Teichm\"uller theory is to find certain family of projective unitary representations of  $M(\Sigma)$ on a Hilbert space $\mathscr{H}$.  

\vs

In general, a {\em projective representation} of a group $G$ on a vector space $V$ is given by a map
\begin{align}
\label{eq:projective_representation}
\rho : G \to {\rm GL}(V)
\end{align}
that is `almost' a group homomorphism\footnote{This is a little more than just having a group homomorphism $G\to {\rm PGL}(V)$, which is usually referred to as a `projective representation' of $G$. To distinguish, we will call $\rho$ \eqref{eq:projective_representation} an `almost-linear' representation in the later sections of the present paper; see Def.\ref{def:projective_and_almost-linear_representations}.}, i.e.
\begin{align}
\label{eq:general_rho_relation}
\rho( g_1 g_2) = c_{g_1,g_2} \, \rho(g_1) \rho(g_2), \qquad \forall g_1,g_2\in G,
\end{align}
for some constants $c_{g_1,g_2} \in \mathbb{C}^\times = \mathbb{C}\setminus\{0\}$. 
We use the well-known fact that one can `resolve' the projective representation $\rho$ of $G$ by a genuine representation (i.e. a group homomorphism)
$$
\wh{\rho} : \wh{G} \to {\rm GL}(V)
$$
of a central extension $\wh{G}$ of $G$, which means that there exists a set map $s: G \to \wh{G}$, such that ${\rm proj} \circ s = {\rm id}_G$ where ${\rm proj} : \wh{G} \to G$ is the projection, making the following diagram to commute:
\begin{align}
\nonumber
\begin{array}{l}
\xymatrix@R-4mm@C-5mm{
\wh{G} \ar[rd]^{\wh{\rho}} & & G \ar[ll]_{s} \ar[ld]_{\rho} \\
& {\rm GL}(V) &
}
\end{array}
\end{align}
It is easy to construct one such example $\wh{G}$. However, the most interesting is the smallest one.
\begin{definition}
\label{def:minimal_central_extension}
Among all central extensions $\wh{G}$ of $G$ for which the above is possible, we call the one that is contained in all the others the {\em minimal central extension resolving $\rho$}, if it exists.
\end{definition}
We concentrate on the minimal central extension of $G$ resolving a given projective representation $\rho$ of $G$. In a sense, we are taking only $c_{g_1,g_2}$ out of the data $\rho$, 
but we shall see that this already contains crucial information about $\rho$. See \S\ref{subsec:minimal_central_extensions}, \S\ref{subsec:algebraic_proof} for detailed development.

\vs

In the case of quantum Teichm\"uller theory, we have $G = M(\Sigma)$ for some Riemann surface $\Sigma$ and $V = \mathscr{H}$ for some Hilbert space $\mathscr{H}$, where the images of $\rho$ are unitary operators on $\mathscr{H}$ (so we can replace ${\rm GL}(V)$ in \eqref{eq:projective_representation} by ${\rm U}(\mathscr{H})$). 
It turns out that the corresponding minimal central extension of $M(\Sigma)$ is a central extension of $M(\Sigma)$ by $\mathbb{Z}$. 
Since the relevant projective representations $\rho$ involve the quantum dilogarithm function, this resulting central extension is called a {\em dilogarithmic} central extension of $M(\Sigma)$ by Funar, Sergiescu, and collaborators. Notice that there are two kinds of dilogarithmic central extensions of $M(\Sigma)$, one from the Kashaev quantization and the other from the Chekhov-Fock(-Goncharov) quantization. This paper grew out of the question of how to compare these two central extensions.

\vs

For quantization of $\mcal{T}(\Sigma)$, Chekhov and Fock used a coordinate system on $\mcal{T}(\Sigma)$ which requires a choice of some combinatorial-topological data on the surface $\Sigma$, namely, an `ideal triangulation' of $\Sigma$. This means a triangulation whose vertices are at punctures and boundary components, and whose edges are defined up to homotopy. Then elements of $M(\Sigma)$ are realized as transformations of ideal triangulations of $\Sigma$, thus as sequences of `flips' along edges of ideal triangulations (see Prop.\ref{prop:T_acts_transitively} for flips). So, it suffices to describe how these flips are represented as operators on $\mathscr{H}$. Meanwhile, Kashaev used an enhanced version of ideal triangulation, which we call a {\em dotted triangulation}; this is an ideal triangulation of $\Sigma$ together with the choice of a distinguished corner for each triangle, depicted as a dot $\bullet$ in pictures. Elements of $M(\Sigma)$ are then realized as transformations of dotted triangulations of $\Sigma$, and Kashaev represented `elementary' transformations of dotted triangulations as operators on some other Hilbert space. 

\vs

These results are often described in terms of the {\em Ptolemy groupoid} $Pt(\Sigma)$ for the Chekhov-Fock quantization and the {\em dotted Ptolemy groupoid} $Pt_{\rm dot}(\Sigma)$ for the Kashaev quantization, which are the category of ideal triangulations of $\Sigma$ and that of dotted triangulations of $\Sigma$, respectively; for each of these categories, there is unique morphism from any object to any object. Then we would want to construct projective representations of these categories, that is, projective functors from these categories to the category of Hilbert spaces. In order to compare these two functors for the two quantizations, we need to relate the two categories $Pt(\Sigma)$ and $Pt_{\rm dot}(\Sigma)$, for example try to build a functor between them in a natural way. However, this is not possible in general, and in the present paper we construct a functor between some full subcategories which are `orbits' of mapping class group actions, in the case of `universal' Teichm\"uller theory.

\vs

Amazingly, some difficulties and subtleties of Teichm\"uller theory and its quantization disappear when we consider a `universal' setting, in which case the relevant surface $\Sigma$ can be thought of as the open unit disc $\mathbb{D} \subset \mathbb{C}$ with a certain restriction on the behavior on the boundary $S^1 = \partial \mathbb{D}$, or a closed unit disc with countably many marked points on the boundary in the sense of \cite{FG06}. We consider infinite triangulations of $\mathbb{D}$, called {\em tessellations of $\mathbb{D}$}, with vertices at all rational points on $S^1$. The standard such tessellation is the well-known {\em Farey tessellation}, see Def.\ref{def:Farey_tessellation}. One considers the {\em universal Ptolemy groupoid $Pt$}, an analog of the Ptolemy groupoid $Pt(\Sigma)$, obtained by applying finite number of flips on the Farey tessellation; it is the category of tessellations of $\mathbb{D}$ whose edges are those of the Farey tessellation except for finitely many edges. We note that in this universal case it is necessary to introduce a decoration on such tessellations, namely the choice of a distinguished oriented edge (\cite{Penner2}). Then any two objects are connected by a finite sequence of elementary moves called $\alpha$ and $\beta$, described combinatorially in Def.\ref{def:alpha_and_beta}. There are some algebraic relations satisfied by $\alpha$ and $\beta$, so we can think of the formal group presented with generators $\alpha,\beta$ and these relations. Since this group is shown to be isomorphic to the well-known Thompson group $T$ of dyadic piecewise affine homeomorphisms of $S^1 = [0,1]/0 \hspace{-1mm} \sim \hspace{-1mm} 1$, it is called the {\em Ptolemy-Thompson group}, and we denote it by $T$ (see  \cite{FuKaS} for an exposition). A presentation computed by Lochak-Schneps \cite{LSc} is
\begin{align}
\label{eq:T_presentation_intro}
T = \left\langle \alpha,\beta \left| 
{\renewcommand{\arraystretch}{1.2} \begin{array}{l}
(\beta\alpha)^5 = 1, \quad \alpha^4 = 1,\quad \beta^3 = 1, \\
\left[\beta\alpha\beta, \, \alpha^2\beta\alpha\beta\alpha^2 \right] =1,\quad 
\left[\beta\alpha\beta, \, \alpha^2\beta\alpha^2\beta\alpha\beta\alpha^2\beta^2\alpha^2\right]=1
\end{array}} \right.
\right\rangle.
\end{align}
It is likely that there may be a certain profinite completion of this group containing the mapping class group of every Riemann surface of finite-type, i.e. Riemann surface having finite genus and $s$ punctures with $0<s<\infty$ (\cite{Penner2}). So $T$ could be viewed as a `discrete baby version' of  what can be called `the universal mapping class group'. Application of the Chekhov-Fock quantization yields a projective representation of $T$, and thus a dilogarithmic central extension of $T$ by $\mathbb{Z}$, denoted by $\wh{T}^{\rm CF}$ in the present paper (${\rm CF}$ for Chekhov-Fock).

\vs

Meanwhile, the {\em dotted universal Ptolemy groupoid $Pt_{\rm dot}$}, an analog of the dotted Ptolemy groupoid $Pt_{\rm dot}(\Sigma)$, is defined to be the category of tessellations of $\mathbb{D}$ coinciding with the Farey tessellation except for finitely many edges together with the choice of a corner for each triangle, while we require that the choice of corners differs from a fixed standard choice only on finitely many triangles. Then again any two objects are connected by a finite sequence of elementary moves. This time there are infinitely many elementary moves, with the advantage that their algebraic relations are much simpler. We define a formal group presented with generators and relations from these elementary moves, which is called the {\em Kashaev group} in \cite{FrKi}:
\begin{align}
\label{eq:K_presentation_intro}
K = \left\langle A_j, T_{jk}, P_{(jk)}  \left| 
\begin{array}{l}
A_j^3 = 1, \quad T_{k\ell} T_{jk} = T_{jk} T_{j\ell} T_{k\ell}, \quad A_j T_{jk} A_k = A_k T_{kj} A_j, \\
T_{jk} A_j T_{kj} = A_j A_k P_{(jk)}, \quad
\mbox{and trivial relations}
\end{array} \right.
\right\rangle,
\end{align}
where $j,k,\ell$ are mutually distinct elements of $\mathbb{Q}^\times$, and trivial relations mean that any generators whose subscript indices do not intersect commute, that conjugation by $P_{(jk)}$ acts as an index change $j\leftrightarrow k$, and that $P_{(jk)}$'s satisfy the permutation group relations. The Kashaev quantzation yields a projective representation of $K$ and thus a (dilogarithmic) central extension $\wh{K}$ of $K$ by $\mathbb{Z}$. 

\vs

For comparison with the Chekhov-Fock case, we should investigate how the groups $K$ and $T$ are related to each other. One of the key ideas of the present paper is the natural and essentially unique construction of a functor
$$
\mcal{F}: Pt \to Pt_{\rm dot}.
$$
This yields an injective group homomorphism
\begin{align}
\label{eq:T_to_K}
{\bf F}: T \to K,
\end{align}
which could be useful in the future projects, too. Pulling back the above central extension $\wh{K}$ of $K$ along ${\bf F}$ yields a dilogarithmic central extension $\wh{T}^{\rm Kash}$ of $T$ by $\mathbb{Z}$, coming from the Kashaev quantization (${\rm Kash}$ for Kashaev).  Now, one could ask for example if these two central extensions $\wh{T}^{\rm CF}$ and $\wh{T}^{\rm Kash}$ are equivalent as central extensions of $T$ by $\mathbb{Z}$, that is, if they correspond to the same class in the second cohomology group $H^2(T;\mathbb{Z})$. To say the result only, they correspond to different cohomology classes. However, there is another very interesting explicit way of manifesting the discrepancy between these two central extensions, using topological methods.

\vs

We shall first observe that $T$ can be viewed as a version of `asymptotically rigid' mapping class group of the unit disc $\mathbb{D}$. In order to talk about a mapping class group, we should settle which homotopies of $\mathbb{D}$ to use; we use {\em boundary-fixing} homotopies of $\mathbb{D}$, that is, homotopies of $\mathbb{D}$ that can be extended continuously to $\ol{\mathbb{D}} = \mathbb{D} \cup S^1$ and fix every point of $S^1 = \partial \mathbb{D}$ pointwise at all times. In the present paper, any homotopy of $\mathbb{D}$ is assumed to be boundary-fixing.

\begin{definition}
\label{def:mapping_class}
A {\em mapping class} of $\mathbb{D}$ is the homotopy class of homeomorphisms $\mathbb{D} \to \mathbb{D}$.
\end{definition}

Then $T$ can be identified with the group of all {\em asymptotically rigid} mapping classes of $\mathbb{D}$, defined as follows:

\begin{definition}
\label{def:asymptotically_rigid}
A homeomorphism $f : \mathbb{D} \to \mathbb{D}$ is said to be {\em asymptotically rigid} if 
\begin{enumerate}
\item it extends continuously to the boundary circle $S^1 = \partial \mathbb{D}$,  and
\item the homeomorphism $\mu^{-1} \circ (f|_{S^1}) \circ \mu : \mathbb{RP}^1 \to \mathbb{RP}^1$ is piecewise-${\rm PSL}(2,\mathbb{Z})$ with finitely many pieces whose endpoints are rational, where $\mu(x) = \frac{x-i}{x+i}$ is the Cayley transform.
\end{enumerate}
A homotopy class of asymptotically rigid homeomorphisms is called an {\em asymptotically rigid mapping class} of $\mathbb{D}$.
\end{definition}
Indeed, we can observe that elements of $T$ can be induced by asymptotically rigid mapping classes, and conversely any asymptotically rigid mapping class can be written as an element of $T$.  The terminology `asymptotically rigid' can be best justified from the fact that asymptotically rigid mapping classes `eventually' preserve the {\em Farey tessellation} (Def.\ref{def:Farey_tessellation}) of $\mathbb{D}$, i.e. they preserve the Farey tessellation except for finitely many edges of it. In this respect, one can see why we need piecewise-${\rm PSL}(2,\mathbb{Z})$ homeomorphisms\footnote{instead of, say piecewise-${\rm PSL}(2,\mathbb{Q})$ (as pointed out by a referee)}, for (globally) ${\rm PSL}(2,\mathbb{Z})$ fractional-linear homeomorphisms are what preserve the entire Farey tessellation. We note that, in fact, the identification of $T$ with the asymptotically rigid mapping class group of $\mathbb{D}$ is what makes the construction of the map \eqref{eq:T_to_K} natural and unique; see \S\ref{subsec:mcal_F}.

\vs

Now, we shall move one step further and introduce locally finite collection of countably infinite number of punctures inside $\mathbb{D}$; let $\mathbb{D}^\diamond$ be this infinitely-punctured unit disc. We require the homotopy of $\mathbb{D}^\diamond$ to pointwise fix the boundary $S^1$ {\em and} all punctures at all times, and suppose we have chosen a suitable `asymptotically rigid' condition for mapping classes of $\mathbb{D}^\diamond$ which refines Def.\ref{def:asymptotically_rigid}. Define $T^\diamond$ to be the group of all asymptotically rigid mapping classes of $\mathbb{D}^\diamond$. By forgetting the punctures, we obtain a natural map $T^\diamond \to T$, whose kernel is the group of homotopy classes of `braiding homeomorphisms' which permute the punctures (Def.\ref{def:braiding}). This kernel is isomorphic to the {\em infinite braid group} $B_\infty$ (Def.\ref{def:B_infty}), the inductive limit of the usual Artin braid group $B_n$ on $n$ strands. So, we get a short exact sequence:
\begin{align}
\label{eq:ses1}
\xymatrix{
1 \ar[r] & B_\infty \ar[r] & T^\diamond \ar[r] & T \ar[r] & 1,
}
\end{align}
and thus the group $T^\diamond$ is called a {\em braided} Ptolemy-Thompson group in \cite{FuKa2}. The abelianization of the kernel $B_\infty$ is $H_1(B_\infty) = B_\infty/[B_\infty,B_\infty] \cong \mathbb{Z}$. So, `dividing \eqref{eq:ses1} by $[B_\infty,B_\infty]$' yields another short exact sequence
\begin{align}
\label{eq:ses2}
\xymatrix{
1 \ar[r] & \mathbb{Z} \ar[r] & T^\diamond_{\rm ab} \ar[r] & T \ar[r] & 1,
}
\end{align}
where
$$
T^\diamond_{\rm ab} := T^\diamond/[B_\infty,B_\infty].
$$
This procedure is called the {\em relative abelianization} of the short exact sequence \eqref{eq:ses1}. In our case, it is easy to prove that in \eqref{eq:ses2} the kernel $\mathbb{Z}$ embeds into the center of $T^\diamond_{\rm ab}$, so that $T^\diamond_{\rm ab}$ is a {\em central} extension of $T$ by $\mathbb{Z}$ (Prop.\ref{prop:T_diamond_ab_is_central_extension}). This can be thought of as a topological method for producing a central extension of $T$ by $\mathbb{Z}$.

\vs

As a matter of fact, the choice of an `asymptotically rigid' structure of $\mathbb{D}^\diamond$ is crucial. There are two natural choices \cite{FuKa2}: $\mathbb{D}^*$ obtained by choosing a puncture on each edge of the Farey tessellation, and $\mathbb{D}^\sharp$ obtained by choosing a puncture in each triangle of the Farey tessellation. The resulting braided Ptolemy-Thompson groups are the group $T^*$ of all mapping classes of $\mathbb{D}^*$ eventually preserving the edge-punctured Farey tessellation, and the group $T^\sharp$ of all mapping classes of $\mathbb{D}^\sharp$ eventually preserving the triangle-punctured Farey tessellation. 
Funar and Sergiescu found out that the relative abelianization of $T^*$ is isomorphic to $\wh{T}^{\rm CF}$, the central extension of $T$ coming from the Chekhov-Fock quantization of the universal Teichm\"uller space:
\begin{proposition}[Funar-Sergiescu \cite{FuS}]
\label{prop:FS_T_star_ab}
One has a group isomorphism
$\wh{T}^{\rm CF} \cong T^*_{\rm ab}$.
\end{proposition}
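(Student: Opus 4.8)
\emph{Proof proposal.} Both sides are central extensions of $T$ by $\mathbb{Z}$: for $T^*_{ab}=T^*/[B_\infty,B_\infty]$ this is \eqref{eq:ses2} together with Prop.\ref{prop:T_diamond_ab_is_central_extension}, and for $\wh{T}^{CF}$ it is the definition of the minimal central extension (Def.\ref{def:minimal_central_extension}) resolving the Chekhov--Fock projective representation $\rho^{CF}$ of $T$. Since an equivalence of central extensions is in particular an isomorphism, the plan is to produce, on each side, a finite presentation having the shape of the Lochak--Schneps presentation \eqref{eq:T_presentation_intro} enlarged by one central generator $z$, with a correction power of $z$ attached to each of the five defining relators, and then to match the two resulting collections of correction exponents (after possibly replacing $z$ by $z^{-1}$ to fix orientations). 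Equivalently, one computes the image of the class of each extension in $H^2(T;\mathbb{Z})$ from its presentation and checks the two classes coincide; the introduction records that both should equal $12$ times the Euler class.

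First I would compute a presentation of $\wh{T}^{CF}$. Using the Chekhov--Fock construction, which assigns operators on a Hilbert space $\mathscr{H}$ to the flips of the universal Ptolemy groupoid $Pt$, realize the generators $\alpha,\beta$ of $T$ as explicit products of flip operators, and evaluate, for each of the five Lochak--Schneps relators $w_i(\alpha,\beta)$, the scalar $c_i\in\mathbb{C}^\times$ for which $\rho^{CF}(w_i(\alpha,\beta))=c_i\cdot\mathrm{id}$. The nontrivial scalars arise from the pentagon identity and the inversion/unitarity relations of the Faddeev--Kashaev quantum dilogarithm, together with the combinatorics of how $\alpha,\beta$ act on decorated tessellations (Def.\ref{def:alpha_and_beta}); minimality (Def.\ref{def:minimal_central_extension}) then pins down the central generator $z$ and converts $(c_i)$ into a well-defined tuple of integers $(n_i)$, giving a presentation $\wh{T}^{CF}=\langle\,\wh\alpha,\wh\beta,z\mid z\ \text{central},\ \wh{w_i}(\wh\alpha,\wh\beta)=z^{n_i}\,\rangle$.

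Next I would compute a presentation of $T^*_{ab}$. Take as input the finite presentation of the braided Ptolemy--Thompson group $T^*$ due to Funar and Kapoudjian \cite{FuKa2}, whose generators lift $\alpha,\beta$ and include braid generators generating the kernel $B_\infty$ of \eqref{eq:ses1}. Passing to $T^*/[B_\infty,B_\infty]$ makes the braid generators central and mutually commuting; the braid relations modulo $[B_\infty,B_\infty]$ show that the image of $B_\infty$ is infinite cyclic, say generated by $z$ (this is $H_1(B_\infty)\cong\mathbb{Z}$), and Prop.\ref{prop:T_diamond_ab_is_central_extension} guarantees $z$ is central. Eliminating the now-redundant generators by Tietze transformations should collapse the presentation of $T^*$ to one of exactly the same shape as above, namely the five Lochak--Schneps relators twisted by powers $z^{m_i}$.

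Finally one checks $n_i=m_i$ for all $i$ (up to the global sign normalization), after which sending $\wh\alpha,\wh\beta,z$ to the corresponding generators of $T^*_{ab}$ is the desired isomorphism of central extensions. The main obstacle is the first step: faithfully propagating the scalar ambiguity of $\rho^{CF}$ through the groupoid formalism and through all five relations of $T$ — in particular the two long commutator relations — which requires careful use of the functional equations of the quantum dilogarithm and precise control of the flip sequences representing $\alpha$ and $\beta$. The second step is more elementary but needs care to confirm that the relative abelianization identifies nothing beyond what is intended and that $z$ indeed generates a genuine $\mathbb{Z}$; the final comparison is then bookkeeping.
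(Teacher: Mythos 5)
Your proposal is correct and follows essentially the same route as the paper (and as Funar--Sergiescu): compute the lifted relations of $\rho^{CF}$ to get $\wh{T}^{CF}\cong T_{1,0,0,0}$ (Thm.\ref{thm:FS}), compute the relative abelianization of the Funar--Kapoudjian presentation of $T^*$ to get $T^*_{ab}\cong T_{1,0,0,0}$ (Prop.\ref{prop:FuKa_T_star_ab_presentation}), and match the two presentations, equivalently their classes $12\chi$ in $H^2(T;\mathbb{Z})$ via Thm.\ref{thm:FS_classification}. The only cosmetic difference is that Funar--Sergiescu obtain the scalars $c_i$ from the Fock--Goncharov relations without writing explicit formulas for $\wh{\alpha}_0,\wh{\beta}_0$, but the structure of the argument is the one you describe.
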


In the present paper, we 
find that the relative abelianization of the other braided Ptolemy-Thompson group $T^\sharp$ is isomorphic to $\wh{T}^{\rm Kash}$, the central extension of $T$ coming from the Kashaev quantization of the universal Teichm\"uller space (the author acknowledges that this isomorphism is suggested to him by Louis Funar). This observation is the high point of the present paper:
\begin{proposition}
\label{prop:T_sharp_ab}
One has a group isomorphism 
$\wh{T}^{\rm Kash} \cong T^\sharp_{\rm ab}$.
\end{proposition}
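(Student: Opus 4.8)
The plan is to prove the isomorphism by computing an explicit presentation of each side and matching them. Both $\wh{T}^{Kash}$ and $T^\sharp_{ab}$ are central extensions of $T$ by $\mathbb{Z}$ --- the first by construction as the pull-back of $\wh{K}$ along the injection $\mathbf{F}:T\to K$ of \eqref{eq:T_to_K}, the second by \eqref{eq:ses2} and Prop.~\ref{prop:T_diamond_ab_is_central_extension} applied to the case $\mathbb{D}^\diamond=\mathbb{D}^\sharp$ --- so each is determined by a class in $H^2(T;\mathbb{Z})$, and it suffices to exhibit presentations whose relators are twisted by the same powers of the central generator (possibly after replacing that generator by its inverse, which absorbs the sign ambiguity of the class).

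For the left-hand side I would first record a presentation of $\wh{K}$. The Kashaev quantization gives unitaries $\mathbf{A}_j,\mathbf{T}_{jk},\mathbf{P}_{(jk)}$ obeying the relations of \eqref{eq:K_presentation_intro} only up to $U(1)$-scalars: the pentagon $\mathbf{T}_{k\ell}\mathbf{T}_{jk}=\mathbf{T}_{jk}\mathbf{T}_{j\ell}\mathbf{T}_{k\ell}$ and the trivial relations hold exactly, whereas $\mathbf{A}_j^3$ and $\mathbf{T}_{jk}\mathbf{A}_j\mathbf{T}_{kj}=\mathbf{A}_j\mathbf{A}_k\mathbf{P}_{(jk)}$ acquire genuine phases (the usual ``inversion'' scalar built from the self-dual quantum dilogarithm). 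By the formalism of minimal central extensions (\S\ref{subsec:minimal_central_extensions}, \S\ref{subsec:algebraic_proof}) these phases present $\wh{K}$ on lifts $\wh{A}_j,\wh{T}_{jk},\wh{P}_{(jk)}$ together with one central generator $z$, each relation of $K$ being twisted by an explicit power of $z$. Next I would use the functor $\mcal{F}:Pt\to Pt_{dot}$ of \S\ref{subsec:mcal_F} to write $\mathbf{F}(\alpha)$ and $\mathbf{F}(\beta)$ as explicit words in the $A_j,T_{jk},P_{(jk)}$, lift those words to $\wh{K}$, and compute, for each of the five defining relators $r$ of \eqref{eq:T_presentation_intro}, the power $z^{n(r)}$ to which the corresponding lifted word evaluates. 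This yields a presentation of $\wh{T}^{Kash}$ on generators $\wh\alpha,\wh\beta$ and a central $z$, in which $(\wh\beta\wh\alpha)^5$, $\wh\alpha^4$, $\wh\beta^3$ and the two commutators each equal a specific power of $z$; the resulting class in $H^2(T;\mathbb{Z})$ is $6$ times the Euler class.

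For the right-hand side I would start from the finite presentation of the braided Ptolemy--Thompson group $T^\sharp$ due to Funar--Kapoudjian: it has lifts $\til\alpha,\til\beta$ of $\alpha,\beta$ and the Artin generators of $B_\infty$, together with relations describing how $\til\alpha,\til\beta$ permute and braid the punctures of $\mathbb{D}^\sharp$. The generator of $H_1(B_\infty)\cong\mathbb{Z}$ is the common class of all Artin generators, so passing to the quotient by $[B_\infty,B_\infty]$ collapses them to a single central $z$ and converts the mixed relations into twists, by powers of $z$, of precisely the five relators of $T$. This produces a presentation of $T^\sharp_{ab}$ of exactly the same shape as the one above; comparing the two tuples of exponents (using, if needed, $z\mapsto z^{-1}$) shows that the two presentations describe isomorphic groups over $T$, which is the assertion. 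Run instead for $\mathbb{D}^*$ and the Chekhov--Fock operators, the same bookkeeping recovers Prop.~\ref{prop:FS_T_star_ab} and the value $12$, so the two statements are genuinely parallel.

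The main obstacle is the scalar bookkeeping in the second step: one must pin down the exact $U(1)$-phases in Kashaev's operator identities and then propagate them faithfully through the words $\mathbf{F}(\alpha)$, $\mathbf{F}(\beta)$ while checking the rather long relators $(\beta\alpha)^5$, $\alpha^4$, $\beta^3$ and the two commutators of \eqref{eq:T_presentation_intro} --- this is where a sign or normalization slip would change the final multiple of the Euler class. A secondary point is to confirm that the minimal central extension resolving the pulled-back projective representation of $T$ is indeed by $\mathbb{Z}$, neither larger nor smaller, which rests on the general discussion of \S\ref{subsec:minimal_central_extensions}; and on the topological side one first needs Prop.~\ref{prop:T_diamond_ab_is_central_extension}, that the relative abelianization lands in the center, for the cohomological comparison to make sense.
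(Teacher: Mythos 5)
Your proposal is correct in outline, but it proves the proposition by a genuinely different route from the paper's own proof. You obtain $\wh{T}^{Kash}\cong T^\sharp_{ab}$ by matching two explicitly computed presentations: on one side the presentation of $\wh{T}^{Kash}$ obtained by pushing the five relators of \eqref{eq:T_presentation_intro} through $\rho^{Kash}$ (this is the ``algebraic'' computation of \S\ref{subsec:algebraic_proof}, giving $T_{3,2,0,0}$), and on the other side the presentation of $T^\sharp_{ab}$ obtained by relatively abelianizing the Funar--Kapoudjian presentation of $T^\sharp$ (Prop.\ref{prop:FuKa_T_sharp_ab_presentation}, also $T_{3,2,0,0}$). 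The paper acknowledges that this works but deliberately reverses the logical order: its dedicated proof of the proposition introduces the $\sharp$-punctured Kashaev group $K^\sharp$ and its relative abelianization $K^\sharp_{ab}$, constructs the natural maps ${\bf F}^\sharp$ and ${\bf F}^\sharp_{ab}$, and then observes that the presentation of $K^\sharp_{ab}$ matches the lifted operator relations of Prop.\ref{prop:lifted_Kashaev_relations} essentially by inspection, since on both sides the only relation of $K$ acquiring a nontrivial central twist is $T_{[j][k]}A_{[j]}T_{[k][j]}=z^{-1}A_{[j]}A_{[k]}P_{(jk)}$; chaining equivalences of almost $T$-homomorphisms then yields the isomorphism without ever evaluating the twisted $\alpha,\beta$-relators, and Thm.\ref{thm:main} is afterwards \emph{deduced} from the proposition rather than used to prove it. The cost of your route is exactly the step you flag as the main obstacle: all five lifted relators must be computed in $GL(\mathscr{M})$, and the paper describes the two commutator computations as quite lengthy and omits them; you also need the full Funar--Kapoudjian presentation rather than only the identification of the braid kernel. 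What your route buys is independence from the auxiliary groups $K^\sharp$ and $K^\sharp_{ab}$. One small correction to your phase bookkeeping: with the normalization \eqref{eq:rho_A_j} one has ${\bf A}_{[j]}^3=id$ exactly, so $\mathbf{A}_j^3$ does \emph{not} acquire a phase; the single scalar $\zeta$ in the relation above is what propagates to give $\wh\alpha^4=z^2$ and $(\wh\beta\wh\alpha)^5=z^3$.
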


It is remarkable that the phases $c_{g_1,g_2}$ \eqref{eq:general_rho_relation} appearing in the relations among the operators for the projective representations of the two quantizations are precisely captured by the topological information about braids for punctures of $\mathbb{D}$ introduced in two different natural ways. 

\vs

Meanwhile, one can be convinced that $\wh{T}^{\rm Kash}$ and $\wh{T}^{\rm CF}$ are indeed {\em distinct} central extensions of $T$, in the following sense. It is well-known that the set of equivalence classes of central extensions of $T$ by $\mathbb{Z}$ is in one-to-one correspondence with $H^2(T;\mathbb{Z})$. As said in \cite{FuS}, it is proved by Ghys-Sergiescu \cite{GS} that
\begin{align}
\label{eq:second_cohomology_group_of_T}
H^2(T;\mathbb{Z}) = \mathbb{Z} \chi \oplus \mathbb{Z} \alpha \cong \mathbb{Z} \oplus \mathbb{Z},
\end{align}
where $\chi$ and $\alpha$ are the Euler class and the discrete Godbillon-Vey class, respectively. Funar and Sergiescu devised a formula for computing the class in $H^2(T;\mathbb{Z})$ corresponding to each central extension of $T$ by $\mathbb{Z}$, if its finite presentation is given:

\begin{theorem}[Thm.1.2 of \cite{FuS}]
\label{thm:FS_classification}
Let $T_{n,p,q,r}$ be the group presented by the generators $\bar{\alpha}$, $\bar{\beta}$, $z$ and the relations
\begin{align}
\nonumber
\left\{ {\renewcommand{\arraystretch}{1.2}
\begin{array}{l}
(\bar{\beta} \bar{\alpha})^5 = z^n, \quad
\bar{\alpha}^4 = z^p, \quad
\bar{\beta}^3 = z^q, \\
\left[\bar{\beta} \bar{\alpha} \bar{\beta}, \, \bar{\alpha}^2 \bar{\beta} \bar{\alpha} \bar{\beta} \bar{\alpha}^2\right] = z^r, \quad
\left[\bar{\beta} \bar{\alpha} \bar{\beta}, \, \bar{\alpha}^2 \bar{\beta} \bar{\alpha}^2 \bar{\beta} \bar{\alpha} \bar{\beta} \bar{\alpha}^2 \bar{\beta}^2 \bar{\alpha}^2\right] = 1, \quad
\left[\bar{\alpha},z\right] = \left[\bar{\beta},z\right] = 1.
\end{array}} \right.
\end{align}
Then each central extension of $T$ by $\mathbb{Z}$ is isomorphic to $T_{n,p,q,r}$, for some $n,p,q,r\in \mathbb{Z}$. Moreover, the class $c_{T_{n,p,q,r}} \in H^2(T;\mathbb{Z})$ of the extension $T_{n,p,q,r}$ is given by
\begin{align}
\nonumber
c_{T_{n,p,q,r}} = (12n - 15p - 20q - 60r) \chi + r\alpha.
\end{align}
\end{theorem}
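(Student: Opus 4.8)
The plan is to reduce everything to a presentation computation and then to detect the resulting central extension class by pairing it against explicit torsion and commutator data.

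\emph{Step 1 (from extensions to presentations).} Given a central extension $1 \to \mathbb{Z} \to \wh{T} \to T \to 1$ with $\mathbb{Z} = \langle z\rangle$ central, I would first choose lifts $\bar\alpha, \bar\beta \in \wh{T}$ of the generators $\alpha, \beta$ of the Lochak--Schneps presentation \eqref{eq:T_presentation_intro}. Since each of the five relators of $T$ maps to $1 \in T$, its lift lies in the central kernel and hence equals some power $z^{e_i}$. The standard presentation lemma for central extensions then shows that $\wh{T}$ is presented by $\bar\alpha, \bar\beta, z$ with $z$ central and the five relators set equal to $z^{e_1}, \dots, z^{e_5}$; call this group $T_{e_1,\dots,e_5}$. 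Thus a priori every central extension is some $T_{e_1,\dots,e_5}$, a five-parameter family.

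\emph{Step 2 (normalizing to four parameters).} I would next exploit the freedom of lifts $\bar\alpha \mapsto \bar\alpha z^a$, $\bar\beta \mapsto \bar\beta z^b$, which replaces $(e_1,e_2,e_3)$ by $(e_1 + 5(a+b),\, e_2 + 4a,\, e_3 + 3b)$ while fixing the two commutator weights $e_4, e_5$ (commutators are insensitive to multiplying their entries by central elements). On the other hand, the requirement that $T_{e_1,\dots,e_5}$ really be an extension by $\mathbb{Z}$, i.e. that $z$ have infinite order rather than finite order, forces the weights to respect the identities among the Lochak--Schneps relators. Since $H^2(T;\mathbb{Z}) \cong \mathbb{Z}^2$ by \eqref{eq:second_cohomology_group_of_T} while the lift action above leaves the raw weight lattice a rank-three family, exactly one independent linear relation among the $e_i$ must be imposed by these identities; carrying out that identity analysis lets me eliminate the second commutator weight and set $e_5 = 0$, leaving the four free parameters $n = e_1$, $p = e_2$, $q = e_3$, $r = e_4$. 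This yields the presentation of $T_{n,p,q,r}$ as stated.

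\emph{Step 3 (computing the class).} The assignment sending $T_{n,p,q,r}$ to its class in $H^2(T;\mathbb{Z})$ is linear in $(n,p,q,r)$, so it remains to determine its $\chi$- and $\alpha$-components. For the Euler component I would restrict the extension to the three torsion cyclic subgroups $\langle \beta\alpha\rangle \cong \mathbb{Z}/5$, $\langle\alpha\rangle \cong \mathbb{Z}/4$, $\langle\beta\rangle \cong \mathbb{Z}/3$; on $H^2(\mathbb{Z}/m;\mathbb{Z}) = \mathbb{Z}/m$ the restricted extension is read off directly from $(\bar\beta\bar\alpha)^5 = z^n$, $\bar\alpha^4 = z^p$, $\bar\beta^3 = z^q$ as $n \bmod 5$, $p \bmod 4$, $q \bmod 3$, with the Euler class restricting to the relevant rotation numbers and the Godbillon--Vey class $\alpha$ restricting to zero on torsion. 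Matching these congruences and clearing denominators by $\mathrm{lcm}(3,4,5) = 60$ produces the combination $12n - 15p - 20q$ as the rotation-number part of the $\chi$-coefficient. The $\alpha$-component, invisible to torsion, is detected by the commutator defect and equals $r$; the same commutator generator of $H_2(T)$ also carries an Euler component, which accounts for the extra $-60r$ term in the $\chi$-coefficient. Assembling these gives $c_{T_{n,p,q,r}} = (12n - 15p - 20q - 60r)\chi + r\alpha$.

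\emph{Main obstacle.} The two genuinely delicate points are the normalization $e_5 = 0$ in Step 2, which requires a careful analysis of the identities among the Lochak--Schneps relators (equivalently an explicit description of $H_2(T) \cong \mathbb{Z}^2$ via Hopf's formula) to justify that exactly one linear constraint appears and that it eliminates the second commutator, and the precise integer normalization in Step 3. In particular the coupling term $-60r$ between the commutator weight and the Euler class is the least transparent: it reflects that the commutator relator represents a homology class with a nonzero Euler pairing rather than a pure Godbillon--Vey generator, and pinning its coefficient exactly demands fixing the Ghys--Sergiescu normalization of $\chi$ and $\alpha$ and computing the corresponding rotation numbers with correct signs.
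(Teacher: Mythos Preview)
The paper does not prove this theorem at all: it is quoted verbatim as ``Thm.~1.2 of \cite{FuS}'' and used as a black box to read off the cohomology class once a presentation of $\wh{T}^{Kash}$ has been obtained. There is therefore no proof in the paper to compare your proposal against.

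That said, your outline is a reasonable sketch of the strategy one would expect in \cite{FuS}, and the broad architecture (lift relators, normalize via the ambiguity in the choice of lifts, detect the class by restricting to torsion subgroups and by the commutator defect) is standard for such classification results. The points you flag as delicate are exactly the ones that require real work: the justification that the fifth relator weight $e_5$ can be normalized to $0$ needs a concrete identity among the Lochak--Schneps relators (or equivalently an explicit Hopf-formula computation of $H_2(T;\mathbb{Z})$), and the coefficient $-60r$ in the Euler component cannot be obtained from the torsion restrictions alone but requires knowing the precise Euler pairing of the commutator $2$-cycle. Your sketch does not actually carry out either of these computations, so as written it is a plausible plan rather than a proof; but since the present paper does not attempt a proof either, there is nothing further to compare.
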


\begin{theorem}[\cite{FuS}: presentation of $\wh{T}^{\rm CF}$]
\label{thm:FS}
One has
\begin{align}
\label{eq:hat_T_CF}
\wh{T}^{\rm CF} \cong T_{1,0,0,0},
\end{align}
and hence the corresponding class in $H^2(T;\mathbb{Z})$ is $12\chi$.
\end{theorem}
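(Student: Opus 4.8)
The plan is to prove $\wh{T}^{CF}\cong T_{1,0,0,0}$; once this is done the cohomology class is immediate, since Theorem~\ref{thm:FS_classification} applied with $(n,p,q,r)=(1,0,0,0)$ gives $c_{T_{1,0,0,0}}=(12\cdot 1-15\cdot 0-20\cdot 0-60\cdot 0)\chi+0\cdot\alpha=12\chi$. So everything rests on producing the presentation of $\wh{T}^{CF}$. The starting point is the Lochak--Schneps presentation \eqref{eq:T_presentation_intro} of $T$ together with the general machinery of \S\ref{subsec:minimal_central_extensions}--\S\ref{subsec:algebraic_proof}: if $\rho^{CF}\colon T\to PU(\mathscr{H})$ denotes the Chekhov--Fock projective representation, then the minimal central extension resolving it admits a presentation obtained from any presentation $\langle\, x_i \mid w_j \,\rangle$ of $T$ by adjoining a central generator $z$, imposing $[x_i,z]=1$, and replacing each relator $w_j$ by $w_j=z^{k_j}$, where $z^{k_j}\in\mathbb{C}^\times$ is the scalar by which a fixed choice of lifts $\wh\rho^{CF}(x_i)\in U(\mathscr{H})$ fails to satisfy $w_j$ --- provided one checks that the $z^{k_j}$ together generate an infinite cyclic subgroup of $\mathbb{C}^\times$ (so that the kernel is genuinely $\mathbb{Z}$, not a proper quotient) and that no smaller extension works. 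Thus the problem reduces to choosing good lifts $\mathbf a=\wh\rho^{CF}(\alpha)$, $\mathbf b=\wh\rho^{CF}(\beta)$ and computing the five scalar defects attached to the relators $(\beta\alpha)^5$, $\alpha^4$, $\beta^3$, and the two long commutators.

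To carry this out I would use the explicit form of the Chekhov--Fock operators: $\mathbf b$ is the marking-change operator rotating the distinguished oriented edge inside a triangle (a finite product of Gaussian/quadratic-exponential unitaries and a permutation), while $\mathbf a$ is the flip operator along the edge adjacent to the distinguished one, built from the Faddeev quantum dilogarithm $\Phi_b$. After tracking the combinatorics of tessellations, each relator reduces to a composite of pentagon identities for $\Phi_b$ and its inversion (reflection) relation, together with bookkeeping of the Gaussian prefactors. By first rescaling $\mathbf a\mapsto\mu\mathbf a$, $\mathbf b\mapsto\nu\mathbf b$ one normalizes away the defects of $\alpha^4$ and $\beta^3$, forcing $p=q=0$; the two commutator relators have trivial defect because in each of them the two flipped edges are disjoint, so the corresponding flip operators literally commute (and commutators are insensitive to the rescaling), giving $r=0$ and the fifth relation exactly. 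What remains is the pentagon relator: the quantum pentagon identity yields $(\mathbf b\mathbf a)^5=\lambda\cdot\mathrm{Id}$ with $\lambda=e^{-i\pi c}\neq 1$ a fixed constant (essentially the central charge), and after setting $z$ to be the generator of $\langle\lambda\rangle\cong\mathbb{Z}$ one gets $(\bar\beta\bar\alpha)^5=z^{1}$. Hence $(n,p,q,r)=(1,0,0,0)$.

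Finally I would close the two loose ends. First, the central kernel is exactly $\mathbb{Z}$: the kernel of $\wh{T}^{CF}\to T$ is central and generated as a group by the images of the relators, all of which are powers of $z$, so it is cyclic; and it is infinite because $\lambda$ is not a root of unity for generic $b$ --- equivalently because $\wh{T}^{CF}$ is by construction the dilogarithmic central extension, a central extension by $\mathbb{Z}$. Second, minimality: any strictly smaller resolving extension would have to kill $(\bar\beta\bar\alpha)^5$, which is impossible since $\lambda\neq 1$; thus $T_{1,0,0,0}$ is the minimal one and $\wh{T}^{CF}\cong T_{1,0,0,0}$, which combined with Theorem~\ref{thm:FS_classification} gives the class $12\chi$. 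The main obstacle is the honest evaluation of the pentagon defect $\lambda$ --- running the quantum pentagon identity for $\Phi_b$ while keeping every Gaussian exponential prefactor, and, just as delicate, pinning down the dictionary between the flip operator, the change-of-decoration operator, and the abstract generators $\alpha,\beta$ from \S\ref{subsec:mcal_F}, together with the normalization of $z$ so that it matches the convention in Theorem~\ref{thm:FS_classification}; a secondary subtlety is verifying rigorously that the two commutator relators have zero defect.
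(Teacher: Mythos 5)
Your overall route is the same one the paper attributes to Funar--Sergiescu: evaluate each of the five relators of \eqref{eq:T_presentation_intro} on explicit unitary lifts of $\alpha,\beta$, read off the scalar defects $(\lambda,1,1,1,1)$ recorded in \eqref{eq:lifted_T_relations_for_CF}, conclude $\wh{T}^{CF}\cong T_{1,0,0,0}$, and feed $(n,p,q,r)=(1,0,0,0)$ into Thm.\ref{thm:FS_classification} to get $12\chi$. (The paper does not itself redo the operator computation; it imports \eqref{eq:lifted_T_relations_for_CF} from \cite{FuS}.) Your identification of the quantum pentagon identity as the source of the single nontrivial defect, and of disjointness of the affected quadrilaterals as the reason the two commutator relators have defect exactly $1$ (scalars cancel in commutators), is the intended mechanism.

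The one step that is not legitimate as written is the normalization ``rescale $\mathbf a\mapsto\mu\mathbf a$, $\mathbf b\mapsto\nu\mathbf b$ to force $p=q=0$.'' By Def.\ref{def:minimal_central_extension} and Prop.\ref{prop:appropriate_generalization}, $\wh{T}^{CF}$ is the minimal central extension resolving the \emph{specific} set map $\rho^{CF}$ produced by the quantization, and its kernel is the subgroup of $\mathbb{C}^\times$ generated by the images of \emph{all} relators. That subgroup is not invariant under rescaling the lifts by arbitrary scalars: if the true defects were $(\lambda_0,\mu_0,\nu_0,1,1)$ with $\mu_0,\nu_0$ multiplicatively independent of $\lambda_0$, the kernel would be $\mathbb{Z}^3$ rather than $\mathbb{Z}$ and no $T_{n,p,q,r}$ would apply, while after your rescaling you would be computing the extension attached to a \emph{different} almost-linear representation. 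Rescaling is an isomorphism of the extension only when $\mu,\nu$ lie in the central subgroup already generated by the defects (this is exactly why $T_{n,p,q,r}\cong T_{n+5a+5b,\,p+4a,\,q+3b,\,r}$, which preserves $12n-15p-20q-60r$). So you cannot normalize the $\alpha^4$ and $\beta^3$ defects away; you must verify that the canonical operators already satisfy $\wh{\alpha}_0^4=1$ and $\wh{\beta}_0^3=1$ on the nose --- which is true, and is precisely part of the content of \eqref{eq:lifted_T_relations_for_CF}. With that replacement (and your own caveat about honestly checking the two commutator defects), the argument is the one in \cite{FuS}.
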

The principal result of the present paper is the following:
\begin{theorem}[Main theorem of the present paper: presentation of $\wh{T}^{\rm Kash}$]
\label{thm:main}
One has
\begin{align}
\label{eq:hat_T_Kash}
\wh{T}^{\rm Kash} \cong T_{3,2,0,0},
\end{align}
and hence the corresponding class in $H^2(T;\mathbb{Z})$ is $6\chi$.
\end{theorem}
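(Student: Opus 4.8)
The plan is to compute an explicit finite presentation of $\wh{T}^{Kash}$ in the form $T_{n,p,q,r}$ appearing in Theorem~\ref{thm:FS_classification}, and then read off the cohomology class from the formula $c_{T_{n,p,q,r}} = (12n-15p-20q-60r)\chi + r\alpha$. Since $\wh{T}^{Kash}$ is obtained by pulling back the central extension $\wh{K}$ of the Kashaev group $K$ along the injection ${\bf F}: T \to K$ of \eqref{eq:T_to_K}, the first task is to track the generators $\alpha,\beta$ of $T$ through ${\bf F}$, writing ${\bf F}(\alpha)$ and ${\bf F}(\beta)$ as explicit words in the Kashaev generators $A_j, T_{jk}, P_{(jk)}$. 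This requires making the functor $\mcal{F}: Pt \to Pt_{dot}$ completely concrete on the relevant objects and morphisms --- in particular, identifying which dotted triangulation corresponds to the decorated Farey tessellation, and how the elementary moves $\alpha$ and $\beta$ decompose into the moves $A_j$, $T_{jk}$, $P_{(jk)}$ together with relabellings.

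Next I would lift everything to the central extensions. The Kashaev quantization assigns to each generator of $K$ a unitary operator (built from the quantum dilogarithm), and the defining relations of $K$ hold only projectively, with specific phases $c_{g_1,g_2}$; the minimal central extension $\wh{K}$ is then presented by the generators of $K$ together with a central element $z$, where each relation $w = id$ of $K$ is replaced by $\wh{w} = z^{m}$ for the integer $m$ recording the corresponding phase (suitably normalized so that $z$ generates $\mathbb{Z}$, not a proper subgroup). I would collect these integers for the relations of $K$ actually used --- this is essentially bookkeeping with the pentagon and the other Kashaev relations and their known phase anomalies. Then, substituting the words ${\bf F}(\alpha), {\bf F}(\beta)$ into the defining relations of $T$ from \eqref{eq:T_presentation_intro} and using the lifted relations of $\wh{K}$, I compute for each of the five relations of $T$ the power of $z$ by which it fails in $\wh{T}^{Kash}$: these give the exponents $n$ (for $(\beta\alpha)^5$), $p$ (for $\alpha^4$), $q$ (for $\beta^3$), $r$ (for the first commutator), and the exponent of the second commutator, which must come out trivial for consistency with Theorem~\ref{thm:FS_classification}. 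I expect the answer to be $(n,p,q,r) = (3,2,0,0)$, giving $12\cdot 3 - 15\cdot 2 = 6$, hence the class $6\chi$.

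The main obstacle will be the phase computation: the quantum dilogarithm satisfies its functional equations (the pentagon identity in particular) only up to scalars involving central charges, and correctly normalizing the generator $z$ of the $\mathbb{Z}$ in $\wh{K}$, then correctly propagating these scalars through the substitution of the multi-letter words ${\bf F}(\alpha),{\bf F}(\beta)$ into long relations like $(\beta\alpha)^5 = 1$, is delicate and error-prone. A useful consistency check, which I would use to validate the computation, is that the two commutator relations of $T$ must lift with exponents $(r,0)$ matching the form demanded by Theorem~\ref{thm:FS_classification}; any other outcome signals an arithmetic slip. A secondary, more conceptual obstacle is justifying that the pulled-back extension $\wh{T}^{Kash}$ is genuinely the \emph{minimal} central extension resolving the restricted projective representation of $T$ (so that $z$ really generates $\mathbb{Z}$ and not, say, $2\mathbb{Z}$ or $6\mathbb{Z}$ inside it) --- I would handle this by the general nonsense on minimal central extensions developed in \S\ref{subsec:minimal_central_extensions} together with the explicit fact that $H^2(T;\mathbb{Z})$ is torsion-free, as in \eqref{eq:second_cohomology_group_of_T}, so there is no ambiguity once the class is pinned down. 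Finally, I would double-check $\wh{T}^{Kash}\cong T_{3,2,0,0}$ against the topological side by comparing with the presentation of $T^\sharp_{ab}$, which is the content of Proposition~\ref{prop:T_sharp_ab}.
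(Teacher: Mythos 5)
Your proposal is correct and coincides with the paper's ``algebraic'' proof in \S\ref{subsec:algebraic_proof}: one substitutes ${\bf F}(\alpha)=A_{[-1]}T_{[-1][1]}^{-1}A_{[1]}P_{\gamma_\alpha}$ and ${\bf F}(\beta)=A_{[-1]}P_{\gamma_\beta}$ into the five relations of \eqref{eq:T_presentation_intro}, uses the single phase $\zeta$ from Prop.\ref{prop:lifted_Kashaev_relations} to find $\wh{\beta}^3=1$, $\wh{\alpha}^4=\zeta^{-2}$, $(\wh{\beta}\wh{\alpha})^5=\zeta^{-3}$ and trivial commutators, and then applies Thm.\ref{thm:FS_classification} to get $(n,p,q,r)=(3,2,0,0)$ and the class $6\chi$. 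Your cross-check against $T^\sharp_{ab}$ is precisely the paper's alternative ``topological'' proof, and your worry about normalizing $z$ is resolved exactly as you suggest, since $\rho^{Kash}(R_{mark})$ is generated by the single scalar $\zeta^{-1}$, of infinite order for generic $b$.
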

By looking at the corresponding classes in $H^2(T;\mathbb{Z})$, we deduce  that $\wh{T}^{\rm CF}$ and $\wh{T}^{\rm Kash}$ are not equivalent as central extensions of $T$ by $\mathbb{Z}$, and therefore, in particular, the two projective representations $\rho$ of $T$ coming from the two quantizations of the universal Teichm\"uller space are not equivalent to each other.

\vs

It is worthwhile to review how Theorems \ref{thm:FS} and \ref{thm:main} are proven in \cite{FuS} and in the present paper. Funar-Sergiescu proved \eqref{eq:hat_T_CF} using only the projective representation $\rho^{\rm CF}$ of the Chekhov-Fock quantization; namely, for each relation in \eqref{eq:T_presentation_intro}, replace each $\alpha$, $\beta$ by $\rho^{\rm CF}(\alpha)$, $\rho^{\rm CF}(\beta)$, and evaluate. Then they obtained Thm.\ref{thm:FS} with the help of Thm.\ref{thm:FS_classification} which they established separately. On the other hand, they also proved $T^*_{\rm ab} \cong T_{1,0,0,0}$, thus getting Prop.\ref{prop:FS_T_star_ab}.

\vs

For us, we have two options to reach Thm.\ref{thm:main}. With Thm.\ref{thm:FS_classification} in our hand, the key statement to prove is of course $\wh{T}^{\rm Kash} \cong T_{3,2,0,0}$ \eqref{eq:hat_T_Kash}, and one way to show this is to use only the projective representation $\rho^{\rm Kash}$ of the Kashaev quantization, like Funar and Sergiescu did. This amounts to computing the lifted $\alpha,\beta$-relations satisfied by the operators $\rho^{\rm Kash}(\alpha)$, $\rho^{\rm Kash}(\beta)$. We call such a proof an `algebraic' proof of Thm.\ref{thm:main}. In fact, complete calculation of the last two commutation relations is quite lengthy, so we only present the computation of the relations other than these two.
The other way to show \eqref{eq:hat_T_Kash} is to prove first $\wh{T}^{\rm Kash} \cong T^\sharp_{\rm ab}$ (Prop.\ref{prop:T_sharp_ab}) and then $T^\sharp_{\rm ab} \cong T_{3,2,0,0}$. It turns out that the main calculation in the proof of Prop.\ref{prop:T_sharp_ab} is short, and proof of $T^\sharp_{\rm ab} \cong T_{3,2,0,0}$ takes only small amount of topological checking (sketched in \S\ref{subsec:relative_abelianizations}). Thus we call this a `topological' proof of Thm.\ref{thm:main}; we prefer this to the algebraic proof, as it is more enlightening and does not require any clever algebraic manipulation. The reason why proving $\wh{T}^{\rm Kash} \cong T^\sharp_{\rm ab}$ (Prop.\ref{prop:T_sharp_ab}) is easy is because we do the main computation in the Kashaev group $K$ which is easier than a similar computation in $T$, and all that is left to do is to translate this computational result to the $T$ side by using a $\sharp$-punctured version ${\bf F}^\sharp$ of the map ${\bf F}$ \eqref{eq:T_to_K} which arises naturally (\S\ref{subsec:bolf_F_sharp}) and some elementary group theoretical argument.

\vs

The two quantizations of Teichm\"uller spaces have their own pros and cons, while their explicit relationship has still been somewhat mysterious. Guo and Liu \cite{GuLi} tried to build a bridge between the two constructions in a purely algebraic way. 
Namely, for each $g\in M(\Sigma)$, conjugation by $\rho(g)$ defines an automorphism of the algebra of operators on $\mathscr{H}$, and they studied how these algebra automorphisms in the two quantizations are related to each other. Since conjugation forgets multiplicative constants, the information on the phases $c_{g_1,g_2}$ for the projective representations $\rho$ as in \eqref{eq:general_rho_relation} is then lost. 
Only in the level of projective representations $\rho$, can we observe the discrepancy between the two quantizations as discussed above. 
We note that in the present paper, in addition to the topological interpretations of just the phases $c_{g_1,g_2}$, a natural relationship between the two projective representations $\rho$ themselves for the two quantizations of the universal Teichm\"uller space are also suggested via the map ${\bf F}$  \eqref{eq:T_to_K}, although this doesn't give an equivalence between these two.

\vs

Let us now list some possible directions for further research. First, one can try to mimic what is done in the present paper in the cases of finite-type Riemann surfaces $\Sigma$. Funar and Kashaev \cite{FuKas} have a result analogous to Thm.\ref{thm:main} using the Kashaev quantization, and Xu \cite{Xu14} has a result analogous to Thm.\ref{thm:FS} using the Chekhov-Fock quantization; both works rely on extensive algebraic proofs. As suggested by Funar, it is an interesting problem to search for topological interpretations of the resulting minimal central extensions of $M(\Sigma)$ by $\mathbb{Z}$, as done in Propositions \ref{prop:FS_T_star_ab} and \ref{prop:T_sharp_ab}, because na\"ive candidates do not work; the relative abelianization of extensions of $M(\Sigma)$ by braid groups on $\Sigma$ yields central extensions of $M(\Sigma)$ by $\mathbb{Z}/2\mathbb{Z}$, not by $\mathbb{Z}$, when $\Sigma$ is of positive genus (as pointed out to the author by Funar; see \cite{BeFu}).

\vs

In fact, more interesting is still the universal case. For example, the original problem posed to the author by Igor Frenkel is to interpret the projective representations and the central extensions of $T$ in the language of the representation theory of a rather basic Hopf algebra $\mcal{B}$, `the modular double of the quantum plane'. In \cite{FrKi} Frenkel and the author realized the quantum (universal) Teichm\"uller space $\mathscr{H}$ as the space of intertwiners of $\mcal{B}$, and the suggested problem is to realize the operators for $\alpha,\beta\in T$ corresponding to the projective representation $\rho$ as some kind of permutation (and `dualizing') operators on a certain infinite tensor power of the unique irreducible integrable representation of $\mcal{B}$. Meanwhile, recall that quantum Teichm\"uller theory provides genuine representations of the relative abelianizations $T^*_{\rm ab}$, $T^\sharp_{\rm ab}$ of the braided Ptolemy-Thompson groups $T^*$, $T^\sharp$. One can then ask if we can construct representations of $T^*$ and $T^\sharp$ themselves, on which $B_\infty$ acts faithfully; this is currently in progress, and will be published elsewhere. This problem, suggested by Funar and Frenkel, is important in two aspects. One is that such representations may be used to `flatten out' the known representations of $T^*_{\rm ab}$, $T^\sharp_{\rm ab}$ resulting from quantum Teichm\"uller theory, and the other is that there could be a connection to the Grothendieck-Teichm\"uller group $\wh{GT}$, as in Lochak-Schneps' work \cite{LSc}.

\begin{remark}
\label{rem:trivial_constant}
The projective representations of $T$ used in \cite{FuS} by Funar and Sergiescu which yield the central extension $\wh{T}^{\rm CF}$, as well as those of the mapping class groups of finite-type surfaces used in \cite{Xu14}, come from the quantization result of the paper \cite{FG}, which says that the operators representing the generators satisfy the algebraic relations up to complex constants of modulus 1. The author of the present paper computed these constants, in order to formulate a more precise way to `compare' the second cohomology classes coming from the two different quantizations. Namely, instead of considering the minimal central extension resolving a chosen `almost' group homomorphism $T\to {\rm GL}(V)$ and then computing the corresponding class in $H^2(T;\mathbb{Z})$, one can think of the group homomorphism $T\to {\rm PGL}(V)$ which is `more invariant', which yields a well-defined class in $H^2(T;{\rm U}(1))$. Then, the choice of an appropriate embedding $\mathbb{Z} \hookrightarrow {\rm U}(1)$ lets us find the class in $H^2(T;\mathbb{Z})$ that corresponds to the well-defined class in $H^2(T;{\rm U}(1))$ under the induced map $H^2(T;\mathbb{Z}) \to H^2(T;{\rm U}(1))$. Using the same embedding $\mathbb{Z} \hookrightarrow {\rm U}(1)$ for the two stories then lets us compare the two results more precisely. However, the author found out in \cite{K16} that the constants appearing in \cite{FG}, which are denoted by $\lambda$ there, are all $1$. Therefore, the Fock-Goncharov quantization of \cite{FG} yields genuine representations of $T$, not projective. So, in order to recover what are asserted in \cite{FuS} and \cite{Xu14}, one must construct another quantization of Teichm\"uller spaces that resembles Chekhov-Fock-Goncharov's result but does not yield trivial constants.
\end{remark}

\noindent{\bf Acknowledgments.} I am greatly indebted to Louis Funar and Vlad Sergiescu for abundant help, suggestions and discussions about this work, and therefore would like to warmly thank them. I thank Igor B. Frenkel for suggesting the original problem, and for his helpful comments. I thank all the referees for their suggestions to make the paper better.

\section{Decorated universal Ptolemy groupoids}
\label{sec:decorated_universal_Ptolemy_groupoids}

In this section, we study certain infinite tessellations (i.e. triangulations) of the open unit disc $\mathbb{D}$, and two kinds of decorations on the tessellations: marked tessellations and dotted tessellations. We study the groups of transformations of these enhanced tessellations, the Ptolemy-Thompson group $T$ and the Kashaev group $K$ respectively, and build a natural map ${\bf F}: T\to K$. These two groups are the main basic ingredients of the present paper. We shall introduce two groupoids, which lead to these groups.

\subsection{Tessellations of the unit disc $\mathbb{D}$}

Recall that any homotopy of $\mathbb{D}$ is assumed to be boundary-fixing, as noted in \S\ref{sec:introduction}.

\begin{definition}
\label{def:ideal_arc}
An {\em ideal arc} of the unit disc $\mathbb{D}$ connecting two distinct points on the unit circle $S^1 = \partial \mathbb{D}$ is a homotopy class of unoriented paths in $\mathbb{D}$ connecting the two points. The connected region bounded by three ideal arcs connecting three distinct points on $S^1$ is called an {\em ideal triangle}. The three ideal arcs bounding an ideal triangle are called the {\em sides} of the triangle.
\end{definition}

In the figures appearing in the present paper and usually in the literature, each ideal arc is often assumed to be stretched to the unique hyperbolic geodesic with respect to the usual Poincar\'e hyperbolic metric $ds^2 = \frac{|dz|^2}{(1-|z|^2)^2}$, so that it is a part of some circle, which intersects the unit circle at the right angle.

\begin{definition}
A {\em tessellation} $\tau$ of $\mathbb{D}$ is a countable locally finite collection of ideal arcs of $\mathbb{D}$ whose complementary region in $\mathbb{D}$ is the disjoint union of ideal triangles. 
The ideal arcs constituting $\tau$ are called {\em edges} of $\tau$.
The endpoints of the edges of $\tau$ are called {\em vertices} of $\tau$.
\end{definition}

Via the Cayley transformation $\mu$ (Def.\ref{def:asymptotically_rigid}), each point on $S^1 = \partial \mathbb{D}$ gets labeled by the corresponding element of $\mathbb{RP}^1 = \mathbb{R}\cup \{\infty\} = \partial \mathbb{H}$. In the present paper, we only study tessellations of $\mathbb{D}$ whose vertices are rational points of $S^1$, each of which can be written as $\mu(r) \in S^1$ for some $r\in \mathbb{Q} \cup \{\infty\} = \mathbb{QP}^1 \subset \mathbb{RP}^1$; we will denote this point just by $r$ in pictures, e.g. as in Fig.\ref{fig:marked_tessellations}. Since $\mathbb{Q}\cup \{\infty\}$ will come up often, we first settle the notation for its elements:

\begin{definition}
\label{def:reduced_expression}
A nonzero rational number $\frac{p}{q}$ is said to be in the {\em reduced expression} if $p,q\in \mathbb{Z}$, $q> 0$ and ${\rm gcd}(p,q)=1$. We set $\frac{0}{1}$ for the reduced expression for $0$, and $\frac{1}{0}$ or $\frac{-1}{0}$ for the reduced expressions for $\infty$. We call the elements of $\mathbb{Q}\cup \{\infty\}$ the {\em extended rationals}.
\end{definition}

The most important example of tessellations of $\mathbb{D}$ is the {\em Farey tessellation}: 

\begin{definition}
\label{def:Farey_tessellation}
The {\em Farey tessellation} $\tau^*$ is the tessellation whose vertices are all the rational points of $S^1$ (i.e. ${\tau^*}^{(1)} = \mathbb{Q}\cup\{\infty\}$ via $\mu$), in which the two rational points $\mu(\frac{a}{b})$ and $\mu(\frac{c}{d})$ (where $\frac{a}{b}$ and $\frac{c}{d}$ are reduced expressions) are connected by an ideal arc if and only if $|ad - bc| = 1$.

\end{definition}

One can show that if $\frac{a}{b}$ and $\frac{c}{d}$ satisfies $ad-bc=1$, then any $\frac{c'}{d'}$ satisfying $ad' - bc'=1$ can be written as $c' = c + na$, $d' = d+na$ for some integer $n$, and vice versa. Using this fact, one can argue that the collection of arcs among the rational points of $S^1$ defined in Def.\ref{def:Farey_tessellation} indeed defines a tessellation, i.e. no two arcs intersect in the interior of $\mathbb{D}$. We omit the detailed proof.

\begin{remark}
\label{rem:triangle_Farey}
Any ideal triangle of the Farey tessellation $\tau^*$ has the vertices $\mu(\frac{a}{b}), \mu(\frac{a+c}{b+d}), \mu(\frac{c}{d})$ for some extended rationals $\frac{a}{b}, \frac{c}{d}$ (in reduced expressions).
\end{remark}

See Fig.\ref{subfig:standard_marked_tessellation} for the Farey tessellation; ignore the arrowhead. More general tessellations that we are interested in are of the following type (see Fig.\ref{subfig:marked_tessellation_ex}, ignoring the arrowhead):

\begin{definition}
\label{def:Farey-type_tessellations}
A {\em Farey-type tessellation} is a tessellation $\tau$ whose vertices are all the rational points of $S^1$, all but finitely many of which ideal arcs are those of the Farey tessellation $\tau^*$. 
\end{definition}
In the present paper, 
a `tessellation' would automatically mean a `Farey-type tessellation'. 
The first kind of decoration on tessellations we need to consider is as follows.

\begin{definition}
\label{def:marked_tessellations}
A {\em marked tessellation} $(\tau,\vec{a})$ is a tessellation $\tau$ together with the choice of a {\em distinguished oriented edge (d.o.e.)} $\vec{a}$. The {\rm standard marked tessellation} $(\tau^*, \vec{a}^*)$ is the Farey tessellation $\tau^*$ together with the d.o.e. $\vec{a}^*$ being the arc connecting $\mu(0)$ and $\mu(\infty)$, with the direction $\mu(0)\to \mu(\infty)$ (see Fig.\ref{subfig:standard_marked_tessellation}).
We denote the standard marked tessellation $(\tau^*, \vec{a}^*)$ by $\tau^*_{\rm mark}$, and a general marked tessellation $(\tau,\vec{a})$ by $\tau_{\rm mark}$ if the d.o.e. $\vec{a}$ is clear from the context.
\end{definition}

The d.o.e. $\vec{a}$ is indicated by an arrow in the pictures, as in Fig.\ref{fig:marked_tessellations}. For a general example of marked tessellations, see Fig.\ref{subfig:marked_tessellation_ex}.


\begin{figure}[htbp!]
\begin{subfigure}[b]{0.5\textwidth}
\centering
\begin{pspicture}[showgrid=false,linewidth=0.5pt,unit=0.9](-2.6,-2.8)(2.6,2.6)
\psarc(0,0){2.4}{0}{360}
\psarc(-2.4,2.4){2.4}{-90}{0}
\psarc(-2.4,-2.4){2.4}{0}{90}
\psarc(2.4,-2.4){2.4}{90}{180}
\psarc(2.4,2.4){2.4}{180}{-90}
\psarc[arcsep=0.5pt](2.4,1.2){1.2}{142.5}{-90}
\psarc[arcsep=0.5pt](0.8,2.4){0.8}{180}{-40}
\psarc[arcsep=0.5pt](2.4,0.8){0.8}{127}{-90}
\psarc[arcsep=0.5pt](1.714,1.714){0.343}{140}{-51}
\psarc[arcsep=0.5pt](0.48,2.4){0.48}{180}{-22}
\psarc[arcsep=0.5pt](1.2,2.1){0.3}{160}{-34}
\psarc[arcsep=0.5pt](2.4,0.6){0.6}{117}{-90}
\psarc[arcsep=0.5pt](2.03,1.29){0.185}{127}{-60}
\rput(-2.6,0){$\frac{0}{1}$}
\rput(2.6,0){$\frac{1}{0}$}
\rput(0,2.7){-$\frac{1}{1}$}
\rput(1.5,2.2){-$\frac{2}{1}$}
\rput(2.09,1.58){-$\frac{3}{1}$}
\rput(0.87,2.5){-$\frac{3}{2}$}
\rput(2.35,1.15){-$\frac{4}{1}$}
\psarc[arcsep=0.5pt](-2.4,1.2){1.2}{-90}{37.5}
\psarc[arcsep=0.5pt](-0.8,2.4){0.8}{-140}{0}
\psarc[arcsep=0.5pt](-2.4,0.8){0.8}{-90}{53}
\psarc[arcsep=0.5pt](-1.714,1.714){0.343}{-129}{40}
\psarc[arcsep=0.5pt](-0.48,2.4){0.48}{-158}{0}
\psarc[arcsep=0.5pt](-1.2,2.1){0.3}{-146}{20}
\psarc[arcsep=0.5pt](-2.4,0.6){0.6}{-90}{63}
\psarc[arcsep=0.5pt](-2.03,1.29){0.185}{-120}{53}
\rput(-1.55,2.2){-$\frac{1}{2}$}
\rput(-2.03,1.69){-$\frac{1}{3}$}
\rput(-0.89,2.5){-$\frac{2}{3}$}
\rput(-2.33,1.26){-$\frac{1}{4}$}
\psarc[arcsep=0.5pt](-2.4,-1.2){1.2}{-37.5}{90}
\psarc[arcsep=0.5pt](-0.8,-2.4){0.8}{0}{140}
\psarc[arcsep=0.5pt](-2.4,-0.8){0.8}{-53}{90}
\psarc[arcsep=0.5pt](-1.714,-1.714){0.343}{-40}{129}
\psarc[arcsep=0.5pt](-0.48,-2.4){0.48}{0}{158}
\psarc[arcsep=0.5pt](-1.2,-2.1){0.3}{-20}{146}
\psarc[arcsep=0.5pt](-2.4,-0.6){0.6}{-63}{90}
\psarc[arcsep=0.5pt](-2.03,-1.29){0.185}{-53}{120}
\rput(-1.57,-2.13){$\frac{1}{2}$}
\rput(-2.02,-1.68){$\frac{1}{3}$}
\rput(-0.92,-2.5){$\frac{2}{3}$}
\rput(-2.30,-1.18){$\frac{1}{4}$}
\psarc[arcsep=0.5pt](2.4,-1.2){1.2}{90}{-142.5}
\psarc[arcsep=0.5pt](0.8,-2.4){0.8}{40}{-180}
\psarc[arcsep=0.5pt](2.4,-0.8){0.8}{90}{-127}
\psarc[arcsep=0.5pt](1.714,-1.714){0.343}{51}{-140}
\psarc[arcsep=0.5pt](0.48,-2.4){0.48}{22}{-180}
\psarc[arcsep=0.5pt](1.2,-2.1){0.3}{34}{-160}
\psarc[arcsep=0.5pt](2.4,-0.6){0.6}{90}{-117}
\psarc[arcsep=0.5pt](2.03,-1.29){0.185}{60}{-127}
\rput(0,-2.7){$\frac{1}{1}$}
\rput(1.5,-2.2){$\frac{2}{1}$}
\rput(1.98,-1.71){$\frac{3}{1}$}
\rput(0.92,-2.5){$\frac{3}{2}$}
\rput(2.30,-1.18){$\frac{4}{1}$}
\psline[linewidth=0.5pt, 
arrowsize=3pt 4, 
arrowlength=2, 
arrowinset=0.3] 
{->}(0,0)(0.2,0)
\psline{-}(-2.4,0)(2.4,0)
\rput{106}(2.15,0.52){\fontsize{12}{12} $\cdots$}
\rput{-106}(2.13,-0.52){\fontsize{12}{12} $\cdots$}
\rput{74}(-2.13,0.52){\fontsize{12}{12} $\cdots$}
\rput{-74}(-2.15,-0.52){\fontsize{12}{12} $\cdots$}
\rput{-10}(0.40,2.18){\fontsize{10}{10} $\cdots$}
\rput{10}(-0.45,2.18){\fontsize{10}{10} $\cdots$}
\rput{-10}(-0.45,-2.18){\fontsize{10}{10} $\cdots$}
\rput{10}(0.40,-2.18){\fontsize{10}{10} $\cdots$}
\rput{-43}(1.58,1.61){\fontsize{8}{8} $\cdots$}
\rput{43}(-1.62,1.59){\fontsize{8}{8} $\cdots$}
\rput{-43}(-1.62,-1.59){\fontsize{8}{8} $\cdots$}
\rput{43}(1.60,-1.61){\fontsize{8}{8} $\cdots$}
\rput{-29}(1.10,2.00){\fontsize{7}{7} $\cdots$}
\rput{29}(-1.16,1.98){\fontsize{7}{7} $\cdots$}
\rput{-29}(-1.16,-1.98){\fontsize{7}{7} $\cdots$}
\rput{29}(1.10,-2.00){\fontsize{7}{7} $\cdots$}
\rput{-59}(1.96,1.29){\fontsize{2}{2} $\cdots$}
\rput{59}(-1.99,1.25){\fontsize{2}{2} $\cdots$}
\rput{59}(1.96,-1.29){\fontsize{2}{2} $\cdots$}
\rput{-59}(-1.99,-1.25){\fontsize{2}{2} $\cdots$}
\end{pspicture}
\caption{The standard marked tessellation}
\label{subfig:standard_marked_tessellation}
\end{subfigure}
\begin{subfigure}[b]{65mm}
\centering
\begin{pspicture}[showgrid=false,linewidth=0.5pt,unit=0.9](-2.6,-2.8)(2.6,2.6)
\psarc(0,0){2.4}{0}{360}
\psarc(-2.4,2.4){2.4}{-90}{0}
\psarc(-2.4,-2.4){2.4}{0}{90}
\psarc[arcsep=0.5pt](2.4,1.2){1.2}{142.5}{-90}
\psarc[arcsep=0.5pt](0.8,2.4){0.8}{180}{-40}
\psarc[arcsep=0.5pt](2.4,0.8){0.8}{127}{-90}
\psarc[arcsep=0.5pt](1.714,1.714){0.343}{140}{-51}
\psarc[arcsep=0.5pt](0.48,2.4){0.48}{180}{-22}
\psarc[arcsep=0.5pt](1.2,2.1){0.3}{160}{-34}
\psarc[arcsep=0.5pt](2.4,0.6){0.6}{117}{-90}
\psarc[arcsep=0.5pt](2.03,1.29){0.185}{127}{-60}
\rput(-2.6,0){$\frac{0}{1}$}
\rput(2.6,0){$\frac{1}{0}$}
\rput(0,2.7){-$\frac{1}{1}$}
\rput(1.5,2.2){-$\frac{2}{1}$}
\rput(2.09,1.58){-$\frac{3}{1}$}
\rput(0.87,2.5){-$\frac{3}{2}$}
\rput(2.35,1.15){-$\frac{4}{1}$}
\psarc[arcsep=0.5pt](-2.4,1.2){1.2}{-90}{37.5}
\psarc[arcsep=0.5pt](-0.8,2.4){0.8}{-140}{0}
\psarc[arcsep=0.5pt](-2.4,0.8){0.8}{-90}{53}
\psarc[arcsep=0.5pt](-1.714,1.714){0.343}{-129}{40}
\psarc[arcsep=0.5pt](-0.48,2.4){0.48}{-158}{0}
\psarc[arcsep=0.5pt](-1.2,2.1){0.3}{-146}{20}
\psarc[arcsep=0.5pt](-2.4,0.6){0.6}{-90}{63}
\psarc[arcsep=0.5pt](-2.03,1.29){0.185}{-120}{53}
\rput(-1.55,2.2){-$\frac{1}{2}$}
\rput(-2.03,1.69){-$\frac{1}{3}$}
\rput(-0.89,2.5){-$\frac{2}{3}$}
\rput(-2.33,1.26){-$\frac{1}{4}$}
\psarc[arcsep=0.5pt](-0.8,-2.4){0.8}{0}{140}
\psarc[arcsep=0.5pt](-2.4,-0.8){0.8}{-53}{90}
\psarc[arcsep=0.5pt](-1.714,-1.714){0.343}{-40}{129}
\psarc[arcsep=0.5pt](-0.48,-2.4){0.48}{0}{158}
\psarc[arcsep=0.5pt](-1.2,-2.1){0.3}{-20}{146}
\psarc[arcsep=0.5pt](-2.4,-0.6){0.6}{-63}{90}
\psarc[arcsep=0.5pt](-2.03,-1.29){0.185}{-53}{120}
\rput(-1.57,-2.13){$\frac{1}{2}$}
\rput(-2.02,-1.68){$\frac{1}{3}$}
\rput(-0.92,-2.5){$\frac{2}{3}$}
\rput(-2.30,-1.18){$\frac{1}{4}$}
\psarc[arcsep=0.5pt](2.4,-1.2){1.2}{90}{-142.5}
\psarc[arcsep=0.5pt](0.8,-2.4){0.8}{40}{-180}
\psarc[arcsep=0.5pt](2.4,-0.8){0.8}{90}{-127}
\psarc[arcsep=0.5pt](1.714,-1.714){0.343}{51}{-140}
\psarc[arcsep=0.5pt](0.48,-2.4){0.48}{22}{-180}
\psarc[arcsep=0.5pt](1.2,-2.1){0.3}{34}{-160}
\psarc[arcsep=0.5pt](2.4,-0.6){0.6}{90}{-117}
\psarc[arcsep=0.5pt](2.03,-1.29){0.185}{60}{-127}
\rput(0,-2.7){$\frac{1}{1}$}
\rput(1.5,-2.2){$\frac{2}{1}$}
\rput(1.98,-1.71){$\frac{3}{1}$}
\rput(0.92,-2.5){$\frac{3}{2}$}
\rput(2.30,-1.18){$\frac{4}{1}$}
%
\psarc[arcsep=0.5pt](-2.4,4.8){4.8}{-90}{-37.5}
\psarc[arcsep=0.5pt](7.2,-2.4){7.2}{143.0}{180}
\psarc[arcsep=0.5pt](4.0,0.0){3.2}{142.7}{217.5}
\psarc[arcsep=0.5pt](-1.2,-2.4){1.2}{0}{127.5}
%
\psline[linewidth=0.5pt, 
arrowsize=3pt 4, 
arrowlength=2, 
arrowinset=0.3] 
{->}(-0.8,0.6)(-0.81,0.59) 
\rput{106}(2.15,0.52){\fontsize{12}{12} $\cdots$}
\rput{-106}(2.13,-0.52){\fontsize{12}{12} $\cdots$}
\rput{74}(-2.13,0.52){\fontsize{12}{12} $\cdots$}
\rput{-74}(-2.15,-0.52){\fontsize{12}{12} $\cdots$}
\rput{-10}(0.40,2.18){\fontsize{10}{10} $\cdots$}
\rput{10}(-0.45,2.18){\fontsize{10}{10} $\cdots$}
\rput{-10}(-0.45,-2.18){\fontsize{10}{10} $\cdots$}
\rput{10}(0.40,-2.18){\fontsize{10}{10} $\cdots$}
\rput{-43}(1.58,1.61){\fontsize{8}{8} $\cdots$}
\rput{43}(-1.62,1.59){\fontsize{8}{8} $\cdots$}
\rput{-43}(-1.62,-1.59){\fontsize{8}{8} $\cdots$}
\rput{43}(1.60,-1.61){\fontsize{8}{8} $\cdots$}
\rput{-29}(1.10,2.00){\fontsize{7}{7} $\cdots$}
\rput{29}(-1.16,1.98){\fontsize{7}{7} $\cdots$}
\rput{-29}(-1.16,-1.98){\fontsize{7}{7} $\cdots$}
\rput{29}(1.10,-2.00){\fontsize{7}{7} $\cdots$}
\rput{-59}(1.96,1.29){\fontsize{2}{2} $\cdots$}
\rput{59}(-1.99,1.25){\fontsize{2}{2} $\cdots$}
\rput{59}(1.96,-1.29){\fontsize{2}{2} $\cdots$}
\rput{-59}(-1.99,-1.25){\fontsize{2}{2} $\cdots$}
\end{pspicture}
\caption{A general marked tessellation}
\label{subfig:marked_tessellation_ex}
\end{subfigure}

\vspace{-2mm}

\caption{Examples of marked tessellations ($\mu$ is omitted in the vertex labels)}
\label{fig:marked_tessellations}
\end{figure}
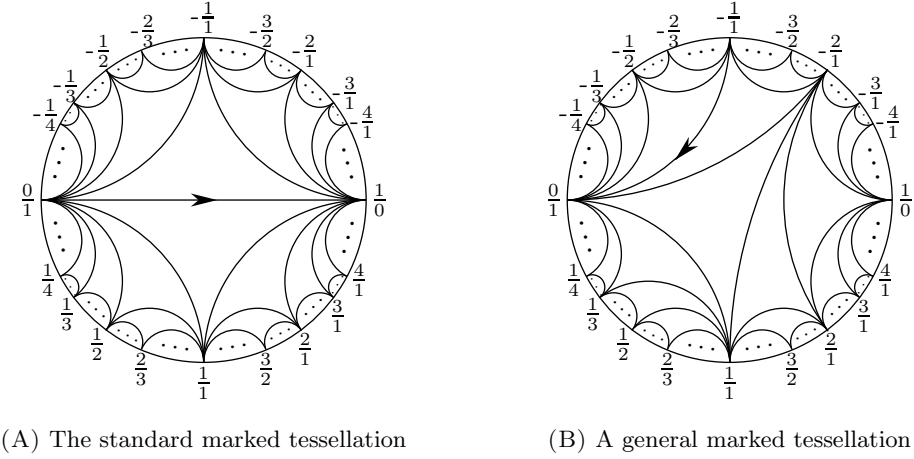

The second decoration of tessellations is the choice of a corner in each ideal triangle, together with a labeling rule of the triangles:

\begin{definition}
\label{def:dotted_tessellations}
A {\em dotted tessellation} $(\tau,D,L)$ is a tessellation $\tau$ together with a rule $D$ which assigns to each triangle a distinguished corner, indicated by a dot $(\bullet)$ in the picture (see Fig.\ref{fig:dotted_tessellations}), and a  choice $L$ of labeling of the triangles by $\mathbb{Q}^\times = \mathbb{Q} \setminus\{0\}$, i.e. a bijection from the set of all ideal triangles of $\tau$ to $\mathbb{Q}^\times$.

The {\em standard dotted tessellation} $(\tau^*,D^*,L^*)$ is the Farey tessellation $\tau^*$ with the dots on the `middle vertices' of the triangles (for a triangle of $\tau^*$ with the vertices $\mu(\frac{a}{b}), \mu(\frac{a+c}{b+d}), \mu(\frac{c}{d})$ as in Rem.\ref{rem:triangle_Farey}, the `middle vertex' is $\mu(\frac{a+c}{b+d})$), where the label of each triangle comes from the middle vertex; see Fig.\ref{subfig:standard_dotted_tessellation}.
We require that the dotting rule $D$ for all but finitely many ideal triangles of a dotted tessellation should coincide with that of $\tau^*$.
We denote the standard dotted tessellation $(\tau^*, D^*, L^*)$ by $\tau^*_{\rm dot}$, and $(\tau,D,L)$ by $\tau_{\rm dot}$ if $D$ and $L$ are clear from the context.
\end{definition}

\begin{remark}
How do we see that the above labeling rule $L^*$, which a priori is just a map from the set of triangles of $\tau^*$ to extended rationals, is a bijection to $\mathbb{Q}^\times$? One way of seeing this is via a recursive construction of the Farey tessellation $\tau^*$. Let us focus on the `lower half' of $\mathbb{D}$. We start from one triangle whose vertices are $\mu(\frac{0}{1})$, $\mu(\frac{1}{1})$, $\mu(\frac{1}{0})$, the `middle vertex' being $\mu(\frac{1}{1})$, hence labeled by $\frac{1}{1}$. A `procedure' takes as an input a triangle, whose vertices are $\mu(\frac{a}{b})$, $\mu(\frac{a+c}{b+d})$, $\mu(\frac{c}{d})$, hence $\mu(\frac{a+c}{b+d})$ being the `middle vertex', and yields two new triangles, one having vertices $\mu(\frac{a}{b})$, $\mu(\frac{2a+c}{2b+d})$, $\mu(\frac{a+c}{b+d})$, the `middle vertex' being $\mu(\frac{2a+c}{2b+d})$, and the other having vertices $\mu(\frac{a+c}{b+d})$, $\mu(\frac{a+2c}{b+2d})$, $\mu(\frac{c}{d})$, the `middle vertex' being $\mu(\frac{a+2c}{b+2d})$. We apply the `procedure' to the initial triangle to get $2$ more new triangles. Then we apply the `procedure' to each of the $2$ new triangles, to get $2^2$ more new triangles. Then we apply `procedure' to each of the $2^2$ new triangles to get $2^3$ more new triangles. And so on. It is clear that thus created triangles all have distinct `middle vertices', hence distinct labels. A standard story on the Farey tessellation, e.g. the one about the `continued fraction' expression of a rational number, tells us that any positive rational number appears as the label of one of the triangles thus obtained.
\end{remark}

In the pictures, we write $[j]$ inside the triangle labeled by $j\in \mathbb{Q}^\times$, as in Fig.\ref{fig:dotted_tessellations}. For a general example of dotted tessellations, see Fig.\ref{subfig:dotted_tessellation_ex}.

\input{fig-dotted_tessellations.tex}

\subsection{Ptolemy-Thompson group $T$ and Kashaev group $K$}
\label{subsec:T_and_K}

We now investigate groups of transformations of marked tessellations and dotted tessellations. A convenient and popular way of studying these is to consider groupoids of the decorated tessellations. Recall that a {\em groupoid} is a category in which every morphism has an inverse.

\begin{definition}[\cite{Penner2}]
\label{def:Ptolemy_groupoid}
Let the {\em universal Ptolemy groupoid} $Pt$ be the category whose objects are the marked tessellations (Def.\ref{def:marked_tessellations}) and for any objects $\tau_{\rm mark},\tau'_{\rm mark}$ there is exactly one morphism denoted by $[\tau_{\rm mark},\tau'_{\rm mark}]$. We set the composition of morphisms by
\begin{align}
\label{eq:composition_of_morphisms}
[\tau_{\rm mark}',\tau_{\rm mark}''] \circ [\tau_{\rm mark},\tau_{\rm mark}'] = [\tau_{\rm mark}, \tau_{\rm mark}''].
\end{align}
\end{definition}
\begin{definition}
\label{def:Pt_dot}
Analogously, define the {\em dotted universal Ptolemy groupoid} $Pt_{\rm dot}$ to be the category whose objects are dotted tessellations (Def.\ref{def:dotted_tessellations}) and for any objects $\tau_{\rm dot}$, $\tau'_{\rm dot}$ there is exactly one morphism denoted by $[\tau_{\rm dot}, \tau'_{\rm dot}]$. The composition rule is analogous to \eqref{eq:composition_of_morphisms}.
\end{definition}

\begin{remark}
Some authors, including Penner \cite{Penner2}, use the composition rule written in an opposite order to \eqref{eq:composition_of_morphisms}. 
\end{remark}

We first take a look into $Pt$. Each morphism $[\tau_{\rm mark},\tau_{\rm mark}']$ of $Pt$ can be thought of as a `transformation of marked triangulation' from $\tau_{\rm mark}$ to $\tau_{\rm mark}'$. Among these, there are `elementary' ones generating the whole groupoid $Pt$, which can be combinatorially described as follows:

\begin{definition}
\label{def:alpha_and_beta}
We label a morphism $[(\tau, \vec{a}), \, (\tau',\vec{a}')]$ of $Pt$ by $\alpha$ or $\beta$, if it falls into the relevant description as follows (see Fig.\ref{fig:alpha_beta}):

\begin{enumerate}
\item $\alpha$-move: Locate the ideal quadrilateral of $\tau$ formed by the two ideal triangles having $\vec{a}$ as one of their sides (Def.\ref{def:ideal_arc}). Then $\tau'$ is obtained by erasing the edge $\vec{a}$ from $\tau$ and adding the other ideal diagonal of this quadrilateral. This new edge is the new d.o.e. $\vec{a}'$, with the orientation given as if we obtained $\vec{a}'$ by rotating $\vec{a}$ counterclockwise.

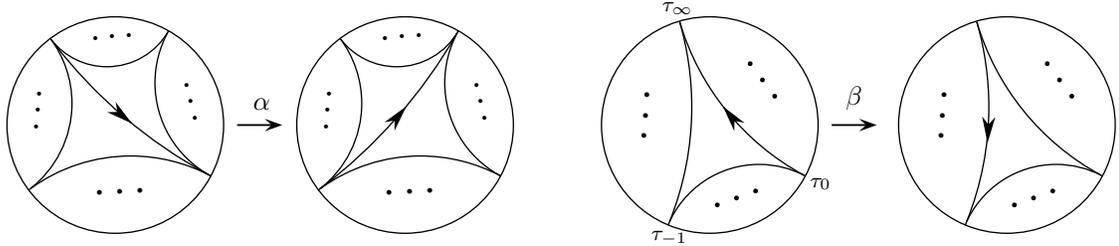
\begin{figure}[htbp!]
$\begin{array}{llll}
\begin{pspicture}[showgrid=false,linewidth=0.5pt,unit=6mm](-1.5,-1.2)(2.5,1.5)
\psarc(0,0){2.4}{0}{360}
%
\psarc[arcsep=0.5pt](0.343,-4.457){3.771}{61.7}{126.4}
\psarc[arcsep=0.5pt](-3.36,0.48){2.4}{-53.6}{37.4}
\psarc[arcsep=0.5pt](-0.185,2.862){1.569}{-142.6}{-29}
\psarc[arcsep=0.5pt](3.220,0.937){2.341}{151}{-118.3}
%
\psarc[arcsep=0.5pt](7.2,8.4){10.8}{-142.6}{-118.3}
%
\psline[linewidth=0.5pt, 
arrowsize=3pt 4, 
arrowlength=2, 
arrowinset=0.3] 
{->}(0.35,0.050)(0.37,0.034)
%
\rput{104.0}(1.7,0.5){\fontsize{15}{15} $\cdots$}
\rput{5.0}(-0.10,1.94){\fontsize{15}{15} $\cdots$}
\rput{85.0}(-1.7,0.3){\fontsize{15}{15} $\cdots$}
\rput{4.0}(0.08,-1.50){\fontsize{17}{17} $\cdots$}
%
\rput[l](2.7,0){\pcline[linewidth=0.7pt, arrowsize=2pt 4]{->}(0,0)(1;0)\Aput{\,$\alpha$}}
\end{pspicture}
%
%
%
%
& \begin{pspicture}[showgrid=false,linewidth=0.5pt,unit=6mm](-1.0,-1.2)(3.0,1.5)
\psarc(0,0){2.4}{0}{360}
%
\psarc[arcsep=0.5pt](0.343,-4.457){3.771}{61.7}{126.4}
\psarc[arcsep=0.5pt](-3.36,0.48){2.4}{-53.6}{37.4}
\psarc[arcsep=0.5pt](-0.185,2.862){1.569}{-142.6}{-29}
\psarc[arcsep=0.5pt](3.220,0.937){2.341}{151}{-118.3}
%
%
\psarc[arcsep=0.5pt](-8.8,7.733){11.467}{-53.0}{-29.7}
%
\psline[linewidth=0.5pt, 
arrowsize=3pt 4, 
arrowlength=2, 
arrowinset=0.3] 
{->}(0.01,0.393)(0.03,0.417)
%
\rput{104.0}(1.7,0.5){\fontsize{15}{15} $\cdots$}
\rput{5.0}(-0.10,1.94){\fontsize{15}{15} $\cdots$}
\rput{85.0}(-1.7,0.3){\fontsize{15}{15} $\cdots$}
\rput{4.0}(0.08,-1.50){\fontsize{17}{17} $\cdots$}
\end{pspicture}
%
%
%
%
& \begin{pspicture}[showgrid=false,linewidth=0.5pt,unit=6mm](-0.7,-1.2)(3.3,1.5)
\psarc(0,0){2.4}{0}{360}
%
\psarc[arcsep=0.5pt](1.091,-3.055){2.182}{61.7}{158.0}
\psarc[arcsep=0.5pt](-7.2,0.4){6.8}{-22}{16.3}
\psarc[arcsep=0.5pt](4.8,3.9){5.7}{-163.7}{-118.3}
%
\psline[linewidth=0.5pt, 
arrowsize=3pt 4, 
arrowlength=2, 
arrowinset=0.3] 
{->}(0.31,0.389)(0.29,0.414)
%
\rput{127.0}(1.2,1.0){\fontsize{19}{19} $\cdots$}
\rput{86.0}(-1.4,0.2){\fontsize{19}{19} $\cdots$}
\rput{20.0}(0.58,-1.65){\fontsize{17}{17} $\cdots$}
%
\rput(2.35,-1.3){\fontsize{8}{8} $\tau_0$}
\rput(-0.82,2.59){\fontsize{8}{8} $\tau_\infty$}
\rput(-1.00,-2.51){\fontsize{8}{8} $\tau_{-1}$}
\rput[l](2.7,0){\pcline[linewidth=0.7pt, arrowsize=2pt 4]{->}(0,0)(1;0)\Aput{$\beta$}}
\end{pspicture}
%
%
%
%
& \begin{pspicture}[showgrid=false,linewidth=0.5pt,unit=6mm](-0.3,-1.2)(3.4,1.5)
\psarc(0,0){2.4}{0}{360}
%
\psarc[arcsep=0.5pt](1.091,-3.055){2.182}{61.7}{158.0}
\psarc[arcsep=0.5pt](-7.2,0.4){6.8}{-22}{16.3}
\psarc[arcsep=0.5pt](4.8,3.9){5.7}{-163.7}{-118.3}
%
\psline[linewidth=0.5pt, 
arrowsize=3pt 4, 
arrowlength=2, 
arrowinset=0.3] 
{->}(-0.41,0.031)(-0.43,-0.4)
%
\rput{127.0}(1.2,1.0){\fontsize{19}{19} $\cdots$}
\rput{86.0}(-1.4,0.2){\fontsize{19}{19} $\cdots$}
\rput{20.0}(0.58,-1.65){\fontsize{17}{17} $\cdots$}
%
\end{pspicture}
\end{array} $
\caption{The action of $\alpha$ and $\beta$ on a marked tessellation}
\label{fig:alpha_beta}
\end{figure}

\vs

\item $\beta$-move: Locate the ideal triangle of $\tau$ having $\vec{a}$ as one of its sides and situated to the left of $\vec{a}$. Give labels $\tau_0,\tau_\infty,\tau_{-1}$ to the vertices of this triangle, so that $\vec{a}$ runs from $\tau_0$ to $\tau_\infty$. Then $\tau' = \tau$, and the new d.o.e. $\vec{a}'$ is the one running from $\tau_\infty$ to $\tau_{-1}$.

\end{enumerate}

These are called {\em elementary morphisms of $Pt$}. In each of the above cases, we say $\tau_{\rm mark}'$ is obtained from $\tau_{\rm mark}$ by {\em applying} the relevant move.
\end{definition}

\begin{proposition}
\label{prop:T_acts_transitively}
Any morphism of $Pt$ is a finite composition of elementary morphisms.
\end{proposition}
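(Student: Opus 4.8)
The plan is to show that the elementary morphisms $\alpha$ and $\beta$ generate $Pt$ by proving that any two marked tessellations $\tau_{mark}$ and $\tau'_{mark}$ are connected by a finite sequence of $\alpha$- and $\beta$-moves; equivalently, since $Pt$ has a unique morphism between any two objects and composition is concatenation via \eqref{eq:composition_of_morphisms}, the morphism $[\tau_{mark}, \tau'_{mark}]$ then factors as a finite composition of elementary ones. By transitivity of "being connected by a finite sequence of elementary moves" (it is clearly reflexive and, because $\alpha$ and $\beta$ are invertible in $Pt$ — their inverses being $\alpha^{-1}$, $\beta^{-1}$, themselves expressible via the relations of \eqref{eq:T_presentation_intro}, or just as morphisms of the groupoid — symmetric as well), it suffices to show every marked tessellation can be connected to the standard one $\tau^*_{mark}$.

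First I would recall that, by definition of a Farey-type tessellation (Def.\ref{def:Farey-type_tessellations}), $\tau$ differs from the Farey tessellation $\tau^*$ in only finitely many edges; introduce $n(\tau)$ to be the number of edges of $\tau$ not belonging to $\tau^*$, and argue by induction on $n(\tau)$. When $n(\tau)=0$ the underlying tessellation is $\tau^*$ itself and only the d.o.e. may differ; so the base case reduces to showing that any d.o.e. on $\tau^*$ can be brought to $\vec{a}^*$ using $\alpha$ and $\beta$. This is the statement that the Ptolemy-Thompson group $T$ acts transitively on the oriented edges of $\tau^*$, which follows from the well-known identification of $T$ with the asymptotically rigid mapping class group together with the explicit description of $\alpha$ and $\beta$ in Def.\ref{def:alpha_and_beta}: the $\beta$-move rotates the d.o.e. within a fixed triangle (a $3$-cycle on the oriented sides issuing appropriately), and the $\alpha$-move lets one "walk" the d.o.e. across an edge into a neighboring triangle, so composites of $\alpha$ and $\beta$ realize transport of the d.o.e. to any oriented edge of $\tau^*$ a bounded distance away, and hence to $\vec{a}^*$.

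For the inductive step, suppose $n(\tau) \ge 1$. I would locate a non-Farey edge $e$ of $\tau$ that is "outermost" in a suitable sense — concretely, an edge $e$ of $\tau$ such that $e \notin \tau^*$ but the quadrilateral in $\tau$ having $e$ as a diagonal has its other two vertices joined, in $\tau$, by the corresponding Farey edge (such an $e$ exists because the finite set of non-Farey edges, together with the finitely many triangles they bound, forms a finite sub-configuration of $\tau$, and one can always find an edge of this configuration whose "flip" is a Farey edge — this is the standard fact that any Farey-type tessellation is obtained from $\tau^*$ by finitely many flips, and flips can be undone one at a time from the outside in). After first transporting the d.o.e. to coincide with this edge $e$ using the case-$0$ moves applied within the Farey part (or more carefully, using $\alpha,\beta$ to move the d.o.e. near $e$ and then onto $e$), I apply a single $\alpha$-move: it erases $e$ and inserts the other diagonal, which is now a Farey edge, producing a marked tessellation $\tau''_{mark}$ with $n(\tau'') = n(\tau) - 1$. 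By the induction hypothesis $\tau''_{mark}$ is connected to $\tau^*_{mark}$ by elementary moves, hence so is $\tau_{mark}$.

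The main obstacle I anticipate is the bookkeeping in the inductive step: making precise the notion of an "outermost" non-Farey edge and verifying that an $\alpha$-move on it genuinely decreases $n(\tau)$, together with the need to first maneuver the d.o.e. into position before performing that $\alpha$-move (one must check that moving the d.o.e. around does not alter the underlying tessellation in a way that spoils the count — but it does not, since $\beta$ fixes $\tau$ and $\alpha$ applied along a Farey edge whose flip is again Farey changes $n$ by at most a controlled amount, or one simply works with $\beta$-moves and $\alpha$-moves along edges chosen to keep $n$ from increasing). All of this is essentially combinatorial and finite, so while slightly fiddly it presents no real difficulty; the conceptual content is entirely in the two observations that (i) $\alpha,\beta$ suffice to transport the d.o.e. anywhere on a fixed tessellation, and (ii) $\alpha$ can be used to remove non-Farey edges one at a time.
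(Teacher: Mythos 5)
Your argument is correct and takes essentially the same route as the paper: transport the d.o.e.\ to any oriented edge of a fixed tessellation using $\beta$ together with $\alpha^2$ (note that a single $\alpha$ changes the underlying tessellation, so it is $\alpha^2$ that reverses the d.o.e.\ in place --- your ``walk across an edge with $\alpha$'' should be read this way), and then use $\alpha$ to realize flips along the d.o.e., finitely many of which connect any two Farey-type tessellations because they agree outside a finite ideal polygon. The paper simply invokes connectivity of the flip graph of that finite polygon instead of your monotone induction on the number of non-Farey edges, but the content is the same.
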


\begin{proof}
Recall that the $\alpha$ action replaces a diagonal of some ideal quadrilateral with the other diagonal. If we forget the choice of d.o.e. and just think of the underlying tessellations, we can call this transformation of tessellations a `flip'. We can associate a flip to any ideal arc of a tessellation. It is easy to see that any two tessellations are related by a finite number of flips, by observing that there exists a finite ideal polygon outside of which the two tessellations coincide. Meanwhile, given any underlying tessellation, we can change the d.o.e. to any ideal arc with any orientation while fixing the underlying tessellation, using a finite number of $\beta$'s and $\alpha^2$'s. Since $\alpha$ induces the flip of the underlying tessellation along the d.o.e., and since we know how to change the d.o.e. to any ideal arc in a given underlying tessellation by a finite number of elementary moves, we conclude that any two marked tessellations are connected by a finite sequence of $\alpha$-moves and $\beta$-moves. 
\end{proof}

By the requirement that there is only one morphism from any object to any object in $Pt$, the $\alpha$-moves and $\beta$-moves satisfy some algebraic relations. Easiest to see are $\beta^3={\rm id}$, $\alpha^4={\rm id}$, and the most famous is the {\em pentagon relation} $(\beta\alpha)^5={\rm id}$ which is not hard to check by pictures. Here, as usual, we read the composition of (i.e. `a word in') the elementary moves from the right; for example, $\alpha\beta\alpha^2$ means applying $\alpha^2$ first, then $\beta$, then $\alpha$. These three relations, together with two certain commutation relations, generate the whole set of algebraic relations of $\alpha,\beta$.
\begin{theorem}[Lochak-Schneps \cite{LSc}]
\label{thm:T}
Any algebraic relation of $\alpha,\beta$ is a consequence of the five relations in \eqref{eq:T_presentation_intro}. The free group generated by $\alpha,\beta$ modded out by these relations is isomorphic to Richard Thompson's group $T$ of dyadic piecewise affine homeomorphisms of $S^1$.
\end{theorem}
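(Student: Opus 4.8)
The plan, following Lochak--Schneps \cite{LSc}, is to realize the group $\widetilde{G}$ freely generated by the symbols $\alpha,\beta$ as a concrete group of orientation-preserving homeomorphisms of the circle, to identify that concrete group with Thompson's group $T$, and only then to extract the presentation \eqref{eq:T_presentation_intro} by comparison with an independently known presentation of $T$.

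First I would set up the geometric realization. By Prop.~\ref{prop:T_acts_transitively} every word $w$ in $\alpha^{\pm 1},\beta^{\pm 1}$ carries $\tau^*_{mark}$ to a Farey-type marked tessellation $w\cdot\tau^*_{mark}$, and since $Pt$ has exactly one morphism between any two objects, two words represent the same morphism if and only if they produce the same marked tessellation; consequently an \emph{algebraic relation} of $\alpha,\beta$ is exactly a word $w$ with $w\cdot\tau^*_{mark}=\tau^*_{mark}$. Next I would use the standard fact (see \cite{Penner2}) that any Farey-type marked tessellation $\tau_{mark}$ receives a unique simplicial isomorphism from $\tau^*_{mark}$ taking distinguished oriented edge to distinguished oriented edge — uniqueness because fixing an oriented edge rigidifies the Farey complex, existence because $\tau$ agrees with $\tau^*$ outside a finite ideal polygon. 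Since $\mathbb{Q}\cup\{\infty\}$ is dense in $\mathbb{RP}^1$ and the isomorphism preserves cyclic order, it extends to an orientation-preserving homeomorphism $g_\tau$ of $\mathbb{RP}^1$ which coincides with a single element of $PSL(2,\mathbb{Z})$ on each complementary component of that polygon; thus $g_\tau$ is piecewise-$PSL(2,\mathbb{Z})$ with finitely many, rational, breakpoints, i.e. (after transporting through the Cayley transform $\mu$ and a classical conjugacy, realized by Minkowski's question-mark function, to the group of dyadic piecewise-affine homeomorphisms of $S^1$) an element of Thompson's group $T$. Because an orientation-preserving homeomorphism intertwines the combinatorially-defined $\alpha$- and $\beta$-moves with themselves, the assignment sending $w$ to the homeomorphism carrying $w\cdot\tau^*_{mark}$ back to $\tau^*_{mark}$ is a group homomorphism $\Phi\colon\widetilde{G}\to T$; it is onto because, $T$ being piecewise-$PSL(2,\mathbb{Z})$, each $g\in T$ moves $\tau^*_{mark}$ to a Farey-type marked tessellation, which by Prop.~\ref{prop:T_acts_transitively} is also reached by some word, and $\ker\Phi$ is precisely the set of algebraic relations of $\alpha,\beta$. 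Hence $\widetilde{G}/\ker\Phi\cong T$, which is the second assertion.

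It remains to prove the first assertion, that $\ker\Phi$ is the normal closure $N$ of the five relators of \eqref{eq:T_presentation_intro}, which I denote $r_1,\dots,r_5$. The inclusion $N\subseteq\ker\Phi$ — i.e. that the five relations genuinely hold — I would check directly: $\alpha^4=\beta^3=id$ and the pentagon $(\beta\alpha)^5=id$ by the picture inspections already mentioned after Prop.~\ref{prop:T_acts_transitively}, and the two commutation relations by computing the transformation of $\tau^*_{mark}$ induced by the two commutator words and seeing that each returns $\tau^*_{mark}$ to itself; this last is a longer but entirely mechanical picture-chase. For the reverse inclusion I would argue by a sandwich: the surjection $\Phi$ factors through $\widetilde{G}/N$ to give $\widetilde{G}/N\twoheadrightarrow T$, and to see this is an isomorphism I would invoke a finite presentation of $T$ established independently (for instance the Cannon--Floyd--Parry presentation; see \cite{FuKaS}), write each of its generators as an explicit word in $\alpha,\beta$, and check that each of its defining relators is a consequence of $r_1,\dots,r_5$, thereby producing a homomorphism $T\to\widetilde{G}/N$ inverse to the above surjection.

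The hard part is exactly this final step — showing that the five relations \emph{suffice}. Verifying that they hold is routine; the real work is the completeness, which comes down to a concrete but delicate Tietze-style rewriting reconciling the two two-generator descriptions of $T$. An alternative route, which I would also be prepared to pursue, is a presentation-from-action argument: exhibit a simply connected $2$-complex whose vertices are the Farey-type marked tessellations, whose edges are $\alpha$- and $\beta$-moves, and whose $2$-cells realize the five relations, on which $\widetilde{G}/N$ acts with trivial vertex stabilizers, and conclude from the standard criterion that $\widetilde{G}/N$ is freely presented by \eqref{eq:T_presentation_intro}; here transitivity of the action is Prop.~\ref{prop:T_acts_transitively} and the content is again precisely that any word fixing $\tau^*_{mark}$ is trivial modulo $r_1,\dots,r_5$. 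Either way, this is where essentially all of the difficulty lies, and I expect the bulk of the argument to be consumed by it.
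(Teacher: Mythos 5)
The paper does not prove this theorem: it is quoted verbatim from Lochak--Schneps \cite{LSc} (see also Rem.~2.26, which points to \cite{Penner2} and \cite{I} for the identification of the Ptolemy group with $T$), so there is no in-paper argument to compare yours against. Judged on its own, your outline follows the standard route from the literature and its skeleton is sound: identifying an algebraic relation with a word fixing $\tau^*_{mark}$ is exactly right given that $Pt$ has a unique morphism between any two objects; the passage from a marked tessellation to a piecewise-$PSL(2,\mathbb{Z})$ homeomorphism is the paper's vertex function and the group $M$, and the injectivity of $w\mapsto\Phi(w)$ on relations rests on the freeness of the $M$-action (density of the rationals in $S^1$), which you correctly invoke. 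One subtlety you handle implicitly but should make explicit: the natural bijection $M\to T$ obtained by acting on $\tau^*_{mark}$ is an \emph{anti-}isomorphism (Prop.~2.24 of the paper), which is why you must send $w$ to the homeomorphism carrying $w\cdot\tau^*_{mark}$ \emph{back} to $\tau^*_{mark}$ rather than the other way around; as written this works, but a sign error here is easy to make.

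The genuine gap is the one you yourself flag: the first assertion of the theorem --- that the five relations generate \emph{all} relations --- is the entire content of the result, and neither of your two proposed mechanisms for it is executed. The Tietze route requires writing the Cannon--Floyd--Parry generators of $T$ explicitly in $\alpha,\beta$ and deriving each of their relators from $r_1,\dots,r_5$ inside the abstract group $\widetilde{G}/N$, a computation of several pages; the $2$-complex route requires actually proving simple connectivity of the complex whose $2$-cells are the five relators (in practice, a reduction of an arbitrary loop of flips to pentagon, square and commutation cells). Until one of these is carried through, what you have is a correct reduction of the theorem to its hard core, not a proof. I would also note that the two commutation relators are not checked even at the level of "the relations hold": you assert the picture-chase is mechanical, which is true, but since these are precisely the relators that distinguish this presentation from the naive $\langle\alpha,\beta\mid\alpha^4,\beta^3,(\beta\alpha)^5\rangle$ (which presents $PSL(2,\mathbb{Z})$-like quotients, not $T$), they deserve to be exhibited rather than waved at.
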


\begin{definition}[Funar, Kapoudjian, Sergiescu, and collaborators: the Ptolemy-Thompson group]
\label{def:Ptolemy-Thompson_group}
Let $F_{\rm mark}$ be the free group generated by the symbols $\alpha,\beta$, and let $R_{\rm mark}$ be the normal subgroup generated by the relations as in the RHS of \eqref{eq:T_presentation_intro} (that is, $(\beta\alpha)^5$, $\alpha^4$, etc). Then the quotient group $F_{\rm mark}/R_{\rm mark}$ is called the {\em Ptolemy-Thompson group} and denoted by $T$.
\end{definition}

\begin{remark}
One can replace $(\beta\alpha)^5=1$ by $(\alpha\beta)^5=1$, and $\left[\beta\alpha\beta, \, \alpha^2\beta\alpha^2\beta\alpha\beta\alpha^2\beta^2\alpha^2\right]=1$ by $\left[\beta\alpha\beta, \, \alpha^2\beta^2\alpha^2\beta\alpha\beta\alpha^2\beta\alpha^2\right]=1$. The version \eqref{eq:T_presentation_intro} is the one used by Funar-Sergiescu \cite{FuS}.
\end{remark}

\begin{remark}
As mentioned in \cite{LSc}, the isomorphism between the group of transformations of marked tessellations generated by the elementary moves $\alpha,\beta$ and the Thompson group $T$ is proved by Imbert \cite{I}, without determining the complete set of generating relations.
\end{remark}

Since both $\alpha$-move and $\beta$-move can be applied to any marked tessellation, so can any element of $T$ be. Note that Prop.\ref{prop:T_acts_transitively} says that $T$-action on the set of all marked tessellations is transitive, and that Thm.\ref{thm:T} implies that this action is free.
\begin{corollary}
\label{cor:T_acts_freely}
The group $T$ acts freely transitively on the set of all marked tessellations.
\end{corollary}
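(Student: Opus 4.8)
The plan is to deduce the statement from the two actions already implicit in the setup --- that of the free group $F_{mark}$ and that of its quotient $T = F_{mark}/R_{mark}$ --- on the set $\mathcal{X}$ of all marked tessellations. First I would make the $F_{mark}$-action explicit: a word $w = s_n\cdots s_1$ in $\alpha,\beta$ acts on a marked tessellation by applying the moves $s_1,\dots,s_n$ of Def.\ref{def:alpha_and_beta} in turn; each such move is always applicable and keeps us within the Farey-type marked tessellations (an $\alpha$-move alters one edge and rotates the d.o.e., a $\beta$-move moves only the d.o.e.), and reading words from the right makes this a left action, i.e.\ a homomorphism $\phi\colon F_{mark}\to\mathrm{Sym}(\mathcal{X})$. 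I would then observe that $\phi$ factors through $T$: since $R_{mark}$ is the \emph{normal} closure of the five relators of \eqref{eq:T_presentation_intro} while $\ker\phi$ is normal, it is enough that each relator acts trivially, which is exactly the assertion that $(\beta\alpha)^5$, $\alpha^4$, $\beta^3$ and the two commutators are honest relations among the moves --- the first three by the picture checks already mentioned, the last two in the same spirit.

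Transitivity then comes straight from Prop.\ref{prop:T_acts_transitively}: for marked tessellations $\sigma,\sigma'$ the unique morphism $[\sigma,\sigma']$ of $Pt$ is a finite composite of elementary morphisms, and recording the labels $\alpha$ or $\beta$ yields a word $w$ with $\phi(w)(\sigma)=\sigma'$; in particular every marked tessellation equals $\phi(u)(\tau^*_{mark})$ for some $u$, so the $T$-action on $\mathcal{X}$ is transitive.

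For freeness the genuine input is Thm.\ref{thm:T}: the words fixing the standard marked tessellation $\tau^*_{mark}$ are precisely the consequences of the five relations, i.e.\ $\mathrm{Stab}_{F_{mark}}(\tau^*_{mark})=R_{mark}$. Because $R_{mark}$ is normal and the action is transitive, any $\sigma=\phi(u)(\tau^*_{mark})$ satisfies $\mathrm{Stab}_{F_{mark}}(\sigma)=u\,R_{mark}\,u^{-1}=R_{mark}$, so in $T$ the stabilizer of every marked tessellation is trivial --- which, together with transitivity, is the claim. The one place where real content enters rather than bookkeeping is the identification $\mathrm{Stab}_{F_{mark}}(\tau^*_{mark})=R_{mark}$, i.e.\ the invocation of the Lochak--Schneps theorem; I expect that (and, to a far lesser extent, the pictorial verification that the two commutator relators act trivially on the nose) to be the only non-formal ingredient, everything else being manipulation of the $F_{mark}$- and $T$-actions.
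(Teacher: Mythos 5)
Your proposal is correct and follows essentially the same route as the paper, which deduces transitivity directly from Prop.\ref{prop:T_acts_transitively} and freeness from the Lochak--Schneps theorem (Thm.\ref{thm:T}); your extra steps (well-definedness of the $F_{mark}$-action, factoring through $T$, and the conjugation argument showing all stabilizers equal $R_{mark}$) are just a more explicit write-up of the same bookkeeping the paper leaves implicit.
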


We proceed to $Pt_{\rm dot}$. Similar to $Pt$, there are elementary morphisms generating the whole groupoid $Pt_{\rm dot}$, which are combinatorially described. This time, there are infinitely many kinds, which can be grouped into three types.

\begin{definition}
\label{def:elementary_moves_of_Pt_dot}
We describe the {\rm elementary moves} $A_{[j]}$, $T_{[j][k]}$, $P_\gamma$ of $Pt_{\rm dot}$ for mutually distinct triangle labels $j,k\in \mathbb{Q}^\times$  and a permutation $\gamma$ of $\mathbb{Q}^\times$. A morphism $[\tau_{\rm dot}, \tau'_{\rm dot}]$ of $Pt_{\rm dot}$ is labeled by one of these names if it falls into the relevant description as follows:

\vs

\begin{itemize}
\item[\rm 1)] $A_{[j]}$-move: The dotted tessellation $\tau'_{\rm dot}$ is obtained from $\tau_{\rm dot}$ by moving the dot $\bullet$ (i.e. the distinguished corner) of the triangle of $\tau$ labeled by $j\in \mathbb{Q}^\times$ counterclockwise to the next corner in that triangle, while leaving all other information intact. 

\vs

\item[\rm 2)] $T_{[j][k]}$-move: The triangles of $\tau_{\rm dot}$ labeled by $j$ and $k$ (where $j\neq k$) must share exactly one side and the dots of these two triangles are placed exactly as in the LHS of Fig.\ref{fig:action_of_T_jk}, relative to the common edge of the two triangles. Then $\tau_{\rm dot}'$ is obtained from $\tau_{\rm dot}$ by replacing the common edge of the triangles labeled by $j, k$ by the other diagonal arc of the ideal quadrilateral formed by these two triangles, and setting the new dots and labels as in the RHS of Fig.\ref{fig:action_of_T_jk}, as if we rotate {\em clockwise} the diagonal arc of the quadrilateral while letting the dots $\bullet$ and triangle labels $[j],[k]$ be `floating' and thus pushed accordingly by the rotating arc, while leaving all the other information intact. 

\vs

\item[\rm 3)] $P_{\gamma}$-move: $\tau'_{\rm dot}$ is obtained from $\tau_{\rm dot}$ by relabeling the triangles, while leaving all other information intact. A triangle labeled by $j$ in $\tau_{\rm dot}$ is labeled by $\gamma(j)$ in $\tau'_{\rm dot}$. The $\gamma(j)$-triangle of $\tau'_{\rm dot}$ inherits the dotting rule of the $j$-triangle of $\tau_{\rm dot}$.

\end{itemize}

\vs

These are called {\em elementary morphisms} of $Pt_{\rm dot}$. In each of the above cases, we say that $\tau'_{\rm dot}$ is obtained from $\tau_{\rm dot}$ by {\em applying} the relevant move.

\end{definition}

\begin{figure}[htbp!]
$\begin{array}{llll}
\begin{pspicture}[showgrid=false,linewidth=0.5pt,unit=6mm](-1.3,-1.2)(2.7,1.3)
\psarc(0,0){2.4}{0}{360}
%
\psarc[arcsep=0.5pt](1.091,-3.055){2.182}{61.7}{158.0}
\psarc[arcsep=0.5pt](-7.2,0.4){6.8}{-22}{16.3}
\psarc[arcsep=0.5pt](4.8,3.9){5.7}{-163.7}{-118.3}
%
%
\rput{127.0}(1.2,1.0){\fontsize{19}{19} $\cdots$}
\rput{86.0}(-1.4,0.2){\fontsize{19}{19} $\cdots$}
\rput{20.0}(0.58,-1.65){\fontsize{17}{17} $\cdots$}
%
\rput(-0.5,-1.3){\fontsize{11}{11} $\bullet$}
%
\rput(0.1,-0.3){\fontsize{11}{11} $[\, j\, ]$}
%
\rput[l](2.7,0){\pcline[linewidth=0.7pt, arrowsize=2pt 4]{->}(0,0)(1;0)\Aput{\,$A_{[j]}$}}
\end{pspicture}
%
%
%
%
& \begin{pspicture}[showgrid=false,linewidth=0.5pt,unit=6mm](-0.8,-1.2)(2.7,1.3)
\psarc(0,0){2.4}{0}{360}
%
\psarc[arcsep=0.5pt](1.091,-3.055){2.182}{61.7}{158.0}
\psarc[arcsep=0.5pt](-7.2,0.4){6.8}{-22}{16.3}
\psarc[arcsep=0.5pt](4.8,3.9){5.7}{-163.7}{-118.3}
%
\rput(1.1,-0.7){\fontsize{11}{11} $\bullet$}
%
\rput(0.1,-0.3){\fontsize{11}{11} $[\, j\, ]$}
%
\rput{127.0}(1.2,1.0){\fontsize{19}{19} $\cdots$}
\rput{86.0}(-1.4,0.2){\fontsize{19}{19} $\cdots$}
\rput{20.0}(0.58,-1.65){\fontsize{17}{17} $\cdots$}
\end{pspicture}
%
%
%
%
& \begin{pspicture}[showgrid=false,linewidth=0.5pt,unit=6mm](-0.7,-1.2)(3.3,1.3)
\psarc(0,0){2.4}{0}{360}
%
\psarc[arcsep=0.5pt](0.343,-4.457){3.771}{61.7}{126.4}
\psarc[arcsep=0.5pt](-3.36,0.48){2.4}{-53.6}{37.4}
\psarc[arcsep=0.5pt](-0.185,2.862){1.569}{-142.6}{-29}
\psarc[arcsep=0.5pt](3.220,0.937){2.341}{151}{-118.3}
%
\psarc[arcsep=0.5pt](7.2,8.4){10.8}{-142.6}{-118.3}
%
\rput(-0.95,1.00){\fontsize{10}{10} $\bullet$}
\rput(0.72,1.42){\fontsize{10}{10} $\bullet$}
%
\rput(-0.5,-0.1){\fontsize{10}{10} $[\, j\, ]$}
\rput(0.3,0.8){\fontsize{10}{10} $[\, k\, ]$}
%
\rput{104.0}(1.7,0.5){\fontsize{15}{15} $\cdots$}
\rput{5.0}(-0.10,1.94){\fontsize{15}{15} $\cdots$}
\rput{85.0}(-1.7,0.3){\fontsize{15}{15} $\cdots$}
\rput{4.0}(0.08,-1.50){\fontsize{17}{17} $\cdots$}
%
\rput[l](2.8,0){\pcline[linewidth=0.7pt, arrowsize=2pt 4]{->}(0,0)(1;0)\Aput{$T_{[j][k]}$}}
\end{pspicture}
%
%
%
%
& \begin{pspicture}[showgrid=false,linewidth=0.5pt,unit=6mm](-0.3,-1.2)(3.4,1.3)
\psarc(0,0){2.4}{0}{360}
%
\psarc[arcsep=0.5pt](0.343,-4.457){3.771}{61.7}{126.4}
\psarc[arcsep=0.5pt](-3.36,0.48){2.4}{-53.6}{37.4}
\psarc[arcsep=0.5pt](-0.185,2.862){1.569}{-142.6}{-29}
\psarc[arcsep=0.5pt](3.220,0.937){2.341}{151}{-118.3}
%
%
\psarc[arcsep=0.5pt](-8.8,7.733){11.467}{-53.0}{-29.7}
%
\rput(-1.00,1.30){\fontsize{10}{10} $\bullet$}
\rput(0.67,1.10){\fontsize{10}{10} $\bullet$}
%
\rput(-0.50,0.8){\fontsize{10}{10} $[\, j\, ]$}
\rput(0.3,-0.1){\fontsize{10}{10} $[\, k\, ]$}
%
\rput{104.0}(1.7,0.5){\fontsize{15}{15} $\cdots$}
\rput{5.0}(-0.10,1.94){\fontsize{15}{15} $\cdots$}
\rput{85.0}(-1.7,0.3){\fontsize{15}{15} $\cdots$}
\rput{4.0}(0.08,-1.50){\fontsize{17}{17} $\cdots$}
\end{pspicture}
\end{array} $
\\
\begin{subfigure}[b]{0.48\textwidth}
\caption{The action of $A_{[j]}$ on a dotted tessellation}
\label{fig:action_of_A_j}
\end{subfigure}
\hfill
\begin{subfigure}[b]{0.5\textwidth}
\caption{The action of $T_{[j][k]}$ on a dotted tessellation}
\label{fig:action_of_T_jk}
\end{subfigure}
\vspace{-2mm}
\caption{Some elementary morphisms of $Pt_{\rm dot}$}
\label{fig:action_on_dotted_tessellations}
\end{figure}
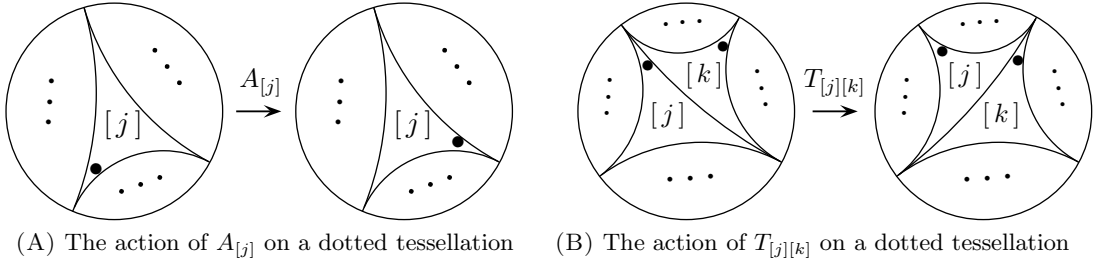

\begin{remark}
The alphabet $T$ for $T_{[j][k]}$ has nothing to do with the Ptolemy-Thompson group $T$, and this notational coincidence is just an unfortunate one.
\end{remark}
Note that the $T_{[j][k]}$-move is applicable only to certain dotted tessellations $\tau_{\rm dot}$. 
Thus, not all words of elementary moves are applicable to all dotted tessellations. So, an algebraic relation $({\rm word}_1) = ({\rm word}_2)$ means that whenever $({\rm word}_1)$ can be applied to some $\tau_{\rm dot}$ then so can $({\rm word}_2)$ be applied to $\tau_{\rm dot}$, and they yield the same result $\tau'_{\rm dot}$.

\vs

Analogously to Prop.\ref{prop:T_acts_transitively}, it is not hard to see the following:
\begin{proposition}
\label{prop:morphism_as_composition}
Any morphism of $Pt_{\rm dot}$ is a finite composition of elementary morphisms.
\end{proposition}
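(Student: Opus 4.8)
\emph{Proof proposal.} The plan is to follow the template of the proof of Proposition~\ref{prop:T_acts_transitively}, splitting the task into three successive stages: first match the underlying tessellations, then the dotting rules, then the labelings. Fix two dotted tessellations $\tau_{dot}=(\tau,D,L)$ and $\tau'_{dot}=(\tau',D',L')$. Since $\tau$ and $\tau'$ are Farey-type tessellations, each agrees with $\tau^*$, and therefore with the other, outside some finite ideal polygon; so, exactly as in Proposition~\ref{prop:T_acts_transitively}, $\tau$ and $\tau'$ are joined by a finite sequence of flips supported inside that polygon.

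For the first stage I would realize each of these flips by elementary moves. A flip occurs along an edge shared by two triangles of the current dotted tessellation; let $j,k$ be their labels (well defined and distinct, since $L$ is a bijection). The move $T_{[j][k]}$ performs precisely this flip, but it is applicable only when the two dots are placed as on the left of Fig.~\ref{fig:action_of_T_jk} relative to the common edge. Because the $A$-move cycles through the three corners of a triangle and $A_{[j]}^3=id$, I can first bring the dots of the two triangles into the required configuration by applying finitely many $A_{[j]}$- and $A_{[k]}$-moves --- preceded, if necessary, by $P_{(jk)}$ to interchange the roles of $j$ and $k$ --- and then apply $T_{[j][k]}$. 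Iterating over the finitely many flips produces a dotted tessellation $\tau''_{dot}$ whose underlying tessellation is $\tau'$; the dots and labels of $\tau''_{dot}$ have been altered only on finitely many triangles, which does not matter at this stage.

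For the second stage, note that by Definition~\ref{def:dotted_tessellations} the dotting rule of $\tau''_{dot}$ and that of $\tau'_{dot}$ both agree with $D^*$ away from finitely many triangles, hence differ from each other on only finitely many triangles; for each such triangle with label $j$, one or two applications of $A_{[j]}$ (using $A_{[j]}^3=id$) rotate its dot to the desired corner while leaving everything else intact. After finitely many such moves the two dotting rules coincide. At that point the two dotted tessellations differ only by their labelings, which are related by a single permutation $\gamma$ of $\mathbb{Q}^\times$ (namely $\gamma=L'\circ(L'')^{-1}$ on the common triangle set); applying $P_\gamma$ makes them equal. Composing all the elementary moves used in the three stages exhibits the morphism $[\tau_{dot},\tau'_{dot}]$ as a finite composition of elementary morphisms, as desired.

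I expect the first stage to be the only delicate point: one must verify that the dot configuration demanded by $T_{[j][k]}$ can always be produced from an arbitrary configuration by $A$- and $P$-moves --- this is where the partial definedness of $T_{[j][k]}$ genuinely bites --- and that ordering the flips as in Proposition~\ref{prop:T_acts_transitively} creates no obstruction. Once one keeps honest track of how $T_{[j][k]}$ redistributes the dots and labels of the two triangles it acts on, the remaining two stages are routine bookkeeping.
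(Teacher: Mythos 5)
Your proof is correct and is exactly the argument the paper has in mind: the paper omits the proof of this proposition, merely asserting it is "analogous" to Proposition~\ref{prop:T_acts_transitively}, and your three-stage scheme (flips realized by $T_{[j][k]}$ after adjusting dots with $A$-moves, then correcting the finitely many discrepant dots, then a single $P_\gamma$ for the labels) is the intended fleshing-out of that analogy. The one point you flag as delicate is indeed fine, since $A_{[j]}$ cycles the dot of triangle $j$ through all three corners independently of everything else, so the configuration required by $T_{[j][k]}$ (or $T_{[k][j]}$) is always reachable.
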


What is not so obvious is the complete generating set of algebraic relations among these elementary moves.

\begin{theorem}
\label{thm:algebraic_relations_of_elementary_moves}
All the nontrivial algebraic relations among the elementary moves of $Pt_{\rm dot}$ are the consequences of the four types of nontrivial relations in \eqref{eq:K_presentation_intro} (where we replace $A_j$, $T_{jk}$ of \eqref{eq:K_presentation_intro} by $A_{[j]}$, $T_{[j][k]}$), and the following `trivial relations':
\begin{align*}
{\rm [permutation]} \quad & P_{\rm id} = {\rm id}, \quad P_{\gamma_1} P_{\gamma_2} = P_{\gamma_1 \circ \gamma_2}, \\
{\rm [index~change]} \quad & P_\gamma A_{[j]} = A_{[\gamma(j)]} P_\gamma, \quad
P_\gamma T_{[j][k]} = T_{[\gamma(j) \, \gamma(k)]} P_\gamma, 
\\
{\rm [commutativity]} \quad & T_{[j][k]} T_{[\ell][m]} = T_{[\ell][m]} T_{[j][k]}, \quad
T_{[j][k]} A_{[\ell]} = A_{[\ell]} T_{[j][k]}, \quad
A_{[j]} A_{[k]} = A_{[k]} A_{[j]}, 
\end{align*}
where $\gamma_1,\gamma_2,\gamma$ are permutations of $\mathbb{Q}^\times$, and $j,k,\ell,m \in \mathbb{Q}^\times$ are mutually distinct.
\end{theorem}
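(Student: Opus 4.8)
The plan is to identify a convenient groupoid-theoretic framework in which ``all algebraic relations'' can be phrased precisely, and then to reduce the statement to a standard connectivity argument. Recall from Prop.\ref{prop:morphism_as_composition} that the elementary moves generate $Pt_{dot}$; what we must show is that the subgroup of relations is normally generated by the four families in \eqref{eq:K_presentation_intro} together with the trivial relations. First I would set up the \emph{Cayley $2$-complex} point of view: build a $2$-complex $X$ whose vertex set is the set of all dotted tessellations, whose edges are labeled by elementary moves (so $X$ is the Cayley graph of $Pt_{dot}$ viewed as a groupoid), and whose $2$-cells are glued in along the listed relations wherever they are applicable. A word in the elementary moves is a relation of $Pt_{dot}$ precisely when the corresponding edge-path in $X$ is a loop, and the claim of the theorem is exactly that $X$ is simply connected. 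Thus the whole statement becomes: \emph{every loop of elementary moves based at the standard dotted tessellation $\tau^*_{dot}$ bounds a disc tiled by the prescribed $2$-cells.}

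The core of the argument is then a normal-form / confluence analysis. I would first observe that every dotted tessellation differs from $\tau^*_{dot}$ only in a finite region (finitely many edges, finitely many non-standard dots, a labeling that is ``eventually standard''), so every element of $Pt_{dot}$ can be brought to $\tau^*_{dot}$ by a word supported on a finite sub-polygon. The strategy is to show that any two such words taking $\tau_{dot}$ to $\tau^*_{dot}$ differ by the listed relations. I would do this by an induction on the number of flips (i.e. on the combinatorial complexity of the finite differing region), in the spirit of the classical proof that flips generate the Ptolemy groupoid with the pentagon as the only essential relation. The key reduction steps are: (i) using the commutativity relations and the $P_\gamma$ index-change/permutation relations to push all relabelings to the end and to make moves with disjoint supports commute freely, so that one may assume the word acts ``locally''; (ii) using $A_{[j]}^3 = id$ to normalize the dotting inside each triangle; (iii) using $T_{[k][\ell]}T_{[j][k]} = T_{[j][k]}T_{[j][\ell]}T_{[k][\ell]}$ (the pentagon) and $A_j T_{jk} A_k = A_k T_{kj} A_j$ and $T_{jk}A_j T_{kj} = A_j A_k P_{(jk)}$ to move between different sequences of flips realizing the same underlying change of tessellation. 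The local analysis is the pentagon: two different maximal sequences of flips inside a fixed finite ideal polygon, producing the same dotted tessellation, are related by the pentagon together with the $A$- and $T$-relations that govern how dots are transported by flips.

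The technical heart — and the step I expect to be the main obstacle — is the \emph{bookkeeping of dots under flips}. The relations in \eqref{eq:K_presentation_intro} are stated only for configurations in which the dots sit in the specific positions drawn in Fig.\ref{fig:action_of_T_jk}; in a general word the dots will be in arbitrary positions, so before one can apply any $T$-relation one must first conjugate by a suitable product of $A_{[j]}$'s to bring the relevant two triangles into the ``canonical'' dot configuration, then apply the relation, then undo. Showing that this can always be done \emph{consistently}, and that the resulting detours themselves only invoke the listed relations, requires a careful case analysis over the $3\times 3$ possible dot placements on the two triangles sharing an edge (reduced, via $A_{[j]}^3 = id$ and index-change, to a small number of genuinely distinct cases), verified against the pictures. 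A secondary subtlety is that $T_{[j][k]}$ is only partially defined, so one must argue that whenever one side of a proposed relation is applicable to some $\tau_{dot}$ the other side is too, and that the ``detour'' words we insert are themselves applicable along the way; here the geometry of the finite differing polygon, together with the observation that flips inside a polygon can always be scheduled so that the pentagon configuration appears, is what makes it go through. Once these local confluence lemmas are in place, the global statement follows by the standard induction on complexity, choosing a leaf triangle (an ``ear'' of the finite differing polygon) to remove at each step and invoking the $2$-connectivity of the associated flip complex of an ideal polygon.
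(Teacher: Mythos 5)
The paper does not actually prove Theorem \ref{thm:algebraic_relations_of_elementary_moves}: it attributes the statement to Kashaev, notes that Teschner claimed a proof in \cite{T}, and explicitly defers ``a more complete proof'' to a future publication. So there is no in-paper argument to compare yours against, and the relevant question is whether your proposal constitutes a proof on its own. It does not. The framing is sound --- identifying completeness of the relations with simple connectivity of the presentation $2$-complex on dotted tessellations, reducing to a finite differing polygon, and aiming for local confluence --- and you correctly isolate the two genuine difficulties (transport of dots under flips, and the partial definedness of $T_{[j][k]}$). But every one of the steps that carries the mathematical weight is announced rather than executed: ``I would do this by an induction,'' ``requires a careful case analysis,'' ``once these local confluence lemmas are in place.'' Those lemmas \emph{are} the theorem; the fact that the author of the paper judged the full argument substantial enough to publish separately is a strong indication that they cannot be waved through.

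There is also a specific structural gap in the reduction you sketch. The classical simple-connectivity result for the flip complex of an ideal polygon (pentagon plus disjoint-support commutation) governs the \emph{underlying} tessellations only. In $Pt_{dot}$ a loop may return to the same underlying tessellation while nontrivially permuting dots and labels --- e.g.\ the relation $T_{[j][k]} A_{[j]} T_{[k][j]} = A_{[j]} A_{[k]} P_{(jk)}$ is exactly such a ``decoration-level'' loop, corresponding to flipping the same edge twice. Your plan treats the $A$- and $T$-relations as bookkeeping that transports dots alongside the classical pentagon argument, but you never address why the listed relations exhaust \emph{all} loops in the fiber over a fixed underlying tessellation (equivalently, why the kernel of the forgetful map to the undotted flip complex is normally generated by the stated decoration relations). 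Without that, the induction on the finite differing polygon establishes only that the underlying flip sequences agree up to pentagons, not that the full dotted words do. To close the argument you would need, at minimum: (i) a precise statement and proof of the local confluence lemma for the $3\times 3$ dot configurations on a flipped quadrilateral, including verification that the conjugating $A$-words are always applicable; and (ii) a computation of the ``vertical'' relations over a fixed tessellation showing they reduce to $A_{[j]}^3=id$, the $P_\gamma$ relations, and $T_{[j][k]} A_{[j]} T_{[k][j]} = A_{[j]} A_{[k]} P_{(jk)}$.
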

The statement of this theorem is due to Kashaev (see e.g. \cite{Kash00}), and Teschner claimed a proof in \cite{T}; a more complete proof can be found in the author's another paper \cite{Ki14}. Now we define a group $K$ with these generators and relations.

\begin{definition}
\label{def:Kashaev_group}
Let $F_{\rm dot}$ be the free group generated by
\begin{align}
\label{eq:K_generators}
\{ A_{[j]}, T_{[j][k]}, P_\gamma : j,k\in \mathbb{Q}^\times, j\neq k, \mbox{ and } \gamma \mbox{ is a permutation of } \mathbb{Q}^\times\},
\end{align}
and $R_{\rm dot}$ be the normal subgroup of $F_{\rm dot}$ generated by all the relations mentioned in Thm.\ref{thm:algebraic_relations_of_elementary_moves} (that is, $A_j^3$, $T_{k\ell} T_{jk} (T_{jk} T_{j\ell} T_{k\ell})^{-1}$, etc). The {\em Kashaev group} $K$ is defined as $F_{\rm dot}/R_{\rm dot}$.
\end{definition}

\begin{remark}
The generating set \eqref{eq:K_generators} should have been used in \eqref{eq:K_presentation_intro}. In fact, we can restrict $\gamma$ in \eqref{eq:K_generators} to certain types of permutations of $\mathbb{Q}^\times$, but let us allow any permutation here. 
\end{remark}

\begin{remark}
A more general group $G_I$, defined in the same way as $K$, with $\mathbb{Q}^\times$ replaced by any index set $I$, was defined in {\em Frenkel-Kim} \cite{FrKi}, and called the Kashaev group there. 
\end{remark}

The group $K$ can be thought of as the {\em formal} group of transformations of dotted tessellations, as its elements may not be applied to all dotted tessellations. Still, by Thm.\ref{thm:algebraic_relations_of_elementary_moves}, the `action' of $K$ on the set of all dotted tessellations is `free', in the following sense:
\begin{corollary}
\label{cor:K_action_is_free}
If $g\in K$ fixes one dotted tessellation, i.e. $g.\tau_{\rm dot} = \tau_{\rm dot}$ for some $\tau_{\rm dot}$, then $g = 1$. Moreover, if $g, g' \in K$ are applicable to some $\tau_{\rm dot}$ and if $g.\tau_{\rm dot} = g'.\tau_{\rm dot}$ holds, then $g=g'$. Therefore any element of $K$ which can be applied to at least one dotted tessellation is completely characterized by its action on a dotted tessellation which it can be applied to.
\end{corollary}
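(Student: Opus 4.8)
The plan is to reduce the last two assertions to the first and then to recognize the first as a direct instance of Theorem~\ref{thm:algebraic_relations_of_elementary_moves}. For the reduction, suppose $g,g'\in K$ are both applicable to $\tau_{dot}$ with $g.\tau_{dot}=g'.\tau_{dot}=:\sigma_{dot}$. Since every elementary move is reversible, reversing a word representing $g'$ that is applicable to $\tau_{dot}$ gives a word representing $(g')^{-1}$ that is applicable to $\sigma_{dot}$ and carries it back to $\tau_{dot}$; concatenating with a word for $g$ shows that $h:=(g')^{-1}g$ is applicable to $\tau_{dot}$ and fixes it, whence the first assertion yields $h=1$, i.e.\ $g=g'$. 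The third assertion is then immediate. So everything rests on: \emph{if $g\in K$ is applicable to some $\tau_{dot}$ and $g.\tau_{dot}=\tau_{dot}$, then $g=1$ in $K=F_{dot}/R_{dot}$.}

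To prove this I would pick a word $w$ in the generators \eqref{eq:K_generators} and their inverses that represents $g$ and is applicable to $\tau_{dot}$; by hypothesis $w$, read from the right, drags $\tau_{dot}$ through a finite chain of dotted tessellations back to $\tau_{dot}$, so ``$w=\mathrm{id}$'' holds along this particular orbit. The point is then that each elementary move of $Pt_{dot}$ is given by a \emph{uniform, label-indexed local rule} (Definition~\ref{def:elementary_moves_of_Pt_dot}: move the dot of the triangle labelled $j$; flip the quadrilateral spanned by the triangles labelled $j$ and $k$; relabel by $\gamma$), and that its applicability is a condition purely on this finite label-indexed local data. Hence the net combinatorial effect of the fixed word $w$, at every dotted tessellation to which $w$ is applicable, is one and the same transformation of the label-indexed data; being the identity transformation when started at $\tau_{dot}$, it is the identity wherever $w$ is applicable. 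Thus ``$w=\mathrm{id}$'' is a genuine algebraic relation among the elementary moves in the sense fixed just before Proposition~\ref{prop:morphism_as_composition}, so by Theorem~\ref{thm:algebraic_relations_of_elementary_moves} it is a consequence of the four nontrivial relations of \eqref{eq:K_presentation_intro} together with the trivial relations, i.e.\ $w\in R_{dot}$ (Definition~\ref{def:Kashaev_group}) and $g=1$. This mirrors the way Corollary~\ref{cor:T_acts_freely} is deduced from Theorem~\ref{thm:T} in the marked case.

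The only genuine obstacle I anticipate is the uniformity step: one must check carefully, from the definitions of $A_{[j]}$, $T_{[j][k]}$ and $P_\gamma$, that the combinatorial effect of an applicable word is dictated by the word alone and not by the dotted tessellation it acts on, which is what upgrades ``$w$ fixes one $\tau_{dot}$'' to ``$w$ is a genuine relation.'' Bound up with this is the bookkeeping forced by the partial --- not global --- nature of the $K$-action: that the composite $(g')^{-1}g$ in the reduction really is applicable to $\tau_{dot}$, and, underlying the whole statement, that the partial action of $F_{dot}$ does descend to $K$ (the easy direction of Theorem~\ref{thm:algebraic_relations_of_elementary_moves}, namely that the listed relators act trivially wherever applicable). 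Once this applicability bookkeeping is in place, the corollary is formal.
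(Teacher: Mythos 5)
Your proposal is correct and follows the same route as the paper, which presents this corollary as an immediate consequence of Theorem~\ref{thm:algebraic_relations_of_elementary_moves} without further comment; your contribution is to make explicit the two bridging steps the paper leaves implicit, namely the reduction of the last two assertions to the first by inverting an applicable word, and the upgrade from ``$w$ fixes one $\tau_{dot}$'' to ``$w=\mathrm{id}$ is an algebraic relation in the universal sense'' demanded by the paper's definition of algebraic relation just before Prop.~\ref{prop:morphism_as_composition}. Both reductions are sound; the one caveat is that your uniformity claim should be stated slightly more weakly --- the net effect of an applicable word is not literally ``one and the same transformation'' at every object (the effect of $A_{[j]}$, for instance, depends on where the dot of triangle $j$ initially sits), but rather the relative position of $w.\tau_{dot}$ with respect to $\tau_{dot}$ is determined by $w$ together with initial data that the applicability conditions pin down, which suffices to conclude that acting as the identity at one object forces acting as the identity at every object to which $w$ applies.
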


\subsection{The natural functor $\mcal{F}: Pt \to Pt_{\rm dot}$}
\label{subsec:mcal_F}

Recall from \S\ref{sec:introduction} that the universal Ptolemy groupoid $Pt$ and the Ptolemy-Thompson group $T$ pertain to the Chekhov-Fock quantization of universal Teichm\"uller space, while the dotted universal Ptolemy groupoid $Pt_{\rm dot}$ and the Kashaev group $K$ are for the Kashaev quantization. To relate these two quantizations, we shall construct a natural functor
\begin{align}
\label{eq:map_from_marked_to_dotted}
\mcal{F} : Pt \to Pt_{\rm dot},
\end{align}
which will lead in the following subsection to a natural group homomorphism ${\bf F}: T\to K$. Recall that in each of the two categories $Pt$ and $Pt_{\rm dot}$, from any object to any object there is exactly one morphism. Therefore if we specify the images of objects of $Pt$ under $\mcal{F}$, the images of morphisms of $Pt$ under $\mcal{F}$ are then determined, making $\mcal{F}$ a functor. The question is how naturally and uniquely we can choose the images of the objects. We shall make such a choice using the following object:
\begin{definition}
\label{def:M}
The {\em asymptotically rigid mapping class group of $\mathbb{D}$}, denoted by $M$, is the group of all asymptotically rigid mapping classes of $\mathbb{D}$, defined in Def.\ref{def:asymptotically_rigid}. Its elements are denoted by $[\varphi]$, for an asymptotically rigid homeomorphism $\varphi : \mathbb{D} \to \mathbb{D}$ (Def.\ref{def:asymptotically_rigid}), and the composition rule is given by $[\varphi]\circ[\psi] = [\varphi\circ \psi]$.
\end{definition}
\begin{remark}
\label{rem:PPSL2Z}
Elements of $M$ are completely determined by its restriction on $S^1$, hence $M$ can be viewed as a subgroup of $Homeo^+(S^1)$, the group of orientation-preserving homeomorphisms of $S^1$. This subgroup of $Homeo^+(S^1)$ is referred to as ${\rm PPSL}(2,\mathbb{Z})$, standing for the `piecewise-${\rm PSL}(2,\mathbb{Z})$ homeomorphisms'.
\end{remark}
The group $M$ is a certain `universal' analog of the mapping class group $M(\Sigma)$ of a finite-type punctured Riemann surface $\Sigma$, which is a central and natural object in the story of $\Sigma$.  
The reason why we use boundary-fixing homotopies of $\mathbb{D}$ in the definition of $M$ is because we use the tessellations of $\mathbb{D}$ as analogs of ideal triangulations of $\Sigma$ whose vertices are punctures, so that the vertices of tessellations of $\mathbb{D}$ act like punctures of $\Sigma$.
Note that not every mapping class of $\mathbb{D}$ (Def.\ref{def:mapping_class}) preserves the set of objects of $Pt$.
We shall see that $M$ is precisely the group of all mapping classes of $\mathbb{D}$ preserving the set of objects of $Pt$. 
So $M$ can be viewed as the mapping class group of our `surface $\mathbb{D}$' having $Pt$ as its `groupoid of ideal triangulations', hence expect that it will play as natural and central role in our setting which uses the groupoid $Pt$, as the genuine mapping class group $M(\Sigma)$ does for $\Sigma$. 

\begin{remark}
See \cite{Penner2} for a discussion of why $Pt$ is a reasonable groupoid to start with.
\end{remark}

We first observe that $M$ acts naturally on the objects of $Pt$ and on those of $Pt_{\rm dot}$. Namely, regard a marked tessellation as a tessellation with an arrow on one edge, and a dotted tessellation as a tessellation with dots and triangle labels written on ideal triangles. Then one can see how asymptotically rigid mapping classes transform these graphical data. We will shortly see that this $M$-action on the objects of $Pt$ is free and transitive, while the $M$-action on the objects of $Pt_{\rm dot}$ is free. Later, we also prove that this leads to a group anti-isomorphism between $M$ and $T$. Then, by requiring the equivariance under the $M$-actions, the map $\mcal{F}$ on the set of objects of $Pt$ is completely determined by the image of one object. For example, if $\mcal{F}(\tau^*_{\rm mark})$ is chosen, then we know $\mcal{F}(\tau_{\rm mark})$ for any marked tessellation $\tau_{\rm mark}$. Choice of the image of one object can be called an {\em initial condition} for $\mcal{F}$. We will use the following initial condition
\begin{align}
\label{eq:our_initial_condition_for_F}
\mcal{F}(\tau^*_{\rm mark}) = \tau^*_{\rm dot},
\end{align}
just by convenience; the underlying tessellation of the standard dotted tessellation $\tau^*_{\rm dot}$ coincides with that of the standard marked tessellation $\tau^*_{\rm mark}$, and the dotting rule and the triangle-labeling rule of $\tau^*_{\rm dot}$ can easily be understood by means of the `middle vertices' (Def.\ref{def:dotted_tessellations}). But we may as well choose the image of $\tau^*_{\rm mark}$ to be any other dotted tessellation $\tau_{\rm dot}$, and get another $\mcal{F}$, hence another ${\bf F}$. We will see in Prop.\ref{prop:essential_uniqueness_of_bold_F} that the maps ${\bf F}: T\to K$ resulting from different choices of $\mcal{F}(\tau^*_{\rm mark})$ differ by conjugation in $K$, hence are `equivalent' to each other. Thus we justify the naturalness and essential uniqueness of the to-be-constructed map ${\bf F}: T\to K$.

\vs

We now prove the statements promised in the previous paragraph. We first establish how to record a marked tessellation by their vertices.

\begin{definition}[vertex function of a marked tessellation]
\label{def:vertex_function}
For a marked tessellation $(\tau,\vec{a})$, we construct a bijection $j \mapsto \tau_j$ from extended rationals to extended rationals, called the {\em vertex function} of $(\tau,\vec{a})$, by the following `inductive' process:

\begin{enumerate}
\item Let $\tau_0$ and $\tau_\infty$ be the two extended rational numbers such that $\mu(\tau_0)$ and $\mu(\tau_1)$ are the starting point and the ending point of the d.o.e. $\vec{a}$, where $\mu$ is the Cayley transform.

\item Among the ideal triangles of $\tau$, take the unique ideal triangle having $\vec{a}$ as one of its sides and situated to the right (resp. left) of $\vec{a}$, and let $\tau_1$ (resp. $\tau_{-1}$) be the extended rational such that $\mu(\tau_1)$ (resp. $\mu(\tau_{-1})$) is the third vertex of this triangle. 

\item For an ideal triangle of $\tau$ other than the two triangles appearing in {\rm (2)}, if two of its vertices are identified as $\mu(\tau_{\frac{a}{b}})$ and $\mu(\tau_{\frac{c}{d}})$ for some extended rationals $\frac{a}{b}, \frac{c}{d}$ in their reduced expressions but the third one is not identified yet, we let $\tau_{\frac{a+c}{b+d}}$ be the extended rational such that $\mu(\tau_{\frac{a+c}{b+d}})$ is the third vertex of this triangle. `Repeat' this step. 

\end{enumerate}

\end{definition}
For example, for the marked tessellation $(\tau,\vec{a})$ in Fig.\ref{subfig:marked_tessellation_ex}, we have $\tau_0 = -1$, $\tau_\infty = 0$, $\tau_1 = -\frac{1}{2}$, $\tau_{-1} = -2$. It is easy to see:

\begin{lemma}
\label{lem:vertex_function_determines_marked_tessellation}
The vertex function completely determines a marked tessellation.
\end{lemma}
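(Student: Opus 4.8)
The plan is to describe a deterministic procedure that reconstructs the marked tessellation $(\tau,\vec a)$ from its vertex function $j\mapsto\tau_j$; once this is available, two marked tessellations with the same vertex function must coincide, which is what is asserted. Recovering the distinguished oriented edge is immediate: by step (1) of Def.\ref{def:vertex_function}, $\vec a$ is the oriented ideal arc from $\mu(\tau_0)$ to $\mu(\tau_\infty)$, hence is read off from the two values $\tau_0$, $\tau_\infty$. All the content lies in recovering the underlying tessellation $\tau$.

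For this, the idea is that the incidence structure of $\tau$ is that of the Farey tessellation $\tau^*$ (Def.\ref{def:Farey_tessellation}) transported along the vertex function. Call reduced fractions $\tfrac ab,\tfrac cd$ (Def.\ref{def:reduced_expression}) \emph{Farey-adjacent} if $|ad-bc|=1$; this is a purely arithmetic condition, so
\[
E' \;=\; \bigl\{\,\{\mu(\tau_j),\mu(\tau_k)\} \;:\; j,k\ \text{Farey-adjacent}\,\bigr\}
\]
is a collection of ideal arcs determined by the vertex function alone, and it suffices to prove that $E'$ equals the edge set of $\tau$. The inclusion ``$\subseteq$'' is obtained by following the inductive construction of Def.\ref{def:vertex_function} and inducting on the number of mediant steps needed to reach a given Farey-adjacent pair $\{j,k\}$ from the pair $\{0,\infty\}$: the construction carries each Farey triangle of $\tau^*$ (cf.\ Rem.\ref{rem:triangle_Farey}) to an honest ideal triangle of $\tau$ with the correspondingly labeled vertices, so each arc of $E'$ is a side of such a triangle, hence an edge of $\tau$. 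For ``$\supseteq$'' it is enough to show that every ideal triangle of $\tau$ eventually has all three of its vertices labeled, for then its three sides all lie in $E'$; and this holds because the construction is a connected, breadth-first exploration, starting from $\vec a$, of the dual graph of $\tau$, which is a tree since $\tau$ is a tessellation of the disc. Explicitly, one inducts on the distance in this dual tree from $\vec a$: the two triangles flanking $\vec a$ are labeled by step (2), and any triangle one step further out shares a side — hence two of its vertices — with an already-labeled triangle, so step (3) labels its third vertex.

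The single point that genuinely needs checking — and where I expect the only real, though still elementary, work to sit — is that the construction of Def.\ref{def:vertex_function} is well posed at all: no vertex of $\tau$ may receive two conflicting labels, and, relatedly, whenever two already-labeled $\tau$-adjacent vertices $\mu(\tau_{a/b})$, $\mu(\tau_{c/d})$ turn up, the indices $\tfrac ab,\tfrac cd$ must themselves be Farey-adjacent, so that the mediant $\tfrac{a+c}{b+d}$ is meaningful and the labels propagate coherently. I would establish this by carrying through the construction the invariant that the labeled vertices together with the recorded edges form a subtree of the dual tree of $\tau$ on which the labeling is forced: at the base case the triangles flanking $\vec a$ receive the labels $\tau_{\pm1}$ just as those flanking $\{0,\infty\}$ in $\tau^*$ receive $\pm1$, and the inductive step uses that passing to the mediant preserves Farey-adjacency (from $|ad-bc|=1$ one gets $|a(b{+}d)-b(a{+}c)|=1$ and $|c(b{+}d)-d(a{+}c)|=1$). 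Granting this, the reconstruction goes through and the lemma follows.
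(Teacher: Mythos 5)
Your argument is correct, and it is exactly the reconstruction the paper leaves implicit when it states the lemma with only ``It is easy to see'': the vertex function transports the Farey incidence structure onto $\tau$ (so the edges of $\tau$ are precisely the arcs $\{\mu(\tau_j),\mu(\tau_k)\}$ for Farey-adjacent $j,k$, and $\vec a$ is read off from $\tau_0,\tau_\infty$), using that an ideal arc in $\mathbb{D}$ is determined by its endpoints and that the dual graph of a tessellation of the disc is a tree. Your extra check that the mediant step preserves Farey-adjacency is the right thing to verify for the construction to propagate, though strictly it concerns the well-definedness of the vertex function rather than the lemma itself.
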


We can now prove:
\begin{proposition}
\label{prop:M-actions}
The natural $M$-action on the objects of $Pt$ is free and transitive, and the natural $M$-action on the objects of $Pt_{\rm dot}$ is free.
\end{proposition}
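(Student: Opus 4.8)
The proposition has three parts: transitivity and freeness of the $M$-action on the objects of $Pt$, and freeness of the $M$-action on the objects of $Pt_{dot}$. The plan is to exploit the rigidity of the Farey tessellation: an asymptotically rigid homeomorphism is pinned down, via the Farey-mediant recursion that also governs the vertex function (Def.~\ref{def:vertex_function}), by its effect on finitely many vertices.

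For transitivity on the objects of $Pt$, given a marked tessellation $(\tau,\vec a)$ with vertex function $j\mapsto\tau_j$, I would construct an asymptotically rigid homeomorphism $f$ of $\mathbb D$ with $f(\mu(j))=\mu(\tau_j)$ for all $j\in\mathbb{QP}^1$. Since the vertex function is an intrinsic invariant, $[f]\cdot\tau^*_{mark}$ then has vertex function $j\mapsto\mu^{-1}(f(\mu(j)))=\tau_j$, so by Lemma~\ref{lem:vertex_function_determines_marked_tessellation} it equals $(\tau,\vec a)$. To build $f$, I would first choose a finite sub-polygon $Q$ of the Farey tessellation, with extended-rational vertices $q_0<\cdots<q_{m-1}$ in cyclic order, large enough that every non-Farey edge of $\tau$ and every Farey edge that $j\mapsto\tau_j$ carries to a non-Farey edge of $\tau$ lies inside $Q$. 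On the closure of each complementary ``tail'' $U_i$ of $Q$ (the component of $\mathbb D\setminus Q$ bounded by the edge $q_iq_{i+1}$), the tessellation $\tau$ restricts to the Farey tessellation, so the values $\tau_j$ for $j$ in $\overline{U_i}$ are forced from $\tau_{q_i},\tau_{q_{i+1}}$ by the mediant recursion; since that recursion equally forces $\gamma_i(j)$ from $\gamma_i(q_i),\gamma_i(q_{i+1})$ for the unique $\gamma_i\in PSL(2,\mathbb Z)$ carrying the oriented Farey edge $(q_i,q_{i+1})$ onto $(\tau_{q_i},\tau_{q_{i+1}})$, one gets $\tau_j=\gamma_i(j)$ throughout $\overline{U_i}$. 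Gluing the maps $\mu\gamma_i\mu^{-1}$ over the boundary arcs $[\mu(q_i),\mu(q_{i+1})]$ yields an orientation-preserving homeomorphism of $S^1$ that is piecewise-$PSL(2,\mathbb Z)$ with the finitely many rational breakpoints $\mu(q_i)$; extending it to $\overline{\mathbb D}$ (for instance by coning) gives the desired $f$, which is asymptotically rigid by Def.~\ref{def:asymptotically_rigid}.

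For freeness, I would treat the two groupoids separately. If $[f]\in M$ fixes a marked tessellation, then by the transitivity just established I may conjugate and assume it fixes $\tau^*_{mark}$; then $f|_{S^1}$ fixes $\mu(0)$ and $\mu(\infty)$, is orientation-preserving, and preserves $\tau^*$, hence fixes the two Farey triangles on the edge $(0,\infty)$ and with them $\mu(1)$ and $\mu(-1)$. Feeding a fixed Farey triangle into the mediant recursion forces $f$ to fix $\mu(j)$ for every $j\in\mathbb{QP}^1$, so by density and continuity $f|_{S^1}=\mathrm{id}$, and then $[f]=1$ in $M$ by Alexander's trick. If instead $[f]\in M$ fixes a dotted tessellation $(\tau,D,L)$, then $f$ preserves the labeling $L$, hence fixes every triangle of $\tau$ setwise; being orientation-preserving and fixing the distinguished corner of each triangle, it fixes all three corners of each triangle, hence every vertex of $\tau$, and these, $\tau$ being Farey-type, are all the rational points of $S^1$. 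Again $f|_{S^1}=\mathrm{id}$ and $[f]=1$; note that here transitivity is neither available nor needed, which is consistent with the $M$-action on $\mathrm{Ob}(Pt_{dot})$ not being transitive.

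I expect the transitivity statement to be the main obstacle, specifically the bookkeeping that identifies the restriction of $j\mapsto\tau_j$ to each tail of $Q$ with a single $\gamma_i\in PSL(2,\mathbb Z)$. This rests on two facts I would isolate first: that $\tau$ and $\tau^*$ agree outside some finite Farey polygon (a straightforward consequence of the Farey-type condition, since flips preserve vertices and stay within a bounded region), and that $PSL(2,\mathbb Z)$ acts simply transitively on oriented Farey edges, so that each $\gamma_i$ exists and is unique. By contrast, the two freeness assertions are routine once the rigidity of the Farey recursion has been set up.
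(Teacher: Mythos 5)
Your proposal is correct and follows essentially the same route as the paper: freeness on $Pt$ and $Pt_{dot}$ via the fact that an element of $M$ is determined by its action on the rational points of $S^1$ (forced to be trivial by the Farey/mediant rigidity, resp. by the triangle labels and dots), and transitivity via the observation that the vertex function of Def.~\ref{def:vertex_function} is piecewise-$PSL(2,\mathbb{Z})$ with finitely many rational breakpoints and hence is realized by an asymptotically rigid homeomorphism. The paper leaves these steps as assertions ("it is easy to see", "one can show"), whereas you supply the finite-polygon bookkeeping; the substance is the same.
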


\begin{proof}
Notice that an asymptotically rigid mapping class of $\mathbb{D}$ is completely determined by its restriction on $S^1$, and therefore by that on the rational points of $S^1$, because rational points are dense in $S^1$. This, together with Lem.\ref{lem:vertex_function_determines_marked_tessellation}, implies that the $M$-action on $Pt$ is free. Also, it is easy to see that the vertex function $j\mapsto \tau_j$ of a marked tessellation $(\tau,\vec{a})$ is piecewise-${\rm PSL}(2,\mathbb{Z})$ with finitely many breakpoints, which are rational. Therefore, with the help of Lem.\ref{lem:vertex_function_determines_marked_tessellation}, one can show that the standard marked tessellation is connected to any marked tessellation by the action of an element of $M$, thus implying the transitivity of the $M$-action on $Pt$. We can also easily show that the $M$-action on $Pt_{\rm dot}$ is free; for any dotted tessellation, the only element of $M$ fixing the underlying tessellation and the triangle labels is the identity.
\end{proof}

Since both $T$ and $M$ act freely transitively on the set of objects of $Pt$ (Cor.\ref{cor:T_acts_freely}, Prop.\ref{prop:M-actions}), we can construct a set bijection $M\to T$ by choosing one object of $Pt$, although this is not a group isomorphism. For example, if we use $\tau^*_{\rm mark}$ as a reference point, then for each $[\varphi] \in M$, the morphism $[\tau^*_{\rm mark}, [\varphi].\tau^*_{\rm mark}]$ corresponds to some element of $T$, giving a bijection $M \to T$.
\begin{proposition}
\label{prop:anti_isomorphism}
This bijection $M\to T$ is an anti-isomorphism of groups.
\end{proposition}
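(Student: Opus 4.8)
The plan is to unwind the definition of the bijection $M \to T$ and check directly that composing in $M$ corresponds to composing in the opposite order in $T$. Write $\Phi : M \to T$ for the bijection determined by the reference object $\tau^*_{mark}$, so that for $[\varphi] \in M$, the element $\Phi([\varphi]) \in T$ is the unique element $g \in T$ with $g . \tau^*_{mark} = [\varphi].\tau^*_{mark}$ (using that both the $T$-action — Cor.\ref{cor:T_acts_freely} — and the $M$-action — Prop.\ref{prop:M-actions} — on the objects of $Pt$ are free and transitive). The first step is to pin down how the two actions interact. I claim the $T$-action and the $M$-action on the objects of $Pt$ \emph{commute}: for any $g \in T$, any $[\varphi] \in M$, and any marked tessellation $\tau_{mark}$, one has $[\varphi].(g.\tau_{mark}) = g.([\varphi].\tau_{mark})$. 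This is the crux of the matter, and the reason it holds is conceptual: the $T$-action is by elementary moves $\alpha,\beta$, which are defined purely in terms of the combinatorial data of the marked tessellation near the d.o.e. (erasing/adding a diagonal of a quadrilateral, or sliding the d.o.e. within a triangle), whereas the $M$-action is by pushing forward the whole graphical datum along a homeomorphism; since a homeomorphism of $\mathbb{D}$ carries ideal triangles to ideal triangles, quadrilaterals to quadrilaterals, and preserves the left/right and counterclockwise conventions used to define $\alpha$ and $\beta$, applying $[\varphi]$ and then performing the flip/slide gives the same result as performing the flip/slide and then applying $[\varphi]$.

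Granting this commutativity, the anti-homomorphism property follows by a short formal computation. Given $[\varphi],[\psi] \in M$, set $g = \Phi([\varphi])$ and $h = \Phi([\psi])$, so $g.\tau^*_{mark} = [\varphi].\tau^*_{mark}$ and $h.\tau^*_{mark} = [\psi].\tau^*_{mark}$. Then
\begin{align}
\nonumber
([\varphi]\circ[\psi]).\tau^*_{mark} &= [\varphi].([\psi].\tau^*_{mark}) = [\varphi].(h.\tau^*_{mark}) = h.([\varphi].\tau^*_{mark}) = h.(g.\tau^*_{mark}) = (hg).\tau^*_{mark},
\end{align}
where the third equality uses the commutativity of the two actions and the last uses that $T$ is a group acting on the left. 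By freeness of the $T$-action, this gives $\Phi([\varphi]\circ[\psi]) = hg = \Phi([\psi])\Phi([\varphi])$, i.e. $\Phi$ reverses the order of composition. Together with the fact that $\Phi$ is already known to be a bijection, this shows $\Phi$ is an anti-isomorphism of groups.

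The step I expect to be the genuine obstacle is the commutativity claim $[\varphi].(g.\tau_{mark}) = g.([\varphi].\tau_{mark})$, and within it, the subtle point is verifying it for $g \in \{\alpha,\beta\}$ rather than merely asserting it from generalities — one must check that the \emph{orientation} conventions in Def.\ref{def:alpha_and_beta} (``rotating $\vec a$ counterclockwise'' for $\alpha$, ``the triangle to the left of $\vec a$'' for $\beta$) are respected, which uses that asymptotically rigid homeomorphisms are orientation-preserving (built into Def.\ref{def:asymptotically_rigid} via $PPSL(2,\mathbb{Z}) \subset Homeo^+(S^1)$, Rem.\ref{rem:PPSL2Z}). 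Once the claim is established on generators it extends to all of $T$ by induction on word length, and the rest is the formal manipulation above. An essentially equivalent alternative, which may be cleaner to write, is to phrase everything in terms of vertex functions (Def.\ref{def:vertex_function}, Lem.\ref{lem:vertex_function_determines_marked_tessellation}): an element of $M$ acts on vertex functions by post-composition with a $PPSL(2,\mathbb{Z})$ map, while $\alpha,\beta$ act by modifications depending only on the values $\tau_0,\tau_\infty,\tau_{\pm 1}$ and the Farey/mediant combinatorics, and these two kinds of operations manifestly commute; I would likely present the proof along these lines to keep the bookkeeping transparent.
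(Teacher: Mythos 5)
Your proposal is correct and follows essentially the same route as the paper: the paper isolates your commutativity claim as Lemma~\ref{lem:T_M_commutes} (proved via Prop.~\ref{prop:morphisms_lead_to_same_element}, whose ``if'' direction rests on exactly your observation that homeomorphisms preserve the combinatorial data defining $\alpha$ and $\beta$), and then performs the same short formal computation, merely phrased for the inverse bijection $g\mapsto[\varphi_g]$ instead of $\Phi$.
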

We postpone a proof until \S\ref{subsec:bold_F} where we collect some necessary notations. One can also see that $M$ is the group of {\em all} mapping classes of $\mathbb{D}$ inducing elements of $T$ by the action on $\tau^*_{\rm mark}$, as both the actions of elements of $T$ and mapping classes of $\mathbb{D}$ on $\tau^*_{\rm mark}$ are completely determined by the action on the rational points on $S^1$. Therefore the Ptolemy-Thompson group $T$ can be viewed as a substitute for $M$, and sometimes $T$ is referred to as the `asymptotically rigid mapping class group'. The advantage of $T$ over $M$ is that the elements of $T$ have combinatorial descriptions, namely as transformations of marked tessellations of $\mathbb{D}$.

\begin{remark}
It seems that one can obtain a proof of Prop.\ref{prop:anti_isomorphism} also by collecting some results of \cite{Penner2} and \cite{I}, which use the notion of the `(universal) Ptolemy group'.
\end{remark}

\begin{remark}
Meanwhile, the Kashaev group $K$ is much `larger' than $T$ or $M$, because not all its elements are induced by asymptotically rigid mapping classes of $\mathbb{D}$. Using the to-be-constructed natural injective map ${\bf F} : T \to K$, we may say that the group ${\bf F}(T)$ is a substitute for $M$, realized combinatorially as a group of transformations of dotted tessellations.
\end{remark}

Now, by requiring the natural $M$-actions on $Pt$ and $Pt_{\rm dot}$ be preserved, one gets a natural functor $\mcal{F} : Pt \to Pt_{\rm dot}$, which is uniquely determined by the choice of an initial condition.

\begin{proposition}[construction of $\mcal{F}$]
\label{prop:well-definedness_of_F}
For any marked tessellation $\tau_{\rm mark}^\circ$ and any dotted tessellation $\tau_{\rm dot}^\circ$, there is a unique functor $\mcal{F} : Pt \to Pt_{\rm dot}$ \eqref{eq:map_from_marked_to_dotted} which is equivariant under the $M$-actions on the objects and sends $\tau_{\rm mark}^\circ$ to $\tau_{\rm dot}^\circ$. This functor is injective on the set of objects.
\end{proposition}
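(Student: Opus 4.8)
The plan is to exploit the free transitive $M$-action on the objects of $Pt$; the only substantive input is Prop.\ref{prop:M-actions}, and everything else is formal. First I would pin down $\mcal{F}$ on objects. Since $M$ acts transitively on the objects of $Pt$, every marked tessellation has the form $[\varphi].\tau_{mark}^\circ$ for some $[\varphi]\in M$, and since the action is free such a $[\varphi]$ is unique. Hence an $M$-equivariant assignment with $\mcal{F}(\tau_{mark}^\circ)=\tau_{dot}^\circ$ is forced to be
\[
\mcal{F}\bigl([\varphi].\tau_{mark}^\circ\bigr)\ :=\ [\varphi].\tau_{dot}^\circ ,\qquad [\varphi]\in M,
\]
and uniqueness of the expression $[\varphi].\tau_{mark}^\circ$ makes this a well-defined map on all objects of $Pt$. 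This already shows that the object part of $\mcal{F}$, if a functor with the stated properties exists at all, is unique.

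Next I would extend $\mcal{F}$ to morphisms, where there is again no freedom: a functor must send a morphism $[\tau_{mark},\tau'_{mark}]$ of $Pt$ to \emph{some} morphism from $\mcal{F}(\tau_{mark})$ to $\mcal{F}(\tau'_{mark})$, and $Pt_{dot}$ has exactly one such morphism, so necessarily $\mcal{F}([\tau_{mark},\tau'_{mark}]) = [\mcal{F}(\tau_{mark}),\mcal{F}(\tau'_{mark})]$. I would then check the functor axioms, which drop out of the composition rule \eqref{eq:composition_of_morphisms} and its analogue in $Pt_{dot}$: compatibility with composition reads $[\mcal{F}(\tau'),\mcal{F}(\tau'')]\circ[\mcal{F}(\tau),\mcal{F}(\tau')]=[\mcal{F}(\tau),\mcal{F}(\tau'')]$, and identity morphisms are preserved since $[\mcal{F}(\tau),\mcal{F}(\tau)]$ is the identity of $\mcal{F}(\tau)$. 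Equivariance on objects holds by construction; although the statement asks only for this, equivariance on morphisms follows as well, because a morphism of $Pt$ or of $Pt_{dot}$ is determined by its source and target. So $\mcal{F}$ is a functor, and by the two observations that no choice was ever available it is the unique such functor.

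For injectivity on objects I would invoke the freeness of the $M$-action on the objects of $Pt_{dot}$, once more from Prop.\ref{prop:M-actions}. If $\mcal{F}(\tau_{mark}) = \mcal{F}(\tau'_{mark})$, write $\tau_{mark}=[\varphi].\tau_{mark}^\circ$ and $\tau'_{mark}=[\psi].\tau_{mark}^\circ$; then $[\varphi].\tau_{dot}^\circ = [\psi].\tau_{dot}^\circ$, so $[\varphi]=[\psi]$ by freeness on $Pt_{dot}$, and therefore $\tau_{mark}=\tau'_{mark}$.

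Because every step is dictated by Prop.\ref{prop:M-actions}, I regard the proposition as essentially formal. The one place where something beyond bookkeeping is required --- and hence the main, if mild, obstacle --- is the fact tacitly invoked just before Prop.\ref{prop:M-actions}: that the $M$-action on the \emph{objects} of $Pt_{dot}$ is well defined, i.e. that an asymptotically rigid homeomorphism carries an object of $Pt_{dot}$ to an object of $Pt_{dot}$. This is exactly where asymptotic rigidity enters: outside a finite ideal polygon the homeomorphism agrees with an element $g\in PSL(2,\mathbb{Z})$, and such a $g$ distorts the Farey tessellation, and its standard dotting rule, on only finitely many triangles, so the image dotted tessellation still coincides with the standard one off a finite set of triangles. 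Granting this, the argument above needs nothing further.
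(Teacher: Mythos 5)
Your proposal is correct and follows essentially the same route as the paper's proof: the free transitive $M$-action on the objects of $Pt$ forces the object map $[\varphi].\tau_{mark}^\circ \mapsto [\varphi].\tau_{dot}^\circ$, the morphism map is then determined because $Pt_{dot}$ has a unique morphism between any two objects, and injectivity follows from freeness of the $M$-action on the objects of $Pt_{dot}$. Your additional remarks (explicitly verifying the functor axioms and noting that asymptotic rigidity is what makes the $M$-action on objects of $Pt_{dot}$ well defined) are sound elaborations that the paper leaves implicit.
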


\begin{proof}
Suppose $\mcal{F}(\tau_{\rm mark}^\circ) = \tau_{\rm dot}^\circ$. The $M$-equivariance implies $\mcal{F}([\varphi].\tau_{\rm mark}^\circ) = [\varphi].\tau_{\rm dot}^\circ$ for any $[\varphi] \in M$. By the freeness and the transitivity of the $M$-action on $Pt$, any marked tessellation can be written as $[\varphi].\tau_{\rm mark}^\circ$ for a unique $[\varphi]\in M$, so the image of each marked tessellation under $\mcal{F}$ is determined and well-defined. As mentioned already, the images of morphisms of $Pt$ under $\mcal{F}$ are then determined too. If $[\varphi_1].\tau_{\rm mark}^\circ \neq [\varphi_2].\tau_{\rm mark}^\circ$, then $[\varphi_1]\neq [\varphi_2]$, so $[\varphi_1].\tau_{\rm dot}^\circ \neq [\varphi_2].\tau_{\rm dot}^\circ$ by the freeness of the $M$-action on $Pt_{\rm dot}$. Hence the injectivity of $\mcal{F}$ on the set of objects.
\end{proof}

A concrete description of the functor $\mcal{F}$ on the set of objects of $Pt$ is available, if we use the initial condition \eqref{eq:our_initial_condition_for_F}.  To get an idea, observe that Fig.\ref{subfig:dotted_tessellation_ex} is the image of Fig.\ref{subfig:marked_tessellation_ex} under $\mcal{F}$.
\begin{proposition}[concrete description of a particular $\mcal{F}$]
\label{prop:concrete_description_of_F}
The functor $\mcal{F}:Pt \to Pt_{\rm dot}$ constructed in Prop.\ref{prop:well-definedness_of_F} with the initial condition \eqref{eq:our_initial_condition_for_F} can be explicitly described. For any object $(\tau,\vec{a})$ of $Pt$, its image under $\mcal{F}$ is given by $(\tau,D,L)$, where the dotting rule $D$ and the triangle-labeling rule $L$ are as follows. 

\vs

Let $j \mapsto \tau_j$ be the vertex function for $(\tau,\vec{a})$ (Def.\ref{def:vertex_function}). Then the vertices of any ideal triangle of $\tau$ are $\mu(\tau_{\frac{a}{b}})$, $\mu(\tau_{\frac{a+c}{b+d}})$, $\mu(\tau_{\frac{c}{d}})$, for some extended rationals $\frac{a}{b}, \frac{c}{d}$ in their reduced expressions. We choose the corner $\mu(\tau_{\frac{a+c}{b+d}})$ as the distinguished corner of this triangle (i.e. put the dot in that corner, for this triangle); this is the dotting rule $D$. And we label this triangle by $\frac{a+c}{b+d}$; this is the labeling rule $L$ for triangles.
\end{proposition}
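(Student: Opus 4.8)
The plan is to verify that the explicit assignment on objects — write $\mcal{F}'$ for it, sending a marked tessellation $(\tau,\vec a)$ to the dotted tessellation $(\tau,D,L)$ built from the vertex function exactly as in the statement — coincides with the functor $\mcal{F}$ produced by Prop.\ref{prop:well-definedness_of_F} for the initial condition \eqref{eq:our_initial_condition_for_F}. Since there is exactly one morphism between any two objects in each of $Pt$ and $Pt_{dot}$, a functor between these groupoids is nothing more than a map on objects; so by the uniqueness clause of Prop.\ref{prop:well-definedness_of_F} it suffices to check three things: that $\mcal{F}'$ is a well-defined map from the objects of $Pt$ to the objects of $Pt_{dot}$, that it is equivariant for the natural $M$-actions, and that $\mcal{F}'(\tau^*_{mark})=\tau^*_{dot}$.

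Well-definedness is essentially immediate: the vertex function $j\mapsto\tau_j$ of $(\tau,\vec a)$ is a bijection $\mathbb{QP}^1\to\mathbb{QP}^1$ (Def.\ref{def:vertex_function}), and the inductive construction of Def.\ref{def:vertex_function} singles out, for every ideal triangle of $\tau$, a distinguished vertex — the one labelled last — whose index is the mediant of the other two, so the rules $D$ and $L$ are unambiguous. For the initial condition, one checks directly that the vertex function of the standard marked tessellation is the identity of $\mathbb{QP}^1$: the readings $\tau_0=\tfrac01$ and $\tau_\infty=\tfrac10$ come from $\vec a^*$, the readings $\tau_1=\tfrac11$ and $\tau_{-1}=\tfrac{-1}1$ from the two triangles of $\tau^*$ adjacent to $\vec a^*$, and the mediant recursion of Def.\ref{def:vertex_function} then propagates $\tau_j=j$ by Rem.\ref{rem:triangle_Farey}. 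Hence $\mcal{F}'(\tau^*_{mark})$ puts the dot on the middle vertex of every triangle and gives it the middle-vertex label, which is precisely $\tau^*_{dot}$ (Def.\ref{def:dotted_tessellations}).

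The substantive step, which I expect to be the main obstacle, is $M$-equivariance, and it rests on the following naturality of the vertex function. If $(\tau,\vec a)$ has vertex function $j\mapsto\tau_j$ and $[\varphi]\in M$, then, writing $\varphi$ also for the induced bijection $r\mapsto\mu^{-1}(\varphi(\mu(r)))$ of the extended rationals (piecewise-$PSL(2,\mathbb{Z})$ by Def.\ref{def:asymptotically_rigid}), the vertex function of $[\varphi].(\tau,\vec a)=(\varphi(\tau),\varphi(\vec a))$ is $j\mapsto\varphi(\tau_j)$. I would prove this by running the three steps of Def.\ref{def:vertex_function} in parallel for $(\tau,\vec a)$ and for its $\varphi$-image, using that an asymptotically rigid homeomorphism is orientation-preserving and induces an incidence-preserving bijection on the vertices, edges and ideal triangles of $\tau$: it carries $\vec a$ to $\varphi(\vec a)$ with orientation, the triangle to the right (resp.\ left) of $\vec a$ to the triangle to the right (resp.\ left) of $\varphi(\vec a)$, and an ideal triangle two of whose vertices carry indices $\tfrac ab$ and $\tfrac cd$ to an ideal triangle whose corresponding two vertices carry the same indices, so that its third vertex again receives the mediant index $\tfrac{a+c}{b+d}$; the formula follows by induction. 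The only real content here is this bookkeeping — that such a homeomorphism respects incidence of edges and triangles, the notion of left/right of an oriented edge, and the reduced-fraction mediant relations used in Def.\ref{def:vertex_function}.

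Granting this, $M$-equivariance of $\mcal{F}'$ is formal: the ideal triangle of $\varphi(\tau)$ with vertices $\mu(\varphi(\tau_{a/b}))$, $\mu(\varphi(\tau_{(a+c)/(b+d)}))$, $\mu(\varphi(\tau_{c/d}))$ is the $\varphi$-image of the ideal triangle of $\tau$ with vertices $\mu(\tau_{a/b})$, $\mu(\tau_{(a+c)/(b+d)})$, $\mu(\tau_{c/d})$, so under $\mcal{F}'$ it receives the dot at $\mu(\varphi(\tau_{(a+c)/(b+d)}))$ and the label $\tfrac{a+c}{b+d}$ — which is exactly the dot and label that $[\varphi].\mcal{F}'(\tau,\vec a)$ assigns to it, because the $M$-action drags each dot to the corresponding corner of the image triangle and leaves the triangle labels unchanged. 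Thus $\mcal{F}'$ is $M$-equivariant; it then automatically takes values in $Pt_{dot}$, since every object of $Pt$ equals $[\varphi].\tau^*_{mark}$ for some $[\varphi]\in M$ by transitivity (Prop.\ref{prop:M-actions}) and the $M$-action preserves the objects of $Pt_{dot}$. Together with well-definedness and $\mcal{F}'(\tau^*_{mark})=\tau^*_{dot}$, the uniqueness clause of Prop.\ref{prop:well-definedness_of_F} forces $\mcal{F}'=\mcal{F}$, which is the assertion. A less conceptual alternative would be to verify the formula directly by tracking how $\mcal{F}$ transports the elementary moves $\alpha,\beta$ through the pictures of Fig.\ref{fig:alpha_beta}, but the equivariance argument above seems cleaner and more illuminating.
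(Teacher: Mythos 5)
Your argument is correct and complete. For comparison: the paper states Prop.~\ref{prop:concrete_description_of_F} with no proof at all --- the only justification offered is the invitation to observe that Fig.~\ref{subfig:dotted_tessellation_ex} is the image of Fig.~\ref{subfig:marked_tessellation_ex} --- so your write-up supplies what the author leaves to the reader, and it does so by the route the surrounding text is clearly set up for: check that the explicit rule is an unambiguous assignment on objects, that it satisfies the initial condition \eqref{eq:our_initial_condition_for_F}, and that it is $M$-equivariant, then invoke the uniqueness clause of Prop.~\ref{prop:well-definedness_of_F}. The one ingredient with real content, the naturality $(\varphi(\tau))_j=\varphi(\tau_j)$ of the vertex function under the $M$-action, is exactly the fact the paper itself uses implicitly (e.g.\ to get transitivity in Prop.~\ref{prop:M-actions}), and your induction through the three clauses of Def.~\ref{def:vertex_function} --- resting on orientation-preservation of asymptotically rigid homeomorphisms, which follows from the piecewise-$PSL(2,\mathbb{Z})$ condition and is confirmed by Rem.~\ref{rem:PPSL2Z} --- establishes it correctly. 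Your subsidiary checks are also right: the vertex function of $\tau^*_{mark}$ is the identity (so the initial condition holds by Def.~\ref{def:dotted_tessellations}), the mediant member of a Farey triple is unique (so $D$ and $L$ are unambiguous), and membership of the image in the objects of $Pt_{dot}$, i.e.\ the finiteness condition on the dotting rule, is legitimately deferred until equivariance is in hand. No gaps.
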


\subsection{The natural map ${\bf F}: T \to K$}
\label{subsec:bold_F}

Now, out of the constructed functor $\mcal{F}: Pt \to Pt_{\rm dot}$, we should build a map ${\bf F}: T\to K$. In the end we will use \eqref{eq:our_initial_condition_for_F} as the initial condition for our $\mcal{F}$, but at the moment we can stay more general, and just assume that we use some initial condition $\mcal{F}(\tau_{\rm mark}^\circ) = \tau_{\rm dot}^\circ$ as in Prop.\ref{prop:well-definedness_of_F}. From \S\ref{subsec:T_and_K}, we can deduce that morphisms of $Pt$ can be represented as elements of $T$, and morphisms of $Pt_{\rm dot}$ as elements of $K$:
\begin{proposition}
\label{prop:morphisms_to_T_and_K}
For each morphism $[\tau_{\rm mark}, \tau'_{\rm mark}]$ of $Pt$ there is a unique element $g$ of $T$ such that $\tau'_{\rm mark} = g.\tau_{\rm mark}$. For each morphism $[\tau_{\rm dot}, \tau'_{\rm dot}]$ of $Pt_{\rm dot}$ there is a unique element $h$ of $K$ such that $\tau'_{\rm dot} = h.\tau_{\rm dot}$.
\end{proposition}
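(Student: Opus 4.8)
The plan is to prove the two halves of Proposition~\ref{prop:morphisms_to_T_and_K} in parallel, using the free transitivity (resp. freeness) of the relevant group actions established earlier. For the $Pt$ side: fix the morphism $[\tau_{mark},\tau_{mark}']$. By Corollary~\ref{cor:T_acts_freely}, the group $T$ acts freely transitively on the set of all marked tessellations, so there is exactly one element $g\in T$ with $g.\tau_{mark}=\tau_{mark}'$. Transitivity gives existence of such a $g$, freeness gives uniqueness. That is essentially the entire argument for the first sentence, once one notes that morphisms of $Pt$ are in bijection with ordered pairs of objects and that $T$ acts by transformations of marked tessellations in the manner described in Definition~\ref{def:alpha_and_beta} (extended to words).

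For the $Pt_{dot}$ side the situation is slightly more delicate because $K$ does \emph{not} act transitively on the set of all dotted tessellations, and not every element of $K$ is applicable to a given $\tau_{dot}$. First I would invoke Proposition~\ref{prop:morphism_as_composition}: the morphism $[\tau_{dot},\tau'_{dot}]$ is realized by \emph{some} finite composition of elementary moves applicable to $\tau_{dot}$, and reading this word as an element of the free group $F_{dot}$ and then passing to $K=F_{dot}/R_{dot}$ produces an element $h\in K$ with $h.\tau_{dot}=\tau'_{dot}$; this gives existence. For uniqueness I would appeal directly to Corollary~\ref{cor:K_action_is_free}: if $h,h'\in K$ are both applicable to $\tau_{dot}$ and $h.\tau_{dot}=h'.\tau_{dot}=\tau'_{dot}$, then $h=h'$. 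Thus the $h$ produced is independent of the chosen word of elementary moves, and is the unique element of $K$ carrying $\tau_{dot}$ to $\tau'_{dot}$.

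The main point requiring care — and the step I expect to be the only real obstacle — is checking that the element $h$ obtained is genuinely well defined as an element of $K$ rather than merely of $F_{dot}$: a priori two different words of elementary moves taking $\tau_{dot}$ to $\tau'_{dot}$ could represent different elements of $K$. This is exactly what the freeness statement of Corollary~\ref{cor:K_action_is_free} rules out, so the proof hinges on quoting that corollary correctly and observing that both words, being applicable to $\tau_{dot}$ with the same output, must therefore be equal in $K$. Once this is in place, the proposition follows immediately, and I would write it up in two short paragraphs, one per sentence of the statement.
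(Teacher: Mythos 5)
Your proposal is correct and follows the same route the paper indicates: existence via transitivity of the $T$-action on objects of $Pt$ (Prop.\ref{prop:T_acts_transitively}) and realizability of morphisms of $Pt_{dot}$ as words of elementary moves (Prop.\ref{prop:morphism_as_composition}), with uniqueness supplied by the freeness statements (Cor.\ref{cor:T_acts_freely} and Cor.\ref{cor:K_action_is_free}). Your extra remark about well-definedness of $h$ in $K$ across different representing words is exactly the point Cor.\ref{cor:K_action_is_free} is designed to handle, so no gap remains.
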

\begin{definition}
\label{def:leadsto}
In such situations, we say that $[\tau_{\rm mark}, \tau'_{\rm mark}]$ {\em corresponds to $g$} and that $[\tau_{\rm dot}, \tau'_{\rm dot}]$ {\em corresponds to $h$}, and write $[\tau_{\rm mark},\tau'_{\rm mark}] \leadsto g$, and $[\tau_{\rm dot}, \tau'_{\rm dot}] \leadsto h$.
\end{definition}
\begin{proposition}
\label{prop:composition_to_multiplication}
These correspondences carries composition of morphisms to group multiplications in $T$ and $K$.
\end{proposition}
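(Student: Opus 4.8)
The statement to prove is that the correspondences $[\tau_{mark},\tau'_{mark}] \leadsto g$ and $[\tau_{dot},\tau'_{dot}] \leadsto h$ of Definition \ref{def:leadsto} convert composition of morphisms in $Pt$ (resp.\ $Pt_{dot}$) into the group multiplication of $T$ (resp.\ $K$). The plan is to treat the two cases by the same mechanism, since both rest only on the facts that the respective group acts freely on the set of decorated tessellations and that composition of morphisms was \emph{defined} by \eqref{eq:composition_of_morphisms}, namely $[\tau'',\tau'''] \circ [\tau,\tau'] = [\tau,\tau'']$ with $\tau'=\tau''$ forced by composability.

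First, for $Pt$: take composable morphisms $[\tau_{mark},\tau'_{mark}]$ and $[\tau'_{mark},\tau''_{mark}]$, and let $g, h \in T$ be the unique elements (Prop.\ref{prop:morphisms_to_T_and_K}) with $\tau'_{mark} = g.\tau_{mark}$ and $\tau''_{mark} = h.\tau'_{mark}$. Then $\tau''_{mark} = h.(g.\tau_{mark}) = (hg).\tau_{mark}$, so by uniqueness the element of $T$ corresponding to the composite morphism $[\tau'_{mark},\tau''_{mark}] \circ [\tau_{mark},\tau'_{mark}] = [\tau_{mark},\tau''_{mark}]$ is exactly $hg$; that is, $[\tau_{mark},\tau''_{mark}] \leadsto hg$. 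This is the desired statement, once one fixes the convention that the morphism applied first sits on the right, which matches the convention already in force for words in $\alpha,\beta$ (read from the right). The only subtlety is to make sure the order is recorded consistently: the \emph{second} morphism $[\tau'_{mark},\tau''_{mark}]$ corresponds to $h$ and appears on the \emph{left} of the product $hg$, in agreement with \eqref{eq:composition_of_morphisms} read as ``first $[\tau_{mark},\tau'_{mark}]$, then $[\tau'_{mark},\tau''_{mark}]$''.

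Second, for $Pt_{dot}$: the argument is verbatim the same, replacing $T$ by $K$ and Cor.\ref{cor:T_acts_freely} by Cor.\ref{cor:K_action_is_free}, with one extra point of care. Since not every element of $K$ is applicable to every dotted tessellation, one must note that if $g\in K$ is applicable to $\tau_{dot}$ and $h\in K$ is applicable to $g.\tau_{dot}$, then $hg$ is applicable to $\tau_{dot}$ and $(hg).\tau_{dot} = h.(g.\tau_{dot})$; this is immediate from the meaning of applicability (it just says that the composite word can be applied along the way). Then the uniqueness clause of Cor.\ref{cor:K_action_is_free} — an element of $K$ applicable to at least one dotted tessellation is determined by its action there — gives that $[\tau_{dot},\tau''_{dot}] \leadsto hg$ whenever $[\tau_{dot},\tau'_{dot}] \leadsto g$ and $[\tau'_{dot},\tau''_{dot}] \leadsto h$.

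I do not expect a genuine obstacle here: the proposition is essentially a bookkeeping consequence of freeness of the actions together with the definition of composition in the groupoids. The one place demanding attention is the bookkeeping of left/right order, so that the equality reads $[\tau_{dot},\tau''_{dot}]\leadsto hg$ (not $gh$); this is just a matter of matching the groupoid composition convention \eqref{eq:composition_of_morphisms} to the ``apply on the right'' convention for group elements, and I would state it explicitly to avoid later confusion when translating between $Pt$ and $Pt_{dot}$ via $\mcal{F}$.
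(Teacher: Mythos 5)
Your proof is correct and is exactly the standard verification the paper has in mind (the paper simply remarks that Prop.\ref{prop:composition_to_multiplication} ``is easy to show'' and omits the details): freeness pins down the group element, $(hg).\tau = h.(g.\tau)$ identifies the composite, and the order convention matches \eqref{eq:composition_of_morphisms}. Your extra remarks on the left/right bookkeeping and on applicability in $K$ are sensible and do not change the argument.
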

The existence proof for Prop.\ref{prop:morphisms_to_T_and_K} is by transitivity of the $T$-action on objects of $Pt$ (Prop.\ref{prop:T_acts_transitively}) and of the $K$-action on objects of $Pt_{\rm dot}$ (Prop.\ref{prop:morphism_as_composition}), and the uniqueness proof is by the freeness of those actions (Cor.\ref{cor:T_acts_freely}, Cor.\ref{cor:K_action_is_free}). Prop.\ref{prop:composition_to_multiplication} is easy to show. 

\vs

In order to get a well-defined map ${\bf F}: T\to K$ from the functor $\mcal{F}: Pt\to Pt_{\rm dot}$, we should understand which morphisms of $Pt$ (resp. $Pt_{\rm dot}$) correspond to a same element of $T$ (resp. $K$). Here, the $M$-actions discussed in \S\ref{subsec:mcal_F} play a role again.
\begin{definition}
When two morphisms $[\tau_{\rm mark}, \tau'_{\rm mark}]$ and $[\tau_{\rm mark}'', \tau'''_{\rm mark}]$ of $Pt$ correspond to a same element of $T$, we write
$
[\tau_{\rm mark}, \tau'_{\rm mark}] \simeq_T [\tau_{\rm mark}'', \tau'''_{\rm mark}].
$
When two morphisms $[\tau_{\rm dot}, \tau'_{\rm dot}]$ and $[\tau''_{\rm dot}, \tau'''_{\rm dot}]$ of $Pt_{\rm dot}$ correspond to a same element of $K$, we write $[\tau_{\rm dot}, \tau'_{\rm dot}] \simeq_K [\tau''_{\rm dot}, \tau'''_{\rm dot}]$.
\end{definition}
\begin{proposition}
\label{prop:morphisms_lead_to_same_element}
We have $[\tau_{\rm mark}, \tau'_{\rm mark}] \simeq_T [\tau_{\rm mark}'', \tau'''_{\rm mark}]$ if and only if there is $[\varphi] \in M$ such that $\tau''_{\rm mark} = [\varphi].\tau_{\rm mark}$ and $\tau'''_{\rm mark} = [\varphi].\tau'_{\rm mark}$. We have $[\tau_{\rm dot}, \tau'_{\rm dot}] \simeq_K [\tau''_{\rm dot}, \tau'''_{\rm dot}]$ if there is $[\psi] \in M$ such that $\tau''_{\rm dot} = [\psi].\tau_{\rm dot}$ and $\tau'''_{\rm dot} = [\psi].\tau'_{\rm dot}$.
\end{proposition}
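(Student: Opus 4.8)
The plan is to reduce both assertions to a single commutation property: the natural $M$-action commutes with the $T$-action on the objects of $Pt$, and likewise, wherever defined, with the $K$-action on the objects of $Pt_{dot}$. First I would record this. The elementary moves $\alpha,\beta$ of Def.\ref{def:alpha_and_beta} are described purely in terms of combinatorial data of a marked tessellation (the d.o.e. $\vec{a}$, the two triangles flanking it, a counterclockwise rotation of an arc), and an asymptotically rigid homeomorphism $\varphi$, being an orientation-preserving homeomorphism of $\mathbb{D}$, carries all of this data for $\tau_{mark}$ to the corresponding data for $[\varphi].\tau_{mark}$; hence $\alpha.([\varphi].\tau_{mark}) = [\varphi].(\alpha.\tau_{mark})$ and similarly for $\beta$, and since $T$ acts on marked tessellations as a genuine group action generated by $\alpha,\beta$ (Cor.\ref{cor:T_acts_freely}), it follows that $g.([\varphi].\tau_{mark}) = [\varphi].(g.\tau_{mark})$ for every $g\in T$ and $[\varphi]\in M$. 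The same reasoning applies verbatim to the moves $A_{[j]},T_{[j][k]},P_\gamma$ of Def.\ref{def:elementary_moves_of_Pt_dot}, which are described entirely through triangle labels, dots, shared edges, and counterclockwise/clockwise rotations of an arc: each commutes with every $[\psi]\in M$, both as to its domain of applicability and as to its output; composing, any word $w$ of elementary moves applicable to $\tau_{dot}$ is applicable to $[\psi].\tau_{dot}$ with $w.([\psi].\tau_{dot}) = [\psi].(w.\tau_{dot})$.

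Granting this, the ``if'' directions are short. For $Pt$: given $[\varphi]\in M$ with $\tau''_{mark} = [\varphi].\tau_{mark}$ and $\tau'''_{mark} = [\varphi].\tau'_{mark}$, let $g\in T$ be the unique element with $\tau'_{mark} = g.\tau_{mark}$ (Prop.\ref{prop:morphisms_to_T_and_K}); then $\tau'''_{mark} = [\varphi].(g.\tau_{mark}) = g.([\varphi].\tau_{mark}) = g.\tau''_{mark}$, so $[\tau''_{mark},\tau'''_{mark}]$ also corresponds to $g$, i.e. $[\tau_{mark},\tau'_{mark}]\simeq_T[\tau''_{mark},\tau'''_{mark}]$. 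For $Pt_{dot}$: given $[\psi]\in M$ with $\tau''_{dot} = [\psi].\tau_{dot}$ and $\tau'''_{dot} = [\psi].\tau'_{dot}$, let $h\in K$ with $\tau'_{dot} = h.\tau_{dot}$ and pick, by Prop.\ref{prop:morphism_as_composition}, a word $w$ of elementary moves representing $h$ and applicable to $\tau_{dot}$; then $w$ is applicable to $\tau''_{dot}$ and $w.\tau''_{dot} = [\psi].(w.\tau_{dot}) = [\psi].\tau'_{dot} = \tau'''_{dot}$, hence $h.\tau''_{dot} = \tau'''_{dot}$ and $[\tau_{dot},\tau'_{dot}]\simeq_K[\tau''_{dot},\tau'''_{dot}]$.

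For the remaining ``only if'' direction---only in the $Pt$ case, since for $Pt_{dot}$ a converse would need transitivity of the $M$-action on objects of $Pt_{dot}$, which fails (Prop.\ref{prop:M-actions})---I would use the transitivity of the $M$-action on objects of $Pt$ (Prop.\ref{prop:M-actions}). Suppose $[\tau_{mark},\tau'_{mark}]$ and $[\tau''_{mark},\tau'''_{mark}]$ both correspond to $g\in T$, so $\tau'_{mark} = g.\tau_{mark}$ and $\tau'''_{mark} = g.\tau''_{mark}$. Choose $[\varphi]\in M$ with $\tau''_{mark} = [\varphi].\tau_{mark}$; then by the commutation, $[\varphi].\tau'_{mark} = [\varphi].(g.\tau_{mark}) = g.([\varphi].\tau_{mark}) = g.\tau''_{mark} = \tau'''_{mark}$, so this $[\varphi]$ is as required.

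I expect the only point needing genuine care to be the commutation statement in the dotted setting, precisely because the $K$-action is only partially defined: one must verify that $M$-translation preserves not merely the result of an elementary move but also its hypothesis of applicability (for a $T_{[j][k]}$-move, that the $j$- and $k$-triangles still share an edge with the dots in the prescribed configuration), and then propagate this through a word representing $h$ by induction on its length. This stays routine once one uses that $M$ acts by orientation-preserving homeomorphisms preserving all decorations (arrows, dots, labels), but it is the step where the bookkeeping is heaviest.
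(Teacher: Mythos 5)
Your proposal is correct and follows essentially the same route as the paper's proof: the ``if'' directions rest on the observation that homeomorphisms preserve the combinatorial/graphical data defining the elementary moves, and the ``only if'' direction (for $Pt$ only) combines transitivity of the $M$-action with freeness of the $T$-action. The only organizational difference is that you isolate the commutation $g.([\varphi].\tau_{mark})=[\varphi].(g.\tau_{mark})$ up front and prove it directly from the combinatorial descriptions, whereas the paper records this afterwards as Lem.\ref{lem:T_M_commutes} and derives it \emph{from} the proposition; your explicit attention to preservation of applicability in the dotted case is a sound (and non-circular) refinement of the same idea.
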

\begin{proof}
The `if' statements holds because homeomorphisms preserve the combinatorial/graphical data. For example, if $[\tau_{\rm mark},\tau'_{\rm mark}] \leadsto \alpha$, then one notes that $[[\varphi].\tau_{\rm mark}, [\varphi].\tau'_{\rm mark}] \leadsto \alpha$,  for any $[\varphi] \in M$. Now, suppose $[\tau_{\rm mark}, \tau'_{\rm mark}]$ and $[\tau''_{\rm mark}, \tau'''_{\rm mark}]$ correspond to a same element $g$ of $T$. By the transitivity of the $M$-action on $Pt$, we can find $[\varphi] \in M$ such that $\tau_{\rm mark}'' = [\varphi].\tau_{\rm mark}$. Then $[ [\varphi].\tau_{\rm mark}, \tau'''_{\rm mark} ] $ and $[ [\varphi].\tau_{\rm mark}, [\varphi].\tau'_{\rm mark} ]$ correspond to the same element $g$. This means $\tau'''_{\rm mark} = g. ([\varphi].\tau_{\rm mark})$ and $[\varphi].\tau'_{\rm mark} = g.([\varphi].\tau_{\rm mark})$, therefore $\tau'''_{\rm mark} = [\varphi]. \tau'_{\rm mark}$.
\end{proof}

With the help of the following lemma, we now present a promised proof of Prop.\ref{prop:anti_isomorphism}.

\begin{lemma}
\label{lem:T_M_commutes}
For any $g\in T$ and $[\varphi] \in M$, one has $g.([\varphi].\tau_{\rm mark}) = [\varphi].(g.\tau_{\rm mark})$.
\end{lemma}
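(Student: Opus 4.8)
The plan is to reduce the identity to the two generators $\alpha,\beta$ of $T$ and then to verify it for those directly from their combinatorial descriptions. Recall that $T$ acts on the left on the set of marked tessellations (Cor.\ref{cor:T_acts_freely}), with words read from the right, so that $(g_1g_2).\tau_{mark}=g_1.(g_2.\tau_{mark})$. Since in $T$ one has $\alpha^4=\beta^3=1$, every $g\in T$ is represented by a \emph{positive} word $s_1\cdots s_n$ with each $s_i\in\{\alpha,\beta\}$. Hence, once we know that $\alpha.([\varphi].\tau_{mark})=[\varphi].(\alpha.\tau_{mark})$ and $\beta.([\varphi].\tau_{mark})=[\varphi].(\beta.\tau_{mark})$ for all marked tessellations $\tau_{mark}$ and all $[\varphi]\in M$, an immediate induction on $n$ (using that the $T$-action does not depend on the chosen word) gives $g.([\varphi].\tau_{mark})=[\varphi].(g.\tau_{mark})$ for every $g\in T$; the same conclusion for words involving $\alpha^{-1},\beta^{-1}$ also follows by substituting $\tau_{mark}\mapsto g^{-1}.\tau_{mark}$. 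So it suffices to treat $g=\alpha$ and $g=\beta$.

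The point for $g=\alpha$ is that the $\alpha$-move is defined purely in terms of homeomorphism-invariant incidence data. Write $\tau_{mark}=(\tau,\vec a)$ and $\alpha.\tau_{mark}=(\tau',\vec a')$, where $\tau'$ is obtained from $\tau$ by flipping the diagonal $\vec a$ of the ideal quadrilateral $Q$ bounded by the two ideal triangles of $\tau$ adjacent to $\vec a$, and $\vec a'$ is the new diagonal, oriented `as if obtained by rotating $\vec a$ counterclockwise' (Def.\ref{def:alpha_and_beta}). An asymptotically rigid homeomorphism $\varphi$ extends continuously to $\ol{\mathbb{D}}$ and restricts to an orientation-preserving homeomorphism of $S^1$ (Rem.\ref{rem:PPSL2Z}); hence $\varphi$ carries ideal arcs to ideal arcs and ideal triangles to ideal triangles, preserves adjacency, and preserves the counterclockwise cyclic order on $S^1$. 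Consequently the two triangles of $[\varphi].\tau=\varphi(\tau)$ adjacent to $[\varphi].\vec a=\varphi(\vec a)$ are the $\varphi$-images of the two triangles of $\tau$ adjacent to $\vec a$, their union is $\varphi(Q)$, flipping the diagonal of $\varphi(Q)$ yields $\varphi(\tau')$, and the `counterclockwise rotation' recipe applied to $\varphi(\vec a)$ produces $\varphi(\vec a')$. Thus $\alpha.([\varphi].\tau_{mark})=(\varphi(\tau'),\varphi(\vec a'))=[\varphi].(\alpha.\tau_{mark})$. The argument for $g=\beta$ is the same in spirit: $\beta$ only changes the d.o.e., replacing $\vec a$ (running from the vertex $\tau_0$ to $\tau_\infty$) by the side of the triangle to the left of $\vec a$ running from $\tau_\infty$ to $\tau_{-1}$, and $\varphi$ preserves `to the left of an oriented edge', `third vertex of a triangle', and the roles of $\tau_0,\tau_\infty,\tau_{-1}$. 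This is exactly the observation already used in the proof of Prop.\ref{prop:morphisms_lead_to_same_element}, that $[\tau_{mark},\tau'_{mark}]\leadsto\alpha$ implies $[[\varphi].\tau_{mark},[\varphi].\tau'_{mark}]\leadsto\alpha$, and likewise for $\beta$.

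There is no serious obstacle here; the only points needing a word of care are that the $M$-action on objects of $Pt$ is genuinely well defined (the $\varphi$-image of a Farey-type marked tessellation is again one), which is already part of Prop.\ref{prop:M-actions}, and that $\varphi$ is orientation-preserving, which is precisely what makes the orientation conventions in the definitions of the $\alpha$- and $\beta$-moves transform correctly under $\varphi$. Granting these, the lemma follows from the combinatorial (homeomorphism-invariant) nature of the elementary moves together with the elementary induction of the first paragraph.
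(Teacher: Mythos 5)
Your proof is correct, but it takes a different route from the paper's. The paper derives the lemma in two lines from Prop.\ref{prop:morphisms_lead_to_same_element} together with the freeness of the $T$-action: applying $[\varphi]^{-1}$ to both entries of the morphism $[[\varphi].\tau_{mark},\, g.([\varphi].\tau_{mark})]$ produces a morphism $\simeq_T$-equivalent to it, hence also corresponding to $g$; since $[\tau_{mark}, g.\tau_{mark}]$ corresponds to $g$ as well, uniqueness of the target forces $[\varphi]^{-1}.(g.([\varphi].\tau_{mark})) = g.\tau_{mark}$, and one applies $[\varphi]$. This handles an arbitrary $g\in T$ in one stroke, with no induction on word length. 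You instead unpack the content of the ``if'' direction of Prop.\ref{prop:morphisms_lead_to_same_element} — that the combinatorial recipes for $\alpha$ and $\beta$ are natural under orientation-preserving homeomorphisms extending to $S^1$ — and then propagate it to all of $T$ by induction on a positive word, using that the action is independent of the chosen word. Both arguments ultimately rest on the same geometric fact (homeomorphisms preserve the incidence and orientation data defining the elementary moves), which the paper has already isolated inside the proof of Prop.\ref{prop:morphisms_lead_to_same_element}; your version is more self-contained and makes the generator-level verification explicit, at the cost of an induction that the paper's formal argument avoids. As you note yourself, you could shortcut the induction entirely by invoking Prop.\ref{prop:morphisms_lead_to_same_element} for the single morphism $[[\varphi].\tau_{mark},\, g.([\varphi].\tau_{mark})]$ rather than only for the generators.
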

\begin{proof}
Observe $[\tau_{\rm mark}, \, [\varphi]^{-1}.(g.([\varphi].\tau_{\rm mark}))] \simeq_T [[\varphi].\tau_{\rm mark}, \, g.([\varphi].\tau_{\rm mark})] \leadsto g$ by Prop.\ref{prop:morphisms_lead_to_same_element} and Def.\ref{def:leadsto}. Since $[\tau_{\rm mark}, \, g.\tau_{\rm mark}] \leadsto g$, by Prop.\ref{prop:morphisms_lead_to_same_element} one has $[\varphi]^{-1}.(g.([\varphi].\tau_{\rm mark})) = g.\tau_{\rm mark}$, so by applying $[\varphi]$ from the left ones get the desired result.
\end{proof} 

\begin{proof}[Proof of Propf.\ref{prop:anti_isomorphism}]
For any $g\in T$, denote by $[\varphi_g]$ the unique element of $M$ such that $g.\tau_{\rm mark}^* = [\varphi_g].\tau_{\rm mark}^*$. One should prove $[\varphi_{gh}] = [\varphi_h] \circ [\varphi_g]$. Note that
\begin{align*}
& [\tau_{\rm mark}^*, [\varphi_{gh}].\tau_{\rm mark}^*]
= [\tau_{\rm mark}^*, (gh).\tau_{\rm mark}^*]
= [\tau_{\rm mark}^*, g.(h.\tau_{\rm mark}^*)] 
= [\tau_{\rm mark}^*, g.([\varphi_h].\tau_{\rm mark}^*)] \\
& \stackrel{Lem.{\fontsize{7}{7} \ref{lem:T_M_commutes}} }{=} [\tau_{\rm mark}^*, [\varphi_h].(g.\tau_{\rm mark}^*)]
= [\tau_{\rm mark}^*, [\varphi_h].([\varphi_g].\tau_{\rm mark}^*)]
= [\tau_{\rm mark}^*, ( [\varphi_h]\circ [\varphi_g]).\tau_{\rm mark}^*],
\end{align*}
thus $[\varphi_{gh}].\tau_{\rm mark}^* = ( [\varphi_h]\circ [\varphi_g]).\tau_{\rm mark}^*$, and therefore $[\varphi_{gh}] = [\varphi_h] \circ [\varphi_g]$ as desired.
\end{proof}

Finally we construct the sought-for map ${\bf F}: T\to K$, using the above results.
\begin{proposition}
\label{prop:bold_F}
Suppose we have a functor $\mcal{F} : Pt \to Pt_{\rm dot}$ \eqref{eq:map_from_marked_to_dotted} as in Prop.\ref{prop:well-definedness_of_F}. For each morphism $[\tau_{\rm mark}, \tau'_{\rm mark}]$ of $Pt$, let $g \in T$ and $h \in K$ be such that
\begin{align}
\nonumber
[\tau_{\rm mark}, \tau'_{\rm mark}] \leadsto g \quad\mbox{and}\quad [\mcal{F}(\tau_{\rm mark}), \mcal{F}(\tau'_{\rm mark})] \leadsto h
\end{align}
(see Def.\ref{def:leadsto}). Then the map ${\bf F} : T\to K$ given by ${\bf F}(g) = h$ is a well-defined injective group homomorphism.
\end{proposition}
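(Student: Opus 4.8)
The map ${\bf F}$ is defined by picking, for a given $g\in T$, \emph{some} morphism $[\tau_{mark},\tau'_{mark}]\leadsto g$ (e.g.\ take any $\tau_{mark}$ and put $\tau'_{mark}:=g.\tau_{mark}$), applying $\mcal{F}$, and reading off the element $h\in K$ with $[\mcal{F}(\tau_{mark}),\mcal{F}(\tau'_{mark})]\leadsto h$. So there are three things to establish: that $h$ does not depend on the chosen morphism (well-definedness), that $g\mapsto h$ is multiplicative, and that it is injective. Throughout I will use that in each of $Pt$ and $Pt_{dot}$ a morphism is determined by its source and target, so $\mcal{F}$ acts on morphisms by $\mcal{F}([\tau_{mark},\tau'_{mark}])=[\mcal{F}(\tau_{mark}),\mcal{F}(\tau'_{mark})]$; I will also use Prop.\ref{prop:morphisms_to_T_and_K}, Prop.\ref{prop:composition_to_multiplication}, Prop.\ref{prop:morphisms_lead_to_same_element}, the injectivity of $\mcal{F}$ on objects from Prop.\ref{prop:well-definedness_of_F}, and the freeness statements Cor.\ref{cor:T_acts_freely}, Cor.\ref{cor:K_action_is_free}.

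\textbf{Well-definedness.} Suppose $[\tau_{mark},\tau'_{mark}]\leadsto g$ and $[\tau''_{mark},\tau'''_{mark}]\leadsto g$ are two morphisms of $Pt$ corresponding to the same $g\in T$, and let $h,h'\in K$ be determined by $[\mcal{F}(\tau_{mark}),\mcal{F}(\tau'_{mark})]\leadsto h$ and $[\mcal{F}(\tau''_{mark}),\mcal{F}(\tau'''_{mark})]\leadsto h'$. I would invoke Prop.\ref{prop:morphisms_lead_to_same_element} to get $[\varphi]\in M$ with $\tau''_{mark}=[\varphi].\tau_{mark}$ and $\tau'''_{mark}=[\varphi].\tau'_{mark}$, then use the $M$-equivariance of $\mcal{F}$ on objects (Prop.\ref{prop:well-definedness_of_F}) to conclude $\mcal{F}(\tau''_{mark})=[\varphi].\mcal{F}(\tau_{mark})$ and $\mcal{F}(\tau'''_{mark})=[\varphi].\mcal{F}(\tau'_{mark})$. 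The ``if'' half of Prop.\ref{prop:morphisms_lead_to_same_element} for $Pt_{dot}$ then gives $[\mcal{F}(\tau''_{mark}),\mcal{F}(\tau'''_{mark})]\simeq_K[\mcal{F}(\tau_{mark}),\mcal{F}(\tau'_{mark})]$, i.e.\ $h=h'$. This is the conceptual heart of the proposition: $\mcal{F}$ was constructed to intertwine the $M$-actions precisely so that $\simeq_T$-equivalent morphisms are carried to $\simeq_K$-equivalent ones.

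\textbf{Homomorphism.} For $g_1,g_2\in T$ I would pick any $\tau_{mark}$, set $\tau'_{mark}:=g_2.\tau_{mark}$ and $\tau''_{mark}:=g_1.\tau'_{mark}$, so that $[\tau_{mark},\tau'_{mark}]\leadsto g_2$, $[\tau'_{mark},\tau''_{mark}]\leadsto g_1$, and (since $\tau''_{mark}=(g_1g_2).\tau_{mark}$) $[\tau_{mark},\tau''_{mark}]\leadsto g_1g_2$. Applying $\mcal{F}$ and using the composition law of $Pt_{dot}$, $[\mcal{F}(\tau_{mark}),\mcal{F}(\tau''_{mark})]=[\mcal{F}(\tau'_{mark}),\mcal{F}(\tau''_{mark})]\circ[\mcal{F}(\tau_{mark}),\mcal{F}(\tau'_{mark})]$; by Prop.\ref{prop:composition_to_multiplication} the left side corresponds to ${\bf F}(g_1g_2)$ and the right side to ${\bf F}(g_1){\bf F}(g_2)$, so ${\bf F}(g_1g_2)={\bf F}(g_1){\bf F}(g_2)$. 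Well-definedness is what lets me evaluate ${\bf F}$ on these particular morphisms.

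\textbf{Injectivity.} As ${\bf F}$ is a homomorphism it suffices to show $\ker{\bf F}=\{1\}$. If ${\bf F}(g)=1$, choose $[\tau_{mark},\tau'_{mark}]\leadsto g$; then $[\mcal{F}(\tau_{mark}),\mcal{F}(\tau'_{mark})]\leadsto 1$, which by Prop.\ref{prop:morphisms_to_T_and_K} and Cor.\ref{cor:K_action_is_free} means $\mcal{F}(\tau'_{mark})=\mcal{F}(\tau_{mark})$; injectivity of $\mcal{F}$ on objects forces $\tau'_{mark}=\tau_{mark}$, hence $g.\tau_{mark}=\tau_{mark}$, and freeness of the $T$-action (Cor.\ref{cor:T_acts_freely}) gives $g=1$. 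I do not expect a genuine obstacle here: once well-definedness is in place, the homomorphism and injectivity arguments are routine bookkeeping with the groupoid composition laws and the freeness/transitivity results already proved. The only step requiring care is the interplay of $\mcal{F}$ with the $M$-action in the well-definedness argument.
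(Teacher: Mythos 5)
Your proposal is correct and follows essentially the same route as the paper's own proof: well-definedness via Prop.\ref{prop:morphisms_lead_to_same_element} combined with the $M$-equivariance of $\mcal{F}$, the homomorphism property from functoriality and Prop.\ref{prop:composition_to_multiplication}, and injectivity from the injectivity of $\mcal{F}$ on objects together with freeness of the actions. The only difference is cosmetic: you spell out the homomorphism step with explicit choices of objects where the paper states it in one line.
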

\begin{proof}
For any $g\in T$ we can find a morphism of $Pt$ corresponding to $g$, and thus we get some definition of a set map ${\bf F}$. To prove well-definedness, suppose the morphisms $[\tau_{\rm mark}, \tau'_{\rm mark}] $ and $[\tau''_{\rm mark}, \tau'''_{\rm mark}]$ of $Pt$ correspond to a same element $g\in T$. By the first statement of Prop.\ref{prop:morphisms_lead_to_same_element} there is $[\varphi] \in M$ such that $\tau''_{\rm mark} = [\varphi].\tau_{\rm mark}$ and $\tau'''_{\rm mark} = [\varphi].\tau'_{\rm mark}$, so
$$
[\mcal{F}(\tau''_{\rm mark}), \, \mcal{F}(\tau'''_{\rm mark})] = [\mcal{F}([\varphi].\tau_{\rm mark}), \, \mcal{F}([\varphi].\tau'_{\rm mark})]
= [\, [\varphi].( \mcal{F}(\tau_{\rm mark}) ), \, [\varphi]. ( \mcal{F}(\tau'_{\rm mark}) )\,],
$$
by the $M$-equivariance of $\mcal{F}$. Now, by the second statement of Prop.\ref{prop:morphisms_lead_to_same_element}, the morphisms $[\, [\varphi].( \mcal{F}(\tau_{\rm mark}) ), \, [\varphi]. ( \mcal{F}(\tau'_{\rm mark}) )\,]$ and $[\mcal{F}(\tau_{\rm mark}), \mcal{F}(\tau'_{\rm mark})]$ of $Pt_{\rm dot}$ correspond to a same element in $K$. So we showed $[\mcal{F}(\tau''_{\rm mark}), \, \mcal{F}(\tau'''_{\rm mark})]$ and $[\mcal{F}(\tau_{\rm mark}), \mcal{F}(\tau'_{\rm mark})]$ correspond to a same element in $K$, proving the well-definedness of ${\bf F}$.

\vs

We can see that ${\bf F}$ is a group homomorphism because the group multiplication structures of $T$ and $K$ are inherited from the compositions of morphisms (Prop.\ref{prop:composition_to_multiplication}), which are preserved by $\mcal{F}$ since it is a functor. Now suppose ${\bf F}(g)=1$ for some $g\in T$. Let $[\tau_{\rm mark},\tau'_{\rm mark}]$ be a morphism in $Pt$ corresponding to $g$. Then the morphism $[\mcal{F}(\tau_{\rm mark}), \mcal{F}(\tau'_{\rm mark})]$ of $Pt_{\rm dot}$ corresponds to $1 \in K$, meaning that $\mcal{F}(\tau'_{\rm mark}) = 1.\mcal{F}(\tau_{\rm mark}) = \mcal{F}(\tau_{\rm mark})$. Since $\mcal{F}$ is injective on the set of objects (Prop.\ref{prop:well-definedness_of_F}), we get $\tau'_{\rm mark}= \tau_{\rm mark}$, and therefore $g=1$. Hence the injectivity of ${\bf F}$.
\end{proof}

\begin{lemma}
\label{lem:alternative_construction_of_bold_F}
The construction in Prop.\ref{prop:bold_F} can also be written as 
$$
\mcal{F}(g.\tau_{\rm mark}) = ({\bf F}(g)).\mcal{F}(\tau_{\rm mark}), \quad \mbox{for any $g\in T$ and any marked tessellation $\tau_{\rm mark}$.} \qed
$$
\end{lemma}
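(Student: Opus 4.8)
The plan is to unwind the definitions in Prop.\ref{prop:bold_F} and match them against Lem.\ref{lem:alternative_construction_of_bold_F}. Fix $g\in T$ and a marked tessellation $\tau_{mark}$, and set $\tau'_{mark} := g.\tau_{mark}$. Then by Def.\ref{def:leadsto} we have $[\tau_{mark}, \tau'_{mark}] \leadsto g$, so by the defining recipe for ${\bf F}$ in Prop.\ref{prop:bold_F} we have $[\mcal{F}(\tau_{mark}), \mcal{F}(\tau'_{mark})] \leadsto {\bf F}(g)$. By Prop.\ref{prop:morphisms_to_T_and_K} together with Def.\ref{def:leadsto}, this last statement means precisely that $\mcal{F}(\tau'_{mark}) = ({\bf F}(g)).\mcal{F}(\tau_{mark})$. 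Substituting back $\tau'_{mark} = g.\tau_{mark}$ gives $\mcal{F}(g.\tau_{mark}) = ({\bf F}(g)).\mcal{F}(\tau_{mark})$, which is exactly the claimed identity.

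First I would note that the only subtlety is that Prop.\ref{prop:bold_F} defines ${\bf F}(g)$ via \emph{some} choice of morphism $[\tau_{mark}, \tau'_{mark}]$ corresponding to $g$, whereas here we want the identity to hold for an \emph{arbitrary} marked tessellation $\tau_{mark}$. This is where the well-definedness part of Prop.\ref{prop:bold_F} is used: the element ${\bf F}(g)\in K$ does not depend on which morphism of $Pt$ corresponding to $g$ we picked, so in particular we are free to use the morphism $[\tau_{mark}, g.\tau_{mark}]$ based at the given $\tau_{mark}$. Once this observation is in place, the argument is just a chain of rewritings through Def.\ref{def:leadsto} and Prop.\ref{prop:morphisms_to_T_and_K}, with no computation.

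The main (and only) obstacle is therefore purely bookkeeping: one must be careful to invoke well-definedness so that the base tessellation in the definition of ${\bf F}(g)$ may be taken to be the $\tau_{mark}$ appearing in the statement, rather than a fixed reference tessellation. Everything else is a direct translation between the "$\leadsto$'' notation and the group actions of $T$ on objects of $Pt$ and of $K$ on objects of $Pt_{dot}$, using Prop.\ref{prop:morphisms_to_T_and_K}. I expect the proof to be two or three lines long.
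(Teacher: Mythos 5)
Your proof is correct and is exactly the intended argument: the paper states this lemma with no proof (the \qed appears in the statement itself), treating it as an immediate unwinding of Def.\ref{def:leadsto} and Prop.\ref{prop:bold_F}, which is precisely what you carry out. Your remark that the well-definedness established in Prop.\ref{prop:bold_F} is what licenses basing the computation at the arbitrary $\tau_{mark}$ rather than a fixed reference object is the right observation and the only nontrivial point.
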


What remains to be shown about the construction of ${\bf F}$ is its essential uniqueness. 
\begin{proposition}
\label{prop:essential_uniqueness_of_bold_F}
Let $\tau^\circ_{\rm mark}$, $\tau^{\circ\circ}_{\rm mark}$ be any marked tessellations and $\tau^\circ_{\rm dot}$, $\tau^{\circ\circ}_{\rm dot}$ be any dotted tessellations. Let $\mcal{F}^\circ : Pt \to Pt_{\rm dot}$ and $\mcal{F}^{\circ\circ} : Pt \to Pt_{\rm dot}$ be the functors constructed in Prop.\ref{prop:well-definedness_of_F} for initial conditions $\mcal{F}^\circ(\tau^\circ_{\rm mark}) = \tau^\circ_{\rm dot}$ and $\mcal{F}^{\circ\circ}(\tau^{\circ\circ}_{\rm mark}) = \tau^{\circ\circ}_{\rm dot}$ respectively. Let ${\bf F}^\circ, {\bf F}^{\circ\circ} : T \to K$ be the group homomorphisms constructed respectively from $\mcal{F}^\circ, \mcal{F}^{\circ\circ}$ by Prop.\ref{prop:bold_F}. Then there is $h \in K$ such that
\begin{align}
\label{eq:bold_F_conjugation}
{\bf F}^{\circ\circ}(g) = h^{-1} {\bf F}^\circ(g) h, \quad \forall g \in T.
\end{align}
\end{proposition}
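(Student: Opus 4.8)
The plan is to prove the (formally stronger) statement that the functors $\mcal{F}^\circ$ and $\mcal{F}^{\circ\circ}$ themselves differ only by post-composition with the $K$-action of a single fixed element $h\in K$, and then to read off \eqref{eq:bold_F_conjugation} from Lemma~\ref{lem:alternative_construction_of_bold_F} and the freeness of the $K$-action (Cor.~\ref{cor:K_action_is_free}).

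First I would establish the dotted analogue of Lemma~\ref{lem:T_M_commutes}: if $h\in K$ is applicable to a dotted tessellation $\tau_{dot}$, then $h$ is applicable to $[\varphi].\tau_{dot}$ for every $[\varphi]\in M$, and $h.([\varphi].\tau_{dot}) = [\varphi].(h.\tau_{dot})$. The proof copies that of Lemma~\ref{lem:T_M_commutes}: the morphism $[\,[\varphi].\tau_{dot},\,[\varphi].(h.\tau_{dot})\,]$ is the $[\varphi]$-translate of $[\tau_{dot},\,h.\tau_{dot}]$, so by the ``if'' part of Prop.~\ref{prop:morphisms_lead_to_same_element} we get $[\tau_{dot},\,h.\tau_{dot}]\simeq_K[\,[\varphi].\tau_{dot},\,[\varphi].(h.\tau_{dot})\,]$; since $[\tau_{dot},\,h.\tau_{dot}]\leadsto h$, also $[\,[\varphi].\tau_{dot},\,[\varphi].(h.\tau_{dot})\,]\leadsto h$, which is precisely the claim. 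In particular the $M$-action and the (partial) $K$-action on the objects of $Pt_{dot}$ commute.

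Next, I may use $\tau^\circ_{mark}$ as a common reference object for both functors: evaluating $\mcal{F}^{\circ\circ}$ at $\tau^\circ_{mark}$ (legitimate, since a functor is defined on all objects) and invoking Prop.~\ref{prop:morphisms_to_T_and_K}, there is a unique $h\in K$ with $\mcal{F}^\circ(\tau^\circ_{mark}) = h.\mcal{F}^{\circ\circ}(\tau^\circ_{mark})$. I claim $\mcal{F}^\circ(\tau_{mark}) = h.\mcal{F}^{\circ\circ}(\tau_{mark})$ for \emph{every} marked tessellation $\tau_{mark}$: writing $\tau_{mark} = [\varphi].\tau^\circ_{mark}$ by transitivity of the $M$-action (Prop.~\ref{prop:M-actions}) and combining the $M$-equivariance of $\mcal{F}^\circ$ and $\mcal{F}^{\circ\circ}$ with the previous step,
\[
\mcal{F}^\circ(\tau_{mark}) = [\varphi].\mcal{F}^\circ(\tau^\circ_{mark}) = [\varphi].(h.\mcal{F}^{\circ\circ}(\tau^\circ_{mark})) = h.([\varphi].\mcal{F}^{\circ\circ}(\tau^\circ_{mark})) = h.\mcal{F}^{\circ\circ}(\tau_{mark}).
\]
Then, for $g\in T$, applying Lemma~\ref{lem:alternative_construction_of_bold_F} to each functor at $\tau^\circ_{mark}$ and using the displayed identity at $g.\tau^\circ_{mark}$ and at $\tau^\circ_{mark}$ gives
\[
(h\,{\bf F}^{\circ\circ}(g)).\mcal{F}^{\circ\circ}(\tau^\circ_{mark}) = {\bf F}^\circ(g).\mcal{F}^\circ(\tau^\circ_{mark}) = ({\bf F}^\circ(g)\,h).\mcal{F}^{\circ\circ}(\tau^\circ_{mark}),
\]
so $h\,{\bf F}^{\circ\circ}(g) = {\bf F}^\circ(g)\,h$ by the freeness of the $K$-action (Cor.~\ref{cor:K_action_is_free}), i.e. ${\bf F}^{\circ\circ}(g) = h^{-1}{\bf F}^\circ(g)\,h$, which is \eqref{eq:bold_F_conjugation}.

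The one point needing attention is the partiality of the $K$-action: since an element of $K$ need not be applicable to every dotted tessellation, one must keep track of applicability --- this is exactly why the first step is phrased with the applicability clause, and it is what guarantees that $h.\mcal{F}^{\circ\circ}(\tau_{mark})$ makes sense for all $\tau_{mark}$ in the second step. Apart from this bookkeeping, the whole argument runs parallel to the proofs of Lemma~\ref{lem:T_M_commutes} and Prop.~\ref{prop:anti_isomorphism} and involves no new computation.
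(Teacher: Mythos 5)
Your proof is correct. It differs from the paper's argument in organization rather than in substance: both ultimately rest on the single fact that an $M$-translate $[\,[\psi].\tau_{dot},\,[\psi].\tau'_{dot}\,]$ of a morphism of $Pt_{dot}$ corresponds to the same element of $K$ as $[\tau_{dot},\tau'_{dot}]$ does (the ``if'' part of Prop.\ref{prop:morphisms_lead_to_same_element}). The paper applies this fact directly: it decomposes the morphism from $\mcal{F}^{\circ\circ}(\tau^\circ_{mark})$ to $\mcal{F}^{\circ\circ}(g.\tau^\circ_{mark})$ into three pieces via the composition rule and identifies the outer two pieces with $h$ and $h^{-1}$, the key step being that $[\mcal{F}^{\circ\circ}(g.\tau^\circ_{mark}),\mcal{F}^{\circ}(g.\tau^\circ_{mark})]$ is the $[\psi]$-translate of $[\mcal{F}^{\circ\circ}(\tau^\circ_{mark}),\tau^\circ_{dot}]$ and hence also corresponds to $h$. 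You instead package the same fact as a dotted analogue of Lem.\ref{lem:T_M_commutes} (commutation of the $M$-action with the partial $K$-action), use it to prove the formally stronger pointwise identity $\mcal{F}^\circ(\tau_{mark})=h.\mcal{F}^{\circ\circ}(\tau_{mark})$ for every object, and then read off the conjugation from Lem.\ref{lem:alternative_construction_of_bold_F} together with the freeness statement of Cor.\ref{cor:K_action_is_free}. Your route buys a cleaner conceptual statement (the two functors differ by post-composition with a single fixed $h\in K$) at the cost of one extra lemma, while the paper's route avoids stating the commutation explicitly but must track the morphism bookkeeping by hand; your care about applicability of elements of $K$ is appropriate and handled correctly.
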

\begin{proof}
There is a unique $[\varphi] \in M$ such that $\tau^\circ_{\rm mark} = [\varphi].\tau^{\circ\circ}_{\rm mark}$ (transitivity of $M$-action on $Pt$). Then $\mcal{F}^{\circ\circ}( \tau^\circ_{\rm mark} ) = \mcal{F}^{\circ\circ}( [\varphi].\tau^{\circ\circ}_{\rm mark} ) = [\varphi].\mcal{F}^{\circ\circ}(\tau^{\circ\circ}_{\rm mark})$ by the $M$-equivariance of $\mcal{F}^{\circ\circ}$. Thus we let $\tau_{\rm dot} = [\varphi].\tau^{\circ\circ}_{\rm dot}$, so that
$$
\mcal{F}^{\circ\circ}(\tau^\circ_{\rm mark}) = \tau_{\rm dot}.
$$
Now, let $g\in T$, and let $\tau'_{\rm mark} = g.\tau^\circ_{\rm mark}$, so that $[\tau^\circ_{\rm mark}, \tau'_{\rm mark}] \leadsto g$. Let $\tau'_{\rm dot} = \mcal{F}^\circ(\tau'_{\rm mark})$ and $\tau''_{\rm dot} = \mcal{F}^{\circ\circ}(\tau'_{\rm mark})$. Then by definition of ${\bf F}^\circ$ and ${\bf F}^{\circ\circ}$ we have
$$
[\tau^\circ_{\rm dot}, \tau'_{\rm dot}] = [\mcal{F}^\circ(\tau^\circ_{\rm mark}), \mcal{F}^\circ(\tau'_{\rm mark})] \leadsto {\bf F}^\circ(g), \quad
[\tau_{\rm dot}, \tau''_{\rm dot}] =[\mcal{F}^{\circ\circ}(\tau^\circ_{\rm mark}), \mcal{F}^{\circ\circ}(\tau'_{\rm mark})] \leadsto {\bf F}^{\circ\circ}(g).
$$
From the composition rule of $Pt_{\rm dot}$ (Def.\ref{def:Pt_dot}) we have
$$
[\tau_{\rm dot}, \tau''_{\rm dot}]
= [ \tau'_{\rm dot} , \tau''_{\rm dot} ] \circ [\tau^\circ_{\rm dot}, \tau'_{\rm dot}] \circ [ \tau_{\rm dot} , \tau^\circ_{\rm dot}].
$$
Let $h\in K$ be the element such that $[\tau_{\rm dot}, \tau^\circ_{\rm dot}] \leadsto h$. Then, in view of Prop.\ref{prop:composition_to_multiplication}, all we need to show for proving \eqref{eq:bold_F_conjugation} is $[\tau'_{\rm dot}, \tau''_{\rm dot}] \leadsto h^{-1}$, or equivalently, $[\tau''_{\rm dot}, \tau'_{\rm dot}] \leadsto h$. Observe that
$$
[\tau''_{\rm dot}, \tau'_{\rm dot}] = [\mcal{F}^{\circ\circ}(\tau'_{\rm mark}), \mcal{F}^\circ(\tau'_{\rm mark})]
= [\mcal{F}^{\circ\circ}(g.\tau^\circ_{\rm mark}), \mcal{F}^\circ(g.\tau^\circ_{\rm mark})]
$$
By the transitivity of the $M$-action on $Pt$, there is a unique $[\psi] \in M$ such that $g.\tau^\circ_{\rm mark} = [\psi].\tau^\circ_{\rm mark}$. So we have $\mcal{F}^{\circ\circ}(g.\tau^\circ_{\rm mark}) = \mcal{F}^{\circ\circ}([\varphi].\tau^\circ_{\rm mark}) = [\varphi]. \mcal{F}^{\circ\circ}(\tau^\circ_{\rm mark}) = [\varphi].\tau_{\rm dot}$ by the $M$-equivariance of $\mcal{F}^{\circ\circ}$, and similarly $\mcal{F}^\circ(g.\tau^\circ_{\rm mark}) = [\varphi].\mcal{F}^\circ(\tau^\circ_{\rm mark}) = [\varphi].\tau^\circ_{\rm dot}$ by the $M$-equivariance of $\mcal{F}^\circ$. Thus we get $[\tau''_{\rm dot},\tau'_{\rm dot}] = [ [\varphi].\tau_{\rm dot}, [\varphi].\tau^\circ_{\rm dot}]$ which corresponds to the same element in $K$ as $[\tau_{\rm dot}, \tau^\circ_{\rm dot}]$ does, namely $h$, by Prop.\ref{prop:morphisms_lead_to_same_element}. This $h$ doesn't depend on $g$.
\end{proof}

Now that we know that a different choice of an initial condition for $\mcal{F}$ does not essentially alter ${\bf F}:T \to K$, we are allowed to use any initial condition. As mentioned, we choose to use \eqref{eq:our_initial_condition_for_F}, and from now on, we implicitly assume this when we use $\mcal{F}$ and ${\bf F}$. For a concrete description of this map ${\bf F}:T\to K$, it suffices to describe the images of the generators $\alpha,\beta$ of $T$. The easiest way to actually compute these images is to apply on $\tau^*_{\rm mark}$. Namely, we have
$$
[\tau^*_{\rm dot}, \mcal{F}(\alpha.\tau^*_{\rm mark})] \leadsto {\bf F}(\alpha) \quad\mbox{and}\quad
[\tau^*_{\rm dot}, \mcal{F}(\beta.\tau^*_{\rm mark})] \leadsto {\bf F}(\beta),
$$
so we should investigate $\mcal{F}(\alpha.\tau^*_{\rm mark})$ and $\mcal{F}(\beta.\tau^*_{\rm mark})$, using Prop.\ref{prop:concrete_description_of_F}. To write this result down, we first need the following definition.

\begin{definition}
\label{def:P_gamma_alpha_beta}
We define permutations $\gamma_\alpha,\gamma_\beta$ of $\mathbb{Q}^\times$ as follows.

We first require that $\gamma_\alpha$ fixes $-1$ and $1$. 
If an ideal triangle of $\tau^*$ is labeled by $j \notin \{ -1,1 \}$ under the labeling rule of $\tau^*_{\rm dot}$ and by $j'$ under that of $\mcal{F}( \alpha.\tau_{\rm mark}^*)$, we set $\gamma_\alpha(j) = j'$. We thus establish a $\mathbb{Q}^\times$-permutation $\gamma_\alpha$.

For $\gamma_\beta$,
if an ideal triangle of $\tau^*$ is labeled by $j \in \mathbb{Q}^\times$ by the labeling rule of $\tau_{\rm dot}^*$ and by $j'$ under that of $\mcal{F}(\beta.\tau_{\rm mark}^*)$, we set $\gamma_\beta(j)=j'$. We thus get a $\mathbb{Q}^\times$-permutation $\gamma_\beta$. 
\end{definition}
The permutations $\gamma_\alpha$ and $\gamma_\beta$ are best seen in the pictures; see Figures \ref{fig:P_gamma_alpha} and \ref{fig:P_gamma_beta}. We can write some of the actions of the permutations $\gamma_\alpha$, $\gamma_\beta$ on $\mathbb{Q}^\times$:
\begin{align}
\label{eq:gamma_alpha}
& \left\{ {\renewcommand{\arraystretch}{1.2}
\begin{array}{l}
\gamma_\alpha(-1) = -1, \quad \gamma_\alpha(1)=1, \quad \gamma_\alpha(-2)=2, \quad
\gamma_\alpha (-\frac{1}{2}) = -2, \\
\gamma_\alpha(\frac{1}{2}) = -\frac{1}{2}, \quad \gamma_\alpha(2) = \frac{1}{2}, \quad \gamma_\alpha(\frac{1}{3}) = -\frac{3}{2}, \quad \gamma_\alpha(3) = \frac{2}{3}, \quad {\rm etc},
\end{array}} \right.
\\
\label{eq:gamma_beta}
& \left\{ {\renewcommand{\arraystretch}{1.2}
\begin{array}{l}
\gamma_\beta(-1) = -1, \quad \gamma_\beta(1) = -\frac{1}{2}, \quad \gamma_\beta(-2) =1, \quad
\gamma_\beta(-\frac{1}{2}) = -2, \\
\gamma_\beta(\frac{1}{2}) = -\frac{2}{3}, \quad \gamma_\beta(2) = -\frac{1}{3}, \quad \gamma_\beta(\frac{1}{3}) = -\frac{3}{4}, \quad \gamma_\beta(3) = -\frac{1}{4}, \quad {\rm etc}.
\end{array}} \right.
\end{align}

\begin{figure}[htbp!]
%
\centering
$\begin{array}{ccc}
\begin{pspicture}[showgrid=false,linewidth=0.5pt](-2.6,-2.8)(3.5,2.4)
\psarc(0,0){2.4}{0}{360}
\psarc(-2.4,2.4){2.4}{-90}{0}
\psarc(-2.4,-2.4){2.4}{0}{90}
\psarc(2.4,-2.4){2.4}{90}{180}
\psarc(2.4,2.4){2.4}{180}{-90}
\psarc[arcsep=0.5pt](2.4,1.2){1.2}{142.5}{-90}
\psarc[arcsep=0.5pt](0.8,2.4){0.8}{180}{-40}
\psarc[arcsep=0.5pt](2.4,0.8){0.8}{127}{-90}
\psarc[arcsep=0.5pt](1.714,1.714){0.343}{140}{-51}
\psarc[arcsep=0.5pt](0.48,2.4){0.48}{180}{-22}
\psarc[arcsep=0.5pt](1.2,2.1){0.3}{160}{-34}
\rput(-2.6,0){$\tau_0$}
\rput(2.7,0){$\tau_\infty$}
\rput(0,2.6){$\tau_{-1}$}
\rput(1.68,2.0){$\tau_{-2}$}
\psarc[arcsep=0.5pt](-2.4,1.2){1.2}{-90}{37.5}
\psarc[arcsep=0.5pt](-0.8,2.4){0.8}{-140}{0}
\psarc[arcsep=0.5pt](-2.4,0.8){0.8}{-90}{53}
\psarc[arcsep=0.5pt](-1.714,1.714){0.343}{-129}{40}
\psarc[arcsep=0.5pt](-0.48,2.4){0.48}{-158}{0}
\psarc[arcsep=0.5pt](-1.2,2.1){0.3}{-146}{20}
\rput(-1.65,2.15){\fontsize{7}{7} $\tau_{-\frac{1}{2}}$}
\psarc[arcsep=0.5pt](-2.4,-1.2){1.2}{-37.5}{90}
\psarc[arcsep=0.5pt](-0.8,-2.4){0.8}{0}{140}
\psarc[arcsep=0.5pt](-2.4,-0.8){0.8}{-53}{90}
\psarc[arcsep=0.5pt](-1.714,-1.714){0.343}{-40}{129}
\psarc[arcsep=0.5pt](-0.48,-2.4){0.48}{0}{158}
\psarc[arcsep=0.5pt](-1.2,-2.1){0.3}{-20}{146}
\rput(-1.57,-2.13){\fontsize{8}{8} $\tau_{\frac{1}{2}}$}
\rput(-2.05,-1.62){\fontsize{8}{8} $\tau_{\frac{1}{3}}$}
\psarc[arcsep=0.5pt](2.4,-1.2){1.2}{90}{-142.5}
\psarc[arcsep=0.5pt](0.8,-2.4){0.8}{40}{-180}
\psarc[arcsep=0.5pt](2.4,-0.8){0.8}{90}{-127}
\psarc[arcsep=0.5pt](1.714,-1.714){0.343}{51}{-140}
\psarc[arcsep=0.5pt](0.48,-2.4){0.48}{22}{-180}
\psarc[arcsep=0.5pt](1.2,-2.1){0.3}{34}{-160}
\rput(0,-2.6){$\tau_1$}
\rput(1.6,-2.15){$\tau_2$}
\rput(2.05,-1.63){$\tau_3$}
%
\rput(-0.05,1.6){\fontsize{9}{9} $\bullet$}
\rput(-0.05,-1.6){\fontsize{9}{9} $\bullet$}
\rput(1.07,1.52){\fontsize{8}{8} $\bullet$}
\rput(-1.15,1.52){\fontsize{8}{8} $\bullet$}
\rput(1.07,-1.52){\fontsize{8}{8} $\bullet$}
\rput(-1.15,-1.52){\fontsize{8}{8} $\bullet$}
\rput(1.59,1.28){\fontsize{7}{7} $\bullet$}
\rput(-1.69,1.28){\fontsize{7}{7} $\bullet$}
\rput(1.59,-1.28){\fontsize{7}{7} $\bullet$}
\rput(-1.69,-1.28){\fontsize{7}{7} $\bullet$}
\rput(0.78,1.96){\fontsize{6}{6} $\bullet$}
\rput(-0.89,1.96){\fontsize{6}{6} $\bullet$}
\rput(0.78,-1.96){\fontsize{6}{6} $\bullet$}
\rput(-0.89,-1.96){\fontsize{6}{6} $\bullet$}
%
\psline{-}(-2.4,0)(2.4,0)
\rput{110}(2.02,0.68){\fontsize{12}{12} $\cdots$}
\rput{-110}(2.02,-0.68){\fontsize{12}{12} $\cdots$}
\rput{74}(-1.96,0.68){\fontsize{12}{12} $\cdots$}
\rput{-74}(-1.96,-0.68){\fontsize{12}{12} $\cdots$}
\rput{-10}(0.40,2.18){\fontsize{10}{10} $\cdots$}
\rput{10}(-0.45,2.18){\fontsize{10}{10} $\cdots$}
\rput{-10}(-0.45,-2.18){\fontsize{10}{10} $\cdots$}
\rput{10}(0.40,-2.18){\fontsize{10}{10} $\cdots$}
\rput{-43}(1.58,1.61){\fontsize{8}{8} $\cdots$}
\rput{43}(-1.62,1.59){\fontsize{8}{8} $\cdots$}
\rput{-43}(-1.62,-1.59){\fontsize{8}{8} $\cdots$}
\rput{43}(1.60,-1.61){\fontsize{8}{8} $\cdots$}
\rput{-29}(1.10,2.00){\fontsize{7}{7} $\cdots$}
\rput{29}(-1.16,1.98){\fontsize{7}{7} $\cdots$}
\rput{-29}(-1.16,-1.98){\fontsize{7}{7} $\cdots$}
\rput{29}(1.10,-2.00){\fontsize{7}{7} $\cdots$}
%
\rput(-0.05,0.60){\fontsize{10}{10} $[-1]$}
\rput(-0.05,-0.50){\fontsize{10}{10} $[1]$}
\rput(-0.85,1.24){\fontsize{8}{8} $[-\frac{1}{2}]$}
\rput(-0.85,-1.17){\fontsize{9}{9} $[\frac{1}{2}]$}
\rput(-1.45,-1.05){\fontsize{7}{7} $[\frac{1}{3}]$}
\rput(0.75,1.24){\fontsize{8}{8} $[-2]$}
\rput(0.75,-1.17){\fontsize{8}{8} $[2]$}
\rput(1.37,-1.00){\fontsize{7}{7} $[3]$}
\psline[linewidth=0.5pt, 
arrowsize=3pt 4, 
arrowlength=2, 
arrowinset=0.3] 
{->}(0,0)(0.2,0)
%
\rput[l](3.2,0){\pcline[linewidth=0.7pt, arrowsize=2pt 4]{->}(0,0)(1;0)\Aput{$\alpha$}}
\end{pspicture}
%
%
%
%
& \begin{pspicture}[showgrid=false,linewidth=0.5pt](-3.5,-2.8)(2.6,2.4)
\psarc(0,0){2.4}{0}{360}
\psarc(-2.4,2.4){2.4}{-90}{0}
\psarc(-2.4,-2.4){2.4}{0}{90}
\psarc(2.4,-2.4){2.4}{90}{180}
\psarc(2.4,2.4){2.4}{180}{-90}
\psarc[arcsep=0.5pt](2.4,1.2){1.2}{142.5}{-90}
\psarc[arcsep=0.5pt](0.8,2.4){0.8}{180}{-40}
\psarc[arcsep=0.5pt](2.4,0.8){0.8}{127}{-90}
\psarc[arcsep=0.5pt](1.714,1.714){0.343}{140}{-51}
\psarc[arcsep=0.5pt](0.48,2.4){0.48}{180}{-22}
\psarc[arcsep=0.5pt](1.2,2.1){0.3}{160}{-34}
\rput(-2.7,0){\fontsize{8}{8} $\tau'_{-1}$}
\rput(2.7,0){$\tau'_1$}
\rput(0,2.65){$\tau'_\infty$}
\rput(1.6,2.1){$\tau'_2$}
\psarc[arcsep=0.5pt](-2.4,1.2){1.2}{-90}{37.5}
\psarc[arcsep=0.5pt](-0.8,2.4){0.8}{-140}{0}
\psarc[arcsep=0.5pt](-2.4,0.8){0.8}{-90}{53}
\psarc[arcsep=0.5pt](-1.714,1.714){0.343}{-129}{40}
\psarc[arcsep=0.5pt](-0.48,2.4){0.48}{-158}{0}
\psarc[arcsep=0.5pt](-1.2,2.1){0.3}{-146}{20}
\rput(-1.55,2.2){$\fontsize{8}{8} \tau'_{-2}$}
\psarc[arcsep=0.5pt](-2.4,-1.2){1.2}{-37.5}{90}
\psarc[arcsep=0.5pt](-0.8,-2.4){0.8}{0}{140}
\psarc[arcsep=0.5pt](-2.4,-0.8){0.8}{-53}{90}
\psarc[arcsep=0.5pt](-1.714,-1.714){0.343}{-40}{129}
\psarc[arcsep=0.5pt](-0.48,-2.4){0.48}{0}{158}
\psarc[arcsep=0.5pt](-1.2,-2.1){0.3}{-20}{146}
\rput(-1.67,-2.08){\fontsize{8}{8} $\tau'_{-\frac{1}{2}}$}
\rput(-2.27,-1.50){\fontsize{7}{7} $\tau'_{-\frac{2}{3}}$}
\psarc[arcsep=0.5pt](2.4,-1.2){1.2}{90}{-142.5}
\psarc[arcsep=0.5pt](0.8,-2.4){0.8}{40}{-180}
\psarc[arcsep=0.5pt](2.4,-0.8){0.8}{90}{-127}
\psarc[arcsep=0.5pt](1.714,-1.714){0.343}{51}{-140}
\psarc[arcsep=0.5pt](0.48,-2.4){0.48}{22}{-180}
\psarc[arcsep=0.5pt](1.2,-2.1){0.3}{34}{-160}
\rput(0,-2.7){$\tau'_0$}
\rput(1.65,-2.13){\fontsize{8}{8} $\tau'_{\frac{1}{2}}$}
\rput(2.08,-1.60){\fontsize{7}{7} $\tau'_{\frac{2}{3}}$}
%
%
\rput(1.6,0.03){\fontsize{9}{9} $\bullet$}
\rput(-1.6,-0.0){\fontsize{9}{9} $\bullet$}
\rput(1.07,1.52){\fontsize{8}{8} $\bullet$}
\rput(-1.15,1.52){\fontsize{8}{8} $\bullet$}
\rput(1.07,-1.52){\fontsize{8}{8} $\bullet$}
\rput(-1.15,-1.52){\fontsize{8}{8} $\bullet$}
\rput(1.59,1.28){\fontsize{7}{7} $\bullet$}
\rput(-1.69,1.28){\fontsize{7}{7} $\bullet$}
\rput(1.59,-1.28){\fontsize{7}{7} $\bullet$}
\rput(-1.69,-1.28){\fontsize{7}{7} $\bullet$}
\rput(0.78,1.96){\fontsize{6}{6} $\bullet$}
\rput(-0.89,1.96){\fontsize{6}{6} $\bullet$}
\rput(0.78,-1.96){\fontsize{6}{6} $\bullet$}
\rput(-0.89,-1.96){\fontsize{6}{6} $\bullet$}
%
\psline{-}(0,-2.4)(0,2.4)
\rput{110}(2.02,0.68){\fontsize{12}{12} $\cdots$}
\rput{-110}(2.02,-0.68){\fontsize{12}{12} $\cdots$}
\rput{74}(-1.96,0.68){\fontsize{12}{12} $\cdots$}
\rput{-74}(-1.96,-0.68){\fontsize{12}{12} $\cdots$}
\rput{-10}(0.40,2.18){\fontsize{10}{10} $\cdots$}
\rput{10}(-0.45,2.18){\fontsize{10}{10} $\cdots$}
\rput{-10}(-0.45,-2.18){\fontsize{10}{10} $\cdots$}
\rput{10}(0.40,-2.18){\fontsize{10}{10} $\cdots$}
\rput{-43}(1.58,1.61){\fontsize{8}{8} $\cdots$}
\rput{43}(-1.62,1.59){\fontsize{8}{8} $\cdots$}
\rput{-43}(-1.62,-1.59){\fontsize{8}{8} $\cdots$}
\rput{43}(1.60,-1.61){\fontsize{8}{8} $\cdots$}
\rput{-29}(1.10,2.00){\fontsize{7}{7} $\cdots$}
\rput{29}(-1.16,1.98){\fontsize{7}{7} $\cdots$}
\rput{-29}(-1.16,-1.98){\fontsize{7}{7} $\cdots$}
\rput{29}(1.10,-2.00){\fontsize{7}{7} $\cdots$}
%
\rput(0.65,-0.05){\fontsize{10}{10} $[1]$}
\rput(-0.80,-0.05){\fontsize{10}{10} $[-1]$}
\rput(-0.85,1.24){\fontsize{8}{8} $[-2]$}
\rput(-0.85,-1.17){\fontsize{9}{9} $[-\frac{1}{2}]$}
\rput(-1.55,-1.05){\fontsize{2}{2} $[-\frac{2}{3}]$}
\rput(0.75,1.24){\fontsize{8}{8} $[2]$}
\rput(0.75,-1.17){\fontsize{8}{8} $[\frac{1}{2}]$}
\rput(1.37,-1.00){\fontsize{7}{7} $[\frac{2}{3}]$}
\psline[linewidth=0.5pt, 
arrowsize=3pt 4, 
arrowlength=2, 
arrowinset=0.3] 
{->}(0,0)(0,0.2)
\end{pspicture}
&
\end{array} $

\vspace{-4mm}

\caption{The $\alpha$-move on $\tau_{\rm mark}^*$ and $\tau_{\rm dot}^*$}
\label{fig:P_gamma_alpha}
\end{figure}
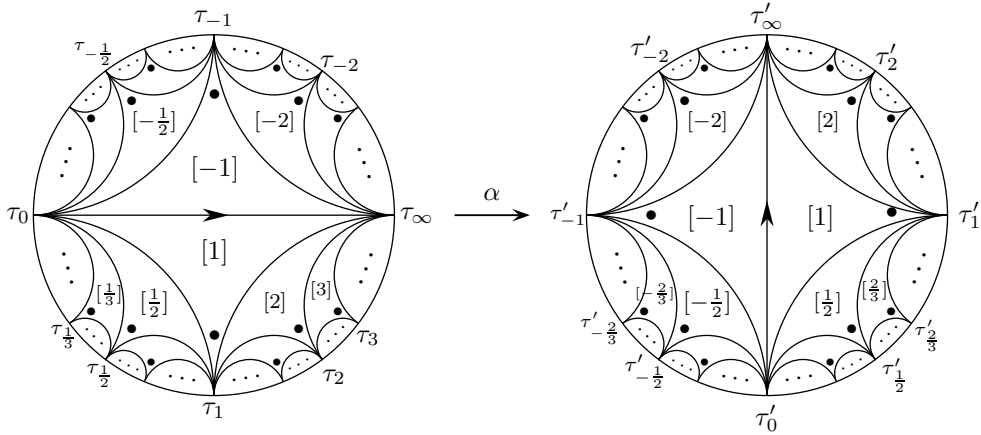
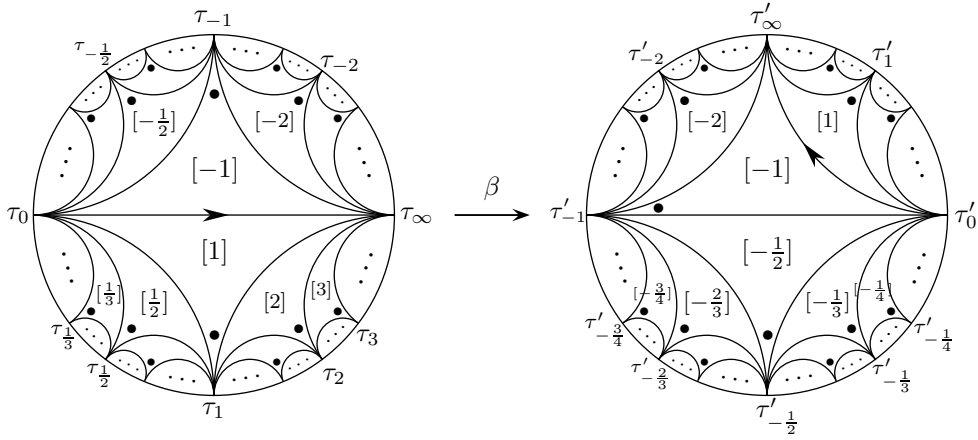
\begin{figure}[htbp!]
%
\centering
$\begin{array}{ccc}
\begin{pspicture}[showgrid=false,linewidth=0.5pt](-2.6,-2.8)(3.5,2.6)
\psarc(0,0){2.4}{0}{360}
\psarc(-2.4,2.4){2.4}{-90}{0}
\psarc(-2.4,-2.4){2.4}{0}{90}
\psarc(2.4,-2.4){2.4}{90}{180}
\psarc(2.4,2.4){2.4}{180}{-90}
\psarc[arcsep=0.5pt](2.4,1.2){1.2}{142.5}{-90}
\psarc[arcsep=0.5pt](0.8,2.4){0.8}{180}{-40}
\psarc[arcsep=0.5pt](2.4,0.8){0.8}{127}{-90}
\psarc[arcsep=0.5pt](1.714,1.714){0.343}{140}{-51}
\psarc[arcsep=0.5pt](0.48,2.4){0.48}{180}{-22}
\psarc[arcsep=0.5pt](1.2,2.1){0.3}{160}{-34}
\rput(-2.6,0){$\tau_0$}
\rput(2.7,0){$\tau_\infty$}
\rput(0,2.6){$\tau_{-1}$}
\rput(1.68,2.0){$\tau_{-2}$}
\psarc[arcsep=0.5pt](-2.4,1.2){1.2}{-90}{37.5}
\psarc[arcsep=0.5pt](-0.8,2.4){0.8}{-140}{0}
\psarc[arcsep=0.5pt](-2.4,0.8){0.8}{-90}{53}
\psarc[arcsep=0.5pt](-1.714,1.714){0.343}{-129}{40}
\psarc[arcsep=0.5pt](-0.48,2.4){0.48}{-158}{0}
\psarc[arcsep=0.5pt](-1.2,2.1){0.3}{-146}{20}
\rput(-1.65,2.15){\fontsize{7}{7} $\tau_{-\frac{1}{2}}$}
\psarc[arcsep=0.5pt](-2.4,-1.2){1.2}{-37.5}{90}
\psarc[arcsep=0.5pt](-0.8,-2.4){0.8}{0}{140}
\psarc[arcsep=0.5pt](-2.4,-0.8){0.8}{-53}{90}
\psarc[arcsep=0.5pt](-1.714,-1.714){0.343}{-40}{129}
\psarc[arcsep=0.5pt](-0.48,-2.4){0.48}{0}{158}
\psarc[arcsep=0.5pt](-1.2,-2.1){0.3}{-20}{146}
\rput(-1.57,-2.13){\fontsize{8}{8} $\tau_{\frac{1}{2}}$}
\rput(-2.05,-1.62){\fontsize{8}{8} $\tau_{\frac{1}{3}}$}
\psarc[arcsep=0.5pt](2.4,-1.2){1.2}{90}{-142.5}
\psarc[arcsep=0.5pt](0.8,-2.4){0.8}{40}{-180}
\psarc[arcsep=0.5pt](2.4,-0.8){0.8}{90}{-127}
\psarc[arcsep=0.5pt](1.714,-1.714){0.343}{51}{-140}
\psarc[arcsep=0.5pt](0.48,-2.4){0.48}{22}{-180}
\psarc[arcsep=0.5pt](1.2,-2.1){0.3}{34}{-160}
\rput(0,-2.6){$\tau_1$}
\rput(1.6,-2.15){$\tau_2$}
\rput(2.05,-1.63){$\tau_3$}
%
\rput(-0.05,1.6){\fontsize{9}{9} $\bullet$}
\rput(-0.05,-1.6){\fontsize{9}{9} $\bullet$}
\rput(1.07,1.52){\fontsize{8}{8} $\bullet$}
\rput(-1.15,1.52){\fontsize{8}{8} $\bullet$}
\rput(1.07,-1.52){\fontsize{8}{8} $\bullet$}
\rput(-1.15,-1.52){\fontsize{8}{8} $\bullet$}
\rput(1.59,1.28){\fontsize{7}{7} $\bullet$}
\rput(-1.69,1.28){\fontsize{7}{7} $\bullet$}
\rput(1.59,-1.28){\fontsize{7}{7} $\bullet$}
\rput(-1.69,-1.28){\fontsize{7}{7} $\bullet$}
\rput(0.78,1.96){\fontsize{6}{6} $\bullet$}
\rput(-0.89,1.96){\fontsize{6}{6} $\bullet$}
\rput(0.78,-1.96){\fontsize{6}{6} $\bullet$}
\rput(-0.89,-1.96){\fontsize{6}{6} $\bullet$}
%
\psline{-}(-2.4,0)(2.4,0)
\rput{110}(2.02,0.68){\fontsize{12}{12} $\cdots$}
\rput{-110}(2.02,-0.68){\fontsize{12}{12} $\cdots$}
\rput{74}(-1.96,0.68){\fontsize{12}{12} $\cdots$}
\rput{-74}(-1.96,-0.68){\fontsize{12}{12} $\cdots$}
\rput{-10}(0.40,2.18){\fontsize{10}{10} $\cdots$}
\rput{10}(-0.45,2.18){\fontsize{10}{10} $\cdots$}
\rput{-10}(-0.45,-2.18){\fontsize{10}{10} $\cdots$}
\rput{10}(0.40,-2.18){\fontsize{10}{10} $\cdots$}
\rput{-43}(1.58,1.61){\fontsize{8}{8} $\cdots$}
\rput{43}(-1.62,1.59){\fontsize{8}{8} $\cdots$}
\rput{-43}(-1.62,-1.59){\fontsize{8}{8} $\cdots$}
\rput{43}(1.60,-1.61){\fontsize{8}{8} $\cdots$}
\rput{-29}(1.10,2.00){\fontsize{7}{7} $\cdots$}
\rput{29}(-1.16,1.98){\fontsize{7}{7} $\cdots$}
\rput{-29}(-1.16,-1.98){\fontsize{7}{7} $\cdots$}
\rput{29}(1.10,-2.00){\fontsize{7}{7} $\cdots$}
%
\rput(-0.05,0.60){\fontsize{10}{10} $[-1]$}
\rput(-0.05,-0.50){\fontsize{10}{10} $[1]$}
\rput(-0.85,1.24){\fontsize{8}{8} $[-\frac{1}{2}]$}
\rput(-0.85,-1.17){\fontsize{9}{9} $[\frac{1}{2}]$}
\rput(-1.45,-1.05){\fontsize{7}{7} $[\frac{1}{3}]$}
\rput(0.75,1.24){\fontsize{8}{8} $[-2]$}
\rput(0.75,-1.17){\fontsize{8}{8} $[2]$}
\rput(1.37,-1.00){\fontsize{7}{7} $[3]$}
\psline[linewidth=0.5pt, 
arrowsize=3pt 4, 
arrowlength=2, 
arrowinset=0.3] 
{->}(0,0)(0.2,0)
%
\rput[l](3.2,0){\pcline[linewidth=0.7pt, arrowsize=2pt 4]{->}(0,0)(1;0)\Aput{$\beta$}}
\end{pspicture}
%
%
%
%
& \begin{pspicture}[showgrid=false,linewidth=0.5pt](-3.5,-2.8)(2.6,2.6)
\psarc(0,0){2.4}{0}{360}
\psarc(-2.4,2.4){2.4}{-90}{0}
\psarc(-2.4,-2.4){2.4}{0}{90}
\psarc(2.4,-2.4){2.4}{90}{180}
\psarc(2.4,2.4){2.4}{180}{-90}
\psarc[arcsep=0.5pt](2.4,1.2){1.2}{142.5}{-90}
\psarc[arcsep=0.5pt](0.8,2.4){0.8}{180}{-40}
\psarc[arcsep=0.5pt](2.4,0.8){0.8}{127}{-90}
\psarc[arcsep=0.5pt](1.714,1.714){0.343}{140}{-51}
\psarc[arcsep=0.5pt](0.48,2.4){0.48}{180}{-22}
\psarc[arcsep=0.5pt](1.2,2.1){0.3}{160}{-34}
\rput(-2.7,0.05){\fontsize{9}{9} $\tau'_{-1}$}
\rput(2.65,0){$\tau'_0$}
\rput(0,2.65){$\tau'_\infty$}
\rput(1.55,2.15){$\tau'_1$}
\psarc[arcsep=0.5pt](-2.4,1.2){1.2}{-90}{37.5}
\psarc[arcsep=0.5pt](-0.8,2.4){0.8}{-140}{0}
\psarc[arcsep=0.5pt](-2.4,0.8){0.8}{-90}{53}
\psarc[arcsep=0.5pt](-1.714,1.714){0.343}{-129}{40}
\psarc[arcsep=0.5pt](-0.48,2.4){0.48}{-158}{0}
\psarc[arcsep=0.5pt](-1.2,2.1){0.3}{-146}{20}
\rput(-1.66,2.15){\fontsize{9}{9} $\tau'_{-2}$}
\psarc[arcsep=0.5pt](-2.4,-1.2){1.2}{-37.5}{90}
\psarc[arcsep=0.5pt](-0.8,-2.4){0.8}{0}{140}
\psarc[arcsep=0.5pt](-2.4,-0.8){0.8}{-53}{90}
\psarc[arcsep=0.5pt](-1.714,-1.714){0.343}{-40}{129}
\psarc[arcsep=0.5pt](-0.48,-2.4){0.48}{0}{158}
\psarc[arcsep=0.5pt](-1.2,-2.1){0.3}{-20}{146}
\rput(-1.60,-2.10){\fontsize{7}{7} $\tau'_{-\frac{2}{3}}$}
\rput(-2.20,-1.52){\fontsize{8}{8} $\tau'_{-\frac{3}{4}}$}
\psarc[arcsep=0.5pt](2.4,-1.2){1.2}{90}{-142.5}
\psarc[arcsep=0.5pt](0.8,-2.4){0.8}{40}{-180}
\psarc[arcsep=0.5pt](2.4,-0.8){0.8}{90}{-127}
\psarc[arcsep=0.5pt](1.714,-1.714){0.343}{51}{-140}
\psarc[arcsep=0.5pt](0.48,-2.4){0.48}{22}{-180}
\psarc[arcsep=0.5pt](1.2,-2.1){0.3}{34}{-160}
\rput(0.15,-2.67){$\tau'_{-\frac{1}{2}}$}
\rput(1.62,-2.13){\fontsize{8}{8} $\tau'_{-\frac{1}{3}}$}
\rput(2.17,-1.58){\fontsize{8}{8} $\tau'_{-\frac{1}{4}}$}
%
\rput(-0.05,-1.6){\fontsize{9}{9} $\bullet$}
\rput(-1.50, 0.09){\fontsize{9}{9} $\bullet$}
\rput(1.07,1.52){\fontsize{8}{8} $\bullet$}
\rput(-1.15,1.52){\fontsize{8}{8} $\bullet$}
\rput(1.07,-1.52){\fontsize{8}{8} $\bullet$}
\rput(-1.15,-1.52){\fontsize{8}{8} $\bullet$}
\rput(1.59,1.28){\fontsize{7}{7} $\bullet$}
\rput(-1.69,1.28){\fontsize{7}{7} $\bullet$}
\rput(1.59,-1.28){\fontsize{7}{7} $\bullet$}
\rput(-1.69,-1.28){\fontsize{7}{7} $\bullet$}
\rput(0.78,1.96){\fontsize{6}{6} $\bullet$}
\rput(-0.89,1.96){\fontsize{6}{6} $\bullet$}
\rput(0.78,-1.96){\fontsize{6}{6} $\bullet$}
\rput(-0.89,-1.96){\fontsize{6}{6} $\bullet$}
%
\psline{-}(-2.4,0)(2.4,0)
\rput{110}(2.02,0.68){\fontsize{12}{12} $\cdots$}
\rput{-110}(2.02,-0.68){\fontsize{12}{12} $\cdots$}
\rput{74}(-1.96,0.68){\fontsize{12}{12} $\cdots$}
\rput{-74}(-1.96,-0.68){\fontsize{12}{12} $\cdots$}
\rput{-10}(0.40,2.18){\fontsize{10}{10} $\cdots$}
\rput{10}(-0.45,2.18){\fontsize{10}{10} $\cdots$}
\rput{-10}(-0.45,-2.18){\fontsize{10}{10} $\cdots$}
\rput{10}(0.40,-2.18){\fontsize{10}{10} $\cdots$}
\rput{-43}(1.58,1.61){\fontsize{8}{8} $\cdots$}
\rput{43}(-1.62,1.59){\fontsize{8}{8} $\cdots$}
\rput{-43}(-1.62,-1.59){\fontsize{8}{8} $\cdots$}
\rput{43}(1.60,-1.61){\fontsize{8}{8} $\cdots$}
\rput{-29}(1.10,2.00){\fontsize{7}{7} $\cdots$}
\rput{29}(-1.16,1.98){\fontsize{7}{7} $\cdots$}
\rput{-29}(-1.16,-1.98){\fontsize{7}{7} $\cdots$}
\rput{29}(1.10,-2.00){\fontsize{7}{7} $\cdots$}
%
\rput(-0.05,0.60){\fontsize{10}{10} $[-1]$}
\rput(-0.05,-0.50){\fontsize{10}{10} $[-\frac{1}{2}]$}
\rput(-0.85,1.24){\fontsize{8}{8} $[-2]$}
\rput(-0.85,-1.17){\fontsize{9}{9} $[-\frac{2}{3}]$}
\rput(-1.58,-1.05){\fontsize{2}{2} $[-\frac{3}{4}]$}
\rput(0.75,1.24){\fontsize{8}{8} $[1]$}
\rput(0.75,-1.17){\fontsize{8}{8} $[-\frac{1}{3}]$}
\rput(1.35,-1.00){\fontsize{2}{2} $[-\frac{1}{4}]$}
\psline[linewidth=0.5pt, 
arrowsize=3pt 4, 
arrowlength=2, 
arrowinset=0.3] 
{->}(0.5,0.936)(0.48,0.96)
\end{pspicture}
&
\end{array} $

\vspace{-2mm}

\caption{The $\beta$-move on $\tau_{\rm mark}^*$ and $\tau_{\rm dot}^*$}
\label{fig:P_gamma_beta}
\end{figure}

In particular, $\gamma_\beta$ fixes $-1$. Now, the formulas for ${\bf F}(\alpha)$ and ${\bf F}(\beta)$ are given as follows:
\begin{proposition}
\label{prop:our_bold_F}
The images of $\alpha,\beta$ under the map ${\bf F}:T \to K$ constructed in Prop.\ref{prop:bold_F} with the initial condition \eqref{eq:our_initial_condition_for_F} for the functor $\mcal{F}:Pt \to Pt_{\rm dot}$ (Prop.\ref{prop:well-definedness_of_F}) are given by:
\begin{align}
\label{eq:bold_F_alpha_beta}
{\bf F}(\alpha) = A_{[-1]} T_{[-1][1]}^{-1} A_{[1]} P_{\gamma_\alpha}, \qquad
{\bf F}(\beta) = A_{[-1]} P_{\gamma_\beta},
\end{align}
where the $\mathbb{Q}^\times$-permutations $\gamma_\alpha$, $\gamma_\beta$ are as described in Def.\ref{def:P_gamma_alpha_beta}.
\end{proposition}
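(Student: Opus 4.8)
The plan is to compute ${\bf F}(\alpha)$ and ${\bf F}(\beta)$ by evaluating the functor $\mcal{F}$ on the two marked tessellations $\alpha.\tau^*_{mark}$ and $\beta.\tau^*_{mark}$ and then reading off the corresponding morphisms of $Pt_{dot}$ as words in the elementary moves. By Lemma~\ref{lem:alternative_construction_of_bold_F} applied with $\tau_{mark}=\tau^*_{mark}$ and the initial condition \eqref{eq:our_initial_condition_for_F}, the element ${\bf F}(\alpha)\in K$ is the unique one with $\mcal{F}(\alpha.\tau^*_{mark}) = ({\bf F}(\alpha)).\tau^*_{dot}$, and similarly $\mcal{F}(\beta.\tau^*_{mark}) = ({\bf F}(\beta)).\tau^*_{dot}$. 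Because the $K$-action on dotted tessellations is free (Cor.~\ref{cor:K_action_is_free}), it suffices to exhibit \emph{one} word in the elementary moves having the prescribed effect on $\tau^*_{dot}$; that word is then forced to equal ${\bf F}(\alpha)$, resp. ${\bf F}(\beta)$. So the proof reduces to two explicit verifications.

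First I would write down $\alpha.\tau^*_{mark}$ and $\beta.\tau^*_{mark}$ from Definition~\ref{def:alpha_and_beta}; these are the marked tessellations drawn on the left halves of Figures~\ref{fig:P_gamma_alpha} and~\ref{fig:P_gamma_beta}, and I would record their vertex functions (Def.~\ref{def:vertex_function}). Then, using the concrete description of $\mcal{F}$ in Prop.~\ref{prop:concrete_description_of_F} — dot on the middle vertex relative to the new vertex function, triangle labelled by the corresponding extended rational — I would compute the dotted tessellations $\mcal{F}(\alpha.\tau^*_{mark})$ and $\mcal{F}(\beta.\tau^*_{mark})$, which are the right halves of the two figures. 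By the definition of $\gamma_\alpha,\gamma_\beta$ (Def.~\ref{def:P_gamma_alpha_beta}), the triangle labels of these dotted tessellations are precisely the images under $\gamma_\alpha$, resp. $\gamma_\beta$, of the labels of $\tau^*_{dot}$; in particular each of $\gamma_\alpha,\gamma_\beta$ moves only finitely many labels, so $P_{\gamma_\alpha}$ and $P_{\gamma_\beta}$ are legitimate elementary moves and everything stays Farey-type outside a finite polygon.

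The heart of the argument is then to check that the words on the right-hand sides of \eqref{eq:bold_F_alpha_beta}, applied to $\tau^*_{dot}$, really produce these two dotted tessellations. Since $\beta$ leaves the underlying tessellation unchanged, for ${\bf F}(\beta)=A_{[-1]}P_{\gamma_\beta}$ one need only track dots and labels: applying $P_{\gamma_\beta}$ relabels every triangle, and then $A_{[-1]}$ moves the dot of the re-labelled triangle $-1$ one corner counterclockwise; a direct comparison with the right half of Figure~\ref{fig:P_gamma_beta} closes this case. For ${\bf F}(\alpha)=A_{[-1]}T_{[-1][1]}^{-1}A_{[1]}P_{\gamma_\alpha}$ one reads the word right to left: $P_{\gamma_\alpha}$ relabels, $A_{[1]}$ repositions the dot of triangle $1$, then $T_{[-1][1]}^{-1}$ flips the edge shared by triangles $-1$ and $1$ — which is exactly the distinguished edge $\mu(0)$–$\mu(\infty)$ whose flip underlies the $\alpha$-move — and finally $A_{[-1]}$ fixes up the dot of triangle $-1$; comparing with the right half of Figure~\ref{fig:P_gamma_alpha} finishes.

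I expect the main obstacle to be the bookkeeping of orientations and of the ``floating dot'' convention. One must verify that after $P_{\gamma_\alpha}$ and $A_{[1]}$ the dots of triangles $-1$ and $1$ sit precisely as on the right-hand side of Figure~\ref{fig:action_of_T_jk}, so that $T_{[-1][1]}^{-1}$ is applicable, and that the dots it produces match the left-hand side there; one must also use consistently the counterclockwise rotation in $A_{[-1]}$ and $A_{[1]}$ and the orientation rule for the new distinguished oriented edge of the $\alpha$-move. These are finite, picture-level checks, carried out by overlaying the right halves of Figures~\ref{fig:P_gamma_alpha} and~\ref{fig:P_gamma_beta} with the step-by-step images of $\tau^*_{dot}$ under the elementary moves; no global or asymptotic argument is needed.
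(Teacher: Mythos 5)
Your proposal is correct and matches the paper's (largely implicit) argument: the paper likewise reduces the computation to evaluating $\mcal{F}(\alpha.\tau^*_{mark})$ and $\mcal{F}(\beta.\tau^*_{mark})$ via Prop.\ref{prop:concrete_description_of_F} and reading off the unique element of $K$ carrying $\tau^*_{dot}$ to each, with the verification carried by Figures \ref{fig:P_gamma_alpha} and \ref{fig:P_gamma_beta}. Your explicit invocation of Lem.\ref{lem:alternative_construction_of_bold_F} and Cor.\ref{cor:K_action_is_free} to justify that exhibiting one word suffices is exactly the logic the paper leaves tacit.
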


This map ${\bf F}: T \to K$ will be used to build a relationship between the two quantizations of universal Teichm\"uller space. 
%
%

\section{Dilogarithmic central extensions of $T$}
\label{sec:dilogarithmic_central_extensions_of_T}

We first briefly review the major results of the quantum universal Teichm\"uller theory of Chekhov-Fock and of Kashaev, and see how they give `dilogarithmic' projective representations of the Ptolemy-Thompson group $T$. Then we develop some group theoretical argument for constructing the `minimal' central extension which `resolves' a projective representation of a group. Thus we will obtain two dilogarithmic central extensions of $T$ from the two quantizations, and state the main theorem of the present paper with an algebraic proof.

\subsection{Quantum universal Teichm\"uller space}

Since there is enough literature on the quantum Teichm\"uller theory, we only state the final results which we will use in the present paper, and refer more interested readers to standard references \cite{Kash98} \cite{Fo} \cite{FC} \cite{FG} (also \cite{T} \cite{FuS}) for a detailed reasoning. Funar and Sergiescu \cite{FuS} applied the Chekhov-Fock(-Goncharov) quantization applied to the universal case to get a one-parameter family of {\em projective functors}
\begin{align}
\label{eq:Pt_to_Hilb}
Pt \to {\rm Hilb},
\end{align}
where ${\rm Hilb}$ is the category of complex Hilbert spaces whose morphisms are unitary maps; a functor is called projective if it preserves the compositions of morphisms only up to multiplicative constants. These functors send each object $(\tau,\vec{a})$ to the Hilbert space $L^2_{\rm fin}(\mathbb{R}^{\tau^{(1)}})$, where $\tau^{(1)}$ is the set of edges of the triangulation $\tau$, and for any infinite index set $I$ the Hilbert space 
\begin{align}
\label{eq:L_2_fin}
L^2_{\rm fin}\left(\mathbb{R}^I, \, \bigwedge_{j \in I} dx_j \right)
\end{align}
is defined as follows, as appeared in \cite{FuS} (the symbol $L^2_{\rm fin}$ is adopted only in the present paper; it is not standard). The elements are represented as square-integrable functions with finite dimensional support, i.e. complex valued functions $f$ on $\mathbb{R}^I$ whose support is contained in $\mathbb{R}^J \times \{0\} \subset \mathbb{R}^I$ for some finite subset $J$ of $I$ depending on $f$, such that $\int_{\mathbb{R}^J} |f|^2 \, \bigwedge_{j \in J} dx_j < \infty$. For two elements $f,g$, choose a minimal finite subset $J$ such that $\mathbb{R}^J\times\{0\}$ contains the intersection of the supports of $f,g$; the inner product $\langle f,g\rangle := \int_{\mathbb{R}^J} f \, \ol{g} \, \bigwedge_{j\in J} dx_j$ makes this space a Hilbert space.

\vs

With the help of the vertex function $j\mapsto \tau_j$ for $(\tau,\vec{a})$ (Def.\ref{def:vertex_function}), one can identify $\tau^{(1)}$ canonically with $\mathbb{Q}\setminus\{0,1\}$. Namely, for the ideal triangle with vertices $\mu(\tau_{\frac{a}{b}})$, $\mu(\tau_{\frac{a+c}{b+d}})$, $\mu(\tau_{\frac{c}{d}})$, where $\mu$ is the Cayley transform (Def.\ref{def:asymptotically_rigid}), label the edge of this triangle opposite to the vertex $\mu(\tau_{\frac{a+c}{b+d}})$ by $\frac{a+c}{b+d}$; then only the label of the edge $\vec{a}$ is ambiguous, which we just label by $-1$. Then each morphism of $Pt$ is sent to an operator on the Hilbert space
$$
\mathscr{V} = L^2_{\rm fin}(\mathbb{R}^{\mathbb{Q}\setminus\{0,1\}})
$$
(see \eqref{eq:L_2_fin}). The construction is such that these functors descend to a well-defined family of set maps
\begin{align}
\label{eq:set_map_CF}
T \to {\rm GL}(\mathscr{V}),
\end{align}
which is `almost' a group homomorphism in the sense of \eqref{eq:general_rho_relation}, where the images of \eqref{eq:set_map_CF} are unitary operators on $\mathscr{V}$.

\vs

On the other hand, the Kashaev quantization applied to the universal case yields a one-parameter family of projective functors
$$
Pt_{\rm dot} \to {\rm Hilb},
$$
where this time each object $(\tau,D,L)$ is sent to the Hilbert space $L^2(\mathbb{R}^{\tau^{(2)}})$, where $\tau^{(2)}$ is the set of ideal triangles of $\tau$. With the help of the vertex function for $(\tau,\vec{a})$, one can identify $\tau^{(2)}$ canonically with $\mathbb{Q}^\times = \mathbb{Q}\setminus\{0\}$; namely, label the ideal triangle having vertices $\mu(\tau_{\frac{a}{b}})$, $\mu(\tau_{\frac{a+c}{b+d}})$, $\mu(\tau_{\frac{c}{d}})$ by $\frac{a+c}{b+d}$. Then each morphism of $Pt_{\rm dot}$ is sent to an operator on the Hilbert space
$$
\mathscr{M} = L^2_{\rm fin}(\mathbb{R}^{\mathbb{Q}^\times})
$$
(see \eqref{eq:L_2_fin}). The construction is such that these functors descend to a well-defined family of set maps
\begin{align}
\label{eq:set_map_Kash}
K \to {\rm GL}(\mathscr{M})
\end{align}
which is almost a group homomorphism in the sense of \eqref{eq:general_rho_relation}, where the images of \eqref{eq:set_map_Kash} are unitary operators on $\mathscr{M}$. We can then pullback \eqref{eq:set_map_Kash} by the natural group homomorphism ${\bf F}: T \to K$ constructred in \S\ref{subsec:bold_F}, to get an almost group homomorphism
\begin{align}
\nonumber
T \to {\rm GL}(\mathscr{M}),
\end{align}
which we can now think of `comparing' with \eqref{eq:set_map_CF}.

\vs

The maps \eqref{eq:set_map_CF} and \eqref{eq:set_map_Kash} satisfying \eqref{eq:general_rho_relation} can be thought of as the main final results of the construction of the quantum universal Teichm\"uller theory, and they are usually referred to as {\em (unitary) projective representations} of $T$ and $K$, respectively. Here, we shall be careful about the terminology `projective representation'.
\begin{definition}[projective and almost-linear representations]
\label{def:projective_and_almost-linear_representations}
A {\em projective representation} of a group $G$ on a vector space $V$ is the group homomorphism
$$
G \to {\rm PGL}(V),
$$
where ${\rm PGL}(V)$ is the quotient of ${\rm GL}(V)$ by the scalar matrices. An {\em almost-linear representation} of $G$ on $V$ is the set map
$$
\rho : G \to {\rm GL}(V)
$$
satisfying the condition \eqref{eq:general_rho_relation} for some constants $c_{g_1,g_2}$ in the underlying base field.
\end{definition}
So, it is easy to see that an almost-linear representation induces a projective representation, by post-composing with the canonical projection 
\begin{align}
\label{eq:p}
p : {\rm GL}(V) \to {\rm PGL}(V).
\end{align}
When $G$ is presented by generators and relations like our $T$ and $K$, we can write
$$
G = F/R,
$$
where $F$ is a free group and $R$ is a normal subgroup of $G$ generated by `relations'. Then, an almost-linear representation of $G$ on $V$ is usually given as a group homomorphism 
$$
\rho: F \to {\rm GL}(V)
$$
such that $\rho(R)$ lies in the group of scalar matrices in ${\rm GL}(V)$. To describe $\rho$, it suffices to describe the images of the generators of $F$ under $\rho$. This is in fact precisely how the result of the quantum Teichm\"uller theory is written as, which we will present in the next subsection.

\subsection{Dilogarithmic projective representations $\rho^{\rm CF}$, $\rho^{\rm Kash}$ of $T$}

In the present paper, we will need a slightly more general formulation than the almost-linear representations:
\begin{definition}[almost $G$-homomorphisms]
\label{def:almost_G-homomorphism}
Let $G$ be a group presented by generators and relations, i.e. $G = F/R$ where $F$ is a free group (for generators) and $R$ is the normal subgroup of $F$ generated by the relations. Let $H$ be a group. Now, a group homomorphism
\begin{align}
\label{eq:eta_F_H}
\eta : F \to H
\end{align}
is said to be an {\em almost $G$-homomorphism} if $\eta(R)$ is contained in the center of $\eta(F)$.

\vs

When $H = {\rm GL}(V)$ for some vector space $V$ and $\eta(R)$ lies in the group of scalar matrices, we call such $\eta$ \eqref{eq:eta_F_H} an {\em almost-linear representation of $G$ on $V$},  by abuse of notation.
\end{definition}

We now introduce the main technical tool used in both quantizations of Teichm\"uller spaces:
\begin{definition}[quantum dilogarithm function]
Let $b>0$, $b^2 \notin \mathbb{Q}$. Let the function $\Psi_b(z)$ on the complex plane be defined by 
\begin{align}
\label{eq:quantum_dilogarithm}
\Psi_b (z) = \exp\left(\frac{1}{4} \int_{\Omega_0} \frac{ e^{-2izw}}{ \sinh (wb) \sinh(w/b)} \frac{dw}{w} \right)
\end{align}
first in the strip $|{\rm Im} \, z| < (b+b^{-1})/2$, where $\Omega_0$ means the real line contour with a detour around $0$ (origin) along a small half circle above the real line, and analytically continued to a meromorphic function on the complex plane using the following functional equations:
\begin{align}
\nonumber
\left\{ {\renewcommand{\arraystretch}{1.2}
\begin{array}{l}
\Psi_b(z - ib/2) = (1+e^{2\pi b z}) \Psi_b(z+ ib/2), \\
\Psi_b(z-ib^{-1}/2) = (1 + e^{2\pi b^{-1} z}) \Psi_b(z+ib^{-1}/2).
\end{array} }
\right.
\end{align}
\end{definition}
This $\Psi_b$ is a `non-compact' version of the so-called {\em quantum dilogarithm} function, defined by Faddeev and Kashaev \cite{FK} \cite{F}. We note that the integral \eqref{eq:quantum_dilogarithm} was already known to Barnes \cite{B}. The generic real number $b$ is the parameter by which the final projective representations are parametrized by. The usual quantum parameter $q$ is related by $q = e^{\pi i b^2}$.

\begin{remark}
In the versions of quantum Teichm\"uller theory that are being used in the present paper\footnote{in particular, involving the `non-compact' quantum dilogarithm, instead of the `compact' version}, the condition $b^2\notin \mathbb{Q}$ ensures that the quantum algebras are represented on Hilbert spaces `strongly irreducibly', which makes the relevant analytical situation `rigid' (see e.g. \cite{FrKi}). Not much is known about what would happen when $b^2\in \mathbb{Q}$, which corresponds to $q$ being a root of unity. Perhaps, one related observation is that all zeros and poles of $\Psi_b(z)$ are simple only when $b^2$ is irrational. However, for the purposes of the present paper, maybe we are allowed to use rational $b^2$. Nevertheless, for the moment, we require $b^2 \notin \mathbb{Q}$, to be safe.
\end{remark}

\vs

Recall from Def.\ref{def:Ptolemy-Thompson_group} and Def.\ref{def:Kashaev_group} that
$$
T = F_{\rm mark}/R_{\rm mark} \quad\mbox{and}\quad K = F_{\rm dot}/R_{\rm dot}.
$$
Then the main result of the Kashaev quantization of universal Teichm\"uller space is written as a group homomorphism
\begin{align}
\label{eq:rho_F_dot_to_GL_mathscr_M}
\rho_{\rm dot} : F_{\rm dot} \to {\rm GL}(\mathscr{M}), \qquad\mathscr{M} = L^2_{\rm fin}(\mathbb{R}^{\mathbb{Q}^\times}, \bigwedge_{j\in \mathbb{Q}^\times} dx_j)
\end{align}
(see \eqref{eq:L_2_fin} for the meaning of $L^2_{\rm fin}$). The images of the generators of $F_{\rm dot}$ under $\rho_{\rm dot}$ are given by the following unitary operators:
\begin{align}
\label{eq:rho_A_j}
\rho_{\rm dot} (A_{[j]}) & = {\bf A}_{[j]} := e^{-\pi i/3} e^{3\pi i Q_j^2} e^{\pi i (P_j + Q_j)^2}, \\
\label{eq:rho_T_jk}
\rho_{\rm dot} (T_{[j][k]}) & = {\bf T}_{[j][k]} := e^{2\pi i P_j Q_k} \Psi_b(Q_j + P_k - Q_k)^{-1}, \\
\label{eq:rho_P_gamma}
(\rho_{\rm dot} (P_{\gamma}) f)( \{ x_j\}_{j \in \mathbb{Q}^\times} ) & = ({\bf P}_\gamma f)( \{ x_j\}_{j \in \mathbb{Q}^\times} ) := f(\{ x_{\gamma(j)} \}_{j\in \mathbb{Q}^\times} ), \qquad \forall f\in \mathscr{M},
\end{align}
for $j,k \in \mathbb{Q}^\times$ ($j\neq k$) and a $\mathbb{Q}^\times$-permutation $\gamma$, where $P_j, Q_j$ are self-adjoint operators defined by the formulas
\begin{align}
\nonumber
P_j f = \frac{1}{2\pi i} \frac{\partial}{\partial x_j} f, \qquad
Q_j f = x_j f,
\end{align}
on some dense subspaces of $\mathscr{M}$, as essentially self-adjoint operators there. 

\vs

The best way to describe these operators is to do so `locally'. Namely, for any chosen finite subset $J$ of $\mathbb{Q}^\times$, denote by $\mathscr{M}_J$ the set of all $f\in \mathscr{M}$ such that $J$ is the minimal finite subset of $\mathbb{Q}^\times$ such that the support of $f$ is contained in $\mathbb{R}^J \times \{0\} \subset \mathbb{R}^{Q^\times}$; we shall describe how the above operator acts on elements of $\mathscr{M}_J$. Notice that $\mathscr{M}_J$ is a subset of the Hilbert space $L^2(\mathbb{R}^J, \wedge_{j\in J} dx_j)$, inheriting the inner product structure. For the dense domains, we follow the arguments used by Fock and Goncharov in the quantization of Teichm\"uller spaces of finite-type surfaces, with a slight modification. As done in \cite{G}, let $W_{[j]}$ be the dense $\mathbb{C}$-vector subspace of $L^2(\mathbb{R}, dx_j)$ defined as the set of all finite $\mathbb{C}$-linear combinations of the functions of the form $p(x_j) \, e^{-Bx_j^2+Cx_j}$, where $p$ is a polynomial and $B>0$, $C\in \mathbb{C}$. The algebraic tensor product $W_J := \bigotimes_{j \in J} W_{[j]}$ is a dense subspace of the Hilbert space tensor product $\bigotimes_{j\in J} L^2(\mathbb{R}, dx_j)$, which is canonically isomorphic to $L^2(\mathbb{R}^J, \wedge_{j\in J} dx_j)$ as a Hilbert space. Each of the above formulas for $P_j$ and $Q_j$, $j\in J$, yields a well-defined symmetric operator on $W_J$, preserving $W_J$. It is well known that each of these is an `essentially self-adjoint' operator on $\mathscr{M}_J$, i.e. has a unique self-adjoint extension on $L^2(\mathbb{R}^J, \wedge_{j\in J} dx_j)$ (see \cite{K16}); by abuse of notation, let us denote these self-adjoint extensions by $P_j$ and $Q_j$. Then one makes sense of the formulas \eqref{eq:rho_A_j} and \eqref{eq:rho_T_jk} via the standard `functional calculus' of self-adjoint operators; see \cite{K16}, or a standard textbook, e.g. \cite{RSi80}. Such a description indeed suffices for our purposes, for each relation that is to be satisfied by the operators ${\bf A}_{[j]}, {\bf T}_{[j][k]}, {\bf P}_\gamma$ (see Prop.\ref{prop:lifted_Kashaev_relations}) involves only finitely many variables $x_j$ in a nontrivial way, and is just a permutation in the remaining variables, hence can be studied on $L^2(\mathbb{R}^J, \wedge_{j\in J} dx_j)$ for some finite set $J$.

\vs

Instead of just referring to the word `functional calculus', here we give a more down-to-earth description of these operators ${\bf A}_{[j]}$ and ${\bf T}_{[j][k]}$. Think of ${\bf A}_{[j]}$ \eqref{eq:rho_A_j} as an operator on the space $L^2(\mathbb{R}, dx_j)$ of square-integrable functions on one variable $x_j$; we can write
\begin{align}
\nonumber
L^2(\mathbb{R}, dx_j) \ni f(x_j) \longmapsto
({\bf A}_{[j]} f)(x_j) = e^{ - \pi i/12} \int_\mathbb{R} e^{2\pi i y_j x_j} e^{\pi i x_j^2} f(y_j)dy_j \, \in L^2(\mathbb{R}, dx_j),
\end{align}
which makes sense first on $W_{[j]} \subset L^2(\mathbb{R}, dx_j)$ for example, and then can be extended to the whole $L^2(\mathbb{R})$ by continuity, as it is unitary. If we write this as an operator ${\bf A} = e^{-\pi i/3} e^{3\pi i Q^2} e^{\pi i (P+Q)^2}$ acting on $L^2(\mathbb{R}, dx)$ for convenience, where $P$ and $Q$ are the self-adjoint operators on $L^2(\mathbb{R}, dx)$ given by $P = \frac{1}{2\pi i} \frac{d}{dx}$ and $Q = x$ on a dense subspace, then ${\bf A}$ is the unique unitary operator up to scalar multiplication by a complex number of modulus one that satisfies
\begin{align}
\label{eq:conjugation_action_of_bold_A}
{\bf A} Q {\bf A}^{-1} = P-Q, \quad
{\bf A} P {\bf A}^{-1} = -Q.
\end{align}
In a formal level, these two equations can be proved using the commutation relation $[P,Q]=\frac{1}{2\pi i}$. One can think of ${\bf A}$ as an analog of the Fourier transform $\mathscr{F} : f(x) \mapsto \int_{\mathbb{R}} e^{-2\pi i xy} f(y) dy$, which is characterized up to a constant by $\mathscr{F} Q \mathscr{F}^{-1} = -P$ and $\mathscr{F} P \mathscr{F}^{-1} = Q$.
For ${\bf T}_{[j][k]}$ \eqref{eq:rho_T_jk}, we view it as an operator on $L^2(\mathbb{R}^2, dx_j \, dx_k)$. First, the unitary operator $e^{2\pi i P_j Q_k}$ acts as
\begin{align}
\nonumber
(e^{2\pi i P_j Q_k} f)(x_j,x_k) = f(x_j + x_k, x_k).
\end{align}
One way to explain the factor $\Psi_b(Q_j+P_k-Q_k)^{-1}$ is as follows. First, write $\Psi_b(Q_j + P_k - Q_k)^{-1} = {\bf A}_{[k]} \Psi_b(Q_j + Q_k)^{-1} {\bf A}_{[k]}^{-1}$, using \eqref{eq:conjugation_action_of_bold_A}. We know how unitary operators ${\bf A}_{[k]}, {\bf A}_{[k]}^{-1}$ act. The operator $\Psi_b(Q_j + Q_k)^{-1}$ is just multiplication by $\Psi_b(x_j+x_k)^{-1}$. To see that this is indeed a unitary operator, we note that for $x\in \mathbb{R}$, the complex number $\Psi_b(x)$ is of modulus $1$.

\vs

The group homomorphism \eqref{eq:rho_F_dot_to_GL_mathscr_M} given by \eqref{eq:rho_A_j}, \eqref{eq:rho_T_jk} and \eqref{eq:rho_P_gamma} is indeed an almost-linear representation of $K$, in the sense of Def.\ref{def:almost_G-homomorphism}:
\begin{proposition}[\cite{Kash00}]
\label{prop:lifted_Kashaev_relations}
The operators ${\bf A}_{[j]}$, ${\bf T}_{[j][k]}$, ${\bf P}_\gamma$ in \eqref{eq:rho_A_j}, \eqref{eq:rho_T_jk}, \eqref{eq:rho_P_gamma} satisfy
\begin{align}
\nonumber
{\bf T}_{[j][k]} {\bf A}_{[j]} {\bf T}_{[k][j]} & = \zeta \, {\bf A}_{[j]} {\bf A}_{[k]} {\bf P}_{(jk)}, \quad j, k \in \mathbb{Q}^\times, \, j\neq k, \quad \mbox{where} \\
\label{eq:zeta}
\zeta & = e^{-\pi i (b+b^{-1})^2/12},
\end{align}
and strictly satisfy all other relations of $A_{[j]}$, $T_{[j][k]}$, $P_\gamma$ mentioned in Thm.\ref{thm:algebraic_relations_of_elementary_moves}:
\begin{align}
\label{eq:lifted_Kashaev_relations_major}
& {\bf A}_{[j]}^3 = {\rm id}, \quad
{\bf T}_{[k][\ell]} {\bf T}_{[j][k]} = {\bf T}_{[j][k]} {\bf T}_{[j][\ell]} {\bf T}_{[k][\ell]}, \quad
{\bf A}_{[j]} {\bf T}_{[j][k]} {\bf A}_{[k]} = {\bf A}_{[k]} {\bf T}_{[k][j]} {\bf A}_{[j]}, \\
\label{eq:lifted_Kashaev_relations_P}
& {\bf P}_{\rm id} = {\rm id},  \quad {\bf P}_{\gamma_1} {\bf P}_{\gamma_2} = {\bf P}_{\gamma_1 \circ \gamma_2}, \quad {\bf P}_\gamma {\bf A}_{[j]} = {\bf A}_{[\gamma(j)]} {\bf P}_\gamma, \quad
{\bf P}_\gamma {\bf T}_{[j][k]} = {\bf T}_{[\gamma(j) \, \gamma(k)]} {\bf P}_\gamma, 
\\
\label{eq:lifted_Kashaev_relations_commutation}
& {\bf T}_{[j][k]} {\bf T}_{[\ell][m]} = {\bf T}_{[\ell][m]} {\bf T}_{[j][k]}, \quad
{\bf T}_{[j][k]} {\bf A}_{[\ell]} = {\bf A}_{[\ell]} {\bf T}_{[j][k]}, \quad
{\bf A}_{[j]} {\bf A}_{[k]} = {\bf A}_{[k]} {\bf A}_{[j]}, 
\end{align}
for mutually distinct $j,k,\ell,m \in \mathbb{Q}^\times$ and any $\mathbb{Q}^\times$-permutations $\gamma,\gamma_1,\gamma_2$.
\end{proposition}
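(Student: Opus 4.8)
The plan is to reduce each relation to a scalar identity and then to evaluate the scalar. First, every relation in the statement involves only finitely many indices, so one may work on $L^2(\mathbb{R}^n)$ for $n\le 3$; there the permutation relations \eqref{eq:lifted_Kashaev_relations_P} follow directly from the definition \eqref{eq:rho_P_gamma} of ${\bf P}_\gamma$ as a coordinate permutation, and the commutation relations \eqref{eq:lifted_Kashaev_relations_commutation}, whose index sets are disjoint, hold because the operators involved act on distinct tensor factors of $L^2(\mathbb{R}^n)$. So the substance lies in four relations: ${\bf A}_{[j]}^3=id$, the pentagon ${\bf T}_{[k][\ell]}{\bf T}_{[j][k]}={\bf T}_{[j][k]}{\bf T}_{[j][\ell]}{\bf T}_{[k][\ell]}$, the relation ${\bf A}_{[j]}{\bf T}_{[j][k]}{\bf A}_{[k]}={\bf A}_{[k]}{\bf T}_{[k][j]}{\bf A}_{[j]}$, and ${\bf T}_{[j][k]}{\bf A}_{[j]}{\bf T}_{[k][j]}=\zeta\,{\bf A}_{[j]}{\bf A}_{[k]}{\bf P}_{(jk)}$. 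For each of these I would use that the operators $P_j,Q_j$ (for $j$ in the relevant finite set) exponentiate to an irreducible representation of a Heisenberg group on $L^2(\mathbb{R}^n)$, so that by Schur's lemma any unitary commuting with all of them is a scalar; hence it suffices to check that the two sides of a relation induce the same automorphism of the algebra generated by the Weyl exponentials, and the relation then holds up to a scalar that is pinned down separately.

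The automorphism check needs the conjugation actions of ${\bf A}_{[j]}$ and ${\bf T}_{[j][k]}$. For ${\bf A}_{[j]}$ this is the linear automorphism $Q_j\mapsto P_j-Q_j$, $P_j\mapsto -Q_j$ of \eqref{eq:conjugation_action_of_bold_A}, obtained formally from $[P_j,Q_j]=\tfrac{1}{2\pi i}$; iterating it three times returns $(Q_j,P_j)$ to itself, so ${\bf A}_{[j]}^3$ is automatically a scalar. For ${\bf T}_{[j][k]}=e^{2\pi i P_jQ_k}\,\Psi_b(Q_j+P_k-Q_k)^{-1}$ I would record that $e^{2\pi i P_jQ_k}$ acts by the linear shift $(Q_j,P_k)\mapsto(Q_j+Q_k,\,P_k-P_j)$, fixing $P_j$ and $Q_k$, while the factor $\Psi_b(Q_j+P_k-Q_k)^{-1}$ conjugates the relevant Weyl exponentials according to the operator form of the functional equations $\Psi_b(z-ib^{\pm1}/2)=(1+e^{2\pi b^{\pm1}z})\Psi_b(z+ib^{\pm1}/2)$, which turns a monomial exponential into a sum of two exponentials (Faddeev's rule). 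With these conjugation rules in hand, checking for each of the four relations that both sides induce the same automorphism becomes a finite, purely combinatorial bookkeeping of how Weyl exponentials are transported, in which I expect no difficulty.

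It remains to pin down the scalars, which I would do by applying both sides to a Gaussian test vector and using elementary Gaussian integrals, the functional equations, and the inversion relation $\Psi_b(z)\Psi_b(-z)=(\text{const})\,e^{\pi i z^2}$ of the quantum dilogarithm. The normalization $e^{-\pi i/3}$ in \eqref{eq:rho_A_j} is exactly the one that makes ${\bf A}_{[j]}^3=id$. The scalars for the pentagon and for ${\bf A}{\bf T}{\bf A}={\bf A}{\bf T}{\bf A}$ come out equal to $1$, the pentagon case being a direct translation of the operator pentagon identity $\Psi_b(P)\Psi_b(Q)=\Psi_b(Q)\Psi_b(P+Q)\Psi_b(P)$ for $[P,Q]=\tfrac{1}{2\pi i}$. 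Finally the nontrivial constant $\zeta=e^{-\pi i(b+b^{-1})^2/12}$ in the ${\bf T}{\bf A}{\bf T}$ relation is produced by the inversion relation: on the left-hand side the two $\Psi_b^{-1}$ factors carried by ${\bf T}_{[j][k]}$ and ${\bf T}_{[k][j]}$ acquire opposite arguments after the intervening ${\bf A}$- and shift-conjugations, so they fuse into a Gaussian (absorbed into the quadratic-exponential operators ${\bf A}$) times precisely the inversion constant, and a short Gaussian computation identifies this constant with $\zeta$.

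The hard part will be the operator pentagon identity, which underlies ${\bf T}_{[k][\ell]}{\bf T}_{[j][k]}={\bf T}_{[j][k]}{\bf T}_{[j][\ell]}{\bf T}_{[k][\ell]}$: although it follows formally from the functional equations of $\Psi_b$ and Baker-Campbell-Hausdorff manipulations of $P,Q$, a rigorous proof must control the domains and the analytic continuations of the unbounded self-adjoint operators $Q_j+P_k-Q_k$ and of their functional calculus. There I would either invoke Woronowicz's operator-theoretic treatment of the quantum exponential or reduce the statement to the known Ramanujan-type scalar integral for $\Psi_b$. This analytic bookkeeping, together with the standing care needed to work on the dense subspaces $W_{[j]}$ and to extend the resulting unitaries by continuity, is the only genuinely delicate ingredient; everything else is a formal consequence of $[P_j,Q_j]=\tfrac{1}{2\pi i}$, of the functional equations of $\Psi_b$, and of its inversion relation.
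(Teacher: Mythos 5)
The paper offers no proof of this proposition: it is stated with a citation to Kashaev's work [Kash00], and the text immediately afterwards only comments on the meaning of the operators. So there is nothing internal to compare against, and your proposal has to be judged on its own terms. Its architecture is the standard one used in the cited literature and is sound: reduce to finitely many tensor factors; dispose of \eqref{eq:lifted_Kashaev_relations_P} and \eqref{eq:lifted_Kashaev_relations_commutation} by the explicit form of ${\bf P}_\gamma$ and disjointness of indices; then, for the four substantive relations, invoke Stone--von Neumann irreducibility so that each relation holds up to a scalar once both sides induce the same automorphism of the Weyl exponentials, and finally evaluate the scalar on Gaussian vectors. Your conjugation formulas (the order-three linear action \eqref{eq:conjugation_action_of_bold_A} of ${\bf A}$, the shift action of $e^{2\pi i P_jQ_k}$, and Faddeev's rule for conjugation by $\Psi_b$ of a linear argument) are all correct, and you correctly isolate the two places where genuine analytic content enters: the operator pentagon identity for $\Psi_b$, which underlies ${\bf T}_{[k][\ell]}{\bf T}_{[j][k]}={\bf T}_{[j][k]}{\bf T}_{[j][\ell]}{\bf T}_{[k][\ell]}$ with scalar exactly $1$, and the inversion relation of $\Psi_b$, whose constant (combined with the $e^{-\pi i/3}$ normalization of ${\bf A}_{[j]}$) produces $\zeta=e^{-\pi i(b+b^{-1})^2/12}$. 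Deferring these two inputs to the known operator-theoretic treatments is consistent with the paper's own decision to treat the whole proposition as a quoted result; the only caution is that the claim that the two $\Psi_b^{-1}$ factors in ${\bf T}_{[j][k]}{\bf A}_{[j]}{\bf T}_{[k][j]}$ end up with exactly opposite arguments is not immediate from one conjugation by ${\bf A}_{[j]}$ alone --- it emerges only after also transporting the arguments through the $e^{2\pi i P_jQ_k}$-type shifts --- so that step of the bookkeeping should be written out rather than asserted.
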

Construction of these operators ${\bf A}_{[j]}$, ${\bf T}_{[j][k]}$, ${\bf P}_\gamma$ is meaningful not just because they satisfy the above relations, but because they are `consistent' deformations of the coordinate change formulas for universal Teichm\"uller space that are induced by the transformations $A_{[j]}$, $T_{[j][k]}$, $P_\gamma$ of dotted tessellations of $\mathbb{D}$. However in the present paper, all that matter are the above relations in Prop.\ref{prop:lifted_Kashaev_relations}. Meanwhile, denote by ${\bf F}$ the natural map
\begin{align}
\label{eq:bold_F_on_free_group}
F_{\rm mark} \to F_{\rm dot}
\end{align}
given by the formula \eqref{eq:bold_F_alpha_beta}, by abuse of notation. Pulling back $\rho_{\rm dot}$ \eqref{eq:rho_F_dot_to_GL_mathscr_M} by ${\bf F}$ \eqref{eq:bold_F_on_free_group} yields
\begin{align}
\label{eq:rho_Kash}
\rho^{\rm Kash} = \rho_{\rm dot} \circ {\bf F}: F_{\rm mark} \to {\rm GL}(\mathscr{M}), \qquad\mathscr{M} = L^2_{\rm fin}(\mathbb{R}^{\mathbb{Q}^\times}),
\end{align}
(${\rm Kash}$ for Kashaev) whose images of the generators are
\begin{align}
\label{eq:rho_Kash_images}
\rho^{\rm Kash}(\alpha) = \wh{\alpha} := {\bf A}_{[-1]} {\bf T}^{-1}_{[-1][1]} {\bf A}_{[1]} {\bf P}_{\gamma_\alpha} \quad\mbox{and}\quad
\rho^{\rm Kash}(\beta) = \wh{\beta} := {\bf A}_{[-1]} {\bf P}_{\gamma_\beta}.
\end{align}
We can easily prove that $\rho^{\rm Kash}$ is an almost-linear representation of the Ptolemy-Thompson group $T$ on $\mathscr{M}$, which is to be compared with the almost-linear representation
\begin{align}
\label{eq:rho_CF}
\rho^{\rm CF} : F_{\rm mark} \to {\rm GL}(\mathscr{V}), \qquad \mathscr{V} = L^2_{\rm fin}(\mathbb{R}^{\mathbb{Q}\setminus\{0,1\}})
\end{align}
(${\rm CF}$ for Chekhov-Fock) of $T$ coming from the Chekhov-Fock(-Goncharov) quantization.

\vs

In order to obtain the explicit images of the generators of $F_{\rm mark}$ under $\rho^{\rm CF}$ \eqref{eq:rho_CF}, one should build the functor $Pt \to {\rm Hilb}$ \eqref{eq:Pt_to_Hilb} using Chekhov-Fock-Goncharov's analogous result $Pt(\Sigma) \to {\rm Hilb}$ for finite-type surfaces. Here the Ptolemy groupoid $Pt(\Sigma)$ for $\Sigma$ is the groupoid of all ideal triangulations of $\Sigma$, without distinguished oriented edges. Therefore morphisms of $Pt(\Sigma)$ are generated by `flips' along edges, so for quantum theory it suffices to know the operators which the flips are represented by, and this is how Chekhov-Fock-Goncharov's construction is written as. However, when we try to apply this idea to the groupoid $Pt$ of triangulations decorated with a d.o.e., some more work is needed if we want to give explicit formulas of the operators
$$
\rho^{\rm CF}(\alpha) = \wh{\alpha}_0 \quad\mbox{and}\quad
\rho^{\rm CF}(\beta) = \wh{\beta}_0
$$
representing $\alpha$ and $\beta$. Funar and Sergiescu \cite{FuS} deduced all the (lifted) $\alpha,\beta$-relations of \eqref{eq:T_presentation_intro} for the operators $\wh{\alpha}_0$, $\wh{\beta}_0$ from the relations shown in Fock-Goncharov \cite{FG}, without explicitly getting the formulas for the operators $\wh{\alpha}_0$, $\wh{\beta}_0$:
\begin{align}
\label{eq:lifted_T_relations_for_CF}
\left\{ \begin{array}{l}
(\wh{\beta}_0 \wh{\alpha}_0)^5 = \lambda, \quad \wh{\alpha}_0^4 = 1,\quad \wh{\beta}_0^3 = 1, \\
\left[\wh{\beta}_0 \wh{\alpha}_0 \wh{\beta}_0, \, \wh{\alpha}_0^2 \wh{\beta}_0 \wh{\alpha}_0 \wh{\beta}_0 \wh{\alpha}_0 ^2 \right] =1,\quad 
\left[\wh{\beta}_0 \wh{\alpha}_0 \wh{\beta}_0, \, \wh{\alpha}_0^2 \wh{\beta}_0 \wh{\alpha}_0^2 \wh{\beta}_0 \wh{\alpha}_0 \wh{\beta}_0 \wh{\alpha}_0^2 \wh{\beta}_0^2 \wh{\alpha}_0^2 \right]=1,
\end{array} \right.
\end{align}
where $\lambda$ is some complex number such that $|\lambda|=1$, depending on the parameter $b$.\footnote{As mentioned in Rem.\ref{rem:trivial_constant}, the author recently found by computation that $\lambda=1$. For the moment, regard $\lambda$ as a non-trivial formal variable.}

\vs

We call these almost-linear representations $\rho^{\rm Kash}$ and $\rho^{\rm CF}$ of $T$ `dilogarithmic', as both of them use the quantum dilogarithm function as a crucial tool.

\begin{remark}
All that matter for $\rho^{\rm CF}$ in the present paper are the relations \eqref{eq:lifted_T_relations_for_CF}, but it would be nice to get explicit formulas for $\wh{\alpha}_0 = \rho^{\rm CF}(\alpha)$ and $\wh{\beta}_0 = \rho^{\rm CF}(\beta)$.
\end{remark}

\subsection{Minimal central extensions resolving almost-linear representations}
\label{subsec:minimal_central_extensions}

It is well known (e.g. as pointed out in \cite{FuS} and \cite{FuKas}) that a projective representation $G \to {\rm PGL}(V)$ of $G$ on a vector space $V$ gives rise to a central extension $\til{G}$ of $G$, as the pullback by $p:{\rm GL}(V) \to {\rm PGL}(V)$ \eqref{eq:p} (that is, via fibre product). We can also construct central extensions of $G$ from an almost-linear representation $G\to {\rm GL}(V)$ of $G$, and among them we focus on the {\em minimal} central extension $\wh{G}$ of $G$ {\em resolving} the almost-linear representation, in the sense of Def.\ref{def:minimal_central_extension} (see e.g. \cite{FuS}, \cite{FuKas}). This extension $\wh{G}$ gives a more refined information on the almost-linear representation than $\til{G}$ does; in particular, $\til{G}$ is a central extension of $G$ by $k^\times= k\setminus\{0\}$ where $k$ is the base field of $V$, and $\wh{G}$ is an extension by a subgroup of $k^\times$. For our case, we have $G=T$, and $\til{T}$ is a central extension of $T$ by $\mathbb{C}^\times$, while the minimal extension $\wh{T}$ is a central extension of $T$ by $\mathbb{Z}$. Then we compute the class in $H^2(T;\mathbb{Z})$ corresponding to $\wh{T}$, and so finally we can compare different minimal central extensions $\wh{T}$ of $T$ inside $H^2(T;\mathbb{Z})$.

\vs

In the present subsection we gather and develop some group theoretical knowledge that is necessary for such constructions. For later use in the present paper, we use a bit more general setting than almost-linear representations; namely, we use almost $G$-homomorphisms, defined in Def.\ref{def:almost_G-homomorphism}. So, let $G$ be a group, presented with generators and relations
$$
G = F/R
$$
as in Def.\ref{def:almost_G-homomorphism}. A short way of stating the construction of the sought-for (`minimal') central extension of the group $G$ is as follows. Given an almost $G$-homomorphism
$$
\eta : F\to H
$$
for some group $H$, we get a central extension
\begin{align}
\label{eq:wh_G_first_construction}
\wh{G} = F/(R\cap \ker \eta)
\end{align}
of $G$ by the group isomorphic to $\eta(R)$. However, this construction is not really useful for presenting the resulting central extension $\widehat{G}$ by generators and relations.

\vs

We can construct $\widehat{G}$ \eqref{eq:wh_G_first_construction} more concretely, as follows. Let $Z$ be a group isomorphic to $\eta(R)$ and let us fix an isomorphism $\phi : \eta(R) \to Z$. Now we consider the free product $F*Z$, and let $R'$ be its normal subgroup generated by
\begin{align}
\label{eq:R_prime_generators}
r (\phi(\eta(r)))^{-1} ~~\mbox{(called the {\em lifted relations})}~~ \mbox{and}~~ [f,z]  ~~\mbox{(called the {\em commuting relations})},
\end{align}
for $r,f,z$ the generators of $R,F,Z$, respectively. This yields a central extension
\begin{align}
\label{eq:wh_G_second_construction}
\widehat{G} = F * Z/R'
\end{align}
of $G$ by $Z$. Using a presentation of $Z$ by generators and relations, this way we easily obtain the presentation for $\widehat{G}$ by generators and relations: lifted relations, commuting relations, and relations for (the central) $Z$. Moreover, we also obtain the natural lift of the original generators, by the group homomorphism
\begin{align}
\label{eq:lift_map}
\Psi : F \to F * Z/R'
\end{align}
induced by the inclusion $F\to F*Z$.

\vs

For completeness, let us prove:
\begin{lemma}
The two groups $F/(R\cap \ker\eta)$ \eqref{eq:wh_G_first_construction} and $F* Z/R'$ \eqref{eq:wh_G_second_construction} are isomorphic.
\end{lemma}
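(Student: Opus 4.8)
\emph{Plan.} The strategy is to show that the homomorphism $\Psi : F \to F*Z/R'$ of \eqref{eq:lift_map}, induced by the inclusion $F \hookrightarrow F*Z$ followed by the quotient, is surjective and has kernel exactly $R \cap \ker\eta$; the desired isomorphism $F/(R\cap\ker\eta) \cong F*Z/R'$ is then immediate from the first isomorphism theorem. Throughout, $r$ ranges over the chosen normal generators of $R$ (the ``relations''), and we repeatedly use the standing hypothesis that $\eta(R)$ lies in the center of $\eta(F)$.

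\emph{Surjectivity of $\Psi$.} Since every $\eta(r)$ is central in $\eta(F)$ and $R$ is generated as a normal subgroup by the $r$'s, any element $g r^{\pm 1} g^{-1}$ of $R$ satisfies $\eta(g r^{\pm 1} g^{-1}) = \eta(r)^{\pm 1}$; hence $\eta(R)$ is generated, as an ordinary subgroup of $H$, by $\{\eta(r)\}$, and so $Z \cong \eta(R)$ is generated by the elements $\phi(\eta(r))$. In $F*Z/R'$ each lifted relation in \eqref{eq:R_prime_generators} forces $\phi(\eta(r)) = \Psi(r)$, so $\Psi(F)$ contains a generating set of the image of $Z$ as well as the image of $F$; therefore $\Psi(F) = F*Z/R'$.

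\emph{$R\cap\ker\eta \subseteq \ker\Psi$.} I will first check that $\Psi(x) = \phi(\eta(x))$ in $F*Z/R'$ for every $x\in R$ (the right side read inside $F*Z/R'$ via $Z \hookrightarrow F*Z$). Writing $x = \prod_j g_j r_{i_j}^{\varepsilon_j} g_j^{-1}$ and applying the lifted relations, then the commuting relations of \eqref{eq:R_prime_generators} (which make each $\phi(\eta(r_{i_j}))$ central in $F*Z/R'$), one obtains $\Psi(x) = \prod_j \phi(\eta(r_{i_j}))^{\varepsilon_j} = \phi\big(\prod_j \eta(r_{i_j})^{\varepsilon_j}\big)$; comparing with $\eta(x) = \prod_j \eta(r_{i_j})^{\varepsilon_j}$ (again by centrality of the $\eta(r_{i_j})$ in $\eta(F)$) gives $\Psi(x) = \phi(\eta(x))$. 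In particular, if also $x\in\ker\eta$ then $\Psi(x) = \phi(1) = 1$.

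\emph{$\ker\Psi \subseteq R\cap\ker\eta$.} Here I build two auxiliary homomorphisms out of $F*Z/R'$. The assignments $f\mapsto \eta(f)$ on $F$ and $z\mapsto \phi^{-1}(z)$ on $Z$ annihilate all relations in \eqref{eq:R_prime_generators} — the lifted ones since $\phi^{-1}(\phi(\eta(r))) = \eta(r)$, the commuting ones since $\phi^{-1}(z)\in\eta(R)$ is central in $\eta(F)$ — hence define $\widetilde\eta : F*Z/R' \to H$ with $\widetilde\eta\circ\Psi = \eta$; thus $x\in\ker\Psi \Rightarrow \eta(x)=1$. Likewise the quotient $F\to F/R = G$ together with the trivial map $Z\to G$ annihilate $R'$, giving $F*Z/R' \to G$ whose composite with $\Psi$ is the quotient $F\to F/R$; thus $x\in\ker\Psi \Rightarrow x\in R$. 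Combining, $\ker\Psi = R\cap\ker\eta$, which completes the proof. The only delicate point is bookkeeping in the two middle steps, where the centrality of $\eta(R)$ in $\eta(F)$ is used crucially to collapse the conjugations and to make the commuting relations compatible with the auxiliary maps; everything else is formal manipulation of presentations.
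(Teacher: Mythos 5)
Your proof is correct, and it follows the same overall skeleton as the paper's: both analyze the homomorphism $\Psi$ of \eqref{eq:lift_map}, prove surjectivity via the lifted relations $r\equiv\phi(\eta(r))$, and identify $\ker\Psi$ with $R\cap\ker\eta$. The substantive difference is in how the inclusion $\ker\Psi\subseteq\ker\eta$ is established. The paper argues that $\Psi(x)\equiv\phi(\eta(x))$ for $x\in R$ and then asserts that $\Psi(x)\equiv 1$ iff $\eta(x)=1$, which requires knowing that the natural map $Z\to F*Z/R'$ is injective; this injectivity is only claimed parenthetically ("it is not hard to see that the natural map from $Z$ to the subgroup $R*Z/(R'\cap(R*Z))$ of $\wh{G}_0$ is an isomorphism") and not proved. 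You instead construct the auxiliary homomorphism $\widetilde\eta : F*Z/R' \to H$ sending $f\mapsto\eta(f)$ and $z\mapsto\phi^{-1}(z)$, verify directly that it annihilates both families of relations in \eqref{eq:R_prime_generators} (the commuting relations precisely because $\eta(R)$ is central in $\eta(F)$), and read off $\ker\Psi\subseteq\ker\eta$ from $\widetilde\eta\circ\Psi=\eta$. This is cleaner and more self-contained: it replaces the unproved injectivity claim by an explicit universal-property argument, and in fact yields that injectivity as a byproduct, since $\widetilde\eta$ restricted to the image of $Z$ is $\phi^{-1}$. Your explicit verification that $\Psi(x)=\phi(\eta(x))$ for a general normal word $x=\prod_j g_j r_{i_j}^{\varepsilon_j}g_j^{-1}$, and your observation that centrality of $\eta(R)$ in $\eta(F)$ forces $\eta(R)$ to be generated by the $\eta(r)$ alone (needed for surjectivity when the lifted relations are imposed only on the normal generators), also make explicit two points the paper leaves implicit. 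Both proofs are valid; yours buys rigor at the one place the paper hand-waves, at the cost of slightly more bookkeeping.
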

\begin{proof}
To avoid confusion, write $\wh{G} = F/(R\cap \ker\eta)$ and $\wh{G}_0 = F*Z / R'$ for the moment. As in \eqref{eq:lift_map} we have a group homomorphism $\Psi: F \to F * Z/R' = \wh{G}_0$. It suffices to prove that $\Psi$ is surjective, and that $\ker\Psi = R\cap \ker\eta$. For surjectivity, we should just show $z\in \Psi(F)$ for any $z\in Z$. We know from the relations $R'$ that for any $r\in R\subset F$ we have $r\equiv \phi(\eta(r))$ in $\wh{G}_0$. Since $\phi : \eta(R) \to Z$ is an isomorphism, for any $z\in Z$ there exists $r\in R$ such that $\phi(\eta(r)) \equiv z$ in $\wh{G}_0$. Then $\Psi(r)\equiv r\equiv \phi(\eta(r))\equiv z \in \wh{G}_0$, so $\Psi$ is surjective. Compose $\Psi$ with the map $\wh{G}_0 \to F/R$ which is quotienting by $Z$, to get $F \to F/R$ (which is easy to see), which coincides with just the natural projecting map from $F$ to $F/R$.
If $x\in \ker\Psi \subset F$, then $x$ is mapped by this composed map into the identity element of $F/R$, meaning that $x\in R$. Then, again using the relations $R'$ which say $r\equiv \phi(\eta(r))$ in $\wh{G}_0$ for any $r\in R$, we have $\Psi(x) \equiv x\equiv \phi(\eta(x))$ in $\wh{G}_0$. Now, $\Psi(x) \equiv 1$ if and only if $\eta(x)=1$, because $\phi : \eta(R) \to Z$ is an isomorphism and the natural map $Z\to \wh{G}_0$ is an injection ($\because$ it is not hard to see that the natural map from $Z$ to the subgroup $R* Z/(R'\cap(R*Z))$ of $\wh{G}_0$ is an isomorphism). Therefore $x\in \ker\eta$, hence $\ker\Psi = \ker\eta \cap R$, as desired.
\end{proof}

\vs

We can also formulate this construction of $\widehat{G} = F*Z/R'$ as follows. Let $G_1 := \eta(F) \le H$ and let ${\rm pr} : G_1 \to G_1/{\rm Center}(G_1)$ be the projection. We observe that ${\rm pr} \circ \eta : F \to G_1/{\rm Center}(G_1) = {\rm pr}(G_1)$ induces a well-defined group homomorphism
$$
G \to {\rm pr}(G_1),
$$
because ${\rm pr} \circ \eta$ sends $R$ to $1$. Now define $\widehat{G}$ as the pullback of $G$ along the map $G_1 \to {\rm pr}(G_1)$:
\begin{align}
\label{eq:conceptual_construction}
\begin{array}{l}
\xymatrix{
\widehat{G} \ar@{.>}[r] \ar@{.>}[d] & G \ar[d] \\
G_1 \ar[r] & {\rm pr}(G_1),
}
\end{array}
\end{align}
or the fibre product of $G\to {\rm pr}(G_1)$ and $G_1 \to {\rm pr}(G_1)$.
\begin{remark}
Suppose ${\rm Center}(G_1)\subset {\rm Center}(H)$. If we replace $G_1 \to {\rm pr}(G_1)$ by $H\to H/{\rm Center}(H)$ and $G\to {\rm pr}(G_1)$ by $G \to H/{\rm Center}(H)$ in the diagram \eqref{eq:conceptual_construction}, we get a possibly bigger central extension, as an analog of pulling back $G\to {\rm PGL}(V)$ along $p:{\rm GL}(V) \to {\rm PGL}(V)$.
\end{remark}

If the homomorphism $G\to {\rm pr}(G_1)$ is actually an isomorphism (we may call such $\eta$ a `faithful' almost $G$-homomorphism), it is easy to show that the resulting $\widehat{G} \to G_1$ is also an isomorphism. Thus for {\em any} central extension $\til{G}$ of $G$, by setting $H=\til{G}$ and choosing a set-map section $G \to \til{G}$, we have the notion of a {\em tautological} almost $G$-homomorphism $F\to \til{G}$, and it is not difficult to see that the central extension of $G$ obtained by the above procedure is indeed isomorphic to $\til{G}$:

\begin{definition}
\label{def:tautological_almost_group_homomorphisms}
Let $G = F/R$ be as in Def.\ref{def:almost_G-homomorphism}, and let $\til{G}$ be a central extension of $G$. An almost $G$-homomorphism (Def.\ref{def:almost_G-homomorphism}) $\eta: F \to \til{G}$ is said to be {\em tautological} if $\eta(F)=\til{G}$ holds and the induced map $G\to {\rm pr}(G_1) \cong G$ as described above is the identity map.
\end{definition}

\begin{proposition}
\label{prop:tautological_leads_to_itself}
The central extension of $G$ obtained by the above described procedure from a tautological almost $G$-homomorphism $F\to \til{G}$ is isomorphic to $\til{G}$.
\end{proposition}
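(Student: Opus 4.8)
The plan is to reduce the claim to the observation recorded just before the Proposition: that a \emph{faithful} almost $G$-homomorphism $\eta$ — one for which the induced map $G\to\mathrm{pr}(G_1)$ is an isomorphism — produces $\widehat G$ with $\widehat G\to G_1$ an isomorphism. A tautological $\eta\colon F\to\tilde G$ is, by Definition~\ref{def:tautological_almost_group_homomorphisms}, precisely such that $G_1=\eta(F)=\tilde G$ and the induced map $G\to\mathrm{pr}(G_1)$ equals the identity under the identification $\mathrm{pr}(G_1)\cong G$; in particular it is an isomorphism, so $\eta$ is faithful, and then $\widehat G\cong G_1=\tilde G$. So the heart of the matter is to verify the faithful case and to check that the resulting isomorphism respects the extension structure.

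For the faithful case I would use the fibre-product description \eqref{eq:conceptual_construction}: $\widehat G$ is the pullback of $\psi\colon G\to\mathrm{pr}(G_1)$ along $\mathrm{pr}\colon G_1\to\mathrm{pr}(G_1)$. If $\psi$ is an isomorphism then the second projection $\widehat G\to G_1$ is an isomorphism — this is the elementary fact that in a pullback square one leg is invertible iff the opposite leg is — and a direct check shows this map intertwines the projection $\widehat G\to G$ with the composite $G_1\to\mathrm{pr}(G_1)\xrightarrow{\psi^{-1}}G$ and carries the central kernel $\ker(\mathrm{pr})=\mathrm{Center}(G_1)$ isomorphically; hence it is an isomorphism of central extensions of $G$. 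Combining this with the tautological hypotheses ($G_1=\tilde G$, $\psi=\mathrm{id}$) gives exactly $\widehat G\cong\tilde G$ as central extensions.

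A self-contained variant, bypassing the fibre-product formalism and working with the presentation $\widehat G=F/(R\cap\ker\eta)$ (identified with $F*Z/R'$ via the Lemma preceding the Proposition), runs as follows. The identity-map condition forces $\mathrm{proj}_{\tilde G}\circ\eta=\pi\colon F\to G$, where $\pi$ is the quotient; hence for $x\in\ker\eta$ we get $\pi(x)=\mathrm{proj}_{\tilde G}(\eta(x))=1$, i.e. $\ker\eta\subseteq R$, so $R\cap\ker\eta=\ker\eta$ and $\eta$ descends to an injection $\bar\eta\colon\widehat G=F/\ker\eta\hookrightarrow\tilde G$. Surjectivity of $\bar\eta$ is the hypothesis $\eta(F)=\tilde G$, and once $\eta$ is onto one has $\eta(R)=\ker\mathrm{proj}_{\tilde G}$ (for $a$ in that kernel, write $a=\eta(x)$ with $\pi(x)=1$, so $x\in R$); thus $\bar\eta$ sends the central subgroup $R/\ker\eta$ onto $\ker\mathrm{proj}_{\tilde G}$ and commutes with the projections to $G$, so it is an isomorphism of central extensions.

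The only genuine subtlety — the \emph{main obstacle}, though it is bookkeeping rather than a real difficulty — is keeping the identifications straight: the tautological hypothesis implicitly makes $\mathrm{Center}(\tilde G)$ coincide with the kernel of the extension (otherwise $\mathrm{pr}(G_1)$ could not be identified with $G$), and one must match the fibre-product presentation of $\widehat G$ with the presentation $F/(R\cap\ker\eta)$ used elsewhere. Both points are dispatched by the Lemma preceding the Proposition together with a short diagram chase, and no clever computation is needed.
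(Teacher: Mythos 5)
Your proposal is correct and follows essentially the same route the paper intends: the paper states the result without proof (``it is easy to show\dots it is not difficult to see''), sketching only the reduction to the faithful case and the pullback-square observation, and your first two paragraphs fill in exactly that sketch, with your self-contained variant via $\widehat{G}=F/(R\cap\ker\eta)$ serving as a valid alternative confirmation. Your closing remark that the tautological hypothesis forces ${\rm Center}(\til{G})$ to coincide with the kernel of the extension is the right subtlety to flag, and nothing in your argument conflicts with the paper.
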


We also introduce the notion of the equivalence of almost $G$-homomorphisms:

\begin{definition}
\label{def:equivalent_almost_group_homomorphisms}
Let $G=F/R$ be as in Def.\ref{def:almost_G-homomorphism} and $H_1,H_2$ be groups. Two almost $G$-homomorphisms (Def.\ref{def:almost_G-homomorphism}) $\eta_1: F\to H_1$ and $\eta_2 : F \to H_2$ are said to be {\em equivalent (via $\Phi_{12}$)} if the subgroups $G_j := \eta_j(F)$ of $H_j$ (for $j=1,2$) are isomorphic to each other and there is an isomorphism $\Phi_{12} : G_1 \to G_2$ such that $\Phi_{12} \circ \eta_1 = \eta_2$. In such a case, we write
\begin{align}
\nonumber
(\eta_1 : F \to H_1) \simeq_{\Phi_{12}} (\eta_2 : F\to H_2).
\end{align}
\end{definition}

It is easy to observe the following proposition and a lemma, which will be used later. 

\begin{proposition}
\label{prop:equivalent_almost_group_homomorphisms}
The equivalence of almost $G$-homomorphisms is an equivalence relation.
Equivalent almost $G$-homomorphisms yield isomorphic central extensions of $G$ via the above procedure. Also, the map $\Phi_{12}$ in Def.\ref{def:equivalent_almost_group_homomorphisms} provides an explicit isomorphism between the resulting central extensions.
\end{proposition}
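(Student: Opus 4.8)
The plan is to dispatch the three assertions in turn; since the assignment $\eta\mapsto\wh{G}$ is purely formal, each amounts to a short diagram chase with no computation. First I would check that $\simeq$ is an equivalence relation: reflexivity is witnessed by the identity automorphism of $G_1=\eta(F)$; symmetry follows by replacing $\Phi_{12}$ with $\Phi_{12}^{-1}$, which is again an isomorphism and satisfies $\Phi_{12}^{-1}\circ\eta_2=\eta_1$; and transitivity follows by composing, since $(\Phi_{23}\circ\Phi_{12})\circ\eta_1=\Phi_{23}\circ\eta_2=\eta_3$. Each of these uses only that the defining identity $\Phi\circ\eta=\eta'$ is preserved under inversion and composition of the intertwiners $\Phi$.

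Next, and this is the substantive part, I would show that equivalent almost $G$-homomorphisms yield isomorphic central extensions, with $\Phi_{12}$ furnishing the isomorphism. Let $\eta_j:F\to H_j$ ($j=1,2$) be equivalent via $\Phi_{12}:G_1\to G_2$, where $G_j:=\eta_j(F)$. The one elementary input is that an isomorphism carries center to center, so $\Phi_{12}$ restricts to an isomorphism ${\rm Center}(G_1)\to{\rm Center}(G_2)$ and descends to an isomorphism $\ol{\Phi}_{12}:G_1/{\rm Center}(G_1)\to G_2/{\rm Center}(G_2)$ with ${\rm pr}_2\circ\Phi_{12}=\ol{\Phi}_{12}\circ{\rm pr}_1$. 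Writing $\pi_j:G\to{\rm pr}_j(G_j)$ for the homomorphism induced by ${\rm pr}_j\circ\eta_j$, the identity $\Phi_{12}\circ\eta_1=\eta_2$ gives $\ol{\Phi}_{12}\circ{\rm pr}_1\circ\eta_1={\rm pr}_2\circ\eta_2$ on $F$, which descends to $\ol{\Phi}_{12}\circ\pi_1=\pi_2$ on $G$. Since $\wh{G}_j$ is the fibre product $\{(g,x)\in G\times G_j:\pi_j(g)={\rm pr}_j(x)\}$, I would then set $\Psi:\wh{G}_1\to\wh{G}_2$, $\Psi(g,x)=(g,\Phi_{12}(x))$. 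This lands in $\wh{G}_2$ because ${\rm pr}_2(\Phi_{12}(x))=\ol{\Phi}_{12}({\rm pr}_1(x))=\ol{\Phi}_{12}(\pi_1(g))=\pi_2(g)$; it is a homomorphism since $\Phi_{12}$ is; and it is bijective, with inverse $(g,y)\mapsto(g,\Phi_{12}^{-1}(y))$. Finally $\Psi$ commutes with the projections onto $G$ and carries the kernel of $\wh{G}_1\to G$ isomorphically onto that of $\wh{G}_2\to G$ via the restriction of $\Phi_{12}$; hence it is an isomorphism of central extensions, exhibited explicitly by $\Phi_{12}$. If one prefers the presentation $\wh{G}_j=F*Z_j/R'_j$, the same argument goes through after noting that $\Phi_{12}$ maps $\eta_1(R)$ isomorphically onto $\eta_2(R)$, which induces the matching identification $Z_1\cong Z_2$ and a compatible isomorphism of the two presented groups.

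I do not anticipate any real obstacle: the proposition is of the ``easy to observe'' type, and the argument sketched above is essentially complete. The only points needing routine care are verifying that each auxiliary map descends to the intended quotient or lands in the intended fibre product, and being precise about what ``isomorphic central extensions'' means here — namely a commuting ladder of short exact sequences whose leftmost vertical arrow is the restriction of $\Phi_{12}$ (equivalently, the induced map $\eta_1(R)\to\eta_2(R)$), since the two kernel groups are only a priori isomorphic rather than literally equal. One could alternatively note that $(\eta,\Phi)\mapsto\wh{G}$ is functorial and hence factors through equivalence classes; I would nonetheless keep the concrete version, as it directly displays $\Phi_{12}$ as the claimed isomorphism.
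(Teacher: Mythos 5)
Your proposal is correct and complete. The paper offers no proof of this proposition (it is introduced with ``It is easy to observe\dots''), so there is nothing to compare against; your verification via the fibre-product description, together with the closing remark handling the presented form $F*Z/R'$, is a sound filling-in of the omitted argument. One small simplification worth noting: since $\Phi_{12}\circ\eta_1=\eta_2$ with $\Phi_{12}$ injective, one has $\ker\eta_1=\ker\eta_2$, so the two groups $F/(R\cap\ker\eta_j)$ of \eqref{eq:wh_G_first_construction} are \emph{literally equal} as quotients of $F$, and the only content is that the two kernel identifications $Z_j\cong\eta_j(R)$ are intertwined by $\Phi_{12}$ --- which is exactly the point your last paragraph makes.
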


\begin{lemma}
\label{lem:compositions_of_equivalent_almost_group_homomorphisms}
Let $G,F,R,H_1,H_2,\eta_1,\eta_2,G_1,G_2,\Phi_{12}$ be as in Def.\ref{def:equivalent_almost_group_homomorphisms}. Let $G' = F' / R'$ be another group presented with generators and relations. Then, if $\phi : F' \to F$ is a group homomorphism with $\phi(R') \subset R$, then the pre-compositions of $\eta_1,\eta_2$ with $\phi$ are equivalent almost $G'$-homomorphisms, i.e. $\eta_1\circ \phi \simeq \eta_2\circ \phi$, via an appropriate restriction of $\Phi_{12}$.

For a group $H_1'$, suppose that $\psi_1: G_1 \to H_1'$ is an injective group homomorphism. Then the post-composition of $\eta_1$ with $\psi_1$, i.e. $\psi_1\circ \eta_1 : F \to H_1'$, is an almost $G$-homomorphism and is equivalent to $\eta_1 : F \to H_1$ via $\psi_1^{-1} : \psi_1(G_1) \to G_1$.
\end{lemma}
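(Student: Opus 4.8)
The plan is to verify the two assertions directly from Def.\ref{def:almost_G-homomorphism} and Def.\ref{def:equivalent_almost_group_homomorphisms}; the only content is bookkeeping of which subgroups the relevant images land inside, plus the two trivial facts that centrality passes to subgroups and to injective homomorphic images, so I do not expect a genuine obstacle.

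For the first assertion I would set $G_j' := (\eta_j\circ\phi)(F')$ for $j=1,2$ and first check that $\eta_j\circ\phi : F'\to H_j$ is an almost $G'$-homomorphism. Since $\phi(R')\subseteq R$ we get $(\eta_j\circ\phi)(R')\subseteq \eta_j(R)\subseteq \mathrm{Center}(G_j)$, the last inclusion because $\eta_j$ is an almost $G$-homomorphism; as $G_j'\subseteq G_j$, anything lying in both $\mathrm{Center}(G_j)$ and $G_j'$ commutes with all of $G_j'$, so $(\eta_j\circ\phi)(R')\subseteq \mathrm{Center}(G_j')$. For the equivalence I would restrict $\Phi_{12}$ to $G_1'$: evaluating the identity $\Phi_{12}\circ\eta_1=\eta_2$ on elements of $\phi(F')$ shows $\Phi_{12}(G_1')=G_2'$, so $\Phi_{12}|_{G_1'}$ is an isomorphism $G_1'\to G_2'$, and the same identity gives $(\Phi_{12}|_{G_1'})\circ(\eta_1\circ\phi)=\eta_2\circ\phi$, which is exactly the condition in Def.\ref{def:equivalent_almost_group_homomorphisms}.

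For the second assertion, injectivity of $\psi_1$ makes $\psi_1^{-1}:\psi_1(G_1)\to G_1$ a well-defined isomorphism. I would first note that $\psi_1\circ\eta_1$ is an almost $G$-homomorphism: $(\psi_1\circ\eta_1)(R)=\psi_1(\eta_1(R))\subseteq \psi_1(\mathrm{Center}(G_1))$, and $\psi_1$ being a homomorphism sends central elements of $G_1$ to central elements of $\psi_1(G_1)=(\psi_1\circ\eta_1)(F)$. Then $\psi_1^{-1}\circ(\psi_1\circ\eta_1)=\eta_1$ exhibits the equivalence of $\psi_1\circ\eta_1$ with $\eta_1$ via $\psi_1^{-1}$, using also that equivalence is symmetric (Prop.\ref{prop:equivalent_almost_group_homomorphisms}). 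The only point requiring attention is that $\phi$ and $\psi_1$ change the ambient group, so at each step one must confirm that ``central in $G_j$'' survives restriction to the smaller image $G_j'$ and that ``central in $G_1$'' survives the injective pushforward by $\psi_1$ — both immediate. Combined with Prop.\ref{prop:equivalent_almost_group_homomorphisms}, these two statements are precisely what will let us transport the minimal central extension along the map ${\bf F}:F_{mark}\to F_{dot}$ and along the operator-theoretic inclusions used later.
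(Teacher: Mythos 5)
Your proof is correct: it is exactly the routine verification that the paper omits (the lemma is stated there as an easy observation with no written proof), checking that centrality in $G_j$ restricts to centrality in the subgroup $G_j'=(\eta_j\circ\phi)(F')$ and pushes forward under the injective map $\psi_1$, and that the defining identity $\Phi_{12}\circ\eta_1=\eta_2$ restricts, respectively inverts, to give the required equivalences. No gaps.
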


Finally, we assert that the procedure described in the present subsection which is for almost $G$-homomorphisms is indeed an appropriate generalization of what we want from almost-linear representations:

\begin{proposition}
\label{prop:appropriate_generalization}
If we apply the procedure described in the present subsection to an almost-linear representation $\eta : F \to {\rm GL}(V)$ of $G=F/R$ (Def.\ref{def:almost_G-homomorphism}), we get the {\em minimal} central extension of $G$ resolving $\bar{\eta}$ (Def.\ref{def:minimal_central_extension}), where $\bar{\eta}: G \to {\rm GL}(V)$ is the map induced by $\eta$.
\end{proposition}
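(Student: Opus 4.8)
The plan is to realize the output of the construction as a subgroup of the ``universal'' resolving extension $\til G$ — the pullback of the induced projective representation $\bar\eta : G\to PGL(V)$ along $p:GL(V)\to PGL(V)$ \eqref{eq:p} — and then to identify it there with the smallest resolving subextension. To begin, recall that the construction produces $\wh G = F*Z/R'$ \eqref{eq:wh_G_second_construction}, isomorphic to $F/(R\cap\ker\eta)$ \eqref{eq:wh_G_first_construction}, a central extension of $G$ by $Z\cong\eta(R)$. The point where \emph{almost-linearity} (rather than the more general notion of Def.\ref{def:almost_G-homomorphism}) enters is that $\eta(R)$ then consists of \emph{scalar} operators, so $Z$ is a subgroup of $k^\times$; fix a set-section $\sigma:G\to F$ taking each generator of $G$ to the corresponding generator of $F$, put $\bar\eta=\eta\circ\sigma:G\to GL(V)$, and note that the phases $c_{g_1,g_2}=\eta(\sigma(g_1g_2)\sigma(g_2)^{-1}\sigma(g_1)^{-1})$ lie in $\eta(R)=Z$ and generate it.

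First I would check that $\wh G$ itself resolves $\bar\eta$. Define $\wh\rho:F*Z\to GL(V)$ to equal $\eta$ on $F$ and $\phi^{-1}$ on $Z$, where $\phi:\eta(R)\to Z$ is the chosen isomorphism. This homomorphism kills the lifted relations of \eqref{eq:R_prime_generators} by the definition of $\phi$, and kills the commuting relations because $\phi^{-1}(Z)=\eta(R)$ is central in $GL(V)$; hence it descends to a genuine representation $\wh\rho:\wh G\to GL(V)$. With $s:=\Psi\circ\sigma:G\to\wh G$, where $\Psi:F\to\wh G$ is the lift \eqref{eq:lift_map}, one has ${\rm proj}\circ s=id_G$ (because ${\rm proj}\circ\Psi$ is the projection $F\to F/R$) and $\wh\rho\circ s=\eta\circ\sigma=\bar\eta$ (because $\wh\rho\circ\Psi=\eta$), so $\wh G$ resolves $\bar\eta$ in the sense of Def.\ref{def:minimal_central_extension}.

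For minimality I would compare everything inside $\til G$. The map $f\mapsto(\bar f,\eta(f))$ on $F$ together with $z\mapsto(1,\phi^{-1}(z))$ on $Z$ is a homomorphism $F*Z\to\til G$ (well defined since $p(\eta(f))=\bar\eta(\bar f)$ and $\phi^{-1}(z)$ is scalar) that again kills $R'$, hence induces a morphism of central extensions $\kappa:\wh G\to\til G$ over $G$; it is injective since $\kappa([f])=1$ forces $f\in R\cap\ker\eta$, which is trivial in $\wh G$. Writing $\tilde s(g)=(g,\bar\eta(g))$ for the tautological section of $\til G$ and $\til\rho(g,M)=M$ for its resolving representation, one has $\til\rho\circ\kappa=\wh\rho$, and the image of $\kappa$ is the subgroup $\langle\tilde s(G)\rangle$ (using $(1,c_{g_1,g_2}^{-1})=\tilde s(g_1g_2)^{-1}\tilde s(g_1)\tilde s(g_2)$, so $\kappa(Z)\subseteq\langle\tilde s(G)\rangle$). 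Finally, if $S\le\til G$ is any subgroup that surjects onto $G$ and resolves $\bar\eta$ through the restriction of $\til\rho$, then a section $\varsigma:G\to S$ with $\til\rho\circ\varsigma=\bar\eta$ has $\varsigma(g)=(g,\bar\eta(g))=\tilde s(g)$, whence $S\supseteq\langle\tilde s(G)\rangle\cong\wh G$; so $\wh G$ is the smallest resolving subextension of $\til G$, i.e. the minimal central extension resolving $\bar\eta$.

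The main obstacle I anticipate is the bookkeeping hidden in the word ``contained'' in Def.\ref{def:minimal_central_extension}: a resolving extension $\wh G'$ presented abstractly (not as a subgroup of $\til G$) need not literally contain $\wh G$, so one must first observe that every resolving $\wh\rho':\wh G'\to GL(V)$ sends the central kernel of $\wh G'\to G$ into the scalars — which is Schur's lemma, given that $V$ is irreducible under $\rho(G)$, the situation for $\rho^{Kash}$ and $\rho^{CF}$ — whence $\wh G'$ maps canonically into $\til G$ and ``minimal among all resolving extensions'' reduces to ``smallest resolving subextension of $\til G$''. Everything else is the routine verification that the displayed homomorphisms are well defined and compatible with the projections to $G$.
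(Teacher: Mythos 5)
The paper states Prop.\ref{prop:appropriate_generalization} without proof, so there is nothing to compare against line by line; your argument supplies exactly the content the paper leaves implicit, and it follows the route the paper itself gestures at (the fibre-product picture \eqref{eq:conceptual_construction} and the remark comparing it with the pullback along $p:GL(V)\to PGL(V)$). Your first three paragraphs are correct: $\wh{G}=F*Z/R'$ does resolve $\bar\eta$, the map $\kappa$ identifies it with $\langle\tilde s(G)\rangle\le\til G$, and any subgroup of $\til G$ that resolves $\bar\eta$ through the restriction of $\til\rho$ is forced to contain $\tilde s(G)$ and hence $\wh G$.

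There is, however, one step in your final reduction that you should not wave away. Even granting Schur's lemma, the canonical morphism $\wh G'\to\til G$, $w\mapsto({\rm proj}'(w),\wh\rho'(w))$, need not be injective: its kernel is $\{w\in C:\wh\rho'(w)=1\}$, where $C$ is the central kernel of $\wh G'$. If this kernel is nontrivial you only conclude that $\wh G$ embeds into a \emph{quotient} of $\wh G'$, which is not literal containment. A concrete obstruction: for $G=\mathbb{Z}/2=\langle x\mid x^2\rangle$ and $\eta(x)=i\in GL_1(\mathbb{C})$ the construction gives $\wh G\cong\mathbb{Z}/4$, yet $\mathbb{Z}/8$ with $\wh\rho'(k)=i^k$ and $s'(\bar x)=1$ also resolves $\bar\eta$ and admits no injection of $\mathbb{Z}/4$ over $\mathbb{Z}/2$; so the minimal extension in the strict sense of Def.\ref{def:minimal_central_extension} fails to exist there. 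What saves the situation in the paper's applications is that $\eta(R)$ is infinite cyclic (generated by $\zeta^{-1}$ or $\lambda$, not a root of unity) and one compares against central extensions by $\mathbb{Z}$: then $\wh\rho'(C)$ contains the infinite group generated by the defect scalars $c_{g_1,g_2}$, so $\wh\rho'|_C:\mathbb{Z}\to\mathbb{C}^\times$ is injective and the map $\wh G'\to\til G$ is injective after all. You should add this observation (and note that the proposition implicitly assumes irreducibility of $V$ and the absence of torsion in $\eta(R)$, both of which hold for $\rho^{Kash}$ and $\rho^{CF}$); with it, your proof is complete.
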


\subsection{Algebraic proof of the main theorem}
\label{subsec:algebraic_proof}

Coming back to our situation, we can now say that $\rho^{\rm CF}$ \eqref{eq:rho_CF} and $\rho^{\rm Kash}$ \eqref{eq:rho_Kash} are almost $T$-homomorphisms, where $T = F_{\rm mark}/R_{\rm mark}$ is presented with generators and relations. By the procedure described in \S\ref{subsec:minimal_central_extensions}, one obtains minimal central extensions of $T$ resolving $\rho^{\rm CF}$ and $\rho^{\rm Kash}$ ($\because$Prop.\ref{prop:appropriate_generalization}), denoted by
$$
\wh{T}^{\rm CF} \quad \mbox{and} \quad \wh{T}^{\rm Kash}
$$
respectively, which are given by some presentations with generators and relations. The main task of constructing these presentations is computing the `lifted relations'  \eqref{eq:R_prime_generators}, using the almost $T$-homomorphisms $\rho^{\rm CF}$ and $\rho^{\rm Kash}$. For $\rho^{\rm CF}$, such result is already written in \eqref{eq:lifted_T_relations_for_CF}, which is due to Funar and Sergiescu \cite{FuS}. In particular, we see by inspection that $\rho^{\rm CF}(R_{\rm mark}) \subset {\rm GL}(\mathscr{V})$ is generated by a single complex scalar $\lambda$, which means that $\wh{T}^{\rm CF}$ is a central extension of $T$ by $\mathbb{Z}$; namely, by replacing $\lambda$ in \eqref{eq:lifted_T_relations_for_CF} by $z$ and adding the commuting relations $[\wh{\alpha}_0,z]=[\wh{\beta}_0,z]=1$, we get a presentation for $\wh{T}^{\rm CF}$ generated by the symbols $\wh{\alpha}_0,\wh{\beta}_0,z$. In a similar way, we can also see that $\wh{T}^{\rm Kash}$ is a central extension by $\mathbb{Z}$. First, note that $\rho_{\rm dot}$ \eqref{eq:rho_F_dot_to_GL_mathscr_M} is an almost $K$-homomorphism, where $K = F_{\rm dot}/R_{\rm dot}$. Denote by $\wh{K}$ the central extension of $K$ resulting from $\rho_{\rm dot}$ by the procedure in \S\ref{subsec:minimal_central_extensions}. Prop.\ref{prop:lifted_Kashaev_relations} gives a complete list of lifted relations, and by inspection one observes that $\rho_{\rm dot}(R_{\rm dot}) \subset {\rm GL}(\mathscr{M})$ is generated by a single complex scalar $\zeta$, which means that $\wh{K}$ is a central extension of $K$ by $\mathbb{Z}$. Since $\wh{T}^{\rm Kash}$ is the pullback of $\wh{K}$ by ${\bf F} : T\to K$, we can indeed deduce that $\wh{T}^{\rm Kash}$ is a central extension of $T$ by $\mathbb{Z}$. A complete presentation for $\wh{T}^{\rm Kash}$ is the main result of the present paper (Thm.\ref{thm:main}), and one way of getting it is to translate the lifted version of the relations of the Kashaev group $K$ given in Prop.\ref{prop:lifted_Kashaev_relations} into a lifted version of the relations of the Ptolemy-Thompson group $T$, using ${\bf F}: T\to K$ stated in Prop.\ref{prop:our_bold_F}. We call this an `algebraic' proof of the main theorem, Thm.\ref{thm:main}; we present a major part of it here. 

\vs

Recall from \eqref{eq:rho_Kash_images} that we denoted by $\wh{\alpha}$ and $\wh{\beta}$ the operators corresponding to the generators $\alpha$, $\beta$ of $F_{\rm mark}$ by the almost $T$-homomorphism $\rho^{\rm Kash}$ \eqref{eq:rho_Kash}. So we have $\wh{\alpha} := {\bf A}_{[-1]} {\bf T}^{-1}_{[-1][1]} {\bf A}_{[1]} {\bf P}_{\gamma_\alpha}$ and $\wh{\beta} := {\bf A}_{[-1]} {\bf P}_{\gamma_\beta}$, where the $\mathbb{Q}^\times$-permutations $\gamma_\alpha$ and $\gamma_\beta$ are as described in Def.\ref{def:P_gamma_alpha_beta}. We now take each $\alpha,\beta$-relation in \eqref{eq:T_presentation_intro} and evaluate using $\wh{\alpha}$, $\wh{\beta}$. The strategy is to use the relations in Prop.\ref{prop:lifted_Kashaev_relations}. The following variants of the relations \eqref{eq:lifted_Kashaev_relations_major} will become handy:
\begin{align}
\label{eq:lifted_Kashaev_relations_more}
\left\{ {\renewcommand{\arraystretch}{1.4}
\begin{array}{l}
{\bf T}_{[j][k]}^{-1} {\bf A}_{[j]} {\bf A}_{[k]} = \zeta^{-1} {\bf A}_{[j]} {\bf P}_{(jk)} {\bf T}_{[j][k]}, \quad
{\bf T}_{[j][k]}^{-1} {\bf A}_{[j]}^2 {\bf A}_{[k]} = {\bf A}_{[j]}^2 {\bf A}_{[k]} {\bf T}_{[k][j]}^{-1}, \\
{\bf T}_{[j][k]}^{-1} {\bf T}_{[k][\ell]}^{-1} = {\bf T}_{[k][\ell]}^{-1} {\bf T}_{[j][\ell]}^{-1} {\bf T}_{[j][k]}^{-1}, \quad {\bf T}_{[k][j]}^{-1} {\bf A}_{[j]}^2 {\bf T}_{[j][k]}^{-1} = \zeta^{-1} {\bf P}_{(jk)} {\bf A}_{[j]}^2 {\bf A}_{[k]}^2.
\end{array}} \right.
\end{align}
We begin from the easier relations:
\begin{lemma}
\label{lem:easy_relations_representation}
The operators $\widehat{\alpha}$ and $\widehat{\beta}$ defined in \eqref{eq:rho_Kash_images} satisfy
\begin{align*}
\widehat{\beta}^3 = 1 \quad\mbox{and}\quad
\widehat{\alpha}^4 = \zeta^{-2}.
\end{align*}
\end{lemma}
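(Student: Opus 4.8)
The plan is to compute $\widehat{\beta}^3$ and $\widehat{\alpha}^4$ directly from the definitions in \eqref{eq:rho_Kash_images}, using only the lifted Kashaev relations of Prop.\ref{prop:lifted_Kashaev_relations} (and their variants \eqref{eq:lifted_Kashaev_relations_more}) together with the index-change relations \eqref{eq:lifted_Kashaev_relations_P}. The crucial bookkeeping is keeping track of how the permutation operators ${\bf P}_{\gamma_\alpha}$, ${\bf P}_{\gamma_\beta}$ move past the ${\bf A}$- and ${\bf T}$-operators, which they do at the cost of relabelling indices via $\gamma_\alpha$, $\gamma_\beta$ as in \eqref{eq:gamma_alpha}--\eqref{eq:gamma_beta}; no scalar phase is introduced by an index change, so all scalars come from the relation ${\bf T}_{[j][k]}{\bf A}_{[j]}{\bf T}_{[k][j]} = \zeta\,{\bf A}_{[j]}{\bf A}_{[k]}{\bf P}_{(jk)}$ of \eqref{eq:zeta}.

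For $\widehat{\beta}^3$: since $\widehat{\beta} = {\bf A}_{[-1]}{\bf P}_{\gamma_\beta}$, cubing gives
\begin{align*}
\widehat{\beta}^3 = {\bf A}_{[-1]}{\bf P}_{\gamma_\beta}{\bf A}_{[-1]}{\bf P}_{\gamma_\beta}{\bf A}_{[-1]}{\bf P}_{\gamma_\beta}
= {\bf A}_{[-1]}{\bf A}_{[\gamma_\beta(-1)]}{\bf A}_{[\gamma_\beta^2(-1)]}\,{\bf P}_{\gamma_\beta}^3.
\end{align*}
Now $\gamma_\beta$ fixes $-1$ (noted after \eqref{eq:gamma_beta}), so this equals ${\bf A}_{[-1]}^3\,{\bf P}_{\gamma_\beta}^3 = {\bf P}_{\gamma_\beta^3}$ using ${\bf A}_{[-1]}^3 = id$ from \eqref{eq:lifted_Kashaev_relations_major}. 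It then remains to check that $\gamma_\beta^3 = id$ as a permutation of $\mathbb{Q}^\times$; this is a purely combinatorial fact about the $\beta$-move on dotted tessellations (it reflects $\beta^3 = id$ in $T$ together with $\mcal{F}(\beta^3.\tau^*_{mark}) = \mcal{F}(\tau^*_{mark})$), and can be read off from Fig.\ref{fig:P_gamma_beta}. Hence $\widehat{\beta}^3 = 1$.

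For $\widehat{\alpha}^4$: write $\widehat{\alpha} = {\bf A}_{[-1]}{\bf T}^{-1}_{[-1][1]}{\bf A}_{[1]}{\bf P}_{\gamma_\alpha}$. Using that $\gamma_\alpha$ fixes both $-1$ and $1$ (Def.\ref{def:P_gamma_alpha_beta}), every ${\bf P}_{\gamma_\alpha}$ commutes with ${\bf A}_{[-1]}$, ${\bf A}_{[1]}$ and ${\bf T}_{[-1][1]}$, so I can collect the four copies of ${\bf P}_{\gamma_\alpha}$ on the right as ${\bf P}_{\gamma_\alpha}^4$ and reduce to analysing
\begin{align*}
\bigl({\bf A}_{[-1]}{\bf T}^{-1}_{[-1][1]}{\bf A}_{[1]}\bigr)^4.
\end{align*}
Here I would repeatedly apply the variants in \eqref{eq:lifted_Kashaev_relations_more}, in particular ${\bf T}_{[j][k]}^{-1}{\bf A}_{[j]}{\bf A}_{[k]} = \zeta^{-1}{\bf A}_{[j]}{\bf P}_{(jk)}{\bf T}_{[j][k]}$ and ${\bf T}_{[k][j]}^{-1}{\bf A}_{[j]}^2{\bf T}_{[j][k]}^{-1} = \zeta^{-1}{\bf P}_{(jk)}{\bf A}_{[j]}^2{\bf A}_{[k]}^2$ with $j = -1$, $k = 1$, to push the ${\bf T}^{-1}$'s through, accumulating exactly two factors of $\zeta^{-1}$ and collapsing the ${\bf A}$-operators (via ${\bf A}^3 = id$) and the transposition operators ${\bf P}_{(-1\,1)}$ (which square to the identity). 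One then also needs $\gamma_\alpha^4 = id$ so that ${\bf P}_{\gamma_\alpha}^4 = id$; again this is the combinatorial shadow of $\alpha^4 = id$ in $T$, visible from Fig.\ref{fig:P_gamma_alpha}. Assembling the pieces yields $\widehat{\alpha}^4 = \zeta^{-2}$.

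The main obstacle is the $\widehat{\alpha}^4$ computation: one must carry out the word reduction of $({\bf A}_{[-1]}{\bf T}^{-1}_{[-1][1]}{\bf A}_{[1]})^4$ without error, correctly tracking the transposition operators ${\bf P}_{(-1\,1)}$ that appear mid-computation and verifying that they cancel in pairs, and confirming that precisely two $\zeta^{-1}$'s survive. The $\widehat{\beta}^3$ case is comparatively immediate once one invokes $\gamma_\beta(-1) = -1$ and $\gamma_\beta^3 = id$. Throughout, I would lean on the fact (Prop.\ref{prop:lifted_Kashaev_relations}) that the only relation carrying a scalar is the $\zeta$-relation, so all phase contributions are localized and easy to audit.
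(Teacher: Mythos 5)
Your proposal is correct and follows essentially the same route as the paper: the $\widehat{\beta}^3$ argument is identical, and for $\widehat{\alpha}^4$ the paper merely organizes the same computation as $(\widehat{\alpha}^2)^2$ with $\widehat{\alpha}^2 = \zeta^{-1}{\bf P}_{(-1\,1)}{\bf P}_{\gamma_\alpha}^2$, whereas you pull the four ${\bf P}_{\gamma_\alpha}$'s to the right first — both hinge on the same facts that $\gamma_\alpha$ fixes $\pm1$, that the only scalar-bearing relation is ${\bf T}_{[j][k]}^{-1}{\bf A}_{[j]}{\bf A}_{[k]} = \zeta^{-1}{\bf A}_{[j]}{\bf P}_{(jk)}{\bf T}_{[j][k]}$ (applied twice), and that $\gamma_\alpha^4$ and $\gamma_\beta^3$ are the identity permutations. (The second variant relation you cite, ${\bf T}_{[k][j]}^{-1}{\bf A}_{[j]}^2{\bf T}_{[j][k]}^{-1} = \zeta^{-1}{\bf P}_{(jk)}{\bf A}_{[j]}^2{\bf A}_{[k]}^2$, is not actually needed here, but that is harmless.)
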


\begin{proof}
Observe from \eqref{eq:gamma_beta} that $\gamma_\beta$ fixes $-1$; so ${\bf P}_{\gamma_\beta}$ commutes with ${\bf A}_{[-1]}$ ($\because$ \eqref{eq:lifted_Kashaev_relations_P}). Then
\begin{align*}
\widehat{\beta}^3 \, \stackrel{\eqref{eq:rho_Kash_images}}{=} \, ({\bf A}_{[-1]} {\bf P}_{\gamma_\beta})^3 = {\bf A}_{[-1]}^3 {\bf P}_{\gamma_\beta}^3 \, \stackrel{\eqref{eq:lifted_Kashaev_relations_major}}{=} \, {\bf P}_{\gamma_\beta}^3
\, \stackrel{\eqref{eq:lifted_Kashaev_relations_P}}{=} \, {\bf P}_{\gamma_\beta^3}.
\end{align*}
By applying the definition of $\gamma_\beta$ (as in Def.\ref{def:P_gamma_alpha_beta}) three times, it is easy to see that $\gamma_\beta^3$ is the identity permutation of $\mathbb{Q}^\times$, because it is the triangle-label permutation associated to the move $\beta^3 = {\rm id}$. Hence ${\bf P}_{\gamma_\beta^3} = {\bf P}_{\rm id} \stackrel{\eqref{eq:lifted_Kashaev_relations_P}}{=} {\rm id}$. This yields $\widehat{\beta}^3 = 1$.

\vs

We now take $\wh{\alpha}^2$. Note from \eqref{eq:gamma_alpha} that $\gamma_\alpha$ fixes $-1$ and $1$; so ${\bf P}_{\gamma_\alpha}$ commutes with ${\bf A}_{[-1]} {\bf T}_{[-1][1]}^{-1} {\bf A}_{[1]}$ ($\because$ \eqref{eq:lifted_Kashaev_relations_P}). Then
\begin{align}
\nonumber
\widehat{\alpha}^2 & \, \stackrel{\eqref{eq:rho_Kash_images}}{=} \, ({\bf A}_{[-1]} {\bf T}_{[-1][1]}^{-1} {\bf A}_{[1]} {\bf P}_{\gamma_\alpha})^2
= {\bf A}_{[-1]} {\bf T}_{[-1][1]}^{-1} \ul{ {\bf A}_{[1]}{\bf A}_{[-1]} } {\bf T}_{[-1][1]}^{-1} {\bf A}_{[1]} {\bf P}_{\gamma_\alpha}^2 \\
\nonumber
& \, \stackrel{\eqref{eq:lifted_Kashaev_relations_commutation}}{=} \, {\bf A}_{[-1]} \ul{ {\bf T}_{[-1][1]}^{-1} ({\bf A}_{[-1]}  {\bf A}_{[1]}) }{\bf T}_{[-1][1]}^{-1} {\bf A}_{[1]} {\bf P}_{\gamma_\alpha}^2 \\
\nonumber
& \, \stackrel{\eqref{eq:lifted_Kashaev_relations_more}}{=} \,
{\bf A}_{[-1]} (\zeta^{-1} {\bf A}_{[-1]} \ul{ {\bf P}_{(-1\, 1)} \cancel{ {\bf T}_{[-1][1]} } ) \cancel{ {\bf T}_{[-1][1]}^{-1} }  {\bf A}_{[1]}  } {\bf P}_{\gamma_\alpha}^2 \\
\nonumber
& \stackrel{\eqref{eq:lifted_Kashaev_relations_P}}{=} \zeta^{-1} \ul{ {\bf A}_{[-1]}^3 }  {\bf P}_{(-1 \, 1)} {\bf P}_{\gamma_\alpha}^2
\stackrel{\eqref{eq:lifted_Kashaev_relations_major}}{=}
\zeta^{-1} \, {\bf P}_{(-1\, 1)} {\bf P}_{\gamma_\alpha}^2,
\end{align}
where in each step we underlined the part which is being replaced in the next step. Since $\gamma_\alpha$ fixes $-1$ and $1$, we know ${\bf P}_{\gamma_\alpha}$ commutes with ${\bf P}_{(-1 \, 1)}$, and therefore
\begin{align*}
\widehat{\alpha}^4 = (\widehat{\alpha}^2)^2 = (\zeta^{-1} \, {\bf P}_{(-1\, 1)} {\bf P}_{\gamma_\alpha}^2)^2
= \zeta^{-2} \, \ul{ {\bf P}_{(-1\, 1)}^2 }  \, \ul{ {\bf P}_{\gamma_\alpha}^4 }
\stackrel{\eqref{eq:lifted_Kashaev_relations_P}}{=} \zeta^{-2} \, {\bf P}_{(-1 \, 1)^2} {\bf P}_{\gamma_\alpha^4} = \zeta^{-2} {\bf P}_{\gamma_\alpha^4}.
\end{align*}
By applying the definition of $\gamma_\alpha$ (as in Def.\ref{def:P_gamma_alpha_beta}) four times, it is easy to see that $\gamma_\alpha^4$ is the identity permutation of $\mathbb{Q}^\times$, because it is the triangle-label permutation associated to the move $\alpha^4 = {\rm id}$. Hence ${\bf P}_{\gamma_\alpha^4} = {\bf P}_{\rm id} = {\rm id}$. This yields $\widehat{\alpha}^4 = \zeta^{-2}$.
\end{proof}

From now on, the trivial step of switching the order of commuting factors  such as ${\bf A}_{[1]} {\bf A}_{[-1]} = {\bf A}_{[-1]} {\bf A}_{[1]}$ may not be explicitly shown. Proof of the following result is a little bit more involved.

\begin{lemma}
\label{lem:pentagon_representation}
The operators $\widehat{\alpha}$ and $\widehat{\beta}$ defined in \eqref{eq:rho_Kash_images} satisfy
\begin{align*}
(\widehat{\beta} \widehat{\alpha})^5 = \zeta^{-3}.
\end{align*}
\end{lemma}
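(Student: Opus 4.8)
~~

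The plan is to compute the product $(\widehat{\beta}\widehat{\alpha})^5$ by pushing all permutation operators ${\bf P}_\gamma$ to the right using the index-change relations \eqref{eq:lifted_Kashaev_relations_P}, collecting the scalar factors of $\zeta$ along the way, and then verifying that the surviving permutation is the one associated to the move $(\beta\alpha)^5 = id$, hence is the identity and contributes nothing. Concretely, I would first write $\widehat{\beta}\widehat{\alpha} = {\bf A}_{[-1]} {\bf P}_{\gamma_\beta} {\bf A}_{[-1]} {\bf T}_{[-1][1]}^{-1} {\bf A}_{[1]} {\bf P}_{\gamma_\alpha}$ and use \eqref{eq:lifted_Kashaev_relations_P} to move ${\bf P}_{\gamma_\beta}$ past the operators to its right, turning indices $-1,1$ into $\gamma_\beta(-1) = -1$, $\gamma_\beta(1) = -\frac{1}{2}$; this gives $\widehat{\beta}\widehat{\alpha} = {\bf A}_{[-1]}^2 {\bf T}_{[-1][-1/2]}^{-1} {\bf A}_{[-1/2]} {\bf P}_{\gamma_\beta \gamma_\alpha}$ or similar, after which I repeat for each of the five factors, keeping a running permutation $\sigma_k$ and a running $\mathbb{Q}^\times$-label bookkeeping.

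The main obstacle is that, even though all the ${\bf P}$'s slide to the right cleanly, the resulting product of the ${\bf A}$- and ${\bf T}$-type operators is a genuine operator identity that must be reduced to the scalar $\zeta^{-3}$ using only the relations in Prop.\ref{prop:lifted_Kashaev_relations} and their variants \eqref{eq:lifted_Kashaev_relations_more}; this is the heart of the ``pentagon'' computation. The key idea I would exploit is that the pentagon relation ${\bf T}_{[k][\ell]}{\bf T}_{[j][k]} = {\bf T}_{[j][k]}{\bf T}_{[j][\ell]}{\bf T}_{[k][\ell]}$ together with ${\bf A}_{[j]}^3 = id$, ${\bf A}_{[j]}{\bf T}_{[j][k]}{\bf A}_{[k]} = {\bf A}_{[k]}{\bf T}_{[k][j]}{\bf A}_{[j]}$, and ${\bf T}_{[j][k]}{\bf A}_{[j]}{\bf T}_{[k][j]} = \zeta\,{\bf A}_{[j]}{\bf A}_{[k]}{\bf P}_{(jk)}$ is precisely the lifted form of the Kashaev-group relations that are known to encode the $T$-pentagon; so the cleanest route is to trace through how ${\bf F}$ maps the word $(\beta\alpha)^5$ into a word in the Kashaev generators, apply the commutation and pentagon relations in the same order one would to verify $(\beta\alpha)^5 = id$ combinatorially on dotted tessellations, and count how many times the $\zeta$-producing relation \eqref{eq:zeta} is invoked — the claim is that it is invoked a net of three times (up to inverses).

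A cleaner alternative, which I would actually prefer to present, is to sidestep the brute-force operator manipulation: since by Cor.\ref{cor:K_action_is_free} and Prop.\ref{prop:morphisms_to_T_and_K} the image ${\bf F}((\beta\alpha)^5) \in K$ must equal $1$ (because $(\beta\alpha)^5 = 1$ in $T$ and ${\bf F}$ is a homomorphism), the operator $(\widehat{\beta}\widehat{\alpha})^5 = \rho_{dot}({\bf F}((\beta\alpha)^5))$ is forced to be a scalar in $GL(\mathscr{M})$, and it suffices to determine that scalar. One can determine it by choosing a specific dotted tessellation, writing ${\bf F}((\beta\alpha)^5)$ as an explicit reduced word in the generators $A_{[j]}, T_{[j][k]}, P_\gamma$ using the concrete description of ${\bf F}$ in Prop.\ref{prop:our_bold_F} and Figures \ref{fig:P_gamma_alpha}--\ref{fig:P_gamma_beta}, and then reducing that word to the empty word in $F_{dot}$ modulo $R_{dot}$ while tracking each use of the relation ${\bf T}_{[j][k]}{\bf A}_{[j]}{\bf T}_{[k][j]} = \zeta\,{\bf A}_{[j]}{\bf A}_{[k]}{\bf P}_{(jk)}$; each such use (or its inverse) contributes a factor $\zeta^{\pm 1}$, and the net exponent is $-3$. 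The bookkeeping of indices across the five applications of $\widehat\beta\widehat\alpha$ is tedious but mechanical; the only genuinely delicate point is making sure no $\zeta$ is dropped or double-counted, which I would handle by fixing an explicit normal form for the intermediate words and reducing step by step.
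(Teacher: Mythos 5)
Your proposal follows essentially the same route as the paper's proof: write $\widehat{\beta}\widehat{\alpha} = {\bf A}_{[-1]}^2{\bf T}_{[-1][-\frac{1}{2}]}^{-1}{\bf A}_{[-\frac{1}{2}]}{\bf P}_{\gamma_\beta\circ\gamma_\alpha}$, push the permutation operators aside, and reduce the remaining word using the lifted relations of Prop.\ref{prop:lifted_Kashaev_relations} while counting the factors of $\zeta$ produced by ${\bf T}_{[j][k]}{\bf A}_{[j]}{\bf T}_{[k][j]} = \zeta\,{\bf A}_{[j]}{\bf A}_{[k]}{\bf P}_{(jk)}$. Your preliminary observation that $(\widehat{\beta}\widehat{\alpha})^5$ is forced to be a scalar (since $(\beta\alpha)^5 \in R_{mark}$, the map \eqref{eq:bold_F_on_free_group} carries $R_{mark}$ into $R_{dot}$, and by Prop.\ref{prop:lifted_Kashaev_relations} the image $\rho_{dot}(R_{dot})$ is generated by $\zeta\cdot id$) is correct and pleasant, but it does not shorten the actual bookkeeping. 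One caution about your first paragraph: the ``surviving permutation'' $(\gamma_\beta\circ\gamma_\alpha)^5$ obtained by merely sliding all the ${\bf P}$'s to the right is \emph{not} the identity --- the paper notes this explicitly. The identity only emerges after the transpositions ${\bf P}_{(jk)}$ generated by the $\zeta$-producing relation are folded in; the paper handles this by factoring $\gamma_\beta\circ\gamma_\alpha = (-\frac{1}{2}\ 1)\circ\gamma$ with $\gamma$ fixing $-1,-\frac{1}{2},1$, checking geometrically that $\gamma^5 = id$, and then carrying out the operator computation entirely in the three variables $x_{-1},x_{-\frac{1}{2}},x_1$, where the residual transposition lives. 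Your ``trace the reduction and count the $\zeta$'s'' formulation would absorb this correctly, but a literal execution of your first plan (``verify the surviving permutation is the identity'') would stall at exactly that point.
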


\begin{proof}
From the definition \eqref{eq:rho_Kash_images} of $\wh{\alpha}$ and $\wh{\beta}$ we get
\begin{align}
\nonumber
\widehat{\beta} \widehat{\alpha}
& = ({\bf A}_{[-1]} \ul{ {\bf P}_{\gamma_\beta}) ({\bf A}_{[-1]} {\bf T}_{[-1][1]}^{-1} {\bf A}_{[1]} } {\bf P}_{\gamma_\alpha}) \\
\nonumber
& \stackrel{\eqref{eq:lifted_Kashaev_relations_P}}{=}
{\bf A}_{[-1]} {\bf A}_{[\gamma_\beta(-1)]} {\bf T}_{[\gamma_\beta(-1)][\gamma_\beta(1)]}^{-1} {\bf A}_{[\gamma_\beta(1)]}  \ul{ {\bf P}_{\gamma_\beta} {\bf P}_{\gamma_\alpha}} 
\stackrel{\eqref{eq:gamma_beta}, \, \eqref{eq:lifted_Kashaev_relations_P}}{=}
{\bf A}_{[-1]}^2 {\bf T}_{[-1][-\frac{1}{2}]}^{-1} {\bf A}_{[-\frac{1}{2}]}  {\bf P}_{\gamma_\beta \circ \gamma_\alpha}.
\end{align}
From \eqref{eq:gamma_alpha} and \eqref{eq:gamma_beta} we get $\gamma_\beta \circ \gamma_\alpha: -1 \mapsto -1$, $1\to -\frac{1}{2}$, $-\frac{1}{2} \mapsto 1$, 
and hence we can write
\begin{align}
\label{eq:def_gamma}
\gamma_\beta \circ \gamma_\alpha = (-\frac{1}{2} \, 1) \circ \gamma
\end{align}
for some permutation $\gamma$ of $\mathbb{Q}^\times$ which fixes $-1,-\frac{1}{2},1$. Now using \eqref{eq:lifted_Kashaev_relations_P} we write ${\bf P}_{\gamma_\beta \circ \gamma_\alpha} = {\bf P}_{(-\frac{1}{2} \, 1)} {\bf P}_\gamma$. Since $\gamma$ fixes $-1,-\frac{1}{2},1$, from \eqref{eq:lifted_Kashaev_relations_P} we know that ${\bf P}_\gamma$ commutes with the expression
\begin{align}
\label{eq:hat_beta_alpha_knot}
(\widehat{\beta}\widehat{\alpha})_\star := {\bf A}_{[-1]}^2 {\bf T}_{[-1][-\frac{1}{2}]}^{-1} {\bf A}_{[-\frac{1}{2}]} {\bf P}_{(-\frac{1}{2} \, 1)}.
\end{align}
So we have
\begin{align*}
\widehat{\beta} \widehat{\alpha} = (\widehat{\beta} \widehat{\alpha})_\star \, {\bf P}_\gamma = {\bf P}_\gamma \, (\widehat{\beta} \widehat{\alpha})_\star,
\end{align*}
and therefore
\begin{align*}
(\widehat{\beta} \widehat{\alpha})^5 = (\widehat{\beta} \widehat{\alpha})_\star^5 \, {\bf P}_\gamma^5
\,\stackrel{\eqref{eq:lifted_Kashaev_relations_P}}{=} \, (\widehat{\beta} \widehat{\alpha})_\star^5 \, {\bf P}_{\gamma^5}.
\end{align*}
Think of applying the move $(\beta\alpha)^5$ to the standard marked tessellation $\tau_{\rm mark}^*$ (ten elementary moves in total). By drawing the picture for each step, we can observe that all ideal triangles of $\tau_{\rm mark}^*$ (here ideal triangles are viewed as subsets of $\mathbb{D}$ without labels) remain intact during this whole process of ten moves, except the three which are labeled by $-1,-\frac{1}{2},1$ according to the labeling rule $L^*$ of $\mcal{F}(\tau_{\rm mark}^*) = \tau^*_{\rm dot} = (\tau^*, D^*, L^*)$.  And we know that $(\beta\alpha)^5$ is the identity move on the set of marked tessellations. Now, by following the definitions of $\gamma_\alpha$, $\gamma_\beta$ (Def.\ref{def:P_gamma_alpha_beta}) and $\gamma$ \eqref{eq:def_gamma}, one can deduce from pictures that $\gamma^5$ is the identity permutation of $\mathbb{Q}^\times$, thus ${\bf P}_{\gamma^5} = 1$ (in particular, we note that $(\gamma_\beta \circ\gamma_\alpha)^5 \neq {\rm id}$).

\vs

So it remains to prove $(\widehat{\beta} \widehat{\alpha})_\star^5 = \zeta^{-3}$. From its definition \eqref{eq:hat_beta_alpha_knot}, $(\widehat{\beta}\widehat{\alpha})_\star$ can be thought of as an operator on $L^2(\mathbb{R}^3, dx_{-1} \, dx_{-\frac{1}{2}} \, dx_1)$. For the ease of notation, we replace the subscripts $[-1],[-\frac{1}{2}],[1]$ with $1,2,3$ respectively. For example, ${\bf A}_{[-1]}$ will now be denoted by ${\bf A}_1$, and ${\bf T}_{[-1][-\frac{1}{2}]}$ by ${\bf T}_{12}$. The permutation operators will be denoted without the parentheses, e.g. ${\bf P}_{(-\frac{1}{2} \, 1)}$ will be denoted by ${\bf P}_{23}$. Then we now can rewrite \eqref{eq:hat_beta_alpha_knot} as:
\begin{align}
\label{eq:hat_beta_alpha_knot2}
(\widehat{\beta} \widehat{\alpha})_\star = {\bf A}_1^2 {\bf T}_{12}^{-1} {\bf A}_2 {\bf P}_{23} : L^2(\mathbb{R}^3, dx_1 \, dx_2\, dx_3) \longrightarrow L^2(\mathbb{R}^3, dx_1 \, dx_2\, dx_3).
\end{align}
We first note that
\begin{align}
\label{eq:hat_beta_alpha_knot_squared}
\left\{ {\renewcommand{\arraystretch}{1.4} \begin{array}{rl}
(\widehat{\beta} \widehat{\alpha})_\star^2
& = ({\bf A}_1^2 {\bf T}_{12}^{-1} {\bf A}_2 \ul{ {\bf P}_{23}) ({\bf A}_1^2 {\bf T}_{12}^{-1} {\bf A}_2 {\bf P}_{23} })
\stackrel{\eqref{eq:lifted_Kashaev_relations_P}}{=}
{\bf A}_1^2 \ul{ {\bf T}_{12}^{-1} {\bf A}_2 {\bf A}_1^2} {\bf T}_{13}^{-1} {\bf A}_3  \\
& \stackrel{\eqref{eq:lifted_Kashaev_relations_more}}{=} 
\ul{ {\bf A}_1^2 {\bf A}_1^2 } {\bf A}_2  {\bf T}_{21}^{-1}  {\bf T}_{13}^{-1} {\bf A}_3 
\stackrel{\eqref{eq:lifted_Kashaev_relations_major}}{=}
{\bf A}_1 {\bf A}_2 {\bf T}_{21}^{-1} {\bf T}_{13}^{-1} {\bf A}_3.
\end{array} } \right.
\end{align}
Putting together \eqref{eq:hat_beta_alpha_knot2} and \eqref{eq:hat_beta_alpha_knot_squared}, we get
\begin{align*}
(\widehat{\beta} \widehat{\alpha})_\star^5
& = (\widehat{\beta} \widehat{\alpha})_\star^2 (\widehat{\beta} \widehat{\alpha})_\star (\widehat{\beta} \widehat{\alpha})_\star^2
= (\ul{ {\bf A}_1 {\bf A}_2 {\bf T}_{21}^{-1} {\bf T}_{13}^{-1} {\bf A}_3)
({\bf A}_1^2 {\bf T}_{12}^{-1} {\bf A}_2 {\bf P}_{23} })
({\bf A}_1 {\bf A}_2 {\bf T}_{21}^{-1} {\bf T}_{13}^{-1} {\bf A}_3) \\
& \stackrel{\eqref{eq:lifted_Kashaev_relations_P}}{=}
( {\bf P}_{23} {\bf A}_1 {\bf A}_3 {\bf T}_{31}^{-1} {\bf T}_{12}^{-1} {\bf A}_2
{\bf A}_1^2 \ul{ {\bf T}_{13}^{-1} {\bf A}_3 )
{\bf A}_1 } {\bf A}_2 \ul{ {\bf T}_{21}^{-1} {\bf T}_{13}^{-1} } {\bf A}_3 \\
& \stackrel{\eqref{eq:lifted_Kashaev_relations_more}}{=}
{\bf P}_{23} {\bf A}_1 {\bf A}_3 {\bf T}_{31}^{-1} {\bf T}_{12}^{-1} {\bf A}_2
{\bf A}_1^2 (\zeta^{-1} {\bf A}_1 \underbrace{ {\bf P}_{13} \ul{ \cancel{ {\bf T}_{13} } ) {\bf A}_2 ( \cancel{ {\bf T}_{13}^{-1} } } {\bf T}_{23}^{-1} {\bf T}_{21}^{-1}  ) {\bf A}_3 } \\
& \stackrel{\eqref{eq:lifted_Kashaev_relations_P}}{=}
\zeta^{-1} {\bf P}_{23} {\bf A}_1 {\bf A}_3 {\bf T}_{31}^{-1} {\bf T}_{12}^{-1} \ul{ {\bf A}_2
\cancel{ {\bf A}_1^2 } \cancel{  {\bf A}_1 } ( {\bf A}_2 }  {\bf T}_{21}^{-1} {\bf T}_{23}^{-1}  {\bf A}_1 {\bf P}_{13} ) \\
& \stackrel{\eqref{eq:lifted_Kashaev_relations_major}}{=}
\zeta^{-1} {\bf P}_{23} {\bf A}_1 {\bf A}_3 {\bf T}_{31}^{-1} \ul{ {\bf T}_{12}^{-1} ({\bf A}_2^2)  {\bf T}_{21}^{-1} } {\bf T}_{23}^{-1}  {\bf A}_1 {\bf P}_{13}  \\
& \stackrel{\eqref{eq:lifted_Kashaev_relations_more}}{=}
\zeta^{-1} {\bf P}_{23} {\bf A}_1 {\bf A}_3 {\bf T}_{31}^{-1} (\zeta^{-1} \ul{ {\bf P}_{21} {\bf A}_2^2 {\bf A}_1^2 ) {\bf T}_{23}^{-1}  {\bf A}_1 } {\bf P}_{13}  \\
& \stackrel{\eqref{eq:lifted_Kashaev_relations_P}}{=}
\zeta^{-2} {\bf P}_{23} {\bf A}_1 {\bf A}_3 {\bf T}_{31}^{-1}  (  {\bf A}_1^2 \ul{ {\bf A}_2^2 {\bf T}_{13}^{-1} }  {\bf A}_2  {\bf P}_{12}) {\bf P}_{13}  \\
& \,\,\,\, = \,\,\,
\zeta^{-2} {\bf P}_{23} {\bf A}_1 {\bf A}_3 \ul{ {\bf T}_{31}^{-1}  {\bf A}_1^2 ( {\bf T}_{13}^{-1} } \,\,  \ul{ \cancel{ {\bf A}_2^2 }  ) \cancel{ {\bf A}_2 } } {\bf P}_{12} {\bf P}_{13}  \\
& \hspace{-4mm}\stackrel{\eqref{eq:lifted_Kashaev_relations_major}, \, \eqref{eq:lifted_Kashaev_relations_more}}{=}
\zeta^{-2} {\bf P}_{23} {\bf A}_1 {\bf A}_3 (\zeta^{-1} \ul{ {\bf P}_{13} {\bf A}_1^2 {\bf A}_3^2 ) {\bf P}_{12} {\bf P}_{13} }  \\
& \stackrel{\eqref{eq:lifted_Kashaev_relations_P}}{=}
\zeta^{-3} {\bf P}_{23} \underbrace{ \cancel{ {\bf A}_1 } \ul{ \cancel{ {\bf A}_3 }  ( \cancel{ {\bf A}_3^2 } } \cancel{ {\bf A}_1^2 }  } {\bf P}_{32} )
\stackrel{\eqref{eq:lifted_Kashaev_relations_major}}{=}
\zeta^{-3} \ul{ \cancel{ {\bf P}_{23} } \cancel{ {\bf P}_{32} } } 
\stackrel{\eqref{eq:lifted_Kashaev_relations_P}}{=}
\zeta^{-3}.
\end{align*}
\end{proof}

A key observation in the above proof is that the operator $(\widehat{\beta} \widehat{\alpha})^5$ defined on the space $\mathscr{M} = L^2_{\rm fin}(\mathbb{R}^{\mathbb{Q}^\times}, \wedge_{j\in \mathbb{Q}^\times} dx_j)$  (see \eqref{eq:L_2_fin}) of functions in variables $\{x_j\}_{j\in \mathbb{Q}^\times}$ acts in an interesting way (i.e. involving ${\bf A}_{\cdot}$'s and ${\bf T}_{\cdot\, \cdot}$'s) only for the three variables $x_{-1}, x_{-\frac{1}{2}}, x_1$, while acting as a permutation operator for the other variables, where this permutation is in fact the identity permutation. Similarly, when checking the remaining two relations, the relevant operators will act in an interesting way only for a finite number of variables, and act as a certain permutation operator for the others. Hence we can focus on those few variables as we have done above.

\begin{lemma}
\label{lem:remaining_relations}
The operators $\widehat{\alpha}$ and $\widehat{\beta}$ defined in \eqref{eq:rho_Kash_images} satisfy
\begin{align*}
(\widehat{\beta} \widehat{\alpha} \widehat{\beta}) (\widehat{\alpha}^2 \widehat{\beta} \widehat{\alpha} \widehat{\beta} \widehat{\alpha}^2) & = (\widehat{\alpha}^2 \widehat{\beta} \widehat{\alpha} \widehat{\beta} \widehat{\alpha}^2) (\widehat{\beta} \widehat{\alpha} \widehat{\beta}), \\
(\widehat{\beta} \widehat{\alpha} \widehat{\beta}) (\widehat{\alpha}^2 \widehat{\beta} \widehat{\alpha}^2 \widehat{\beta} \widehat{\alpha} \widehat{\beta} \widehat{\alpha}^2 \widehat{\beta}^2 \widehat{\alpha}^2)
& = (\widehat{\alpha}^2 \widehat{\beta} \widehat{\alpha}^2 \widehat{\beta} \widehat{\alpha} \widehat{\beta} \widehat{\alpha}^2 \widehat{\beta}^2 \widehat{\alpha}^2) (\widehat{\beta} \widehat{\alpha} \widehat{\beta}).
\end{align*}
\end{lemma}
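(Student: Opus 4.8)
The plan is to follow the localization strategy of Lemma~\ref{lem:pentagon_representation}. Both identities are defining relations of $T$ of commutator type: writing $u=\beta\alpha\beta$ and $v=\alpha^2\beta\alpha\beta\alpha^2$ (resp. $v=\alpha^2\beta\alpha^2\beta\alpha\beta\alpha^2\beta^2\alpha^2$), we have $uv=vu$ in $T$, so $uvu^{-1}v^{-1}\in R_{mark}$, and since $\rho^{Kash}(R_{mark})$ consists of scalar operators we already know $\widehat{u}\,\widehat{v}=c\,\widehat{v}\,\widehat{u}$ for some $c\in\mathbb{C}$ with $|c|=1$, where $\widehat{w}:=\rho^{Kash}(w)$. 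Thus the whole content of the lemma is the computation of $c$, and the claim is $c=1$.

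First I would record, for an arbitrary word $w$ in $\alpha,\beta$, the normal form $\widehat{w}=\widehat{w}_\star\,{\bf P}_{\gamma_w}$ obtained by repeatedly applying the relations of Prop.~\ref{prop:lifted_Kashaev_relations} and the variants \eqref{eq:lifted_Kashaev_relations_more} to push all ${\bf A}_{[\cdot]}$- and ${\bf T}_{[\cdot][\cdot]}$-factors to the left and to collect the permutation operators to the right; here $\widehat{w}_\star$ is an operator involving only the finite set $S_w\subset\mathbb{Q}^\times$ of triangle-labels of the triangles of $\tau^*$ that are actually disturbed by the corresponding element of $T$ — precisely the phenomenon exhibited by $(\widehat{\beta}\widehat{\alpha})_\star$ in \eqref{eq:hat_beta_alpha_knot}, which involved only $x_{-1},x_{-\frac12},x_1$. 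The key geometric input is that $u$ and $v$ are supported on disjoint sub-arcs of $S^1$: for the first relation $v=\alpha^2u\alpha^2$ is the $\alpha^2$-conjugate of $u$ and the involution $\alpha^2$ moves $\operatorname{supp}(u)$ to a disjoint arc, and for the second the support of the long word splits into arcs each disjoint from $\operatorname{supp}(\beta\alpha\beta)$. Reading this off $\tau^*$ bounds the relevant variable set $S=S_u\cup S_v$ by an explicit finite set, so both sides of each relation reduce to operators on $L^2(\mathbb{R}^{S})$ tensored with a permutation operator on the remaining variables.

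To finish, I would compare the two pieces separately. The permutation parts: since $w\mapsto\gamma_w$ is multiplicative up to the $M$-equivariance cocycle for $\mcal{F}$ (Lem.~\ref{lem:alternative_construction_of_bold_F}) and $uv=vu$ in $T$, the ${\bf P}$-parts of $\widehat{u}\widehat{v}$ and $\widehat{v}\widehat{u}$ coincide; this is exactly the step where one checks, by following the moves on $\tau^*_{mark}$ as in the pentagon proof, that the residual permutation is the identity off $S$. For the finite part, one runs the reshuffle on $L^2(\mathbb{R}^{S})$ using Prop.~\ref{prop:lifted_Kashaev_relations} and \eqref{eq:lifted_Kashaev_relations_more}, keeping track of the powers of $\zeta$; because the ${\bf T}{\bf A}{\bf T}$-moves coming from the sub-word of $v$ are, once the two supports have been separated, matched by inverse moves from $v^{-1}$ (and likewise for $u$), the $\zeta$'s cancel and the two finite operators agree, so $c=1$. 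Alternatively, having reduced to $L^2(\mathbb{R}^{S})$, one may simply evaluate $\widehat{u}\widehat{v}$ and $\widehat{v}\widehat{u}$ on a product of Gaussians and read off $c$.

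The main obstacle is the bookkeeping rather than any conceptual point: the words are long (up to thirteen letters, and each $\widehat{\alpha}^{\pm1}$, $\widehat{\beta}^{\pm1}$ is itself a product of up to four ${\bf A}/{\bf T}/{\bf P}$ factors), so pinning down the support sets from the Farey picture and carrying the reshuffle through without index or sign errors is genuinely lengthy. This is why in the present paper we record the algebraic computation only for the shorter relations and instead derive these two commutation relations by the topological route, namely from $\widehat{T}^{Kash}\cong T^\sharp_{ab}$ (Prop.~\ref{prop:T_sharp_ab}) together with $T^\sharp_{ab}\cong T_{3,2,0,0}$ (used in Thm.~\ref{thm:main}), where the vanishing of these commutators is immediate because they lift commutators of elements of $T^\sharp$ acting over disjoint families of punctures.
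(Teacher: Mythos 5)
Your plan follows essentially the same route as the paper: the paper likewise reduces each commutator to an operator identity involving only finitely many of the variables $x_j$ (with the residual permutation part handled exactly as in Lem.\ref{lem:pentagon_representation}) and then omits the resulting index bookkeeping as routine, referring to \cite{Ki} for the full calculation. The one place where the paper is sharper is its observation that only the trivial relations \eqref{eq:lifted_Kashaev_relations_P} and \eqref{eq:lifted_Kashaev_relations_commutation} are ever invoked in this reduction, so no power of $\zeta$ is produced at any stage --- a cleaner way to see that the scalar is $1$ than your expectation that $\zeta$-factors appear and then cancel.
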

Proof of these relations can be done similarly as in the proof of Lem.\ref{lem:easy_relations_representation} and Lem.\ref{lem:pentagon_representation}. In fact, we only need to use the relations \eqref{eq:lifted_Kashaev_relations_P} and \eqref{eq:lifted_Kashaev_relations_commutation}, and the only tricky part is to keep good track of the subscript indices. We consider this computation to be trivial, and since it can be checked by any interested reader, we omit it here. See \cite{Ki} for a full calculation.

\vs

From Lemmas \ref{lem:easy_relations_representation}, \ref{lem:pentagon_representation} and \ref{lem:remaining_relations} one can easily see that the image $\rho^{\rm Kash}(R_{\rm mark}) \subset \mathbb{C}^\times$ of the group $R_{\rm mark} \subset F_{\rm mark}$ of $\alpha,\beta$-relations of $T$ under the almost $T$-homomorphism $\rho^{\rm Kash}$ \eqref{eq:rho_Kash} is generated by a single complex number $\zeta^{-1} \in {\rm U}(1)$, which depends on the parameter $b$ of the quantization. Applying the procedure in \S\ref{subsec:minimal_central_extensions} to $\rho^{\rm Kash}$, we obtain the central extension $\wh{T}^{\rm Kash}$ of $T$ by $\mathbb{Z}$, presented with generators $\ol{\alpha}$, $\ol{\beta}$, $z$, where $\ol{\alpha}$, $\ol{\beta}$ are the lifts of $\alpha$, $\beta$ of $T$ and $z$ is the generator of the center $\mathbb{Z}$ (i.e. the kernel of the central extension), where the lifted relations is obtained by replacing $\wh{\alpha}, \wh{\beta}, \zeta^{-1}$ in the results of Lemmas \ref{lem:easy_relations_representation}, \ref{lem:pentagon_representation}, \ref{lem:remaining_relations} by $\ol{\alpha}, \ol{\beta}, z$, respectively, and the commutation relations $\left[\bar{\alpha},z\right]=\left[\bar{\beta},z\right]=1$ are added:
\begin{align}
\label{eq:wh_T_Kash_presentation}
\left\{ {\renewcommand{\arraystretch}{1.2}
\begin{array}{l}
(\bar{\beta}\bar{\alpha})^5 = z^3, \qquad \bar{\alpha}^4 = z^2, \qquad \bar{\beta}^3= 1, \\
\left[\bar{\beta}\bar{\alpha}\bar{\beta}, \, \bar{\alpha}^2 \bar{\beta}\bar{\alpha}\bar{\beta}\bar{\alpha}^2\right]= \left[\bar{\beta}\bar{\alpha}\bar{\beta}, \, \bar{\alpha}^2\bar{\beta}\bar{\alpha}^2\bar{\beta}\bar{\alpha}\bar{\beta}\bar{\alpha}^2\bar{\beta}^2\bar{\alpha}^2\right]= \left[\bar{\alpha},z\right]=\left[\bar{\beta},z\right]=1.
\end{array} } \right.
\end{align}

\vs

To finish the proof of Thm.\ref{thm:main}, we recall the result of Funar and Sergiescu \cite{FuS}, which gives a classification of all possible central extensions of $T$ by $\mathbb{Z}$, their presentations, and a way of computing the corresponding extension classes in $H^2(T; \mathbb{Z})$. Their result is gathered in Thm.\ref{thm:FS_classification} in \S\ref{sec:introduction} of the present paper. From the above presentation of $\wh{T}^{\rm Kash}$ we see that it is isomorphic to the group $T_{3,2,0,0}$ appearing in Thm.\ref{thm:FS_classification}, and using the  formula in Thm.\ref{thm:FS_classification} we can compute the extension class of this central extension $\wh{T}^{\rm Kash}$ of $T$ to be $6\chi \in H^2(T;\mathbb{Z})$, where $\chi$ is the `Euler class' (see \eqref{eq:second_cohomology_group_of_T}). This finishes the `algebraic' proof of our main theorem. Analogous result for $\wh{T}^{\rm CF}$ obtained in \cite{FuS} is written in Thm.\ref{thm:FS} in \S\ref{sec:introduction}, which in particular shows that $\wh{T}^{\rm Kash}$ and $\wh{T}^{\rm CF}$ are inequivalent central extensions of $T$, as the extension class of $\wh{T}^{\rm CF}$ is $12\chi$.

\section{Topological proof of the main theorem}

In this section we present a `topological' proof of the main theorem: $\wh{T}^{\rm Kash} \cong T_{3,2,0,0}$. We introduce infinitely-punctured unit discs $\mathbb{D}^*$, $\mathbb{D}^\sharp$, and their asymptotically rigid mapping class groups $T^*$, $T^\sharp$, which are extensions of $T$ by the infinite braid group $B_\infty$. By abelianizing the kernel $B_\infty$ we get central extensions $T^*_{\rm ab}$, $T^\sharp_{\rm ab}$ of $T$ by $\mathbb{Z}$. The strategy is to prove $\wh{T}^{\rm Kash} \cong T^\sharp_{\rm ab}$ (Prop.\ref{prop:T_sharp_ab}) and $T^\sharp_{\rm ab} \cong T_{3,2,0,0}$ separately. The latter can be easily checked topologically, so the main point is to check $\wh{T}^{\rm Kash} \cong T^\sharp_{\rm ab}$. For this, we introduce a version of Kashaev group $K^\sharp$ for $\mathbb{D}^\sharp$, which is an extension of $K$ by $B_\infty$ and yields a central extension $K^\sharp_{\rm ab}$ of $K$ by abelianizing $B_\infty$. Natural group homomorphisms ${\bf F}^\sharp : T^\sharp \to K^\sharp$ and ${\bf F}^\sharp_{\rm ab} : T^\sharp_{\rm ab} \to K^\sharp_{\rm ab}$ are constructed in a similar way as in \S\ref{subsec:mcal_F} and \S\ref{subsec:bold_F}, which enable us to deduce the desired result.

\subsection{Infinitely-punctured unit discs and their tessellations}

Following Funar and Kapoudjian \cite{FuKa2}, we introduce infinite number of punctures in the unit disc $\mathbb{D}$ in certain ways, and consider a construction analogous to what is done in \S\ref{sec:decorated_universal_Ptolemy_groupoids} of the present paper, for these new infinitely-punctured surfaces.

\begin{definition}
\label{def:punctured_discs}
Denote by $\mathbb{D}^*$ (resp. $\mathbb{D}^\sharp$) the open unit disc $\mathbb{D}$ with infinitely many punctures (depicted as $\circ$ in the pictures, to avoid confusion with the dots $\bullet$ for dotted tessellations), where the position of the punctures are chosen once and for all, in the following way. Choose one point in the interior of each ideal arc (resp. one point in the interior of each ideal triangle) of the Farey tessellation of Def.\ref{def:Farey_tessellation} where we assume here that all the ideal arcs are stretched to geodesics with respect to the Poincar\'e hyperbolic metric; these points comprise the punctures for $\mathbb{D}^*$ (resp. for $\mathbb{D}^\sharp$). We call the punctures of $\mathbb{D}^*$ (resp. $\mathbb{D}^\sharp$) the {\em $*$-punctures} (resp. {\em $\sharp$-punctures}).
\end{definition}
Any homotopy of $\mathbb{D}^*$ (resp. $\mathbb{D}^\sharp$) is assumed to pointwise fix every point on the boundary $S^1 = \partial \mathbb{D}$ {\em and} every $*$-puncture (resp. $\sharp$-puncture) at all times.
\begin{definition}
\label{def:ideal_arc_in_punctured_discs}
An {\em ideal arc in $\mathbb{D}^*$} (resp. {\em ideal arc in $\mathbb{D}^\sharp$}) connecting two given distinct rational points on $S^1=\partial \mathbb{D}$ is a homotopy class of unoriented paths connecting the two points, while the homotopy requires that each ideal arc should pass through exactly one $*$-puncture at all times (resp. that each ideal arc should not pass through any $\sharp$-puncture at any time). An {\em ideal triangle} in $\mathbb{D}^*$ (resp. in $\mathbb{D}^\sharp$) is a triangle with three distinct vertices on $S^1$ whose sides are ideal arcs in $\mathbb{D}^*$ (resp. in $\mathbb{D}^\sharp$).
\end{definition}

\begin{remark}
It may be more natural to view the $*$-punctures of $\mathbb{D}^*$ as distinguished points instead of punctures, if an ideal arc in $\mathbb{D}^*$ is described as above.
\end{remark}

\begin{remark}
There are infinitely many distinct ideal arcs in $\mathbb{D}^*$ (resp. ideal arcs in $\mathbb{D}^\sharp$) connecting given two distinct rational points on $S^1$. Recall that for the non-punctured case there is a unique ideal arc connecting any given two distinct points on $S^1$.
\end{remark}

\begin{definition}
\label{def:diamond_Farey}
A {\em $*$-Farey ideal arc} (resp. {\em $\sharp$-Farey ideal arc}) is the homotopy class of ideal arcs in $\mathbb{D}^*$ (resp. in $\mathbb{D}^\sharp$) homotopic  to an ideal arc of the Farey tesssellation of $\mathbb{D}$ (Def.\ref{def:Farey_tessellation}) stretched to the hyperbolic geodesic for the Poincar\'e metric.

\vs

A {\em Farey-type $*$-punctured tessellation of $\mathbb{D}^*$} (resp. {\em Farey-type $\sharp$-punctured tessellation of $\mathbb{D}^\sharp$}) is a Farey-type tessellation of $\mathbb{D}$ (Def.\ref{def:Farey-type_tessellations}) such that each ideal arc passes through exactly one $*$-puncture of $\mathbb{D}^*$ while every $*$-puncture is being passed by one arc (resp. such that each ideal triangle contains in its interior exactly one $\sharp$-puncture), and such that all but finitely many ideal arcs are $*$-Farey ideal arcs (resp. $\sharp$-Farey ideal arcs). In this definition, each ideal arc should be thought of as an ideal arc in $\mathbb{D}^*$ (resp. in $\mathbb{D}^\sharp$), in the sense of Def.\ref{def:ideal_arc_in_punctured_discs}. Ideal arcs constituting Farey type $*$-punctured or $\sharp$-punctured tessellation are also called {\em edges}.
\end{definition}

From now on, we only use Farey type $*$-punctured or $\sharp$-punctured tessellations, so we omit the word `Farey-type'.

\begin{definition}
Let $\diamond = *$ or $\sharp$. A {\em marked $\diamond$-punctured tessellation of $\mathbb{D}^\diamond$} is a $\diamond$-punctured tessellation of $\mathbb{D}^\diamond$ together with the choice of a distinguished oriented edge, called {\em d.o.e.}

\vs

A {\em dotted $\diamond$-punctured tessellation of $\mathbb{D}^\diamond$} is a $\diamond$-punctured tessellation of $\mathbb{D}^\diamond$ together with the choice of a distinguished corner for each ideal triangle denoted by a filled dot $\bullet$ in the pictures, and the choice of a way of labeling ideal triangles by $\mathbb{Q}^\times$, i.e. a bijection between the ideal triangles and $\mathbb{Q}^\times$, where the triangle labeled by $j \in \mathbb{Q}^\times$ is indicated by $[j]$ in the picture.

\vs

For $\diamond\in \{ *,\sharp\}$, we denote by $Pt^\diamond$ (resp. $Pt^\diamond_{\rm dot}$) the groupoid whose objects are all possible $\diamond$-punctured marked tessellations (resp. $\diamond$-punctured dotted tessellations) of $\mathbb{D}^\diamond$, where, from any object to any object there is a unique morphism. 
\end{definition}

Funar and Kapoudjian \cite{FuKa2} in fact used another infinite surface, namely the {\em ribbon tree}, obtained by thickening the binary tree in the plane. There is a one-to-one correspondence between hexagon decompositions of the ribbon tree and tessellations of the unit disc, which can easily be understood via Fig.\ref{fig:dualizing_tessellation_to_ribbon_graph}. Choice of a d.o.e. of a tessellation can be realized as choice of an ordered pair of two adjacent hexagons of the ribbon tree, and a dotted tessellation also can be realized by some combinatorial decoration on the hexagon decomposition of the ribbon tree. We refer the readers to \cite{Ki} for a more detailed discussion on the correspondence between these two models. 

\begin{figure}[htbp!]
\centering
\includegraphics[width=40mm]{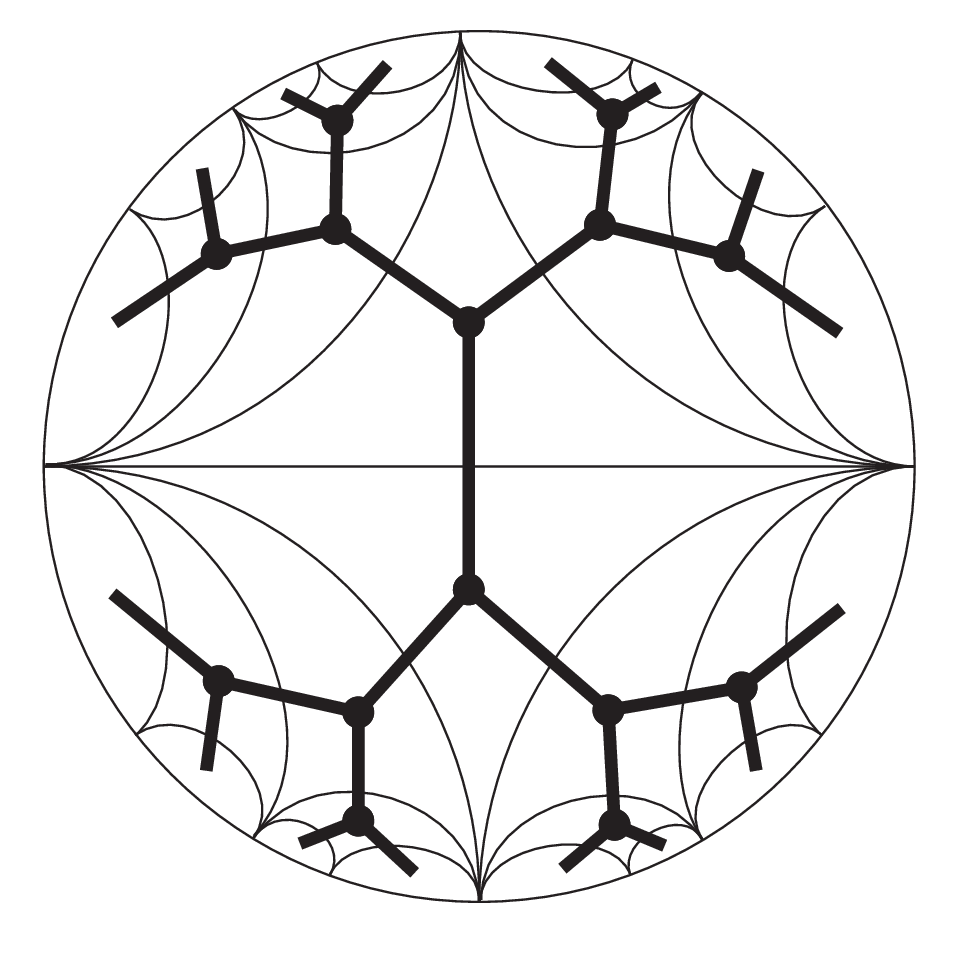}
\begin{pspicture}[showgrid=false,linewidth=0.5pt,unit=7.5mm](-0.5,-2)(0.8,2.0)
\rput[l](-0.5,0.3){\pcline[linewidth=0.7pt, arrowsize=2pt 4]{<->}(0,0)(1.5;0)\Aput{{\rm dualize}}}
\end{pspicture}
\includegraphics[width=40mm]{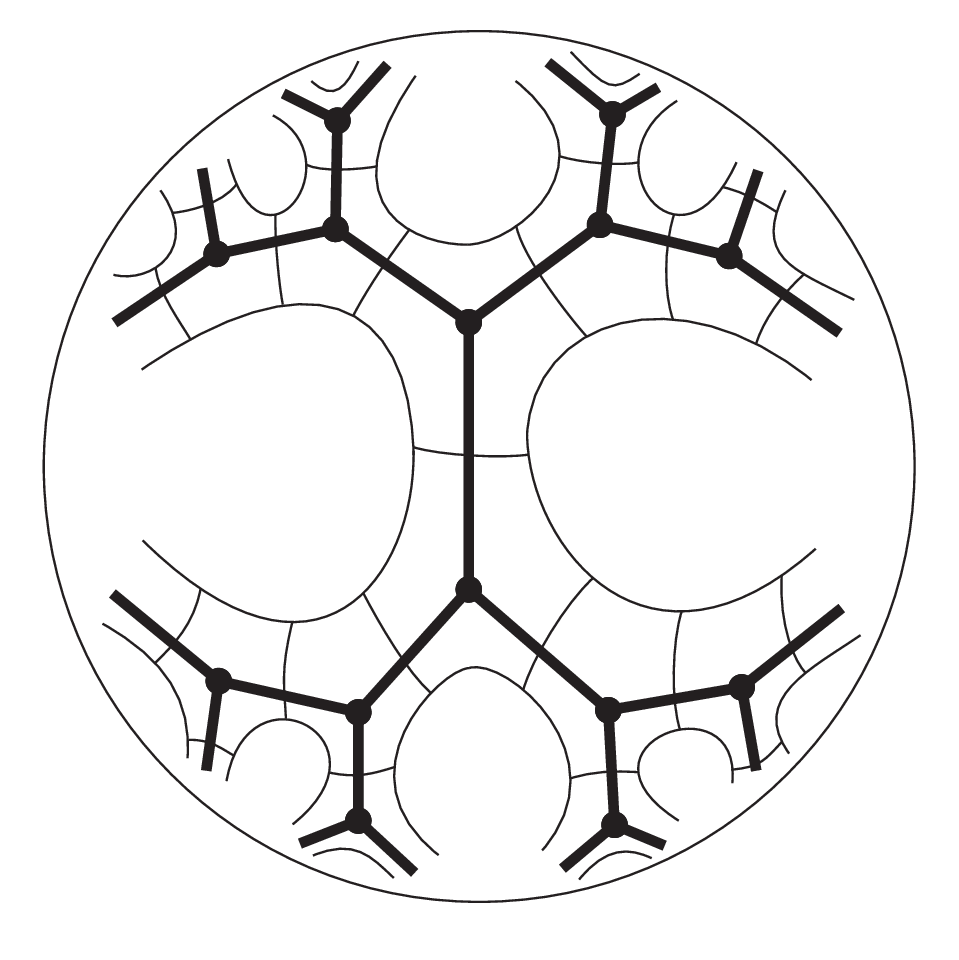}

\vspace{-5mm}

\caption{Dualizing between a tessellation of $\mathbb{D}$ and an infinite ribbon tree}
\label{fig:dualizing_tessellation_to_ribbon_graph}
\end{figure}

\vspace{-3mm}

\subsection{Braided Ptolemy-Thompson groups $T^*$, $T^\sharp$}
\label{subsec:braided_Ptolemy-Thompson_groups}

Analogously to the non-punctured case, there are some `elementary' morphisms of $Pt^\diamond$ (for $\diamond\in \{ *,\sharp\}$) which generate all the morphisms of the whole groupoid, and which have combinatorial descriptions. As in \S\ref{subsec:T_and_K}, each morphism of $Pt^\diamond$ can be viewed as a transformation of a marked $\diamond$-punctured tessellation of $\mathbb{D}^\diamond$ into another. We first define analogs of the $\alpha$-move and the $\beta$-move in $Pt^*$ and $Pt^\sharp$.
\begin{definition}
\label{def:braided_alpha_and_beta}
We label a morphism of $Pt^*$ by $\alpha^*$ (resp. $\beta^*$) if it transforms a marked $*$-punctured tessellation of $\mathbb{D}^*$ as in Fig.\ref{fig:action_of_alpha_star} (resp. as in Fig.\ref{fig:action_of_beta_star}), leaving all other parts indicated by triple dots `$\cdots$' in Fig.\ref{fig:action_of_alpha_star} (resp. in Fig.\ref{fig:action_of_beta_star}) intact. In other words, $\alpha^*$-move rotates the d.o.e. counterclockwise to the other diagonal of the ideal quadrilateral containing the d.o.e., and the $\beta^*$-move just alters the choice of d.o.e. as the $\beta$-move of $Pt$ does.

\vspace{-4mm}
\begin{figure}[htbp!]
$\begin{array}{ll}
\hspace{-5mm}
\includegraphics[width=35mm]{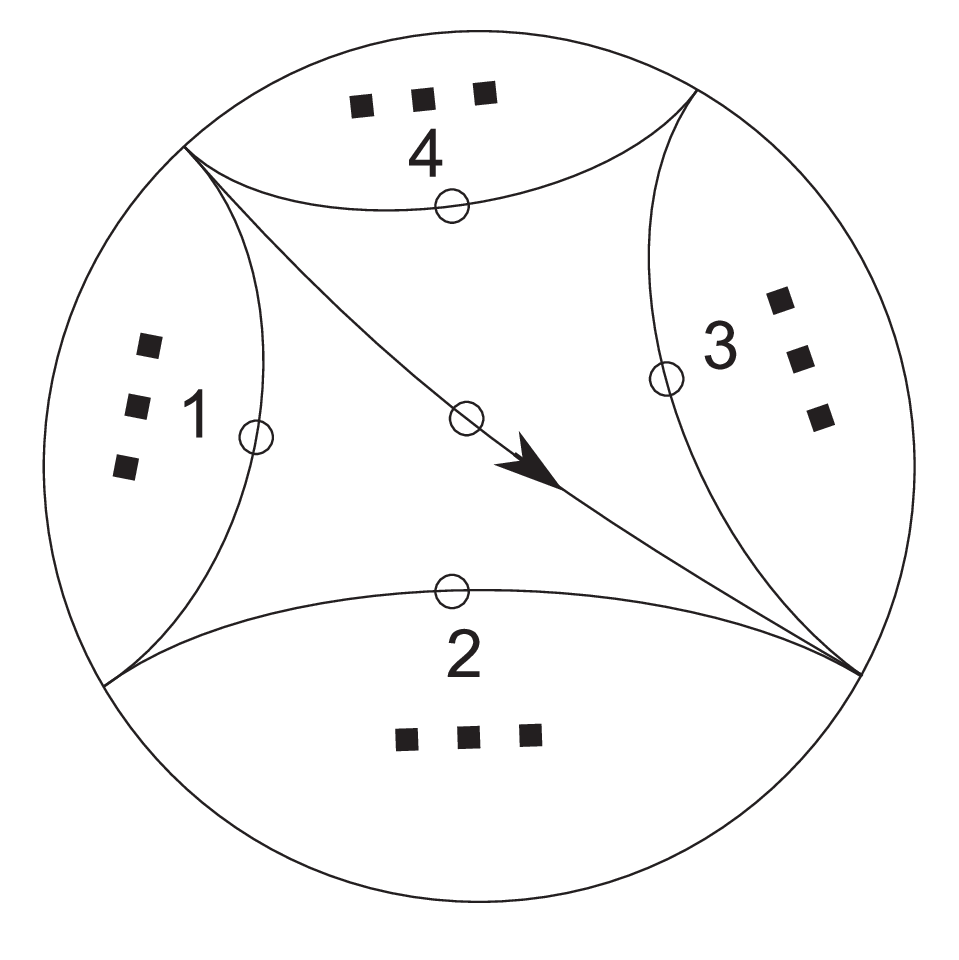}
\begin{pspicture}[showgrid=false,linewidth=0.5pt,unit=7.5mm](-0.5,-1.5)(0.1,2.0)
\rput[l](-0.7,0.6){\pcline[linewidth=0.7pt, arrowsize=2pt 4]{->}(0,0)(1.0;0)\Aput{$\alpha^*$}}
\end{pspicture}
\includegraphics[width=35mm]{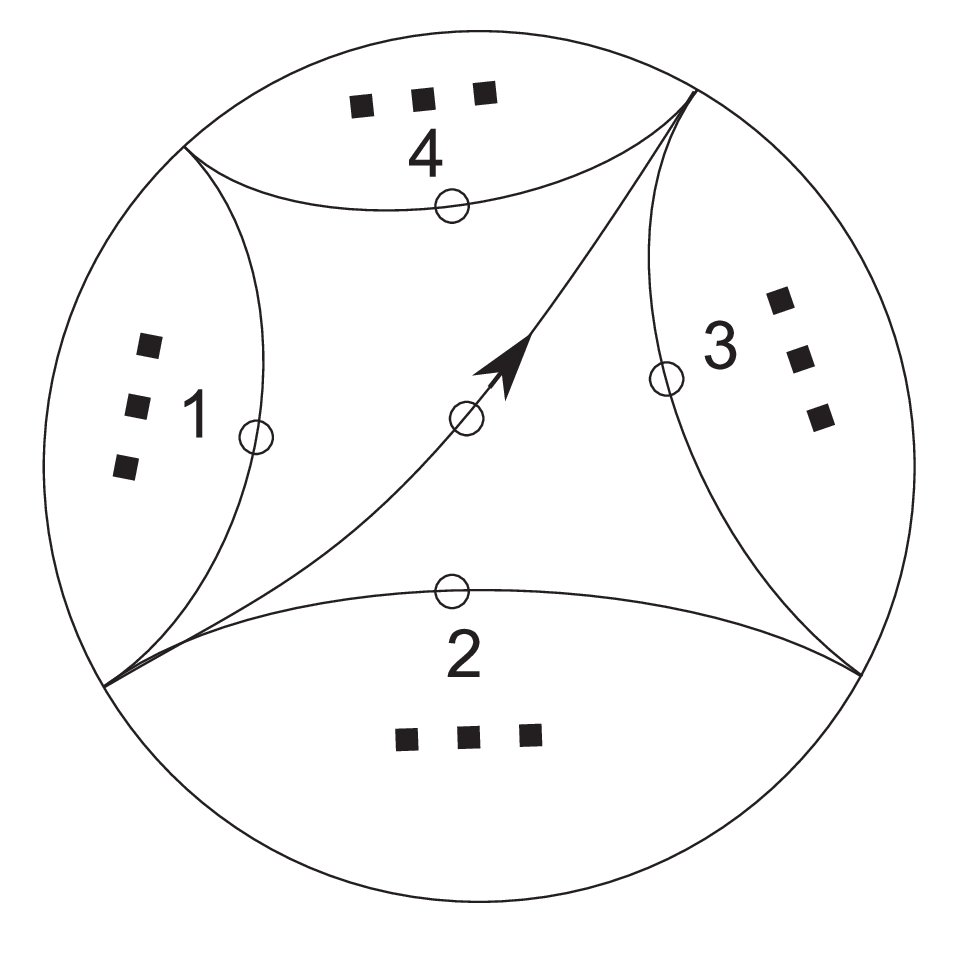}
&
%
\centering
\includegraphics[width=35mm]{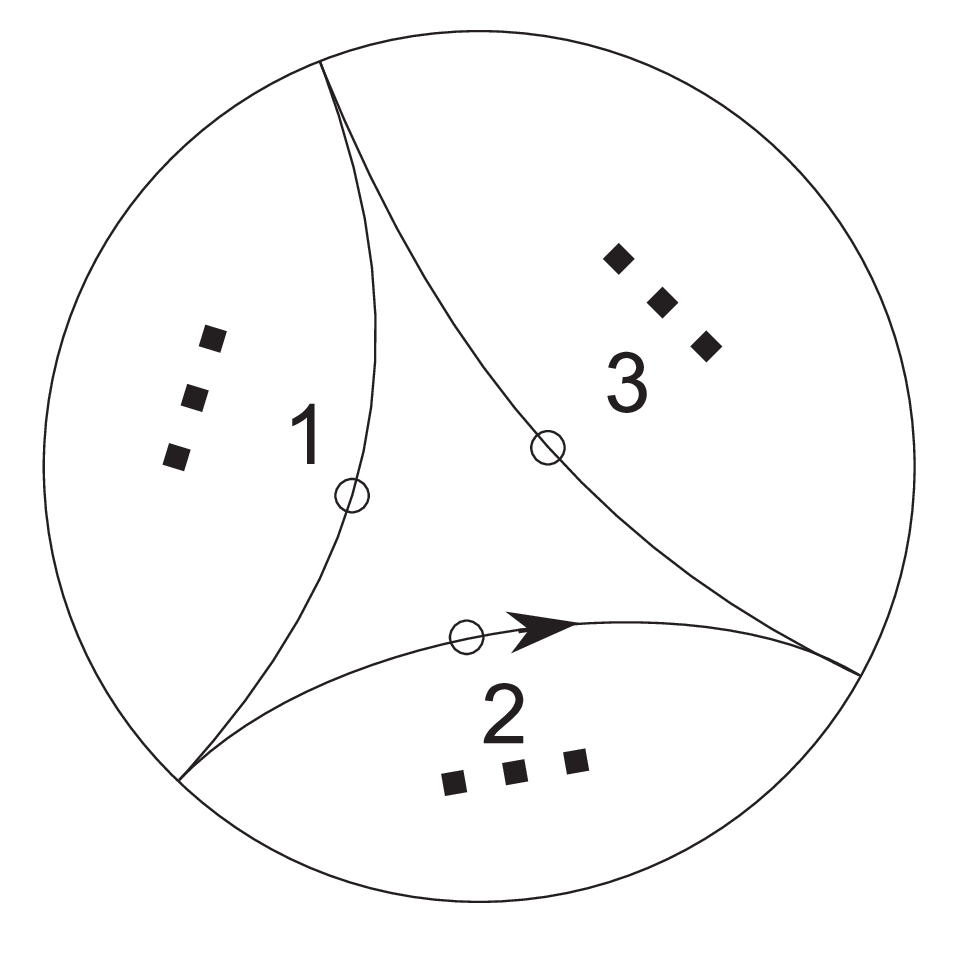}
\begin{pspicture}[showgrid=false,linewidth=0.5pt,unit=7.5mm](-0.5,-1.5)(0.1,2.0)
\rput[l](-0.7,0.6){\pcline[linewidth=0.7pt, arrowsize=2pt 4]{->}(0,0)(1.0;0)\Aput{$\beta^*$}}
\end{pspicture}
\includegraphics[width=35mm]{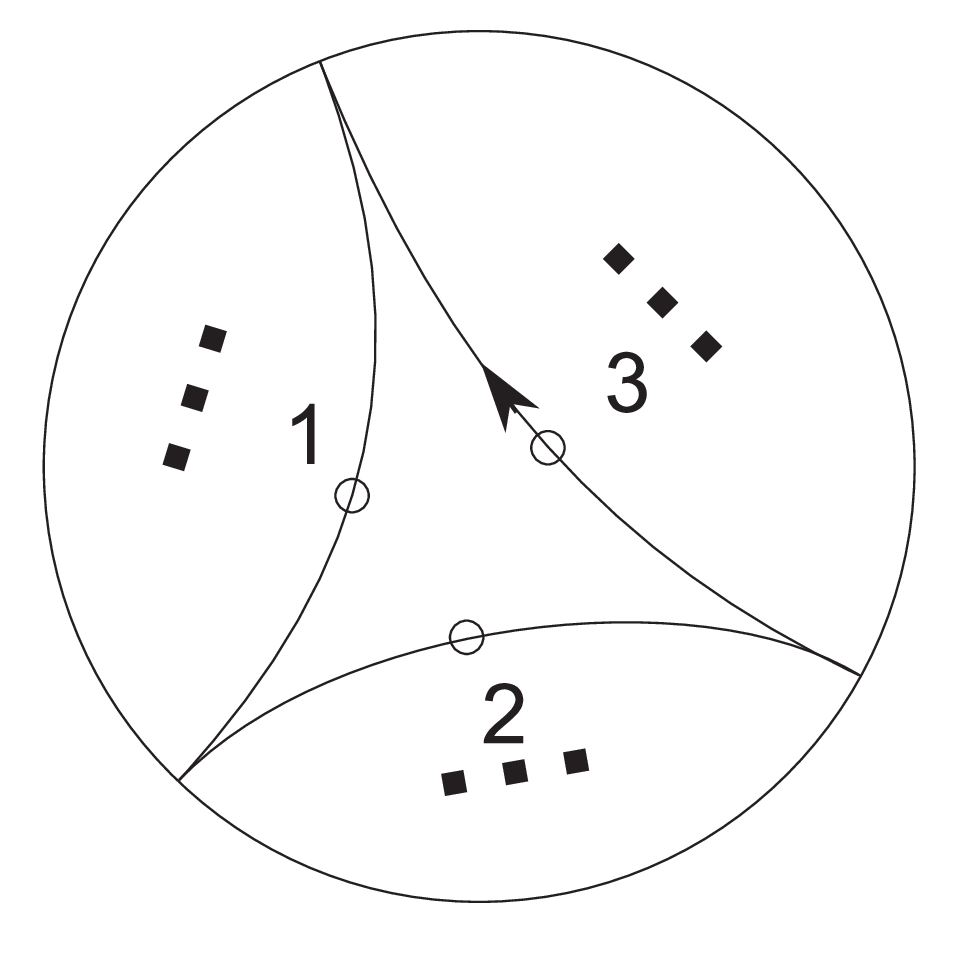}
\end{array}$
\\
\vspace{-4mm}

\begin{subfigure}[b]{0.48\textwidth}
\caption{The action of $\alpha^*$ on $Pt^*$}
\label{fig:action_of_alpha_star}
\end{subfigure}
\hfill
\begin{subfigure}[b]{0.5\textwidth}
\caption{The action of $\beta^*$ on $Pt^*$}
\label{fig:action_of_beta_star}
\end{subfigure}

\vspace{-4mm}

\caption{Some elementary morphisms of $Pt^*$}
\label{fig:some_elementary_morphisms_of_Pt_star}
\end{figure}

\vspace{-2mm}

\vs

We label a morphism of $Pt^\sharp$ by $\alpha^\sharp$ (resp. $\beta^\sharp$) if it transforms a marked $\sharp$-punctured tessellation of $\mathbb{D}^\sharp$ as in Fig.\ref{fig:action_of_alpha_sharp} (resp. as in Fig.\ref{fig:action_of_beta_sharp}), leaving all other parts indicated by triple dots `$\cdots$' in Fig.\ref{fig:action_of_alpha_sharp} (resp. in Fig.\ref{fig:action_of_beta_sharp}) intact. In other words, $\alpha^\sharp$-move rotates the d.o.e. counterclockwise to the other diagonal of the ideal quadrilateral containing the d.o.e. without touching the $\sharp$-punctures, and the $\beta^\sharp$-move just alters the choice of d.o.e. as the $\beta$-move of $Pt$ does.

\begin{figure}[htbp!]
$\begin{array}{ll}
\hspace{-5mm}
\includegraphics[width=35mm]{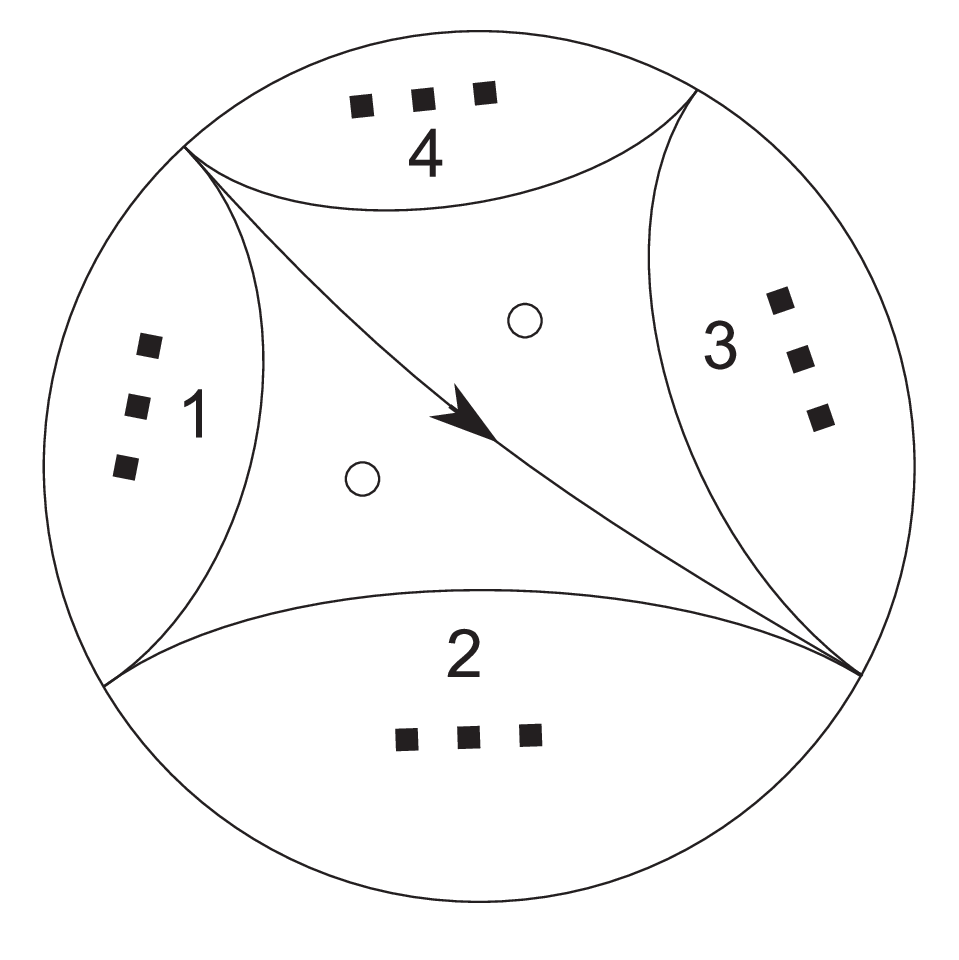}
\begin{pspicture}[showgrid=false,linewidth=0.5pt,unit=7.5mm](-0.5,-1.5)(0.1,2.0)
\rput[l](-0.7,0.6){\pcline[linewidth=0.7pt, arrowsize=2pt 4]{->}(0,0)(1.0;0)\Aput{$\alpha^\sharp$}}
\end{pspicture}
\includegraphics[width=35mm]{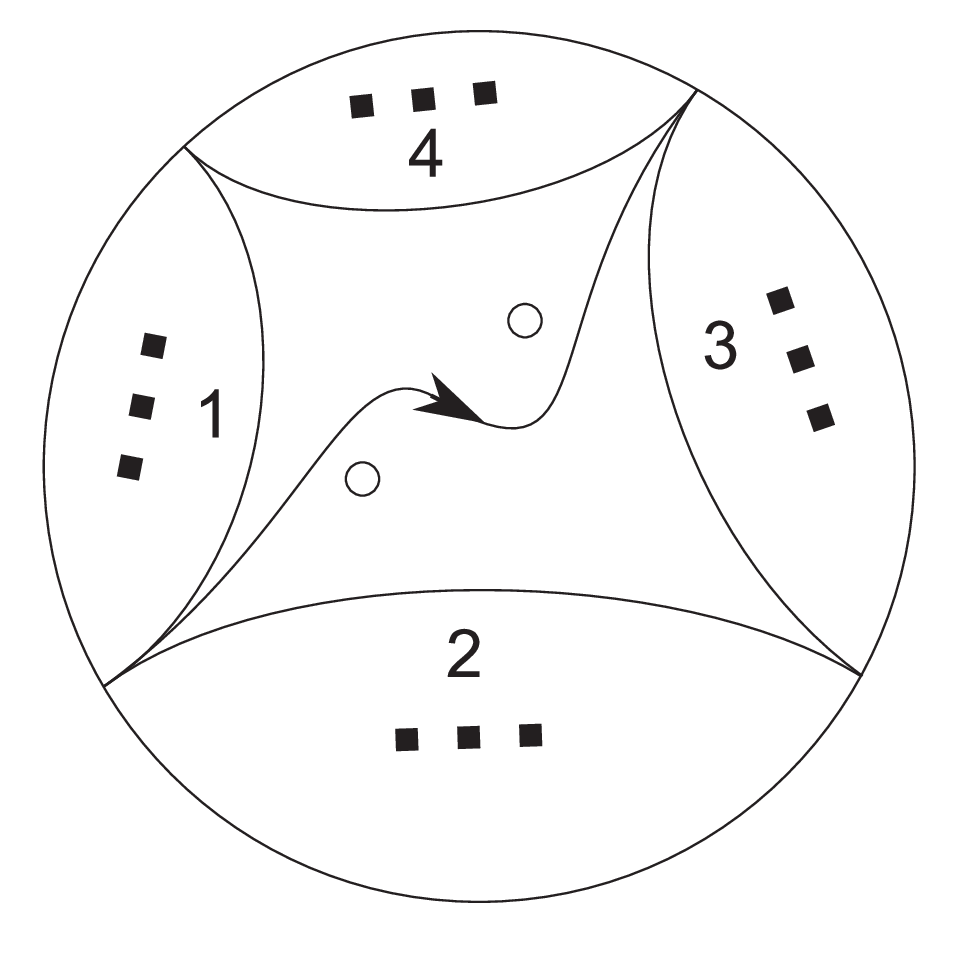}
&
%
\centering
\includegraphics[width=35mm]{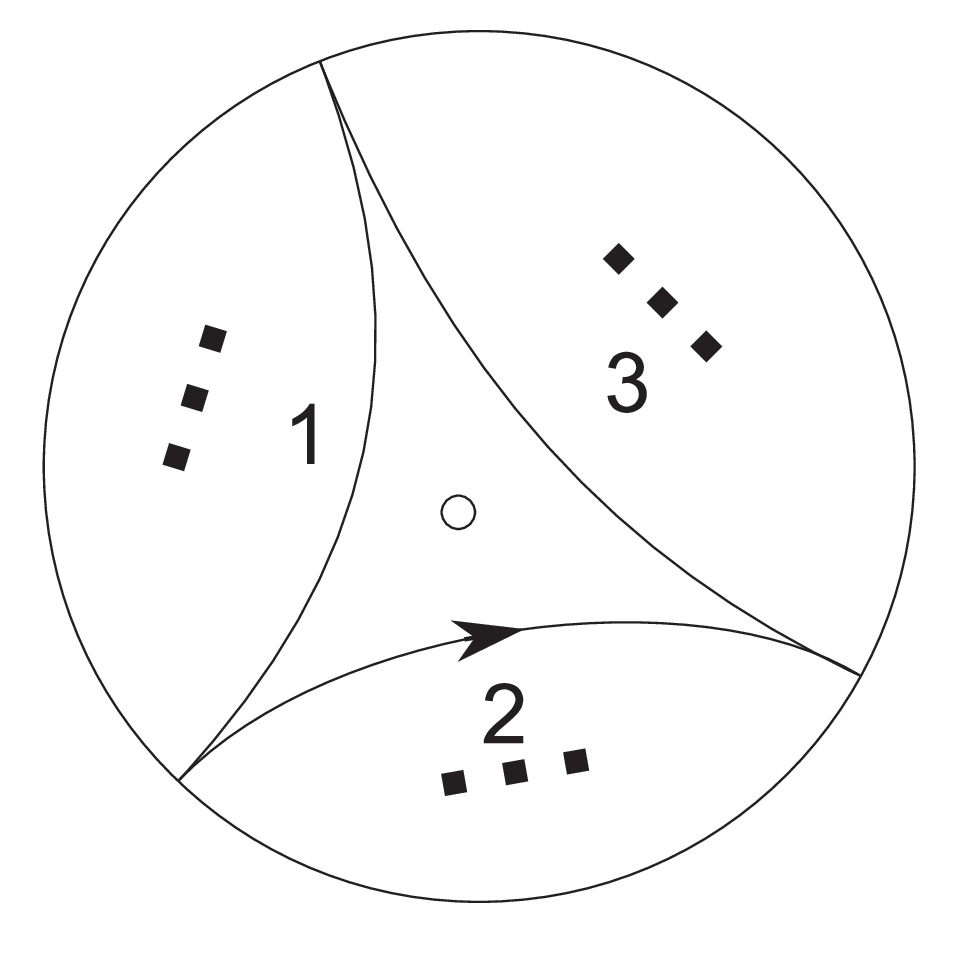}
\begin{pspicture}[showgrid=false,linewidth=0.5pt,unit=7.5mm](-0.5,-1.5)(0.1,2.0)
\rput[l](-0.7,0.6){\pcline[linewidth=0.7pt, arrowsize=2pt 4]{->}(0,0)(1.0;0)\Aput{$\beta^\sharp$}}
\end{pspicture}
\includegraphics[width=35mm]{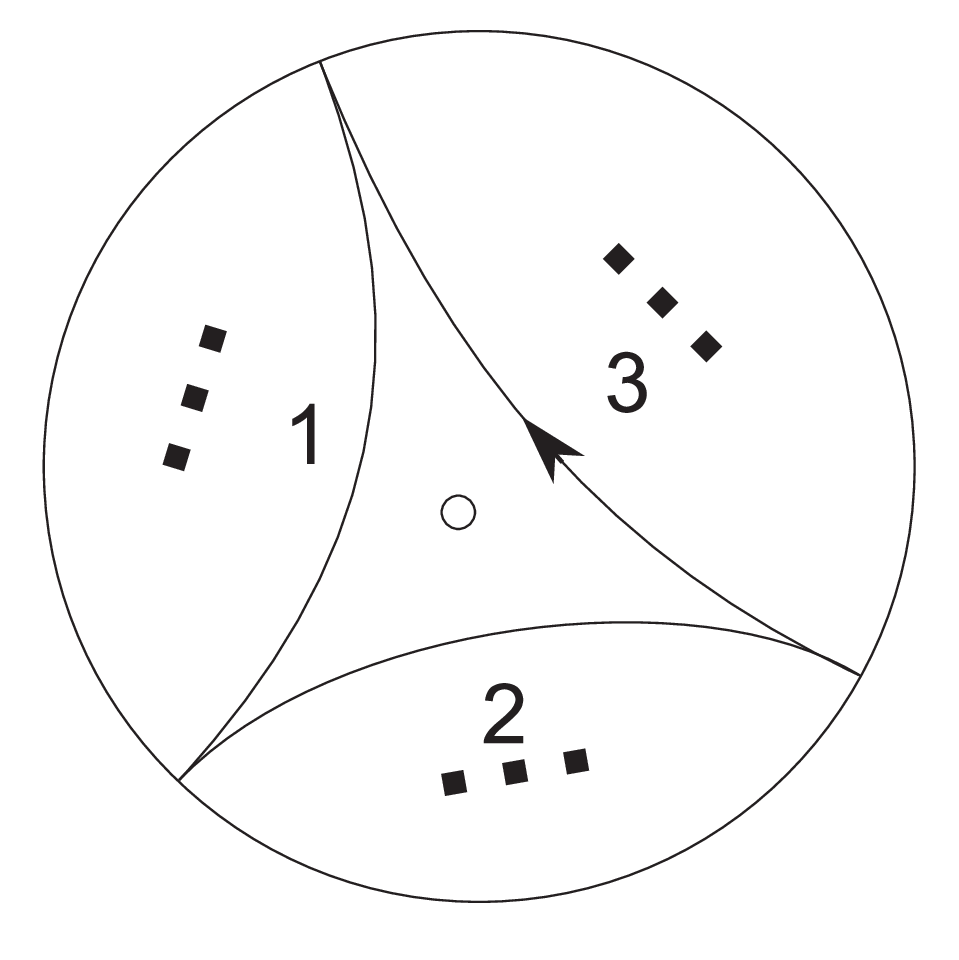}
\end{array}$
\\
\vspace{-4mm}

\begin{subfigure}[b]{0.48\textwidth}
\caption{The action of $\alpha^\sharp$ on $Pt^\sharp$}
\label{fig:action_of_alpha_sharp}
\end{subfigure}
\hfill
\begin{subfigure}[b]{0.5\textwidth}
\caption{The action of $\beta^\sharp$ on $Pt^\sharp$}
\label{fig:action_of_beta_sharp}
\end{subfigure}

\vspace{-4mm}

\caption{Some elementary morphisms of $Pt^\sharp$}
\label{fig:some_elementary_morphisms_of_Pt_sharp}
\end{figure}


\end{definition}

It is necessary to define one more kind of elementary morphisms, namely {\em braids} among the $*$-punctures or among the $\sharp$-punctures, induced by the following homeomorphisms classes.
\begin{definition}[\cite{FuKa2}: braiding]
\label{def:braiding}
Let $\diamond \in \{*,\sharp\}$. A {\em simple arc in $\mathbb{D}^\diamond$} is a homotopy class of non-self-intersecting unoriented paths in $\mathbb{D}^\diamond$. Let $e$ be a simple arc in $\mathbb{D}^\diamond$ connecting two distinct $\diamond$-punctures.  A {\em braiding $\sigma_e$ associated to $e$} is the homotopy class of a homeomorphism $\mathbb{D}^\diamond \to \mathbb{D}^\diamond$ which moves clockwise the two $\diamond$-punctures at the endpoints of $e$ in a thin neighborhood of $e$, interchanging their positions, and which is identity outside this neighborhood. Such a braiding is called {\em positive}, while $\sigma_e^{-1}$ {\em negative}.
\end{definition}

\begin{definition}[braids]
\label{def:braids}
A morphism of $Pt^*$ (resp. $Pt^\sharp$) induced by the braiding associated to a simple arc in $\mathbb{D}^\diamond$ connecting two distinct $*$-punctures (resp. $\sharp$-punctures) is called a {\em braid}.
\end{definition}

\begin{definition}[special braids]
\label{def:special_braids}
A morphism of $Pt^*$ induced by the braiding $\sigma_e$ associated to $e$, the unique simple arc in $\mathbb{D}^*$ connecting the $*$-punctures of the d.o.e. and the edge on the immediate right to the d.o.e. which does not traverse any edge of the $*$-tessellation which the morphism is being applied to, is labeled by $\sigma^*$; see Fig.\ref{fig:action_of_sigma_star}.

A morphism of $Pt^\sharp$ induced by the braiding $\sigma_{e'}$ associated to $e'$, the unique simple arc in $\mathbb{D}^\sharp$ connecting the $\sharp$-punctures of the two ideal triangles having the d.o.e. as one of their sides and traversing the d.o.e. exactly once while not traversing any other edge of the $\sharp$-tessellation which the morphism is being applied to, is labeled by $\sigma^\sharp$; see Fig.\ref{fig:action_of_sigma_sharp}.

\vspace{-4mm}
\begin{figure}[htbp!]
$\begin{array}{ll}
\hspace{-5mm}
\includegraphics[width=35mm]{t9m.eps}
\begin{pspicture}[showgrid=false,linewidth=0.5pt,unit=7.5mm](-0.5,-1.5)(0.1,2.0)
\rput[l](-0.7,0.6){\pcline[linewidth=0.7pt, arrowsize=2pt 4]{->}(0,0)(1.0;0)\Aput{$\sigma^*$}}
\end{pspicture}
\includegraphics[width=35mm]{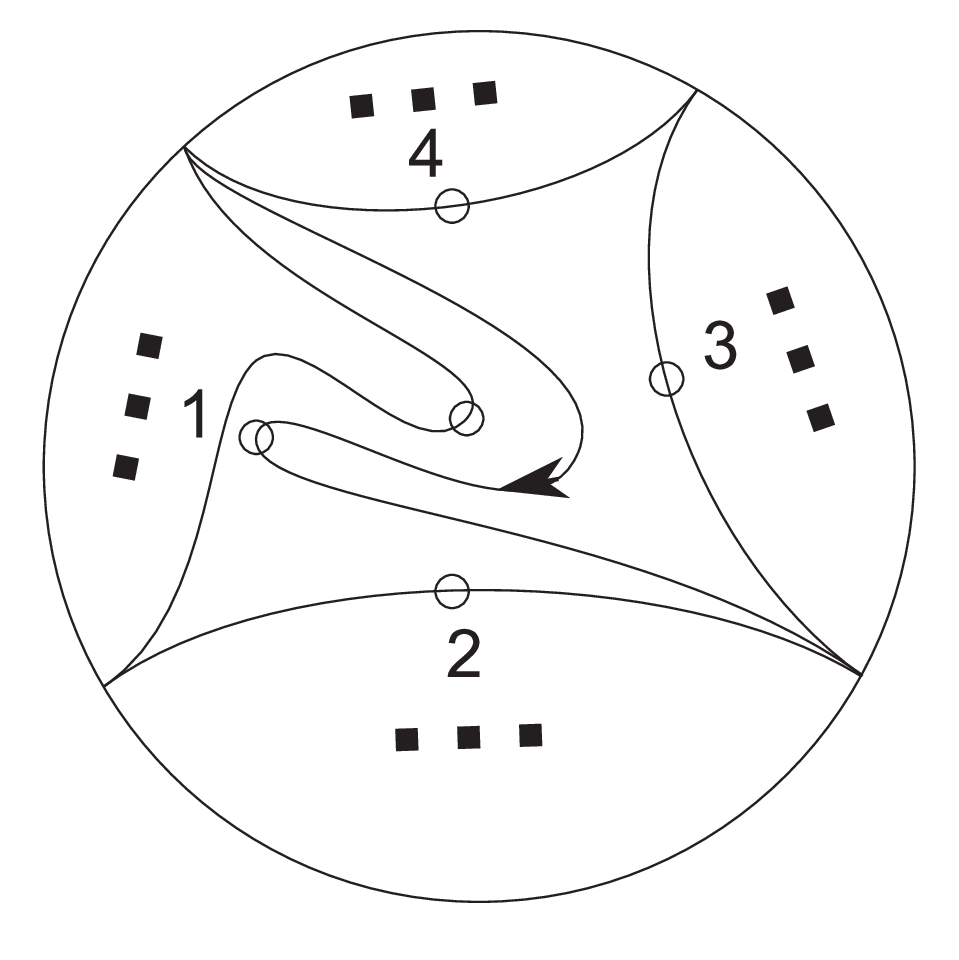}
&
%
\centering
\includegraphics[width=35mm]{t9.eps}
\begin{pspicture}[showgrid=false,linewidth=0.5pt,unit=7.5mm](-0.5,-1.5)(0.1,2.0)
\rput[l](-0.7,0.6){\pcline[linewidth=0.7pt, arrowsize=2pt 4]{->}(0,0)(1.0;0)\Aput{$\sigma^\sharp$}}
\end{pspicture}
\includegraphics[width=35mm]{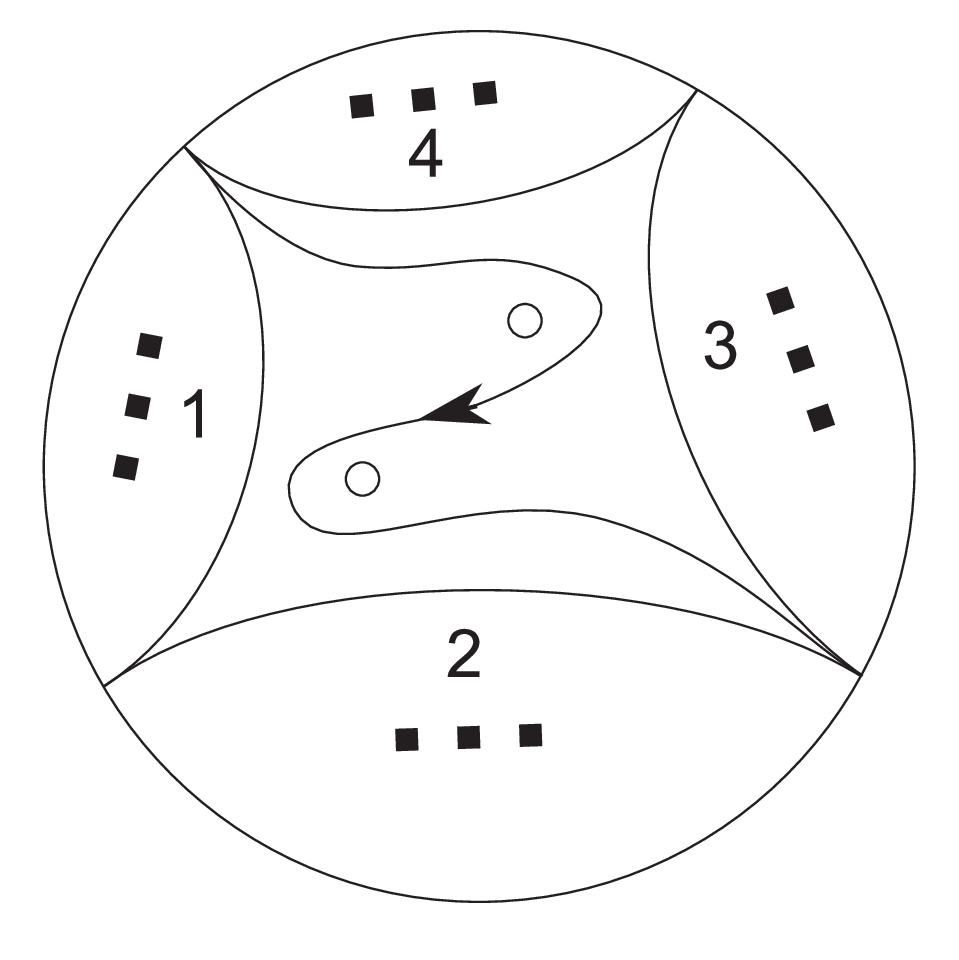}
\end{array}$
\\
\vspace{-4mm}

\begin{subfigure}[b]{0.48\textwidth}
\caption{The action of $\sigma^*$ on $Pt^*$}
\label{fig:action_of_sigma_star}
\end{subfigure}
\hfill
\begin{subfigure}[b]{0.5\textwidth}
\caption{The action of $\sigma^\sharp$ on $Pt^\sharp$}
\label{fig:action_of_sigma_sharp}
\end{subfigure}

\vspace{-4mm}

\caption{Special braids}
\label{fig:special_braids}
\end{figure}

\end{definition} 

\vspace{-8mm}

\begin{definition}
The morphisms of $Pt^*$ and $Pt^\sharp$ appearing in Def.\ref{def:braided_alpha_and_beta} and Def.\ref{def:special_braids} are called {\em elementary} morphisms of $Pt^*$ and $Pt^\sharp$ respectively.
\end{definition}

\begin{theorem}[\cite{FuKa2}]
Any morphism of $Pt^*$ is a composition of finite number of elementary morphisms labeled by $\alpha^*$, $\beta^*$. Any morphism of $Pt^\sharp$ is a composition of finite number of elementary morphisms labeled by $\alpha^\sharp$, $\beta^\sharp$, $\sigma^\sharp$.
\end{theorem}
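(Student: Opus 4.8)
The statement to prove is the generation theorem attributed to Funar–Kapoudjian: every morphism of $Pt^*$ is a finite composition of the elementary morphisms $\alpha^*,\beta^*$, and every morphism of $Pt^\sharp$ is a finite composition of $\alpha^\sharp,\beta^\sharp,\sigma^\sharp$. The plan is to mimic the proof of Prop.\ref{prop:T_acts_transitively} (the non-punctured case) and then account for the extra data introduced by the punctures. Throughout I fix $\diamond\in\{*,\sharp\}$ and recall that a morphism $[\tau^\diamond_{mark},{\tau'}^\diamond_{mark}]$ of $Pt^\diamond$ can be thought of as a transformation carrying one marked $\diamond$-punctured tessellation to another. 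Since there is exactly one morphism between any two objects, it suffices to show that the orbit of any fixed object (say the standard one $\tau^{*}$-tessellation with a chosen d.o.e.) under the submonoid generated by the listed elementary moves is all of the object set; equivalently, any two marked $\diamond$-punctured tessellations are connected by a finite word in the elementary moves.

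First I would handle the underlying (unpunctured) combinatorics. Given two marked $\diamond$-punctured tessellations $\tau^\diamond_{mark}$ and ${\tau'}^\diamond_{mark}$, forget the puncture-threading data to obtain two marked tessellations of $\mathbb{D}$; by Prop.\ref{prop:T_acts_transitively} these are joined by a finite sequence of $\alpha$- and $\beta$-moves. Each such move lifts to $Pt^\diamond$: a $\beta$-move lifts verbatim to $\beta^\diamond$ (it only re-chooses the d.o.e.), and an $\alpha$-move lifts to $\alpha^\diamond$, which by Def.\ref{def:braided_alpha_and_beta} performs the flip of the d.o.e. "without touching the punctures" — in the $*$-case one must note that the new diagonal, viewed as an ideal arc in $\mathbb{D}^*$, automatically passes through the $*$-puncture that lay on the quadrilateral's interior in the prescribed way, so $\alpha^*$ is well-defined on the nose. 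After this stage we have reduced to the case where the two $\diamond$-punctured tessellations have the same underlying marked tessellation of $\mathbb{D}$, so they differ only in how the edges thread the $*$-punctures (for $\diamond=*$) or only by a braid moving the $\sharp$-punctures around (for $\diamond=\sharp$).

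The genuinely new part is the puncture-difference. For $\diamond=*$: two $*$-punctured tessellations with the same underlying unpunctured tessellation and the same d.o.e. are in fact \emph{equal}, because each ideal arc in $\mathbb{D}^*$ is required to pass through exactly one $*$-puncture, and within a fixed homotopy class of unpunctured arc there is (after the threading constraint and the "all but finitely many are $*$-Farey" condition) a unique such arc once the underlying tessellation is fixed — so no braids are needed in the $*$-case, which is why only $\alpha^*,\beta^*$ appear. I would make this uniqueness precise by observing that the complement of the union of the edges of a Farey-type tessellation is a disjoint union of ideal triangles, each containing no $*$-puncture, so a $*$-puncture is determined by the unique edge through it and its position is homotopically rigid. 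For $\diamond=\sharp$: now two $\sharp$-punctured tessellations with the same underlying marked tessellation of $\mathbb{D}$ may genuinely differ, because the $\sharp$-punctures sit inside the triangles and a homeomorphism fixing the unpunctured tessellation can still braid them. The key claim is that the discrepancy between any two such $\sharp$-punctured tessellations is realized by an element of the infinite braid group $B_\infty$ acting on the $\sharp$-punctures (this is essentially the kernel description in the short exact sequence \eqref{eq:ses1}), and that any such braid element is a finite product of the "special braids" $\sigma^\sharp$ \emph{after conjugation by words in} $\alpha^\sharp,\beta^\sharp$. Concretely: the standard Artin generators of $B_\infty$ are braidings along simple arcs connecting adjacent $\sharp$-punctures; any such arc can be brought, by applying a suitable word $w$ in $\alpha^\sharp,\beta^\sharp$, into the standard position of Fig.\ref{fig:action_of_sigma_sharp} (i.e. traversing the current d.o.e. exactly once and no other edge), so the braiding equals $w^{-1}\sigma^\sharp w$; feeding these into the previous reduction finishes the $\sharp$-case.

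The main obstacle I anticipate is the last claim — that \emph{every} braid among the $\sharp$-punctures is conjugate by $\langle\alpha^\sharp,\beta^\sharp\rangle$-words into a product of $\sigma^\sharp$'s, and dually that one only ever needs a \emph{finite} such product. This is where one must use the asymptotic rigidity seriously: a braid occurring as the difference of two Farey-type $\sharp$-punctured tessellations is supported on finitely many triangles (since both tessellations agree with the Farey tessellation off a finite polygon), hence lies in a finitely generated subgroup $B_n\subset B_\infty$; and one must check that the $\alpha^\sharp,\beta^\sharp$-action on simple arcs is transitive enough to move an arbitrary Artin generator of $B_n$ into the $\sigma^\sharp$-position. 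I would prove this transitivity by an explicit picture-chase, reducing an arbitrary simple arc between two $\sharp$-punctures to the standard one by a sequence of flips ($\alpha^\sharp$) that "pull" the arc across edges one at a time, together with d.o.e.-relocations ($\beta^\sharp$, $(\alpha^\sharp)^2$) — exactly the mechanism already used in the proof of Prop.\ref{prop:T_acts_transitively} to relocate the d.o.e. to an arbitrary edge. Since Funar and Kapoudjian \cite{FuKa2} carry this out in the ribbon-tree model, I would either reproduce their argument in the present tessellation language or simply cite it; a self-contained treatment would devote most of its length to this single transitivity lemma.
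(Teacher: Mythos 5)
The paper itself offers no proof of this theorem; it is quoted from \cite{FuKa2}. So your proposal can only be judged on its own merits, and it contains one genuine error, in the $*$-case. You claim that two $*$-punctured tessellations with the same underlying marked tessellation of $\mathbb{D}$ are automatically equal, so that ``no braids are needed in the $*$-case.'' This is false, and it contradicts several things the paper sets up explicitly: the remark following Def.\ref{def:ideal_arc_in_punctured_discs} states that there are infinitely many distinct ideal arcs in $\mathbb{D}^*$ joining two given rational points (they may pass through different $*$-punctures, and even through the same one they may wind differently around the others, since the homotopy is taken in the punctured disc); the morphism $\sigma^*$ of Def.\ref{def:special_braids} and Fig.\ref{fig:action_of_sigma_star} visibly carries a $*$-punctured tessellation to a \emph{different} object with the same underlying marked tessellation; and if your uniqueness claim held, the kernel of $T^*\to T$ would be trivial, contradicting the exact sequence \eqref{eq:ses1} with kernel $B_\infty$. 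Your supporting argument (``a $*$-puncture is determined by the unique edge through it and its position is homotopically rigid'') conflates the homotopy class of an arc in $\mathbb{D}$ (unique, since $\mathbb{D}$ is simply connected) with its homotopy class in $\mathbb{D}^*$ rel the threading condition, which is very far from unique.

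Consequently your reduction for $\diamond=*$ does not terminate: after matching the underlying marked tessellations you are still left with a nontrivial braid, exactly as in the $\sharp$-case, and you have supplied no mechanism for producing it from $\alpha^*,\beta^*$. The correct statement is not that braids are absent but that they are \emph{redundant as generators}: in $Pt^*$ the special braid is itself a word in $\alpha^*,\beta^*$ (the pentagon closes only up to a braid, cf.\ the relation $(\til{\beta}^*\til{\alpha}^*)^5=z$ in Prop.\ref{prop:FuKa_T_star_ab_presentation}, and the paper's comment that $\sigma^*$ is included as a generator of $T^*$ ``although not necessary''). A repaired proof must therefore run the same braid-reduction you outline for $\sharp$ in the $*$-case as well, and then prove the extra lemma that $\sigma^*$ lies in the subgroup generated by $\alpha^*,\beta^*$ and that arbitrary braidings are conjugates of $\sigma^*$ by $\alpha^*,\beta^*$-words. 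Your outline for the $\sharp$-case (reduce to a braid supported on finitely many triangles, then move each Artin generator into the standard $\sigma^\sharp$-position by a word in $\alpha^\sharp,\beta^\sharp$) is sound, with the transitivity lemma you flag being the real content, as in \cite{FuKa2}.
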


The elementary morphisms satisfy some algebraic relations, such as $(\beta^*)^3={\rm id}$ and $(\beta^\sharp)^3={\rm id}$. We can then define groups presented by generators and relations, where the generators are associated to elementary morphisms and the relations are those satisfied by elementary morphisms. For $Pt^*$, we include $\sigma^*$ as one of the generators although not necessary, to make the presentation nicer.
\begin{definition}[\cite{FuKa2}]
Let $\diamond \in \{*,\sharp\}$. Let $T^\diamond$ be the group presented with generators $\alpha^\diamond$, $\beta^\diamond$, $\sigma^\diamond$ and the relations coming from the ones satisfied by the elementary morphisms of $Pt^\diamond$. These groups $T^*$ and $T^\sharp$ are called {\em braided Ptolemy-Thompson groups}.
\end{definition}
Obtaining and proving complete presentations of $T^*$ and $T^\sharp$ is quite a difficult job, and it is done in \cite{FuKa2}. In the present paper, we will need to use only some relations in $T^\sharp$, which is not difficult to show.
\begin{proposition}
\label{prop:some_relations_of_Pt_sharp}
The elementary morphisms of $Pt^\sharp$ satisfy
\begin{align*}
& (\alpha^\sharp)^4 = (\sigma^\sharp)^2, \qquad
(\beta^\sharp)^3 = {\rm id}, \qquad
(\beta^\sharp \alpha^\sharp)^5 = \sigma^\sharp \beta^\sharp \sigma^\sharp (\beta^\sharp)^{-1} \sigma^\sharp,  \\
& [\beta^\sharp \alpha^\sharp \beta^\sharp, \, (\alpha^\sharp)^2 (\sigma^\sharp)^{-1} \beta^\sharp \alpha^\sharp \beta^\sharp (\alpha^\sharp)^2 (\sigma^\sharp)^{-1}]=1, \\
& [\beta^\sharp \alpha^\sharp \beta^\sharp, \, (\alpha^\sharp)^2 (\sigma^\sharp)^{-1} (\beta^\sharp)^2 (\alpha^\sharp)^2 (\sigma^\sharp)^{-1} \beta^\sharp \alpha^\sharp \beta^\sharp (\alpha^\sharp)^2 (\sigma^\sharp)^{-1} \beta^\sharp (\alpha^\sharp)^2 (\sigma^\sharp)^{-1}]=1.
\end{align*}
\end{proposition}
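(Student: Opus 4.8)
The plan is to verify each of the five asserted identities directly, as identities among the elementary morphisms $\alpha^\sharp,\beta^\sharp,\sigma^\sharp$ of the groupoid $Pt^\sharp$; this is exactly what is needed, since $T^\sharp$ is by definition presented by these generators together with \emph{all} the relations its elementary morphisms satisfy. Recall that $Pt^\sharp$ has exactly one morphism between any two objects, so a relation holds as soon as the two sides send every object to the same object; and, exactly as in the non-punctured case of Section \ref{sec:decorated_universal_Ptolemy_groupoids} (and as carried out in \cite{FuKa2}), it suffices to check agreement on the standard marked $\sharp$-punctured tessellation of $\mathbb{D}^\sharp$, provided that ``agreement'' is understood in $\mathbb{D}^\sharp$, i.e.\ including the homotopy classes of all edges relative to the $\sharp$-punctures (Def.\ref{def:punctured_discs}, Def.\ref{def:braiding}). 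Thus each relation reduces to a finite picture computation: draw the sequence of elementary moves prescribed by each side --- with $\alpha^\sharp,\beta^\sharp$ as in Def.\ref{def:braided_alpha_and_beta} and $\sigma^\sharp$ as in Def.\ref{def:special_braids} --- and compare the endpoints. Passing to the dual ribbon-tree model of Fig.\ref{fig:dualizing_tessellation_to_ribbon_graph}, as in \cite{FuKa2}, keeps these pictures manageable.

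To organize the computation I would first reduce modulo the kernel $B_\infty$ of the forgetful homomorphism $T^\sharp\to T$ of \eqref{eq:ses1}, under which $\alpha^\sharp\mapsto\alpha$, $\beta^\sharp\mapsto\beta$ and $\sigma^\sharp\mapsto 1$. Each of the five identities then projects to one of the five defining relations of $T$ in \eqref{eq:T_presentation_intro}, which hold by Thm.\ref{thm:T}; consequently the \emph{underlying} tessellations produced by the two sides already coincide, and only the induced braiding of the finitely many $\sharp$-punctures that actually move remains to be identified --- equivalently, one must check that the element of $B_\infty$ by which the two sides differ is the one exhibited on the right-hand side. For $(\beta^\sharp)^3=id$ this is immediate, since a $\beta^\sharp$-move only relocates the d.o.e.\ and never disturbs a puncture. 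For $(\alpha^\sharp)^4=(\sigma^\sharp)^2$: since $\alpha^4=id$ in $T$ the four $\alpha^\sharp$-moves restore the underlying tessellation, so the net effect is a braiding of the two $\sharp$-punctures in the two triangles of the relevant ideal quadrilateral, and the four flips are seen from the picture to realize precisely the full twist $(\sigma^\sharp)^2$ of that pair.

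For the braided pentagon $(\beta^\sharp\alpha^\sharp)^5=\sigma^\sharp\beta^\sharp\sigma^\sharp(\beta^\sharp)^{-1}\sigma^\sharp$, the ten elementary moves again fix the underlying tessellation ($(\beta\alpha)^5=id$ in $T$), and one tracks the three moving $\sharp$-punctures through the ten steps to read off that the resulting braiding equals the braid word on the right. For the two long commutation relations I would argue via disjoint supports: the $T$-elements $\beta\alpha\beta$ and $\alpha^2\beta\alpha\beta\alpha^2$ (respectively $\alpha^2\beta\alpha^2\beta\alpha\beta\alpha^2\beta^2\alpha^2$) commute in $T$ and can in fact be realized by homeomorphisms of $\mathbb{D}$ with disjoint supports, and the $(\sigma^\sharp)^{-1}$ factors inserted in the statement are placed precisely so that the two lifted words $\beta^\sharp\alpha^\sharp\beta^\sharp$ and $(\alpha^\sharp)^2(\sigma^\sharp)^{-1}\beta^\sharp\alpha^\sharp\beta^\sharp(\alpha^\sharp)^2(\sigma^\sharp)^{-1}$ (resp.\ the other one) are realized by homeomorphisms of $\mathbb{D}^\sharp$ supported in disjoint neighborhoods of the closures of those sub-polygons; disjointly supported mapping classes commute, which gives the relations. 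As a consistency check: applying the abelianization $B_\infty\to\mathbb{Z}$ to these five identities yields exactly the presentation \eqref{eq:wh_T_Kash_presentation} of $\wh{T}^{Kash}$, which is how this proposition will feed into Prop.\ref{prop:T_sharp_ab}.

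The conceptual input is light --- the $T$-relations of \eqref{eq:T_presentation_intro} together with ``disjointly supported mapping classes commute'' --- so \textbf{the real difficulty is the bookkeeping}: one has to draw each sequence of flips carefully enough to follow the homotopy class of every edge relative to the punctures it passes near, in order to pin down the exact braid defect. The pentagon relation, with ten moves and a genuinely nontrivial braid on the right, is the most error-prone; and in the commutation relations one must verify that the naive lifts $(\alpha^\sharp)^2$ introduce braidings that \emph{spoil} disjointness, so that the $(\sigma^\sharp)^{-1}$ corrections are both necessary and correctly positioned. A complete drawing of all cases is carried out in \cite{Ki}.
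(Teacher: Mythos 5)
Your proposal is correct and follows essentially the same route as the paper, which simply cites \cite{FuKa2} and observes that each relation can be checked by hand on pictures because only finitely many ideal triangles are affected in an interesting way. Your reduction modulo $B_\infty$ (so that the underlying-tessellation part follows from the known $T$-relations of \eqref{eq:T_presentation_intro} and only the braid defect remains to be identified) and the disjoint-support argument for the two commutators are sensible ways of organizing exactly that finite picture check, which the paper likewise delegates to \cite{FuKa2} and \cite{Ki}.
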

The above relations are taken from \cite{FuKa2}, and can immediately be checked by hand using pictures, as these relations act only on a finite number of ideal triangles in an interesting way. In fact, for our purposes, we do not even have to take from \cite{FuKa2} the precise relations above, and what we actually need can be easily obtained from scratch. We will come back to this point at the end of the next subsection \S\ref{subsec:relative_abelianizations}.

\subsection{The relative abelianizations $T^*_{\rm ab}$, $T^\sharp_{\rm ab}$ of $T^*$, $T^\sharp$}
\label{subsec:relative_abelianizations}

By forgetting the $*$-punctures and $\sharp$-punctures, we obtain natural surjective group homomorphisms
$$
T^\diamond \to T \quad : \quad \alpha^\diamond \mapsto \alpha, \quad \beta^\diamond \mapsto \beta, \quad \sigma^\diamond \mapsto 1, \qquad \mbox{for} \quad \diamond \in \{*,\sharp\}.
$$
It is not hard to see that the kernel of this map is the following subgroup:
\begin{definition}
Let $\diamond \in \{*,\sharp\}$. Let $B(\mathbb{D}^\diamond)$ be the subgroup of $T^\diamond$ generated by the elements representing braids of $Pt^\diamond$ (Def.\ref{def:braids}).
\end{definition}
Thus we get a short exact sequence
\begin{align*}
\qquad \qquad \xymatrix{
1 \ar[r] & B(\mathbb{D}^\diamond) \ar[r] & T^\diamond \ar[r] & T \ar[r] & 1,
} \qquad \mbox{for}\quad \diamond\in\{*,\sharp\}.
\end{align*}
As observed in \cite{FuKa2}, the group $B(\mathbb{D}^\diamond)$ can be viewed as the inductive limit of the group $B(\mathbb{D}^\diamond_n)$ generated by elements of $T^\diamond$ induced by braids in a finite subsurface $\mathbb{D}^\diamond_n$ consisting of $n$ $\diamond$-punctured ideal triangles of any fixed $\diamond$-punctured tessellation of $\mathbb{D}^\diamond$, where $\mathbb{D}^\diamond_n$ are chosen such that $\mathbb{D}^\diamond_n \subset \mathbb{D}^\diamond_{n+1}$ and $\mathbb{D}^\diamond = \bigcup_n \mathbb{D}^\diamond_n$. It is pointed out to the author by Louis Funar that $B(\mathbb{D}^\diamond_n) \cong B_n$, where $B_n$ is the usual braid group of Artin on $n$ strands; see \cite{S}, in which the braid groups for graphs were considered for the first time. 
\begin{definition}
\label{def:B_infty}
The inductive limit of the Artin braid group $B_n$ on $n$ strands, with respect to the inclusion $\sigma_i \in B_n \mapsto \sigma_i \in B_{n+1}$ where $\sigma_i$ is the braid generator for $i$-th and $i+1$-th strands, is called the {\em stable braid group}, or the {\em infinite braid group}, and is denoted by $B_\infty$.
\end{definition}
Then we have
$$
B(\mathbb{D}^\diamond) \cong B_\infty,
$$
and therefore we obtain the short exact sequence in \eqref{eq:ses1}:
\begin{align*}
\qquad \qquad \xymatrix{
1 \ar[r] & B_\infty \ar[r] & T^\diamond \ar[r] & T \ar[r] & 1,
} \qquad \mbox{for}\quad \diamond\in\{*,\sharp\}.
\end{align*}
It is known that the abelianization of the kernel $B_\infty$ of this short exact sequence is isomorphic to $\mathbb{Z}$, which we can expect from the fact that every generator of $B_\infty$ is conjugate to each other, which we can see from the standard braid relations.
\begin{proposition}[see e.g. \cite{FuS}]
The abelianization of the group $B_\infty$ is $H_1(B_\infty) = \mathbb{Z}$.
\end{proposition}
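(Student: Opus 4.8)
The plan is to reduce the claim for $B_\infty$ to the classical computation of the abelianization of the finite braid groups $B_n$, using that abelianization commutes with direct limits. First I would recall the exponent-sum homomorphism $\varepsilon_n : B_n \to \mathbb{Z}$ defined on generators by $\varepsilon_n(\sigma_i) = 1$ for all $i$; this is well defined because both the braid relation $\sigma_i \sigma_{i+1} \sigma_i = \sigma_{i+1} \sigma_i \sigma_{i+1}$ and the far-commutation relation $\sigma_i \sigma_j = \sigma_j \sigma_i$ ($|i-j|\ge 2$) preserve total exponent sum, and it is clearly surjective for $n\ge 2$.

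Next I would show $B_n^{\mathrm{ab}} \cong \mathbb{Z}$. In $B_n^{\mathrm{ab}}$ the image of the braid relation $\sigma_i \sigma_{i+1} \sigma_i = \sigma_{i+1} \sigma_i \sigma_{i+1}$, after cancelling, forces $\sigma_i = \sigma_{i+1}$ for each $i$; hence all the generators of $B_n$ have the same image in $B_n^{\mathrm{ab}}$, so $B_n^{\mathrm{ab}}$ is a cyclic group generated by the class of $\sigma_1$. Since $\varepsilon_n$ factors through $B_n^{\mathrm{ab}}$ and is surjective onto $\mathbb{Z}$, the class of $\sigma_1$ has infinite order, and therefore $B_n^{\mathrm{ab}} \cong \mathbb{Z}$ with $\varepsilon_n$ inducing the isomorphism.

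Finally I would pass to the limit. The inclusion $B_n \hookrightarrow B_{n+1}$ from Def.\ref{def:B_infty} sends $\sigma_i$ to $\sigma_i$, so the induced map $B_n^{\mathrm{ab}} \to B_{n+1}^{\mathrm{ab}}$ is, under the identifications above, the identity map $\mathbb{Z} \to \mathbb{Z}$ (it commutes with $\varepsilon_n$, $\varepsilon_{n+1}$). Since abelianization is a left adjoint it preserves direct limits, so
$$
H_1(B_\infty) = B_\infty^{\mathrm{ab}} = \varinjlim_n B_n^{\mathrm{ab}} = \varinjlim_n \, (\mathbb{Z} \xrightarrow{\ \mathrm{id}\ } \mathbb{Z} \xrightarrow{\ \mathrm{id}\ } \cdots) = \mathbb{Z},
$$
the global exponent-sum homomorphism $\varepsilon : B_\infty \to \mathbb{Z}$, $\sigma_i \mapsto 1$, furnishing the isomorphism. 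Alternatively one can argue directly inside $B_\infty$: the braid relations again force all $\sigma_i$ to coincide in $B_\infty^{\mathrm{ab}}$, making it cyclic, and $\varepsilon$ shows the generator has infinite order. There is essentially no obstacle here; the only point requiring a moment's care is the verification that the stabilization maps induce the identity on abelianizations, which is immediate from the compatibility with the exponent-sum maps.
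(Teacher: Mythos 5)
Your proof is correct and complete. The paper itself offers no proof of this proposition --- it cites \cite{FuS} and only remarks that the claim is plausible because all the generators $\sigma_i$ are conjugate to one another via the braid relations; your argument (the braid relation forces $\sigma_i = \sigma_{i+1}$ in the abelianization, the exponent-sum homomorphism shows the resulting cyclic group is infinite, and abelianization commutes with the direct limit defining $B_\infty$) is exactly the standard way to make that remark rigorous.
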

Quotienting $T^\diamond$ by the commutator subgroup $[B_\infty,B_\infty]$ of the kernel of \eqref{eq:ses1} is called the {\em relative abelianization} of the short exact sequence \eqref{eq:ses1}, yielding another short exact sequence:
\begin{align}
\nonumber
\left\{ \begin{array}{l}
\xymatrix{
1 \ar[r] & B_\infty \ar[r] \ar[d] & T^\diamond \ar[r] \ar[d] & T \ar[r] \ar[d]^{\rm id} & 1 \\
1 \ar[r] & B_\infty/[B_\infty,B_\infty] \ar[r] & T^\diamond/[B_\infty,B_\infty] \ar[r] & T \ar[r] & 1.
} \end{array} \right.
\end{align}

\begin{definition}
\label{def:relative_abelianization_of_T_diamond}
For $\diamond \in \{*,\sharp\}$, we denote
\begin{align}
\label{eq:T_diamond_ab}
T^\diamond_{\rm ab} := T^\diamond/[B_\infty,B_\infty],
\end{align}
and it is called the {\em relative abelianization of $T^\diamond$}. We denote by
\begin{align}
\nonumber
\til{\alpha}^\diamond, \til{\beta}^\diamond \in T^\diamond_{\rm ab}
\end{align}
the images of $\alpha^\diamond$, $\beta^\diamond \in T^\diamond$ under the projection $T^\diamond \to T^\diamond/[B_\infty,B_\infty] =T^\diamond_{\rm ab}$.
\end{definition}
So we obtained the short exact sequence \eqref{eq:ses2}
\begin{align*}
\xymatrix{
1 \ar[r] & \mathbb{Z} \ar[r] & T^\diamond_{\rm ab} \ar[r] & T \ar[r] & 1,
}
\end{align*}
that is, $T^\diamond_{\rm ab}$ is an extension of $T$ by $\mathbb{Z}$. We can easily prove that this is in fact a central extension.

\begin{proposition}
\label{prop:T_diamond_ab_is_central_extension}
For $\diamond \in \{*,\sharp\}$, $T^\diamond_{\rm ab}$ is a central extension of $T$ by $\mathbb{Z}$.
\end{proposition}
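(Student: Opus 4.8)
The plan is to show that the kernel $\mathbb{Z} = B_\infty/[B_\infty,B_\infty]$ of the short exact sequence \eqref{eq:ses2} is contained in the center of $T^\diamond_{ab}$. Since we already know from the construction of \eqref{eq:ses2} that $\mathbb{Z}$ is a normal subgroup, it suffices to prove that every element of $T^\diamond_{ab}$ commutes with a generator of this $\mathbb{Z}$, equivalently, that conjugation by any element of $T^\diamond_{ab}$ acts trivially on the abelian group $B_\infty/[B_\infty,B_\infty]$. Because $T^\diamond_{ab}$ is generated by the images $\til{\alpha}^\diamond$, $\til{\beta}^\diamond$, $\til{\sigma}^\diamond$ of the elementary morphisms together with the image of $B_\infty$ (which acts trivially on $B_\infty/[B_\infty,B_\infty]$ since that quotient is abelian), it is enough to check that $\til{\alpha}^\diamond$ and $\til{\beta}^\diamond$ act trivially by conjugation on $B_\infty/[B_\infty,B_\infty]$.

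First I would recall that $B_\infty$ is normal in $T^\diamond$, so for any generator $g \in \{\alpha^\diamond,\beta^\diamond\}$ and any braiding $\sigma_e \in B_\infty$, the conjugate $g\,\sigma_e\,g^{-1}$ is again a braiding $\sigma_{e'} \in B_\infty$ associated to the simple arc $e' = g(e)$ — this is just the naturality of the braiding construction (Def.\ref{def:braiding}) under homeomorphisms, since $g$ is induced by a homeomorphism of $\mathbb{D}^\diamond$. Next I would use the key fact, already noted in the excerpt just before Prop.\ref{prop:T_diamond_ab_is_central_extension} and following from the standard braid relations, that \emph{all} braiding generators of $B_\infty$ are conjugate to one another inside $B_\infty$ itself; hence they all map to the \emph{same} generator of $H_1(B_\infty) = B_\infty/[B_\infty,B_\infty] \cong \mathbb{Z}$. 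Therefore $\sigma_e$ and $\sigma_{e'} = g\sigma_e g^{-1}$ have the same image in $B_\infty/[B_\infty,B_\infty]$. Passing to the quotient $T^\diamond_{ab} = T^\diamond/[B_\infty,B_\infty]$, this says precisely that $\til{g}$ commutes with the image of $\sigma_e$, i.e. with the generator of the central $\mathbb{Z}$. Since this holds for $g = \alpha^\diamond$ and $g = \beta^\diamond$, and these (with $\sigma^\diamond \in B_\infty$) generate $T^\diamond_{ab}$, the subgroup $\mathbb{Z}$ lies in the center.

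The only genuinely non-formal input here is the claim that any two braiding generators of $B_\infty = B(\mathbb{D}^\diamond)$ become equal in the abelianization, and that is exactly where one uses that the image $g(e)$ of a simple arc connecting two $\diamond$-punctures is again such a simple arc, so that $\sigma_{g(e)}$ really is a braiding generator and not some more complicated element; this is the step I expect to require the most care, though it is ultimately just unwinding Def.\ref{def:braiding} and Def.\ref{def:braids} together with the conjugacy of Artin generators. Everything else is routine diagram-chasing with the short exact sequences. I would then remark that the same argument shows more generally that whenever a group $\Gamma$ fits in a short exact sequence $1 \to B_\infty \to \Gamma \to Q \to 1$ in which $B_\infty$ is generated by a single conjugacy class that is permuted among itself by conjugation from $\Gamma$, the relative abelianization $\Gamma/[B_\infty,B_\infty]$ is a central extension of $Q$ by $\mathbb{Z}$; this is the structural reason behind both $T^*_{ab}$ and $T^\sharp_{ab}$ being central, and it will be reused verbatim when we later introduce $K^\sharp_{ab}$.
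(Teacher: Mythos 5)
Your proposal is correct and follows essentially the same route as the paper: both arguments reduce centrality to checking that conjugation by $\til{\alpha}^\diamond$ and $\til{\beta}^\diamond$ fixes the generator of $B_\infty/[B_\infty,B_\infty]$, using that a conjugate $g\sigma_e g^{-1}$ is again a positive braiding $\sigma_{g(e)}$ and that all positive braidings have the same image in the abelianization. The closing remark about the general principle for extensions by $B_\infty$ generated by a single $\Gamma$-stable conjugacy class is a nice observation that the paper implicitly reuses for $K^\sharp_{ab}$, but the core argument is the paper's own.
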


\begin{proof}
Recall that $\sigma^\diamond$ is induced by a positive braiding associated to some arc, so we can say that it is a positive braid. We observe that the element of $B_\infty$ induced by the positive braiding $\sigma_e$ for any arc $e$ is mapped to the same element by the map $B_\infty \to B_\infty/[B_\infty,B_\infty] \cong \mathbb{Z}$, say, always to $1$, or always to $-1$. Now, we can see that each of $\alpha^\diamond \sigma^\diamond (\alpha^\diamond)^{-1}$ and $\beta^\diamond \sigma^\diamond (\beta^\diamond)^{-1}$ is induced by a positive braiding associated to some arc, therefore maps to the same element $z\in B_\infty/[B_\infty,B_\infty]$ by the map $B_\infty \to B_\infty/[B_\infty,B_\infty]$ as $\sigma^\diamond$ does. Hence, by applying the map $T^\diamond \to T^\diamond/[B_\infty,B_\infty]$ we get $\til{\alpha}^\diamond z (\til{\alpha}^\diamond)^{-1} =z$ and $\til{\beta}^\diamond z (\til{\beta}^\diamond)^{-1} =z$, proving the desired statement, as $z$ is a generator of the kernel $\mathbb{Z} \cong B_\infty/[B_\infty,B_\infty]$ of the short exact sequence \eqref{eq:ses2}.
\end{proof}

From the full presentations of $T^*$ and $T^\sharp$ obtained in \cite{FuKa2}, we can give presentations of their relative abelianizations $T^*_{\rm ab}$ and $T^\sharp_{\rm ab}$ in the style of Thm.\ref{thm:FS_classification}:
\begin{proposition}[\cite{FuKa2}: presentation of $T^*_{\rm ab}$]
\label{prop:FuKa_T_star_ab_presentation}
The group $T^*_{\rm ab}$ \eqref{eq:T_diamond_ab} has a presentation with generators $\til{\alpha}^*, \til{\beta}^*$, $z$ and the relations
\begin{align*}
{\renewcommand{\arraystretch}{1.2}
\begin{array}{l}
(\til{\beta}^*\til{\alpha}^*)^5 = z, \qquad (\til{\alpha}^*)^4 = 1, \qquad (\til{\beta}^*)^3= 1, \qquad \left[\til{\alpha}^*,z\right]=\left[\til{\beta}^*,z\right]=1, \\
\left[\til{\beta}^*\til{\alpha}^*\til{\beta}^*, \, (\til{\alpha}^*)^2 \til{\beta}^*\til{\alpha}^*\til{\beta}^*(\til{\alpha}^*)^2\right]= \left[\til{\beta}^*\til{\alpha}^*\til{\beta}^*, \, (\til{\alpha}^*)^2\til{\beta}^*(\til{\alpha}^*)^2\til{\beta}^*\til{\alpha}^*\til{\beta}^*(\til{\alpha}^*)^2(\til{\beta}^*)^2(\til{\alpha}^*)^2\right]= 1,
\end{array} }
\end{align*}
Hence 
$T^*_{\rm ab} \cong T_{1,0,0,0}$, 
where $T_{n,p,q,r}$ is as in Thm.\ref{thm:FS_classification}.
\end{proposition}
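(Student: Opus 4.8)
The plan is to derive the presentation of $T^*_{ab}$ directly from the finite presentation of $T^*$ established by Funar--Kapoudjian \cite{FuKa2}, by carrying out the relative abelianization exactly as one does for $T^\sharp$. Recall that $T^*$ is generated by $\alpha^*,\beta^*,\sigma^*$ and that \cite{FuKa2} supplies a complete list of relations; the relevant ones are the $*$-analogues of those in Prop.\ref{prop:some_relations_of_Pt_sharp}, namely $(\alpha^*)^4 = id$ and $(\beta^*)^3 = id$ (here, unlike the $\sharp$ case, $(\alpha^*)^4$ is actually trivial), a pentagon-type relation $(\beta^*\alpha^*)^5 = w$ with $w$ a word in $B_\infty$ conjugate to $\sigma^*$, and two commutator relations whose right-hand sides are braids, together with the braid relations and any relation involving $\sigma^*$. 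First I would observe that $[B_\infty,B_\infty]$ is characteristic in $B_\infty$, which is normal in $T^*$ by the sequence \eqref{eq:ses1}, so $T^*_{ab} = T^*/[B_\infty,B_\infty]$ is a well-defined group; by Prop.\ref{prop:T_diamond_ab_is_central_extension} it is a central extension of $T$ by $\mathbb{Z}$.

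Next I would identify the central $\mathbb{Z}$. Inside $B_\infty$ the standard generators $\sigma_i$ (Def.\ref{def:B_infty}) are mutually conjugate, so they share a common image under $B_\infty \to B_\infty/[B_\infty,B_\infty] = H_1(B_\infty)\cong \mathbb{Z}$, which I call $z$; since $\sigma^*$ is a positive half-twist exchanging two adjacent punctures it is conjugate in $B_\infty$ to a $\sigma_i$, so its image is $z$ (after fixing a sign convention). Because $T^* = \langle \alpha^*,\beta^*,\sigma^*\rangle$ and $z$ is central (Prop.\ref{prop:T_diamond_ab_is_central_extension}), a presentation of $T^*_{ab}$ is obtained from that of $T^*$ by substituting $z$ for $\sigma^*$ and adjoining $[\til{\alpha}^*,z] = [\til{\beta}^*,z] = 1$; these commuting relations make $z$ central, which is precisely what quotienting by $[B_\infty,B_\infty]$ does at the level of presentations, so the braid relations collapse and may be discarded. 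It then remains to rewrite the surviving relations. Since $z$ is central, a word $v$ lying in $B_\infty$ becomes in $T^*_{ab}$ the power $z^k$, where $k$ is the image of $v$ in $H_1(B_\infty)=\mathbb{Z}$ (every conjugate of $z^{\pm1}$ being equal to $z^{\pm1}$). Thus $(\til{\alpha}^*)^4 = 1$ and $(\til{\beta}^*)^3 = 1$ come for free, and I would read off the remaining $H_1$-classes from the \cite{FuKa2} data (or the pictures): the pentagon word $(\beta^*\alpha^*)^5$ has class $1$, giving $(\til{\beta}^*\til{\alpha}^*)^5 = z$, while the two commutator words have class $0$, giving the two trivial commutator relations. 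This yields exactly the displayed presentation, which is that of $T_{1,0,0,0}$ in Thm.\ref{thm:FS_classification}; feeding $(n,p,q,r) = (1,0,0,0)$ into the formula there produces the class $12\chi$, consistent with Thm.\ref{thm:FS} and Prop.\ref{prop:FS_T_star_ab}.

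The main obstacle is the braid bookkeeping: one must verify, from the explicit \cite{FuKa2} presentation or equivalently by chasing the topological pictures of $\alpha^*,\beta^*,\sigma^*$ acting on $*$-punctured tessellations, that $(\beta^*\alpha^*)^5$ has $H_1(B_\infty)$-class exactly $\pm 1$ (and not some other integer) and that the two long commutator relations have vanishing $H_1$-class, so that no relation beyond those displayed survives; one must also confirm that every remaining \cite{FuKa2} relation, after the substitution $\sigma^* \mapsto z$ and the adjunction of $[\til{\alpha}^*,z] = [\til{\beta}^*,z] = 1$, becomes a consequence of the displayed ones. Once the \cite{FuKa2} presentation is in hand these are routine picture-chasing checks, wholly parallel to those behind Prop.\ref{prop:some_relations_of_Pt_sharp}, and are carried out in \cite{FuKa2} and \cite{FuS}; I would cite them rather than reproduce them.
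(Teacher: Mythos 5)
Your proposal is correct and follows essentially the same route as the paper: it takes the finite presentation of $T^*$ from \cite{FuKa2}, performs the relative abelianization by replacing each braid-valued right-hand side with $z^k$ where $k$ is its class in $H_1(B_\infty)\cong\mathbb{Z}$ (exactly the braid-counting recipe the paper describes in \S\ref{subsec:relative_abelianizations} for the $\sharp$ case), and matches the result with $T_{1,0,0,0}$. The paper likewise defers the actual presentation of $T^*$ and the braid bookkeeping to \cite{FuKa2}, so no further justification is needed beyond what you give.
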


\begin{proposition}[\cite{FuKa2}: presentation of $T^\sharp_{\rm ab}$]
\label{prop:FuKa_T_sharp_ab_presentation}
The group $T^\sharp_{\rm ab}$ \eqref{eq:T_diamond_ab} has a presentation with generators $\til{\alpha}^\sharp, \til{\beta}^\sharp$, $z$ and the relations
\begin{align*}
{\renewcommand{\arraystretch}{1.2}
\begin{array}{l}
(\til{\beta}^\sharp\til{\alpha}^\sharp)^5 = z^3, \qquad (\til{\alpha}^\sharp)^4 = z^2, \qquad (\til{\beta}^\sharp)^3= 1, \qquad \left[\til{\alpha}^\sharp,z\right]=\left[\til{\beta}^\sharp,z\right]=1, \\
\left[\til{\beta}^\sharp\til{\alpha}^\sharp\til{\beta}^\sharp, \, (\til{\alpha}^\sharp)^2 \til{\beta}^\sharp\til{\alpha}^\sharp\til{\beta}^\sharp(\til{\alpha}^\sharp)^2\right]= \left[\til{\beta}^\sharp\til{\alpha}^\sharp\til{\beta}^\sharp, \, (\til{\alpha}^\sharp)^2\til{\beta}^\sharp(\til{\alpha}^\sharp)^2\til{\beta}^\sharp\til{\alpha}^\sharp\til{\beta}^\sharp(\til{\alpha}^\sharp)^2(\til{\beta}^\sharp)^2(\til{\alpha}^\sharp)^2\right]= 1,
\end{array} }
\end{align*}
Hence 
$T^\sharp_{\rm ab} \cong T_{3,2,0,0}$, 
where $T_{n,p,q,r}$ is as in Thm.\ref{thm:FS_classification}.
\end{proposition}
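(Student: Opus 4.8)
The plan is to start from the finite presentation of the braided Ptolemy--Thompson group $T^\sharp$ of Funar--Kapoudjian \cite{FuKa2}, with generators $\alpha^\sharp,\beta^\sharp,\sigma^\sharp$ and the relations recorded in Prop.\ref{prop:some_relations_of_Pt_sharp} (together with the remaining relations of that presentation, which involve $\sigma^\sharp$ interacting with generators supported on disjoint subsurfaces and Artin-type braid relations), and to compute what this presentation becomes after quotienting by $[B_\infty,B_\infty]$, where $B_\infty\le T^\sharp$ is the kernel of the forgetful map $T^\sharp\to T$. By Prop.\ref{prop:T_diamond_ab_is_central_extension} we already know $T^\sharp_{ab}$ is a central extension of $T$ by $\mathbb{Z}$, the kernel being generated by a single $z$ which is the common image, under $B_\infty\to B_\infty/[B_\infty,B_\infty]\cong\mathbb{Z}$, of every positive braid generator $\sigma_e$; moreover, as in the proof of that proposition, conjugation by $\alpha^\sharp$ or $\beta^\sharp$ carries a positive braiding $\sigma_e$ to another positive braiding $\sigma_{e'}$, hence preserves the image in $B_\infty/[B_\infty,B_\infty]$. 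So in $T^\sharp_{ab}$ we may replace $\sigma^\sharp$, and any conjugate of a positive special braid, by $z$.

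Granting this, the computation is mechanical. Writing $\til\alpha^\sharp,\til\beta^\sharp$ for the images of $\alpha^\sharp,\beta^\sharp$, the relations of Prop.\ref{prop:some_relations_of_Pt_sharp} descend as follows: $(\alpha^\sharp)^4=(\sigma^\sharp)^2$ gives $(\til\alpha^\sharp)^4=z^2$; $(\beta^\sharp)^3=\mathrm{id}$ gives $(\til\beta^\sharp)^3=1$; in $(\beta^\sharp\alpha^\sharp)^5=\sigma^\sharp\beta^\sharp\sigma^\sharp(\beta^\sharp)^{-1}\sigma^\sharp$ the right side is a product of three positive braids, the middle one $\beta^\sharp\sigma^\sharp(\beta^\sharp)^{-1}$ being again positive, so it maps to $z^3$ and we get $(\til\beta^\sharp\til\alpha^\sharp)^5=z^3$; and in the two commutator relations the inserted factors $(\sigma^\sharp)^{-1}\mapsto z^{-1}$ are central, hence cancel inside the commutators, leaving exactly the two commutator relations of $T_{3,2,0,0}$. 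Adjoining $[\til\alpha^\sharp,z]=[\til\beta^\sharp,z]=1$ yields the presentation in the statement, and comparison with Thm.\ref{thm:FS_classification} (using the Remark following Def.\ref{def:Ptolemy-Thompson_group}, which allows trading the last commutator relation for the equivalent form used there) reads off $(n,p,q,r)=(3,2,0,0)$, i.e.\ $T^\sharp_{ab}\cong T_{3,2,0,0}$.

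The only genuinely delicate point is \emph{completeness}: one must check that the remaining relations of the Funar--Kapoudjian presentation of $T^\sharp$, together with the relations $[\sigma_e,\sigma_{e'}]=1$ one adjoins to pass to $T^\sharp_{ab}$, become consequences of the listed relations once $B_\infty$ is abelianized. The clean route is Tietze transformations: after $[B_\infty,B_\infty]=1$ every $\sigma_e$ equals a single central $z$, so all generators of $T^\sharp$ other than $\alpha^\sharp,\beta^\sharp$ can be eliminated, and every unlisted relation collapses to an identity in $\til\alpha^\sharp,\til\beta^\sharp,z$ that is already implied. As noted at the end of \S\ref{subsec:braided_Ptolemy-Thompson_groups}, for the present paper one can shortcut this: since Prop.\ref{prop:T_diamond_ab_is_central_extension} already gives $T^\sharp_{ab}\cong T_{n,p,q,r}$ for \emph{some} $(n,p,q,r)$ by Thm.\ref{thm:FS_classification}, it suffices to verify, by drawing the finitely many $\sharp$-punctured ideal triangles involved in each case, the five lifted relations $(\til\beta^\sharp\til\alpha^\sharp)^5=z^3$, $(\til\alpha^\sharp)^4=z^2$, $(\til\beta^\sharp)^3=1$, and the two commutators with trivial right-hand side, which pins down $n=3$, $p=2$, $q=0$, $r=0$. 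I expect the bulk of the actual write-up to be exactly these bounded-support picture verifications, all of which are routine.
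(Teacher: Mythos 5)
Your proposal is correct and follows essentially the same route as the paper, which itself offers exactly the two options you describe: descending the $T^\sharp$-relations of Prop.\ref{prop:some_relations_of_Pt_sharp} to the relative abelianization (positive braidings and their $\alpha^\sharp,\beta^\sharp$-conjugates all mapping to the single central $z$, as in the proof of Prop.\ref{prop:T_diamond_ab_is_central_extension}), and the ``from scratch'' picture check in which each $\alpha,\beta$-relation word is unravelled by counting positive versus negative braids, as in Fig.\ref{fig:topological_check}. You also correctly identify the completeness issue (Prop.\ref{prop:some_relations_of_Pt_sharp} is not a full presentation of $T^\sharp$) and your resolution via Thm.\ref{thm:FS_classification} together with the known central-extension structure of $T^\sharp_{ab}$ is the standard argument the paper implicitly relies on.
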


Therefore, from the presentation of $T^*_{\rm ab}$ (Prop.\ref{prop:FuKa_T_star_ab_presentation}) and that of $\wh{T}^{\rm CF}$ (Thm.\ref{thm:FS}), we can deduce that $T^*_{\rm ab} \cong \wh{T}^{\rm CF}$, namely Prop.\ref{prop:FS_T_star_ab}. From the presentation of $T^\sharp_{\rm ab}$ (Prop.\ref{prop:FuKa_T_sharp_ab_presentation}) and that of $\wh{T}^{\rm Kash}$ (Thm.\ref{thm:main}), we can deduce that $T^\sharp_{\rm ab} \cong \wh{T}^{\rm Kash}$, namely Prop.\ref{prop:T_sharp_ab}. What is being done in the present section is to give an alternative proof of Thm.\ref{thm:main}, by proving $T^\sharp_{\rm ab} \cong \wh{T}^{\rm Kash}$ (Prop.\ref{prop:T_sharp_ab}) directly without knowing presentations of $T^\sharp_{\rm ab}$ or $\wh{T}^{\rm Kash}$. Then, what would remain to do is to get a presentation of $T^\sharp_{\rm ab}$, i.e. Prop.\ref{prop:FuKa_T_sharp_ab_presentation}. This way of proving Thm.\ref{thm:main} makes a crucial use of the central extension $T^\sharp_{\rm ab}$ of $T$ obtained by topological arguments, and therefore we call this a `topological' proof of Thm.\ref{thm:main}, our main theorem. We consider that $\wh{T}^{\rm Kash} \cong T^\sharp_{\rm ab}$ (Prop.\ref{prop:T_sharp_ab}) gives more insight on the nature of the central extension $\wh{T}^{\rm Kash}$ coming from the Kashaev quantization, than just a presentation of $\wh{T}^{\rm Kash}$ (Thm.\ref{thm:main}) does.

\vs

One way of proving Prop.\ref{prop:FuKa_T_sharp_ab_presentation} is to use Prop.\ref{prop:some_relations_of_Pt_sharp}, but we promised another way which we can come up with from scratch. Namely, take each $\alpha, \beta$-relation of $T$ in \eqref{eq:T_presentation_intro}, and replace $\alpha,\beta$ by $\alpha^\sharp, \beta^\sharp$, respectively. Apply such expression to any marked $\sharp$-punctured tessellation of $\mathbb{D}^\sharp$. 
Then we get a new marked $\sharp$-punctured tessellation differing from the original only by braids; the underlying marked tessellations are same, but the way how the ideal arcs go around $\sharp$-punctures are different. Apply braids to the obtained marked $\sharp$-punctured tessellation which `unravel' the picture eventually to the original one. The number of negative braids applied minus the number of positive braids applied tells you the power of the central element $z$ which we should put in the RHS of the lifted version of this $\alpha,\beta$-relation. We do not have to know the precise expressions of these braids in terms of elements of $T^\sharp$, but only need to know the number of positive ones and negative ones. An example of this procedure is shown in Fig.\ref{fig:topological_check}; we see that $(\beta^\sharp \alpha^\sharp)^5$ is resolved to identity by three negative braids, so the corresponding relation for $T^\sharp_{\rm ab}$ is $(\til{\beta}^\sharp \til{\alpha}^\sharp)^5 = z^3$.

\input{fig-topological_check.tex}

\vspace{-5mm}

\subsection{The $\sharp$-punctured Kashaev group $K^\sharp$}
\label{subsec:K_sharp}

Our strategy to prove $\wh{T}^{\rm Kash} \cong T^\sharp_{\rm ab}$ (Prop.\ref{prop:T_sharp_ab}) is to prove a corresponding statement in terms of the Kashaev group, namely using dotted $\sharp$-punctured tessellations of $\mathbb{D}^\sharp$. So we first have to define a braided version of the Kashaev group from the groupoid $Pt^\sharp_{\rm dot}$. As in \S\ref{subsec:T_and_K}, we view each morphism of $Pt^\sharp_{\rm dot}$ as a transformation of a dotted $\sharp$-punctured tessellation of $\mathbb{D}^\sharp$ into another, and define elementary morphisms of $Pt^\sharp_{\rm dot}$:
\begin{definition}[elementary morphisms of $Pt^\sharp_{\rm dot}$]
\label{def:elementary_morphisms_of_Pt_sharp_dot}
We label a morphism of $Pt^\sharp_{\rm dot}$ by $A^\sharp_{[j]}$ ($j\in \mathbb{Q}^\times$) if it transforms a dotted $\sharp$-punctured tessellation of $\mathbb{D}^\sharp$ as in Fig.\ref{fig:action_of_A_j_sharp}, i.e. moves the dot $\bullet$ of the triangle $j$, leaving all other parts indicated by triple dots in Fig.\ref{fig:action_of_A_j_sharp} intact.

\vs

We label a morphism of $Pt^\sharp_{\rm dot}$ by $T^\sharp_{[j][k]}$ ($j,k\in \mathbb{Q}^\times$, $j\neq k$) if it transforms a dotted $\sharp$-punctured tessellation of $\mathbb{D}^\sharp$ as in Fig.\ref{fig:action_of_T_jk_sharp}, leaving all other parts indicated by triple dots in Fig.\ref{fig:action_of_T_jk_sharp} intact. That is, it is described in the same way as $T_{[j][k]}$ of Def.\ref{def:elementary_moves_of_Pt_dot}, where we require that the $\sharp$-punctures of triangles $j,k$ must not be touched while rotating the d.o.e. to the other diagonal.

\vs

For a $\mathbb{Q}^\times$-permutation $\gamma$, a morphism of $Pt^\sharp_{\rm dot}$ labeled by $P^\sharp_\gamma$ is defined analogously to $P_\gamma$ in Def.\ref{def:elementary_moves_of_Pt_dot}, leaving the underlying $\sharp$-tessellation intact.

\vs

We label a morphism of $Pt^\sharp_{\rm dot}$ by $\sigma^\sharp_{[j][k]}$ ($j,k\in \mathbb{Q}^\times$, $j\neq k$) if it transforms a dotted $\sharp$-punctured tessellation of $\mathbb{D}^\sharp$ as in Fig.\ref{fig:action_of_sigma_jk_sharp}, leaving all other parts indicated by triple dots in Fig.\ref{fig:action_of_sigma_jk_sharp} intact. That is, it can be applied only to dotted $\sharp$-punctured tessellations on which $T^\sharp_{[j][k]}$ can be applied to, and the action is induced by the braiding (Def.\ref{def:braiding}) associated to the unique simple arc in $\mathbb{D}^\sharp$ connecting the $\sharp$-punctures of the triangles $j$ and $k$ which intersects with only one ideal arc of the initial dotted $\sharp$-punctured tessellation and only once.

\vs

These morphisms are called {\em elementary}.
\end{definition}

\begin{figure}[htbp!]
$\begin{array}{ll}
\hspace{-5mm}
\includegraphics[width=35mm]{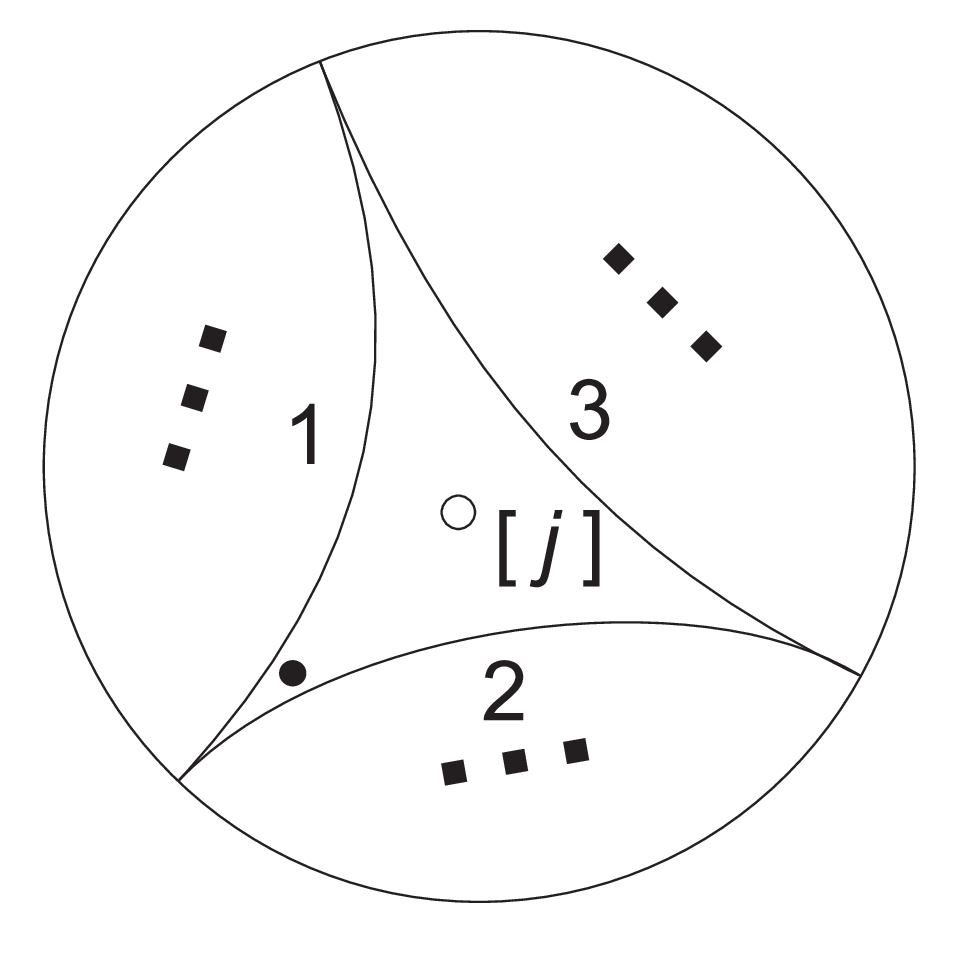}
\begin{pspicture}[showgrid=false,linewidth=0.5pt,unit=7.5mm](-0.5,-1.5)(0.1,2.0)
\rput[l](-0.7,0.6){\pcline[linewidth=0.7pt, arrowsize=2pt 4]{->}(0,0)(1.0;0)\Aput{$A_{[j]}^\sharp$}}
\end{pspicture}
\includegraphics[width=35mm]{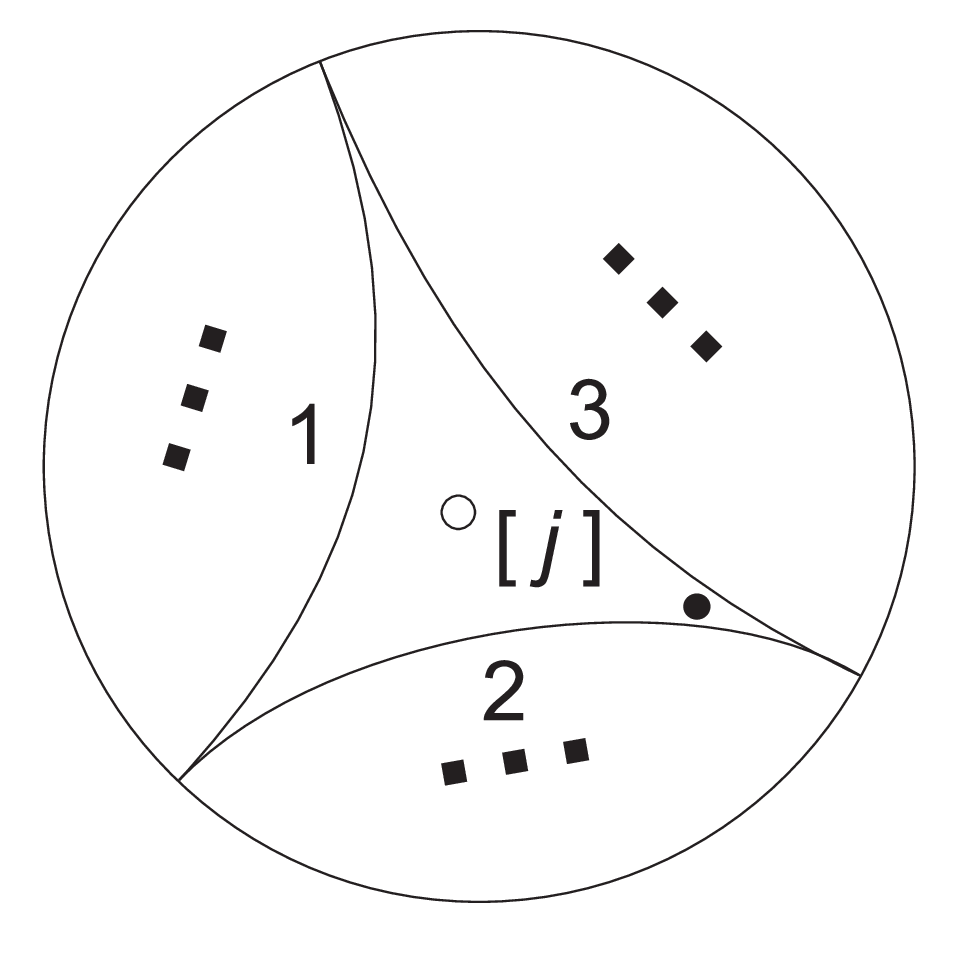}
&
\centering
\includegraphics[width=35mm]{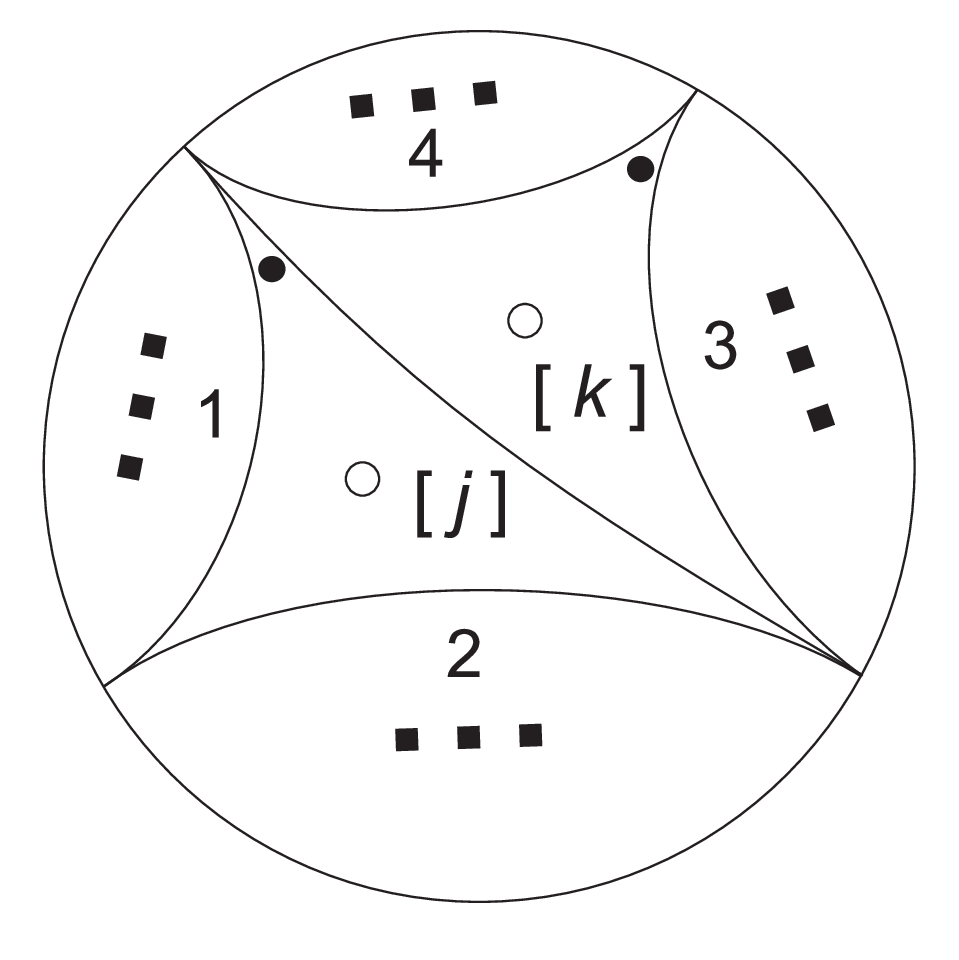}
\begin{pspicture}[showgrid=false,linewidth=0.5pt,unit=7.5mm](-0.5,-1.5)(0.1,2.0)
\rput[l](-0.7,0.6){\pcline[linewidth=0.7pt, arrowsize=2pt 4]{->}(0,0)(1.0;0)\Aput{$T_{[j][k]}^\sharp$}}
\end{pspicture}
\includegraphics[width=35mm]{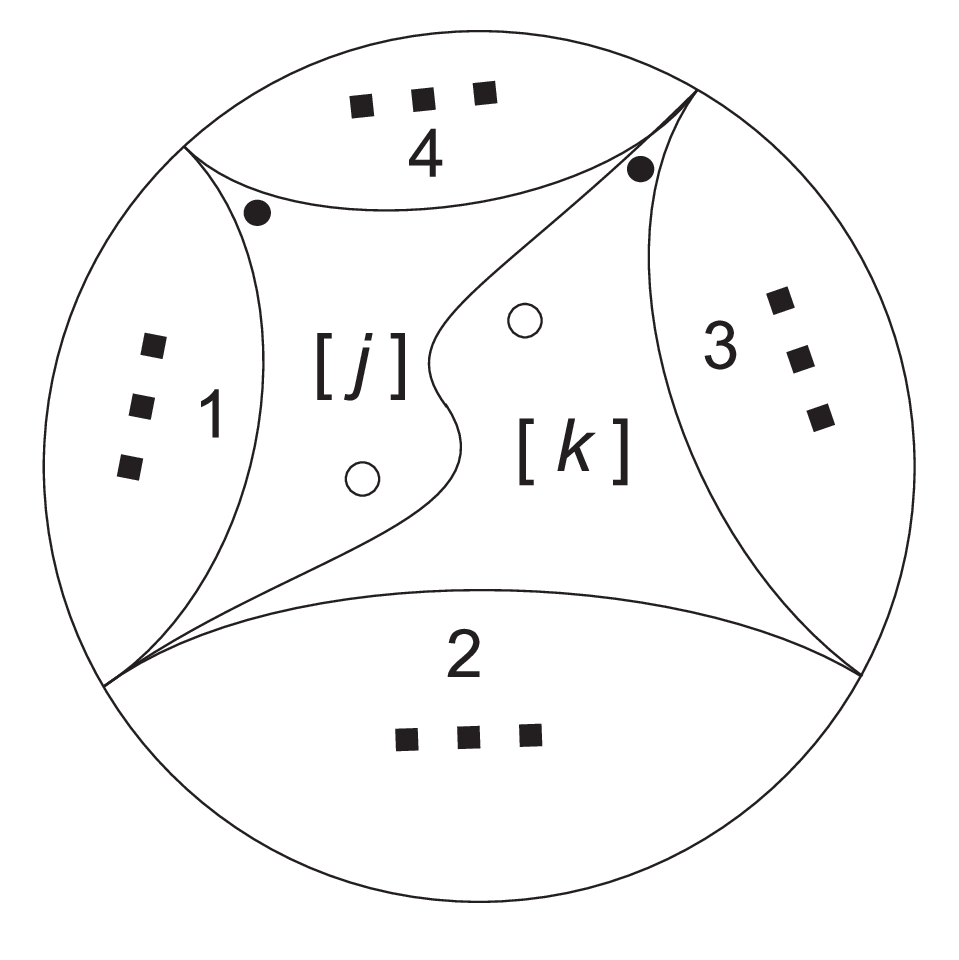}
\end{array}$
\\
\vspace{-2mm}

\begin{subfigure}[b]{0.48\textwidth}
\caption[The action of A-sharp on a sharp-punctured tessellation]{The action of $A_{[j]}^\sharp$ on $Pt^\sharp_{dot}$}
\label{fig:action_of_A_j_sharp}
\end{subfigure}
\hfill
\begin{subfigure}[b]{0.5\textwidth}
\caption{The action of $T^\sharp_{[j][k]}$ on $Pt^\sharp_{dot}$}
\label{fig:action_of_T_jk_sharp}
\end{subfigure}

\includegraphics[width=35mm]{t5.eps}
\begin{pspicture}[showgrid=false,linewidth=0.5pt,unit=7.5mm](-0.5,-1.5)(0.1,2.0)
\rput[l](-0.7,0.6){\pcline[linewidth=0.7pt, arrowsize=2pt 4]{->}(0,0)(1.0;0)\Aput{$\sigma_{[j][k]}^\sharp$}}
\end{pspicture}
\includegraphics[width=35mm]{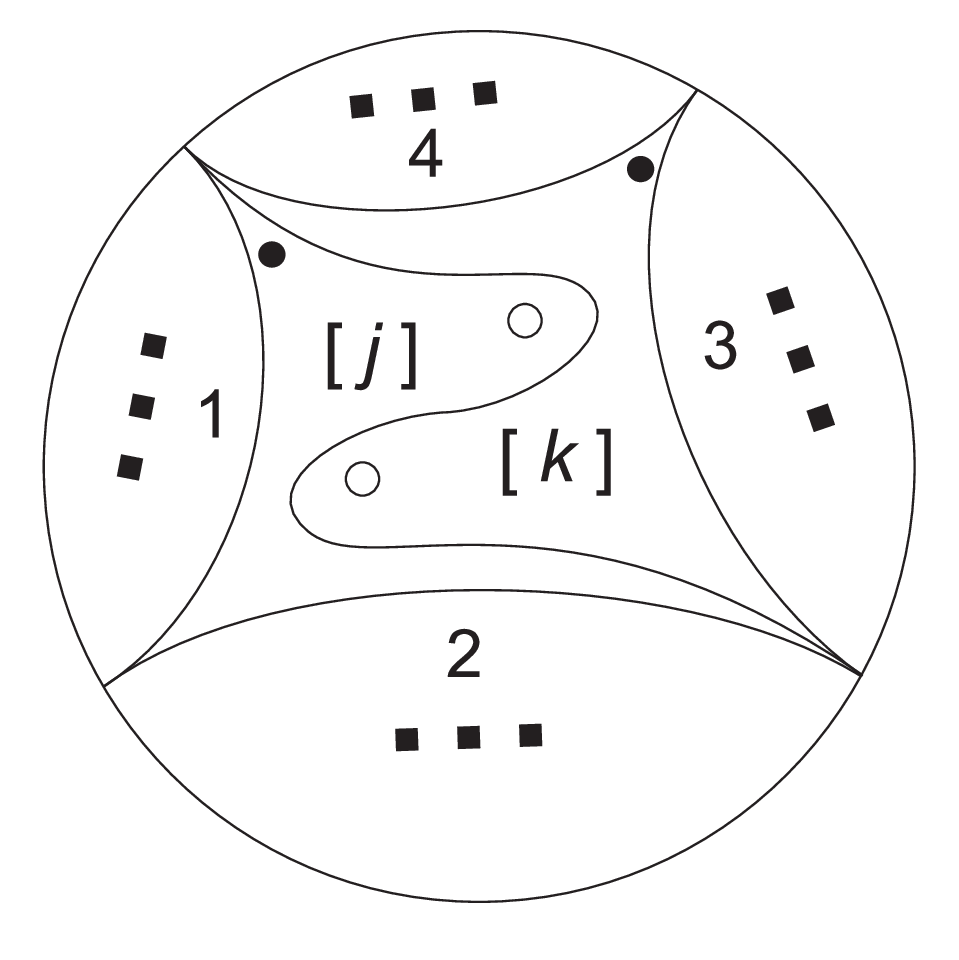}

\vspace{-2mm}

\begin{subfigure}[b]{0.48\textwidth}
\caption{The action of $\sigma_{[j][k]}^\sharp$ on $Pt^\sharp_{dot}$}
\label{fig:action_of_sigma_jk_sharp}
\end{subfigure}

\vspace{-4mm}
\caption{Some elementary morphsims of $Pt^\sharp_{dot}$}
\label{fig:some_elementary_morphisms_of_Pt_sharp_dot}
\end{figure}

\vspace{-3mm}

\begin{remark}
The definition of $\sigma^\sharp_{[j][k]}$ given in \cite{Ki} (denoted by $\sigma_{[j][k]}$ there) is not precise enough, and should be replaced by the above definition. Accordingly, some algebraic relations involving $\sigma^\sharp_{[j][k]}$ should be corrected, although this change doesn't affect the end result of \cite{Ki}.
\end{remark}
These elementary morphisms satisfy certain algebraic relations, some of which are:
\begin{proposition}
\label{prop:K_sharp_relations}
The above elementary moves satisfy the following relations:
\begin{align}
\label{eq:K_sharp_relations_major}
\left\{ {\renewcommand{\arraystretch}{1.4} \begin{array}{ll}
(A_{[j]}^\sharp)^3 = {\rm id},  &
T_{[k][\ell]}^\sharp T_{[j][k]}^\sharp = T_{[j][k]}^\sharp T_{[j][\ell]}^\sharp T_{[k][\ell]}^\sharp, \\
A_{[j]}^\sharp T_{[j][k]}^\sharp A_{[k]}^\sharp = A_{[k]}^\sharp T_{[k][j]}^\sharp A_{[j]}^\sharp, &
T_{[j][k]}^\sharp A_{[j]}^\sharp T_{[k][j]}^\sharp = ( A^{\sharp \, -1}_{[k]}\sigma^{\sharp \, -1}_{[k][j]} A^\sharp_{[k]} ) A_{[j]}^\sharp A_{[k]}^\sharp P_{(jk)}^\sharp,
\end{array} } \right.
\end{align}
where $j,k,\ell \in \mathbb{Q}^\times$ are mutually distinct. The `trivial relations' as in Thm.\ref{thm:algebraic_relations_of_elementary_moves} with $A_{[j]}, T_{[j][k]}, P_\gamma$ replaced by $A^\sharp_{[j]}, T^\sharp_{[j][k]}, P^\sharp_\gamma$ are also satisfied. The usual braid relation
$$
\sigma^\sharp_{[j][k]} \sigma^\sharp_{[k][\ell]} \sigma^\sharp_{[j][k]} 
= \sigma^\sharp_{[k][\ell]} \sigma^\sharp_{[j][k]} \sigma^\sharp_{[k][\ell]}
$$
holds for mutually distinct $j,k,\ell\in \mathbb{Q}^\times$.
\end{proposition}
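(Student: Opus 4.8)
The plan is to verify each relation directly in the groupoid $Pt^\sharp_{dot}$. Since between any two objects of $Pt^\sharp_{dot}$ there is a unique morphism, an identity $(\mathrm{word}_1)=(\mathrm{word}_2)$ means precisely that whenever both sides are applicable to a dotted $\sharp$-punctured tessellation $\tau^\sharp_{dot}$ they produce the same $\tau'^\sharp_{dot}$. Each relation in Prop.\ref{prop:K_sharp_relations} touches only finitely many triangles, edges and $\sharp$-punctures in an interesting way, so it suffices to draw the relevant finite piece of $\mathbb{D}^\sharp$ and compare, on the two sides, the underlying (Farey-type) tessellation, the distinguished corners, the triangle labels, and --- crucially --- the homotopy classes of the edges relative to the $\sharp$-punctures.

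For the trivial relations and for the three relations $(A^\sharp_{[j]})^3=id$, $T^\sharp_{[k][\ell]}T^\sharp_{[j][k]}=T^\sharp_{[j][k]}T^\sharp_{[j][\ell]}T^\sharp_{[k][\ell]}$, and $A^\sharp_{[j]}T^\sharp_{[j][k]}A^\sharp_{[k]}=A^\sharp_{[k]}T^\sharp_{[k][j]}A^\sharp_{[j]}$, I would argue by reduction to the unpunctured case. Forgetting the $\sharp$-punctures sends each of these words to the corresponding word of $Pt_{dot}$, which by Thm.\ref{thm:algebraic_relations_of_elementary_moves} yields the same underlying dotted tessellation on both sides; what remains is to see that every edge is moved through isotopic paths with respect to the $\sharp$-punctures. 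This holds because $A^\sharp_{[j]}$ and $P^\sharp_\gamma$ move no edge, and the $T^\sharp$-move is by definition performed without touching the $\sharp$-punctures of the two triangles involved, so the homotopy class of each edge relative to the punctures is pinned down by the underlying tessellation alone. A single picture confirms each case. The Artin relation $\sigma^\sharp_{[j][k]}\sigma^\sharp_{[k][\ell]}\sigma^\sharp_{[j][k]}=\sigma^\sharp_{[k][\ell]}\sigma^\sharp_{[j][k]}\sigma^\sharp_{[k][\ell]}$ is the classical braid relation: the three simple arcs involved lie in a subsurface of $\mathbb{D}^\sharp$ containing three $\sharp$-punctures in the standard configuration, where the associated braidings (Def.\ref{def:braiding}) generate a copy of $B_3$.

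The one relation with genuine content is the last one in \eqref{eq:K_sharp_relations_major}, namely $T^\sharp_{[j][k]}A^\sharp_{[j]}T^\sharp_{[k][j]}=(A^{\sharp\,-1}_{[k]}\sigma^{\sharp\,-1}_{[k][j]}A^\sharp_{[k]})A^\sharp_{[j]}A^\sharp_{[k]}P^\sharp_{(jk)}$. Forgetting the $\sharp$-punctures, the left side becomes $T_{[j][k]}A_{[j]}T_{[k][j]}$, which by Thm.\ref{thm:algebraic_relations_of_elementary_moves} equals $A_{[j]}A_{[k]}P_{(jk)}$; hence the two sides of the $\sharp$-relation induce the same underlying dotted tessellation and can differ only by a braid supported near the two $\sharp$-punctures of triangles $j$ and $k$. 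I would then identify that braid by tracking, through the three moves $T^\sharp_{[k][j]}$, $A^\sharp_{[j]}$, $T^\sharp_{[j][k]}$, how the edge being flipped winds past those two punctures, and check that the discrepancy is exactly $\sigma^\sharp_{[k][j]}$ conjugated by $A^\sharp_{[k]}$, the conjugation recording that the simple arc realizing the braid is the image of the standard one under the corner-move $A^\sharp_{[k]}$. This is the step I expect to be the main obstacle: fixing the precise conjugating element and the sign of the exponent of $\sigma^\sharp$ demands an orientation-sensitive picture, and it is easy to be off by an inverse or by a conjugation. Once that picture is drawn without ambiguity, the remaining relations are immediate and the proposition follows.
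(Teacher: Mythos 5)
Your overall strategy --- verify each relation by drawing the finite piece of $\mathbb{D}^\sharp$ it affects and comparing the two sides --- is exactly what the paper does: its proof of Prop.\ref{prop:K_sharp_relations} simply declares $(A^\sharp_{[j]})^3=id$ and the trivial relations easy, points to Figures \ref{fig:pentagon_relation_for_sharp}, \ref{fig:first_relation_for_sharp}, \ref{fig:second_relation_for_sharp} for the pentagon relation, the $A^\sharp T^\sharp A^\sharp$ relation, and the relation involving $\sigma^\sharp_{[k][j]}$, and omits the pictorial check of the Artin relation. Your treatment of the fourth relation (forget the punctures, invoke Thm.\ref{thm:algebraic_relations_of_elementary_moves} to see the two sides agree on underlying dotted tessellations, then identify the residual braid) is a reasonable organizing gloss on the same picture.

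There is, however, one step in your justification that fails as stated: the claim that ``the homotopy class of each edge relative to the punctures is pinned down by the underlying tessellation alone.'' This is false --- the paper itself remarks that there are infinitely many distinct ideal arcs in $\mathbb{D}^\sharp$ joining two given rational boundary points, and a Farey-type $\sharp$-punctured tessellation is strictly more data than its underlying tessellation (an edge may wind around a $\sharp$-puncture while each triangle still contains exactly one puncture). Indeed, if your principle were true, the fourth relation could not acquire the correction $A^{\sharp\,-1}_{[k]}\sigma^{\sharp\,-1}_{[k][j]}A^\sharp_{[k]}$ in the first place, since forgetting punctures turns its two sides into equal words of $Pt_{dot}$. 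Consequently your ``reduction to the unpunctured case'' does not actually dispose of the pentagon relation or of $A^\sharp_{[j]}T^\sharp_{[j][k]}A^\sharp_{[k]}=A^\sharp_{[k]}T^\sharp_{[k][j]}A^\sharp_{[j]}$: after forgetting punctures you only know the underlying tessellations agree, and the puncture-relative isotopy classes of the flipped edges on the two sides must still be compared by an honest picture --- exactly the content of Figures \ref{fig:pentagon_relation_for_sharp} and \ref{fig:first_relation_for_sharp}. Since you do fall back on ``a single picture confirms each case,'' the argument can be repaired by simply deleting the false principle and letting the pictures carry the full weight for every relation involving a $T^\sharp$-move, not just the last one.
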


\begin{proof}
The proof of $(A_{[j]}^\sharp)^3 = {\rm id}$ and the trivial relations can be easily seen. The proof of the other three relations in \eqref{eq:K_sharp_relations_major} is manifest from Figures \ref{fig:pentagon_relation_for_sharp}, \ref{fig:first_relation_for_sharp} and \ref{fig:second_relation_for_sharp}. The braid relation can be checked by pictures in a similar way, but we omit it here.
\end{proof}


\vspace{-5mm}

\input{fig-pentagon_relation_for_sharp.tex}

\input{fig-first_relation_for_sharp.tex}

\input{fig-second_relation_for_sharp.tex}

\begin{definition}
\label{def:K_sharp}
Let $K^\sharp$ be the group presented by the generators $A_{[j]}^\sharp$, $T_{[j][k]}^\sharp$, $P_\gamma^\sharp$, $\sigma^\sharp_{[j][k]}$, for $j,k \in \mathbb{Q}^\times$ ($j \neq k$) and $\mathbb{Q}^\times$-permutations $\gamma$, and relations satisfied by the corresponding elementary moves of $Pt^\sharp_{\rm dot}$.
\end{definition}

The relations shown in Prop.\ref{prop:K_sharp_relations} are part of the presentation of $K^\sharp$, but may not give its full presentation. For our purposes, it is enough to have the ones obtained in Prop.\ref{prop:K_sharp_relations}. It is easy to see from the relations \eqref{eq:K_sharp_relations_major} that $A_{[j]}^\sharp$, $T_{[j][k]}^\sharp$, $P_\gamma^\sharp$ generate $K^\sharp$.
By forgetting the punctures we get the group homomorphism 
\begin{align}
\nonumber
K^\sharp \to K : A_{[j]}^\sharp \mapsto A_{[j]}, \quad T_{[j][k]}^\sharp \to T_{[j][k]}, \quad P_\gamma^\sharp \to P_\gamma,
\end{align}
whose kernel is generated by braids, and we can show that this kernel is isomorphic to $B_\infty$ (Def.\ref{def:B_infty}). Thus we get a short exact sequence
\begin{align*}
\xymatrix{
1 \ar[r] & B_\infty \ar[r] & K^\sharp \ar[r] & K \ar[r] & 1.
}
\end{align*}
Quotienting by $[B_\infty,B_\infty]$ induces the commutative diagram for the relative abelianization
\begin{align}
\nonumber
\left\{ \begin{array}{l} \xymatrix{
1 \ar[r] & B_\infty \ar[r] \ar[d] & K^\sharp \ar[r] \ar[d] & K \ar[r] \ar[d]^{\rm id} & 1, \\
1 \ar[r] & \mathbb{Z} \ar[r] & K^\sharp_{\rm ab} \ar[r] & K \ar[r] & 1,
}
\end{array} \right.
\end{align}
where
$$
K^\sharp_{\rm ab} \cong K^\sharp/[B_\infty, B_\infty]
$$
is the relative abelianization of $K^\sharp$. A similar proof as in Prop.\ref{prop:T_diamond_ab_is_central_extension} tells us that $K^\sharp_{\rm ab}$ is a central extension of $K$ by $\mathbb{Z}$. We denote the images of the projection $K^\sharp \to K^\sharp/[B_\infty,B_\infty] = K^\sharp_{\rm ab}$ by
\begin{align}
\label{eq:K_sharp_to_K_sharp_ab}
K^\sharp \to K^\sharp_{\rm ab} \quad : \quad A_{[j]}^\sharp \mapsto \til{A}_{[j]}^\sharp, \quad T_{[j][k]}^\sharp \mapsto \til{T}_{[j][k]}^\sharp, \quad P^\sharp_\gamma \mapsto \til{P}_\gamma^\sharp, \quad \sigma^\sharp_{[j][k]} \mapsto z,
\end{align}
where $z$ is the generator of the central kernel of $K^\sharp_{\rm ab}$ isomorphic to $\mathbb{Z}$. It is now easy to obtain the following presentation of $K^\sharp_{\rm ab}$, from Prop.\ref{prop:K_sharp_relations}, Def.\ref{def:K_sharp}, eq.\eqref{eq:K_sharp_to_K_sharp_ab}, and the relations defining $K$ (Thm.\ref{thm:algebraic_relations_of_elementary_moves} and Def.\ref{def:Kashaev_group}):

\begin{proposition}
\label{prop:K_sharp_ab_presentation}
The group $K^\sharp_{\rm ab}$ can be presented with the generators $\til{A}_{[j]}^\sharp$, $\til{T}_{[j][k]}^\sharp$, $\til{P}_\gamma^\sharp$, $z$, for $j,k \in \mathbb{Q}^\times$ with $j\neq k$ and $\mathbb{Q}^\times$-permutations $\gamma$, with the following relations:  
\begin{align*}
\left\{ {\renewcommand{\arraystretch}{1.4} \begin{array}{ll}
(\til{A}_{[j]}^\sharp)^3 = {\rm id},
& \til{T}_{[k][\ell]}^\sharp \til{T}_{[j][k]}^\sharp = \til{T}_{[j][k]}^\sharp \til{T}_{[j][\ell]}^\sharp \til{T}_{[k][\ell]}^\sharp, \\
\til{A}_{[j]}^\sharp \til{T}_{[j][k]}^\sharp \til{A}_{[k]}^\sharp = \til{A}_{[k]}^\sharp \til{T}_{[k][j]}^\sharp \til{A}_{[j]}^\sharp, 
& \til{T}_{[j][k]}^\sharp \til{A}_{[j]}^\sharp \til{T}_{[k][j]}^\sharp = z^{-1} \til{A}_{[j]}^\sharp \til{A}_{[k]}^\sharp \til{P}_{(jk)}^\sharp,
\end{array} } \right.
\end{align*}
where $j,k,\ell \in \mathbb{Q}^\times$ are mutually distinct, the `trivial relations' as in Thm.\ref{thm:algebraic_relations_of_elementary_moves} with $A_{[j]}, T_{[j][k]}, P_\gamma$ replaced by $\til{A}^\sharp_{[j]}, \til{T}^\sharp_{[j][k]}, \til{P}^\sharp_\gamma$, and the commuting relations
\begin{align*}
[z, \til{A}_{[j]}^\sharp] = [z, \til{T}_{[j][k]}^\sharp] = [z,\til{P}_\gamma^\sharp]=1. \qed
\end{align*}
\end{proposition}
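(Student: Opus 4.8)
The plan is to identify the group $\widehat{K}$ defined by the presentation in the statement with $K^\sharp_{ab}$, exploiting that both are central extensions of the Kashaev group $K$ by a cyclic group and that $K$ already has the \emph{complete} presentation of Thm.\ref{thm:algebraic_relations_of_elementary_moves} and Def.\ref{def:Kashaev_group}. Thus we never need a full presentation of $K^\sharp$ itself (which we have not established): only Prop.\ref{prop:K_sharp_relations}, the short exact sequence $1 \to B_\infty \to K^\sharp \to K \to 1$, and the fact (proved exactly as Prop.\ref{prop:T_diamond_ab_is_central_extension}) that $K^\sharp_{ab}$ is a central extension of $K$ by $\mathbb{Z}$ whose kernel is generated by the common image $z$ of all the $\sigma^\sharp_{[j][k]}$.

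First I would construct a surjective homomorphism $\pi : \widehat{K} \to K^\sharp_{ab}$ sending the generators $\til{A}^\sharp_{[j]}, \til{T}^\sharp_{[j][k]}, \til{P}^\sharp_\gamma, z$ of $\widehat{K}$ to the correspondingly named elements of $K^\sharp_{ab}$ as in \eqref{eq:K_sharp_to_K_sharp_ab}. Well-definedness is checked relation by relation: the relations $(A^\sharp_{[j]})^3 = id$, the pentagon $T^\sharp_{[k][\ell]}T^\sharp_{[j][k]} = T^\sharp_{[j][k]}T^\sharp_{[j][\ell]}T^\sharp_{[k][\ell]}$, the relation $A^\sharp_{[j]}T^\sharp_{[j][k]}A^\sharp_{[k]} = A^\sharp_{[k]}T^\sharp_{[k][j]}A^\sharp_{[j]}$, and the trivial relations already hold in $K^\sharp$ by Prop.\ref{prop:K_sharp_relations} with no braiding generator present, hence in the quotient $K^\sharp_{ab}$; the last relation of \eqref{eq:K_sharp_relations_major}, namely $T^\sharp_{[j][k]}A^\sharp_{[j]}T^\sharp_{[k][j]} = (A^{\sharp\,-1}_{[k]}\sigma^{\sharp\,-1}_{[k][j]}A^\sharp_{[k]})A^\sharp_{[j]}A^\sharp_{[k]}P^\sharp_{(jk)}$, becomes $\til{T}^\sharp_{[j][k]}\til{A}^\sharp_{[j]}\til{T}^\sharp_{[k][j]} = z^{-1}\til{A}^\sharp_{[j]}\til{A}^\sharp_{[k]}\til{P}^\sharp_{(jk)}$ in $K^\sharp_{ab}$ because $\sigma^\sharp_{[k][j]} \mapsto z$ is central there, so $A^{\sharp\,-1}_{[k]}z^{-1}A^\sharp_{[k]} = z^{-1}$; and the commuting relations $[z, \cdot\,] = 1$ hold since $z$ generates the central kernel $B_\infty/[B_\infty,B_\infty]$ of $K^\sharp_{ab}$. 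As $K^\sharp$ is generated by $A^\sharp_{[j]}, T^\sharp_{[j][k]}, P^\sharp_\gamma$, the group $K^\sharp_{ab}$ is generated by the tilde generators together with $z$, so $\pi$ is onto.

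Next I would note that $\widehat{K}$ is itself a central extension $1 \to \langle z\rangle \to \widehat{K} \to K \to 1$: the commuting relations make $z$ central, so $\langle z\rangle$ is a normal cyclic subgroup, and setting $z = id$ in the presentation of $\widehat{K}$ returns exactly the presentation of $K$ from Thm.\ref{thm:algebraic_relations_of_elementary_moves} and Def.\ref{def:Kashaev_group}, whence $\widehat{K}/\langle z\rangle \cong K$. The map $\pi$ is compatible with the two projections onto $K$ (it induces the identity on $K$) and carries $z$ to the generator $z$ of the kernel $\mathbb{Z} = B_\infty/[B_\infty,B_\infty]$ of $K^\sharp_{ab} \to K$; hence $\pi$ restricts to a surjection from the cyclic group $\langle z\rangle$ onto $\mathbb{Z}$, which forces $\langle z\rangle \cong \mathbb{Z}$ with $\pi|_{\langle z\rangle}$ an isomorphism. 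A short diagram chase (equivalently the five lemma) finishes: if $x \in \ker\pi$ then $x$ maps to $id$ in $K$, so $x \in \langle z\rangle$, where $\pi$ is injective, so $x = id$. Thus $\pi$ is an isomorphism. The only genuinely delicate input is the picture computation behind Prop.\ref{prop:K_sharp_relations} — in particular that the single braid $\sigma^{\sharp}_{[k][j]}$ (and no other) intervenes in $T^\sharp_{[j][k]}A^\sharp_{[j]}T^\sharp_{[k][j]}$ — and this is already in place; the remainder is the bookkeeping above. One may equally well run the whole argument through the machinery of \S\ref{subsec:minimal_central_extensions}, applied to the tautological almost-$K$-homomorphism $F_{dot} \to K^\sharp_{ab}$ obtained by lifting the generators: the only quantity to compute is the power of $z$ attached to each defining relation of $K$, and Prop.\ref{prop:K_sharp_relations} shows it is $0$ for every relation except $T_{[j][k]}A_{[j]}T_{[k][j]} = A_{[j]}A_{[k]}P_{(jk)}$, where it is $-1$.
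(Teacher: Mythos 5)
Your proposal is correct and is essentially the argument the paper intends: the paper derives the presentation by combining Prop.\ref{prop:K_sharp_relations}, the map \eqref{eq:K_sharp_to_K_sharp_ab}, the complete presentation of $K$, and the central-extension machinery of \S\ref{subsec:minimal_central_extensions} (your closing remark about the tautological almost $K$-homomorphism $F_{dot}\to K^\sharp_{ab}$ is exactly the paper's route, and your surjection-plus-five-lemma packaging is the same argument unwound). You also correctly pinpoint the one genuinely nontrivial point, namely that no full presentation of $K^\sharp$ is needed because killing $z$ recovers the known complete presentation of $K$, so the central extension structure pins down everything else.
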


\subsection{The natural maps ${\bf F}^\sharp : T^\sharp \to K^\sharp$ and  ${\bf F}^\sharp_{\rm ab} : T^\sharp_{\rm ab} \to K^\sharp_{\rm ab}$}
\label{subsec:bolf_F_sharp}

We will mimic the construction in \S\ref{subsec:mcal_F} and \S\ref{subsec:bold_F} of the natural map ${\bf F} : T \to K$ 
in this $\sharp$-punctured setting. Recall that what made the construction  of ${\bf F}$ natural and unique is the group $M$, the asymptotically rigid mapping class group of $\mathbb{D}$ (Def.\ref{def:M}), which is the group of all mapping classes of $\mathbb{D}$ preserving the set of objects of $Pt$. Analogously, we start by defining a certain `asymptotically rigid' mapping class group $M^\sharp$ of the $\sharp$-punctured disc $\mathbb{D}^\sharp$, which is the group of all mapping classes of $\mathbb{D}^\sharp$ preserving the set of objects of $Pt^\sharp$. Recall that homotopies of $\mathbb{D}^\sharp$ are assumed to pointwise fix every point of $S^1 = \partial \mathbb{D}$ {\em and} every $\sharp$-puncture at all times.

\begin{definition}
\label{def:asymptotically_sharp-rigid_homeomorphisms}
Regard $\mathbb{D}^\sharp$ as the open unit disc minus the $\sharp$-punctures (Def.\ref{def:punctured_discs}). A homotopy class of homeomorphisms $\varphi : \mathbb{D}^\sharp \to \mathbb{D}^\sharp$ is called {\em asymptotically $\sharp$-rigid} if $\varphi$ can be continuously extended to the boundary $S^1$, restricts to $S^1$ as an element of ${\rm PPSL}(2,\mathbb{Z})$ defined in Rem.\ref{rem:PPSL2Z}, and takes all but finitely many $\sharp$-Farey ideal arcs (Def.\ref{def:diamond_Farey}) to $\sharp$-Farey ideal arcs.

The {\em asymptotically $\sharp$-rigid mapping class group of $\mathbb{D}^\sharp$} is the group of all asymptotically $\sharp$-rigid homeomorphism classes of $\mathbb{D}^\sharp$, and is denoted by $M^\sharp$.
\end{definition}
Like in the case of $Pt$ and $M$, the group $M^\sharp$ naturally acts on objects of $Pt^\sharp$ and $Pt^\sharp_{\rm dot}$, and we get an analog of Prop.\ref{prop:M-actions}:
\begin{proposition}
\label{prop:M_sharp-actions}
The natural $M^\sharp$-action on the objects of $Pt^\sharp$ is free and transitive, and the natural $M^\sharp$-action on the objects of $Pt^\sharp_{\rm dot}$ is free. \qed
\end{proposition}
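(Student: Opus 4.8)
The plan is to follow the proof of Proposition \ref{prop:M-actions} as closely as possible, accounting for the single genuinely new feature of $\mathbb{D}^\sharp$: a homeomorphism of $\mathbb{D}^\sharp$ that is the identity near $S^1$ need not be homotopic to the identity, since it may braid the $\sharp$-punctures. Consequently the restriction to $S^1$ no longer determines an element of $M^\sharp$, so the freeness of the $M^\sharp$-action cannot be read off the $S^1$-dynamics alone; a two-dimensional argument is required, and I expect this to be the main point. Transitivity, by contrast, will largely reduce to Proposition \ref{prop:M-actions} together with an extra braid-correction step.

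For freeness on $Pt^\sharp$, suppose $[\varphi]\in M^\sharp$ fixes a marked $\sharp$-punctured tessellation $(\tau,\vec a)$ of $\mathbb{D}^\sharp$. First, $\varphi|_{S^1}\in PPSL(2,\mathbb{Z})$ carries the vertex set of $\tau$ bijectively to itself, preserving adjacency and the d.o.e.\ $\vec a$; running the inductive definition of the vertex function (Def.\ref{def:vertex_function}) from $\vec a$ shows that $\varphi|_{S^1}$ fixes every vertex, hence fixes a dense subset of $S^1$, so $\varphi|_{S^1}=\mathrm{id}$. Next, since the edges of $\tau$ form a locally finite system of properly embedded arcs in $\mathbb{D}^\sharp$ and $\varphi$ maps this system to an isotopic one (rel endpoints, missing the $\sharp$-punctures), after an ambient isotopy of $\mathbb{D}^\sharp$ rel $S^1$ and rel the $\sharp$-punctures we may assume $\varphi$ fixes every edge of $\tau$ pointwise. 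Cutting $\mathbb{D}^\sharp$ along the edges of $\tau$ yields a disjoint union of once-punctured discs (the truncated ideal triangles, each containing one $\sharp$-puncture), and $\varphi$ restricts to each such piece as a homeomorphism fixing its boundary pointwise and fixing the puncture. Since the mapping class group of a disc with one interior puncture, relative to the boundary, is trivial (Birman exact sequence, or directly by Alexander's trick), $\varphi$ is isotopic to the identity on each piece rel boundary and rel puncture; these isotopies agree (constantly) along the shared edges, so they glue to an isotopy $\varphi\simeq\mathrm{id}$ of the allowed type, giving $[\varphi]=1$. The freeness on $Pt^\sharp_{dot}$ is the same argument, with the triangle labels, rather than a d.o.e., providing the combinatorial rigidity that forces $\varphi|_{S^1}=\mathrm{id}$ and $\varphi$ to fix each edge.

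For transitivity on $Pt^\sharp$, given two marked $\sharp$-punctured tessellations, forget the $\sharp$-punctures to obtain marked tessellations $\underline\tau_{mark},\underline{\tau'}_{mark}$ of $\mathbb{D}$; by Proposition \ref{prop:M-actions} there is $[\underline\varphi]\in M$ with $[\underline\varphi].\underline\tau_{mark}=\underline{\tau'}_{mark}$. An asymptotically rigid homeomorphism of $\mathbb{D}$ eventually preserves the Farey tessellation, hence eventually sends $\sharp$-punctures to $\sharp$-punctures; modifying $\underline\varphi$ on the finite core by an isotopy that drags the finitely many exceptional $\sharp$-punctures into place, we may take $\underline\varphi$ to preserve the set of $\sharp$-punctures, so it restricts to an asymptotically $\sharp$-rigid homeomorphism $\varphi$ of $\mathbb{D}^\sharp$. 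Now $[\varphi].(\tau,\vec a)$ and $(\tau',\vec a')$ have the same underlying marked tessellation of $\mathbb{D}$ and agree outside a finite subsurface, so they can differ only in the way finitely many edges wind around finitely many $\sharp$-punctures; such a discrepancy is realized by a finite product of braidings $\sigma_e$ (Def.\ref{def:braiding}) supported in that subsurface, hence by an element of $M^\sharp$, since braidings are the identity near $S^1$ and alter only finitely many $\sharp$-Farey ideal arcs. Composing yields $[\varphi']\in M^\sharp$ with $[\varphi'].(\tau,\vec a)=(\tau',\vec a')$. The only steps here needing care are checking that $\underline\varphi$ can always be adjusted to preserve the $\sharp$-punctures and that the residual discrepancy is genuinely a finite braid; everything else is formal.
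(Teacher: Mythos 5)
Your proposal is correct, and it is worth noting that the paper itself offers no argument for Prop.\ref{prop:M_sharp-actions}: the statement is asserted without proof as an immediate analog of Prop.\ref{prop:M-actions}. Your write-up correctly identifies the one point where that analogy genuinely breaks down: an element of $M^\sharp$ is \emph{not} determined by its restriction to $S^1$, since a homeomorphism supported away from the boundary can braid the $\sharp$-punctures, so the one-line freeness argument used for $M$ does not carry over. Your replacement is the right one: force $\varphi|_{S^1}=\mathrm{id}$ via the vertex function, straighten the (locally finite, pairwise disjoint) edges by an ambient isotopy, cut $\mathbb{D}^\sharp$ into once-punctured ideal triangles, and invoke triviality of the mapping class group of a once-marked disc rel boundary ($B_1=1$); the triangle-by-triangle isotopies glue because they are constant on the shared edges, and they extend continuously by the identity on $S^1$ for the same separation reason that makes elements of $M$ determined by their boundary values. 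The transitivity argument — reduce to Prop.\ref{prop:M-actions}, adjust the resulting homeomorphism to carry $\sharp$-punctures to $\sharp$-punctures, then correct the residual winding by a finite product of braidings — is also sound, and the two points you flag do go through: the exceptional finite ideal polygons on the source and target sides have the same number of ideal vertices, hence contain the same number of Farey triangles and so of $\sharp$-punctures; and the residual discrepancy is supported in a finitely-punctured ideal polygon, where the change-of-coordinates principle together with the fact that the rel-boundary mapping class group is the Artin braid group (generated by braidings $\sigma_e$) produces the required element of $M^\sharp$. In short, where the paper asserts, you supply the missing two-dimensional argument, and it is correct.
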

We also have an analog of Cor.\ref{cor:T_acts_freely}, namely, $T^\sharp$ acts freely transitively on the objects of $Pt^\sharp$, which together with Prop.\ref{prop:M_sharp-actions} yields a set bijection $M^\sharp \to T^\sharp$; moreover, the proof of Prop.\ref{prop:anti_isomorphism} easily applies here, telling us that this map is an anti-isomorphism of groups. So $T^\sharp$ can be viewed as a combinatorial guise of the asymptotically $\sharp$-rigid mapping class group of $\mathbb{D}^\sharp$. As done in \S\ref{subsec:mcal_F}, the functor
\begin{align*}
\mcal{F}^\sharp : Pt^\sharp \to Pt^\sharp_{\rm dot}
\end{align*}
is naturally and essentially uniquely determined, if we require that it is $M^\sharp$-equivariant on the sets of objects. The `standard objects' of $Pt^\sharp$ and $Pt^\sharp_{\rm dot}$ can be defined in a similar way as those of $Pt$ and $Pt_{\rm dot}$ using the $\sharp$-Farey ideal arcs (Def.\ref{def:diamond_Farey}). Imposing the initial condition for $\mcal{F}^\sharp$ using these standard objects yields a concrete description of $\mcal{F}^\sharp$, precisely as in Def.\ref{prop:concrete_description_of_F}, while we require that $\mcal{F}^\sharp$ preserves the underlying $\sharp$-tessellation. From this functor $\mcal{F}^\sharp : Pt^\sharp \to Pt^\sharp_{\rm dot}$ we can construct an injective group homomorphism
$$
{\bf F}^\sharp : T^\sharp \to K^\sharp
$$
in a similar way as in Prop.\ref{prop:bold_F}. Equivalently, we can describe the construction of ${\bf F}$ by
\begin{align}
\label{eq:bold_F_sharp_condition}
\mcal{F}^\sharp( g. \tau_{\rm mark}^\sharp ) = ({\bf F}^\sharp (g)). (\mcal{F}^\sharp(\tau_{\rm mark}^\sharp)), \quad \mbox{for any object $\tau_{\rm mark}^\sharp$ of $Pt^\sharp$ and any $g\in T^\sharp$,}
\end{align}
as in Lem.\ref{lem:alternative_construction_of_bold_F}.
\begin{proposition}
A unique map ${\bf F}^\sharp: T^\sharp \to K^\sharp$ satisfying \eqref{eq:bold_F_sharp_condition} is given by
\begin{align}
\label{eq:bold_F_sharp_formula}
{\bf F}^\sharp ~ : \quad
\alpha^\sharp \mapsto A_{[-1]}^\sharp T_{[-1][1]}^{\sharp \, -1} A_{[1]}^\sharp P_{\gamma_\alpha}^\sharp, \quad
\beta^\sharp \mapsto A_{[-1]}^\sharp P_{\gamma_\beta}^\sharp,
\quad
\sigma^\sharp \mapsto A^\sharp_{[-1]} \sigma^\sharp_{[-1][1]} A^{\sharp \, -1}_{[-1]},
\end{align}
where $\gamma_\alpha$, $\gamma_\beta$ are as in Def.\ref{def:P_gamma_alpha_beta}, and is an injective group homomorphism. \qed
\end{proposition}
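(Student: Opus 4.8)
The plan is to transport the entire machinery of \S\ref{subsec:mcal_F} and \S\ref{subsec:bold_F} to the $\sharp$-punctured setting, where it applies almost verbatim. The only inputs used there are: the freeness and transitivity of the $M$-action on the objects of $Pt$; the freeness of the $M$-action on the objects of $Pt_{dot}$; the freeness and transitivity of the $T$-action on the objects of $Pt$; and the existence of the $M$-equivariant functor $\mcal{F}$. In the $\sharp$-case these are replaced by Prop.\ref{prop:M_sharp-actions}, the free transitive $T^\sharp$-action on the objects of $Pt^\sharp$ (the $\sharp$-analog of Cor.\ref{cor:T_acts_freely}), and the functor $\mcal{F}^\sharp$ constructed above; the $\sharp$-analogs of Prop.\ref{prop:morphisms_to_T_and_K}, Prop.\ref{prop:composition_to_multiplication}, Prop.\ref{prop:morphisms_lead_to_same_element}, Lem.\ref{lem:T_M_commutes}, Prop.\ref{prop:bold_F} and Lem.\ref{lem:alternative_construction_of_bold_F} then hold with identical proofs. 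This already produces a well-defined group homomorphism ${\bf F}^\sharp : T^\sharp \to K^\sharp$ satisfying \eqref{eq:bold_F_sharp_condition}, and its injectivity follows exactly as in the proof of Prop.\ref{prop:bold_F}, from the fact that $\mcal{F}^\sharp$ is injective on objects. Uniqueness of a map satisfying \eqref{eq:bold_F_sharp_condition} is immediate: since the $T^\sharp$-action on the objects of $Pt^\sharp$ is transitive, \eqref{eq:bold_F_sharp_condition} applied to the standard object $\tau^\sharp_{mark}$ pins down ${\bf F}^\sharp(g)$ for every $g\in T^\sharp$.

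It remains to verify the explicit formula \eqref{eq:bold_F_sharp_formula}. By construction ${\bf F}^\sharp(g)$ is the element $h\in K^\sharp$ with $[\mcal{F}^\sharp(\tau^\sharp_{mark}),\mcal{F}^\sharp(g.\tau^\sharp_{mark})]\leadsto h$, so I would compute $\mcal{F}^\sharp(\alpha^\sharp.\tau^\sharp_{mark})$, $\mcal{F}^\sharp(\beta^\sharp.\tau^\sharp_{mark})$ and $\mcal{F}^\sharp(\sigma^\sharp.\tau^\sharp_{mark})$ from the concrete description of $\mcal{F}^\sharp$ and decompose the resulting morphisms of $Pt^\sharp_{dot}$ into the elementary moves of Def.\ref{def:elementary_morphisms_of_Pt_sharp_dot}. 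Because $\mcal{F}^\sharp$ preserves the underlying $\sharp$-tessellation and the underlying (unpunctured) pictures of the $\alpha^\sharp$- and $\beta^\sharp$-moves on $\tau^\sharp_{mark}$ coincide with those in Figures \ref{fig:P_gamma_alpha} and \ref{fig:P_gamma_beta}, the $\alpha^\sharp$- and $\beta^\sharp$-entries of \eqref{eq:bold_F_sharp_formula} are obtained from Prop.\ref{prop:our_bold_F} simply by attaching the superscript $\sharp$; the only thing to check beyond the unpunctured case is that no $\sharp$-puncture is created or destroyed and that each one ends up inside the correct triangle, which is visible from the pictures since none of the arcs involved is dragged across a $\sharp$-puncture.

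The essential new computation, and the main obstacle, is the $\sigma^\sharp$-entry of \eqref{eq:bold_F_sharp_formula}. Here $\sigma^\sharp$ is the braid of Def.\ref{def:special_braids} associated to the simple arc joining the $\sharp$-punctures of the two ideal triangles incident to the d.o.e. and crossing the d.o.e. exactly once (Fig.\ref{fig:action_of_sigma_sharp}); under $\mcal{F}^\sharp$ this corresponds to a morphism of $Pt^\sharp_{dot}$ fixing the underlying $\sharp$-tessellation and interchanging the $\sharp$-punctures of the triangles labeled $-1$ and $1$ in $\tau^\sharp_{dot}$. One must match this with $A^\sharp_{[-1]}\sigma^\sharp_{[-1][1]}A^{\sharp\,-1}_{[-1]}$: the conjugating $A^\sharp_{[-1]}$'s appear for the same reason the $A_{[-1]}$ and $A_{[1]}$ do in ${\bf F}(\alpha)$ in Prop.\ref{prop:our_bold_F}, namely that in $\tau^\sharp_{dot}=\mcal{F}^\sharp(\tau^\sharp_{mark})$ the dot of the triangle $[-1]$ sits at the middle vertex (Def.\ref{def:dotted_tessellations}), whereas the arc used to define $\sigma^\sharp_{[-1][1]}$ (Def.\ref{def:elementary_morphisms_of_Pt_sharp_dot}) is the one compatible with the dotting configuration of Fig.\ref{fig:action_of_T_jk_sharp}; moving the dot of $[-1]$ by $A^{\sharp\,-1}_{[-1]}$ brings the configuration into that form, $\sigma^\sharp_{[-1][1]}$ then realizes precisely the braiding of the arc for $\sigma^\sharp$, and $A^\sharp_{[-1]}$ restores the original dot. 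Drawing this sequence of pictures and checking that the intermediate arcs are dragged exactly as prescribed is the only delicate point; the remaining routine facts — that ${\bf F}^\sharp$ is a homomorphism and injective — carry over word for word from \S\ref{subsec:bold_F} and require no new idea.
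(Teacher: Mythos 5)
Your proposal is correct and follows exactly the route the paper intends: the paper itself omits the proof, stating only that ``similar arguments as in the non-punctured case work,'' and your transport of the machinery of \S\ref{subsec:mcal_F}--\S\ref{subsec:bold_F} via Prop.\ref{prop:M_sharp-actions} and the $\sharp$-analogs of Props.\ref{prop:morphisms_to_T_and_K}--\ref{prop:bold_F} is precisely that argument, with the generator images read off by applying the moves to the standard objects. Your extra discussion of the $A^\sharp_{[-1]}$-conjugation in the $\sigma^\sharp$-entry supplies a detail the paper leaves implicit and is consistent with the dot-adjustment pattern already visible in Prop.\ref{prop:our_bold_F}.
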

We omit the proofs and details, as similar arguments as in 
the non-punctured case work.
One can now descend this map ${\bf F}^\sharp : T^\sharp \to K^\sharp$ to the relative abelianizations of $T^\sharp$ and $K^\sharp$:
\begin{proposition}
\label{prop:bol_F_sharp_ab}
There exists a unique group homomorphism ${\bf F}^\sharp_{\rm ab} : T^\sharp_{\rm ab} \to K^\sharp_{\rm ab}$ making the following diagram to commute:
\begin{align}
\label{eq:bold_F_sharp_diagram}
\begin{array}{l}
\xymatrix@C+2pc{
T^\sharp \ar[r]^{ {\bf F}^\sharp } \ar[d] 
& K^\sharp \ar[d] \\
T^\sharp_{\rm ab} \ar@{.>}[r]^{{\bf F}^\sharp_{\rm ab}} & K^\sharp_{\rm ab}
}
\end{array}
\end{align}
where the vertical arrows are the relative abelianization homomorphisms, that is, projections $T^\sharp \to T^\sharp/[B_\infty,B_\infty] = T^\sharp_{\rm ab}$ and $K^\sharp \to K^\sharp/[B_\infty,B_\infty] = K^\sharp_{\rm ab}$. It is given by
\begin{align}
\label{eq:bold_F_sharp_ab}
{\bf F}^\sharp_{\rm ab} : T^\sharp_{\rm ab} \to K^\sharp_{\rm ab}: \quad \til{\alpha}^\sharp \mapsto \til{A}_{[-1]}^\sharp \til{T}_{[-1][1]}^{\sharp \, -1} \til{A}_{[1]}^\sharp \til{P}_{\gamma_\alpha}^\sharp, \qquad \til{\beta}^\sharp \mapsto \til{A}_{[-1]}^\sharp \til{P}_{\gamma_\beta}^\sharp,
\end{align}
where $\til{\alpha}^\sharp$, $\til{\beta}^\sharp$, $\til{A}^\sharp_{[j]}$, $\til{T}^\sharp_{[j][k]}$, $\til{P}^\sharp_\gamma$ are as defined in Def.\ref{def:relative_abelianization_of_T_diamond} and \eqref{eq:K_sharp_to_K_sharp_ab}, and $\gamma_\alpha$, $\gamma_\beta$ as in Def.\ref{def:P_gamma_alpha_beta}. Furthermore, ${\bf F}^\sharp_{\rm ab}$ is injective.
\end{proposition}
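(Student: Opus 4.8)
The plan is to verify the commutativity of the diagram \eqref{eq:bold_F_sharp_diagram} by a direct computation on generators, then deduce injectivity of ${\bf F}^\sharp_{ab}$ from injectivity of ${\bf F}^\sharp$ together with an easy compatibility of the two relative abelianizations. First I would establish the commutativity: the left vertical map sends $\alpha^\sharp \mapsto \til\alpha^\sharp$, $\beta^\sharp \mapsto \til\beta^\sharp$, $\sigma^\sharp \mapsto$ (image of $\sigma^\sharp$ in $T^\sharp_{ab}$), and by Prop.\ref{prop:T_diamond_ab_is_central_extension} this last image is the generator $z$ of the central $\mathbb{Z}$. Composing with the right vertical map $K^\sharp \to K^\sharp_{ab}$ applied to \eqref{eq:bold_F_sharp_formula}: since $A^\sharp_{[j]} \mapsto \til A^\sharp_{[j]}$, $T^\sharp_{[j][k]} \mapsto \til T^\sharp_{[j][k]}$, $P^\sharp_\gamma \mapsto \til P^\sharp_\gamma$, and $\sigma^\sharp_{[j][k]} \mapsto z$ by \eqref{eq:K_sharp_to_K_sharp_ab}, the element $A^\sharp_{[-1]} T^{\sharp\,-1}_{[-1][1]} A^\sharp_{[1]} P^\sharp_{\gamma_\alpha}$ maps to $\til A^\sharp_{[-1]} \til T^{\sharp\,-1}_{[-1][1]} \til A^\sharp_{[1]} \til P^\sharp_{\gamma_\alpha}$, and similarly for $\beta^\sharp$; for $\sigma^\sharp$, the element $A^\sharp_{[-1]} \sigma^\sharp_{[-1][1]} A^{\sharp\,-1}_{[-1]}$ maps to $\til A^\sharp_{[-1]} z \til A^{\sharp\,-1}_{[-1]} = z$ since $z$ is central in $K^\sharp_{ab}$ (Prop.\ref{prop:K_sharp_ab_presentation}). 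Thus the two ways around the square agree on the generators $\til\alpha^\sharp, \til\beta^\sharp, z$ of $T^\sharp_{ab}$, so the dotted arrow ${\bf F}^\sharp_{ab}$ is well-defined by the stated formula \eqref{eq:bold_F_sharp_ab} (together with $z \mapsto z$), and uniqueness is automatic since $T^\sharp_{ab}$ is generated by these elements and the left vertical map is surjective.

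Next I would prove injectivity of ${\bf F}^\sharp_{ab}$. The key point is that ${\bf F}^\sharp : T^\sharp \to K^\sharp$ restricts to an isomorphism between the kernel $B_\infty \le T^\sharp$ and the kernel $B_\infty \le K^\sharp$: the braiding generators of $B_\infty \le T^\sharp$ are carried by ${\bf F}^\sharp$ to braiding generators of $B_\infty \le K^\sharp$ (this is visible from \eqref{eq:bold_F_sharp_formula}, since $\sigma^\sharp$ goes to a conjugate of $\sigma^\sharp_{[-1][1]}$, and more generally a braid of $Pt^\sharp$ is sent to a braid of $Pt^\sharp_{dot}$ because $\mcal{F}^\sharp$ preserves the underlying $\sharp$-tessellation and hence commutes with forgetting punctures). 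Consequently ${\bf F}^\sharp$ maps $[B_\infty, B_\infty] \le T^\sharp$ into (in fact, onto) $[B_\infty, B_\infty] \le K^\sharp$, which is exactly what makes the descended map ${\bf F}^\sharp_{ab}$ exist; moreover ${\bf F}^\sharp$ induces an isomorphism $B_\infty/[B_\infty,B_\infty] \xrightarrow{\sim} B_\infty/[B_\infty,B_\infty]$ of the two central $\mathbb{Z}$'s, i.e. $z \mapsto z$. Now suppose ${\bf F}^\sharp_{ab}(x) = 1$ for some $x \in T^\sharp_{ab}$. Chasing through the commutative square formed by \eqref{eq:bold_F_sharp_diagram} and the projections $T^\sharp_{ab} \to T$, $K^\sharp_{ab} \to K$ (using that the ordinary ${\bf F}: T \to K$ is injective, Prop.\ref{prop:bold_F}), the image of $x$ in $T$ is trivial, so $x$ lies in the kernel $\mathbb{Z} \le T^\sharp_{ab}$; but ${\bf F}^\sharp_{ab}$ is the identity on this $\mathbb{Z}$, so $x = 1$.

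The main obstacle I anticipate is not the algebraic diagram chase but the geometric claim that ${\bf F}^\sharp$ sends the kernel $B_\infty$ isomorphically onto the kernel $B_\infty$ — equivalently, that the functor $\mcal{F}^\sharp : Pt^\sharp \to Pt^\sharp_{dot}$ is genuinely compatible with the puncture-forgetting maps to $Pt$ and $Pt_{dot}$. This requires knowing that the construction of $\mcal{F}^\sharp$ (via $M^\sharp$-equivariance and the standard-object initial condition, as sketched in \S\ref{subsec:bolf_F_sharp}) does not introduce any extra braiding: one must check that $\mcal{F}^\sharp$ preserves the underlying $\sharp$-tessellation, which is built into its definition, and then that a morphism of $Pt^\sharp$ lying over the identity of $Pt$ — i.e. a pure braid move — is carried to a morphism of $Pt^\sharp_{dot}$ lying over the identity of $Pt_{dot}$. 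Granting the parallel statements for $\mcal{F}: Pt \to Pt_{dot}$ and the naturalness results of \S\ref{subsec:mcal_F}, this should follow from the essential uniqueness of $\mcal{F}^\sharp$ under $M^\sharp$-equivariance together with the fact that forgetting punctures is $M^\sharp$-equivariant over the $M$-action; I would spell this out as the one nontrivial lemma, and the rest of the proof is then the routine diagram chase above.
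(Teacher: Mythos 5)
Your proposal is correct. The existence and uniqueness part is essentially the paper's argument: both rest on the observation that the two copies of $B_\infty$ (the kernels in $T^\sharp$ and in $K^\sharp$) are induced by the same subgroup $B_\infty$ of $M^\sharp$ and that ${\bf F}^\sharp$ restricts to the identity between them, so that ${\bf F}^\sharp([B_\infty,B_\infty])\subset[B_\infty,B_\infty]$ and the composite $T^\sharp\to K^\sharp\to K^\sharp_{ab}$ descends to $T^\sharp_{ab}$; note that your opening computation on generators by itself only determines what ${\bf F}^\sharp_{ab}$ must be, and it is this descent step (which you do supply, just afterwards) that actually establishes well-definedness. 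Where you genuinely diverge is injectivity. The paper argues upstairs: lifting $x\in\ker{\bf F}^\sharp_{ab}$ to $X\in T^\sharp$, one gets ${\bf F}^\sharp(X)\in[B_\infty,B_\infty]\subset K^\sharp$, and since ${\bf F}^\sharp$ is injective and is the identity on the braid subgroups, already $X\in[B_\infty,B_\infty]\subset T^\sharp$, whence $x=1$. You argue downstairs, by a five-lemma-type chase on the map of extensions $1\to\mathbb{Z}\to T^\sharp_{ab}\to T\to1$ and $1\to\mathbb{Z}\to K^\sharp_{ab}\to K\to1$: injectivity of ${\bf F}:T\to K$ (Prop.\ref{prop:bold_F}) pushes $\ker{\bf F}^\sharp_{ab}$ into the central $\mathbb{Z}$, where the map is $z\mapsto z$. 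Both routes are valid and of comparable length; yours needs the extra (easy, formula-level) check that ${\bf F}^\sharp_{ab}$ covers ${\bf F}$, while the paper's reuses the already-stated injectivity of ${\bf F}^\sharp$. The ``one nontrivial lemma'' you flag --- that ${\bf F}^\sharp$ identifies the two braid kernels without introducing extra braiding --- is precisely the paper's opening observation, justified exactly as you anticipate by the $M^\sharp$-equivariance built into the construction of $\mcal{F}^\sharp$.
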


\begin{proof}
We first observe that the embeddings $B_\infty \to T^\sharp$ and $B^\infty \to K^\sharp$ are induced by the $M^\sharp$-action of $B_\infty$ which can naturally be viewed as a subgroup of $M^\sharp$. We denote the images of both of these embeddings as $B_\infty$ here, by abuse of notation; then we see that the restriction of ${\bf F}^\sharp : T^\sharp \to K^\sharp$ to these subgroups $B_\infty$ is the identity. The composition of ${\bf F}^\sharp$ and the relative abelianization map $K^\sharp \to K^\sharp_{\rm ab} = K^\sharp / [B_\infty, B_\infty]$ yields a map ${\bf F}^\sharp_0 : T^\sharp \to K^\sharp_{\rm ab}$.
Since $[B_\infty, B_\infty]$ is in the kernel of ${\bf F}^\sharp_0$, the map ${\bf F}^\sharp_0$ factors through the relative abelianization map $T^\sharp \to T^\sharp_{\rm ab} = T^\sharp / [B_\infty, B_\infty]$
\begin{align*}
\xymatrix@C+2pc{
T^\sharp \ar[r]^{ {\bf F}^\sharp } \ar[d] \ar[rd]^{{\bf F}^\sharp_0} & K^\sharp \ar[d] \\
T^\sharp_{\rm ab} \ar@{.>}[r]_{{\bf F}^\sharp_{\rm ab}} & K^\sharp_{\rm ab}
}
\end{align*}
hence yielding a unique group homomorphism ${\bf F}^\sharp_{\rm ab} : T^\sharp_{\rm ab} \to K^\sharp_{\rm ab}$ making \eqref{eq:bold_F_sharp_diagram} to commute. The formula \eqref{eq:bold_F_sharp_ab} of ${\bf F}^\sharp_{\rm ab}$ comes from the formulas of ${\bf F}^\sharp$ \eqref{eq:bold_F_sharp_formula} and the two relative abelianization maps as in Def.\ref{def:relative_abelianization_of_T_diamond} and \eqref{eq:K_sharp_to_K_sharp_ab}. One can easily see from the presentation of $T^\sharp_{\rm ab}$ in Prop.\ref{prop:FuKa_T_sharp_ab_presentation} that $T^\sharp_{\rm ab}$ is generated by $\til{\alpha}^\sharp$ and $\til{\beta}^\sharp$, so \eqref{eq:bold_F_sharp_ab} is enough to describe ${\bf F}^\sharp_{\rm ab}$.

\vs

Suppose $x \in \ker {\bf F}^\sharp_{\rm ab} \subset T^\sharp_{\rm ab}$. Choose any of its lift $X$ in $T^\sharp$. Then ${\bf F}^\sharp(X) \in K^\sharp$ projects to $1\in K^\sharp_{\rm ab}$ by the commutativity of the diagram \eqref{eq:bold_F_sharp_diagram}, hence ${\bf F}^\sharp(X) \in [B_\infty, B_\infty] \subset K^\sharp$. By an earlier observation about $B_\infty$, we have $X \in [B_\infty, B_\infty] \subset T^\sharp$, and therefore its projection $x$ in $T^\sharp_{\rm ab}$ is the identity element. Hence ${\bf F}^\sharp_{\rm ab}$ is injective.
\end{proof}

\subsection{Identification of $\wh{T}^{\rm Kash}$ with $T^\sharp_{\rm ab}$}

Our goal is to prove Prop.\ref{prop:T_sharp_ab}, that is, to construct an isomorphism between the two central extensions $\wh{T}^{\rm Kash}$ and $T^\sharp_{\rm ab}$ of the Ptolemy-Thompson group $T$ by $\mathbb{Z}$. 
Recall from Def.\ref{def:Ptolemy-Thompson_group} that $T$ is presented with generators and relations, that is, of the form $T \cong F_{\rm mark}/R_{\rm mark}$, where $F_{\rm mark}$ is the free group of generators $\alpha$, $\beta$ and $R_{\rm mark}$ is the normal subgroup of $F_{\rm mark}$ of relations of $T$. By applying the procedure in \S\ref{subsec:minimal_central_extensions} to Kashaev's almost $T$-homomorphism (Def.\ref{def:almost_G-homomorphism}) $\rho^{\rm Kash} : F_{\rm mark} \to {\rm GL}(\mathscr{M})$ \eqref{eq:rho_Kash}, we obtained in \S\ref{sec:dilogarithmic_central_extensions_of_T} the central extension $\wh{T}^{\rm Kash}$ of $T$ by $\mathbb{Z}$. On the other hand, $T^\sharp_{\rm ab}$ is obtained as the relative abelianization of the extension $T^\sharp$ of $T$ by the infinite braid group $B_\infty$.

\vs

For the central extension $T^\sharp_{\rm ab}$ of $T$, we use 
the following tautological almost $T$-homomorphism, in the sense of Def.\ref{def:tautological_almost_group_homomorphisms}:
\begin{align}
\label{eq:first_tautology_of_F_mark}
F_{\rm mark} \to T^\sharp_{\rm ab} ~ : \quad \alpha \mapsto \til{\alpha}^\sharp, \quad \beta \mapsto \til{\beta}^\sharp,
\end{align}
where $\til{\alpha}^\sharp$ and $\til{\beta}^\sharp$ are as in Def.\ref{def:relative_abelianization_of_T_diamond}. 
By Prop.\ref{prop:tautological_leads_to_itself}, this tautological almost $T$-homomorphism \eqref{eq:first_tautology_of_F_mark} yields the central extension $T^\sharp_{\rm ab}$ by the procedure in \S\ref{subsec:minimal_central_extensions}. Since equivalent almost $T$-homomorphisms (in the sense of Def.\ref{def:equivalent_almost_group_homomorphisms}) yield isomorphic central extensions of $T$ (Prop.\ref{prop:equivalent_almost_group_homomorphisms}), it suffices to prove that the two almost $T$-homomorphisms $\rho^{\rm Kash} : F_{\rm mark} \to {\rm GL}(\mathscr{M})$ and $F_{\rm mark} \to T^\sharp_{\rm ab}$ \eqref{eq:first_tautology_of_F_mark} are equivalent.

\vs

This will be done in two steps, and what plays the role of a bridge between the two central extensions of $T$ is the central extension $K^\sharp_{\rm ab}$ of the Kashaev group $K$ studied in \S\ref{subsec:K_sharp}, which is the relative abelianzation of the extension $K^\sharp$ of $K$ obtained by introducing the $\sharp$-punctures. Analogously, we use the following tautological $K$-homomorphism
\begin{align}
\label{eq:F_dot_to_K_sharp_ab}
F_{\rm dot} \to K^\sharp_{\rm ab} ~ : \quad A_{[j]} \mapsto \til{A}_{[j]}^\sharp, \quad T_{[j]} \mapsto \til{T}_{[j][k]}^\sharp, \quad P_\gamma \mapsto \til{P}_\gamma^\sharp,
\end{align}
where $K = F_{\rm dot}/R_{\rm dot}$ and $F_{\rm dot}$ is the free group generated by $A_{[j]}$, $T_{[j][k]}$, $P_\gamma$ for $j,k\in \mathbb{Q}^\times$ ($j\neq k$) and $\mathbb{Q}^\times$-permutations $\gamma$ and $R_{\rm dot}$ is the normal subgroup of $F_{\rm dot}$ of relations of $K$. This tautological $K$-homomorphism \eqref{eq:F_dot_to_K_sharp_ab} yields the central extension $K^\sharp_{\rm ab}$ of $K$  by the procedure in \S\ref{subsec:minimal_central_extensions} (Prop.\ref{prop:tautological_leads_to_itself}).

\vs

The two-step strategy can be roughly sketched as
\begin{align}
\label{eq:rough_strategy}
(\eqref{eq:first_tautology_of_F_mark}: F_{\rm mark} \to T^\sharp_{\rm ab})  ~\sim ~(\eqref{eq:F_dot_to_K_sharp_ab}: F_{\rm dot} \to K^\sharp_{\rm ab})  ~\sim  ~(\eqref{eq:rho_F_dot_to_GL_mathscr_M} \, \rho_{\rm dot}: F_{\rm dot} \to GL(\mathscr{M})),
\end{align}
where $\sim$ should be understood {\em only heuristically}. The first $\sim$ in \eqref{eq:rough_strategy} will `hold' because of the identification of the subgroups $B_\infty$ of $T^\sharp$ and $K^\sharp$, and the second $\sim$ is by inspection of the presentation of $K^\sharp_{\rm ab}$ and the lifted relations satisfied by the images of the generators of $F_{\rm dot}$ under $\rho_{\rm dot}: F_{\rm dot} \to {\rm GL}(\mathscr{M})$. To be more precise, the latter two almost $K$-homomorphisms in \eqref{eq:rough_strategy} should be pre-composed with $F_{\rm mark} \to F_{\rm dot}$ \eqref{eq:bold_F_on_free_group}, and so the two equivalences of almost $T$-homomorphisms that we shall actually prove are:
\begin{align}
\label{eq:actual_strategy}
(\eqref{eq:first_tautology_of_F_mark} : F_{\rm mark} \to T^\sharp_{\rm ab}) \, \stackrel{\mbox{\ding{192}}}{\simeq} \, (F_{\rm mark} \to K^\sharp_{\rm ab}) \, \stackrel{\mbox{\ding{193}}}{\simeq} \, (\eqref{eq:rho_Kash} \, \rho^{\rm Kash}: F_{\rm mark} \to {\rm GL}(\mathscr{M})).
\end{align}
The map $F_{\rm mark} \to F_{\rm dot}$ \eqref{eq:bold_F_on_free_group} that we pre-composed above is the injective group homomorphism
\begin{align}
\label{eq:F_mark_to_F_dot}
F_{\rm mark} \to F_{\rm dot} ~ : \quad \alpha \mapsto A_{[-1]} T_{[-1][1]}^{-1} A_{[1]} P_{\gamma_\alpha}, \quad \beta \mapsto A_{[-1]} P_{\gamma_\beta}
\end{align}
coming from the formula \eqref{eq:bold_F_alpha_beta} of the map ${\bf F}: T \to K$ obtained in Prop.\ref{prop:bold_F} (Prop.\ref{prop:our_bold_F}). Therefore, the map $F_{\rm mark} \to K^\sharp_{\rm ab}$ appearing in the middle of \eqref{eq:actual_strategy} is given by
\begin{align}
\label{eq:second_tautology_of_F_mark}
F_{\rm mark} \to K^\sharp_{\rm ab} ~ : \quad \alpha \mapsto \til{A}_{[-1]}^\sharp \til{T}_{[-1][1]}^{\sharp \, -1} \til{A}_{[1]}^\sharp \til{P}_{\gamma_\alpha}^\sharp, \quad \beta \mapsto \til{A}_{[-1]}^\sharp \til{P}_{\gamma_\beta}^\sharp.
\end{align}
For completeness, we recall that the third almost $T$-homomorphism $\rho^{\rm Kash}: F_{\rm mark} \to {\rm GL}(\mathscr{M})$ appearing in \eqref{eq:actual_strategy} is given as in \eqref{eq:rho_Kash_images}:
\begin{align}
\nonumber
\rho^{\rm Kash}: F_{\rm mark} \to {\rm GL}(\mathscr{M}) ~ : \quad \alpha \mapsto  \wh{\alpha}, \quad \beta \mapsto \wh{\beta}.
\end{align}
The latter two maps $F_{\rm mark} \to K^\sharp_{\rm ab}$ and $\rho^{\rm Kash} : F_{\rm mark} \to {\rm GL}(\mathscr{M})$ of \eqref{eq:actual_strategy} are indeed almost $T$-homomorphisms by the first part of Lem.\ref{lem:compositions_of_equivalent_almost_group_homomorphisms}, because they are obtained by pre-composing the latter two almost $K$-homomorphisms in \eqref{eq:rough_strategy} with the map $F_{\rm mark} \to F_{\rm dot}$ \eqref{eq:F_mark_to_F_dot}, which satisfies the condition of Lem.\ref{lem:compositions_of_equivalent_almost_group_homomorphisms} because \eqref{eq:F_mark_to_F_dot} takes $R_{\rm mark}$ to $R_{\rm dot}$ since it induces a well-defined group homomorphism $T \to K$ \eqref{eq:bold_F_alpha_beta}.

\vs

We first prove the equivalence \ding{192} of \eqref{eq:actual_strategy}, using our knowledge about the relationship between $T^\sharp_{\rm ab}$ and $K^\sharp_{\rm ab}$ studied in \S\ref{subsec:bolf_F_sharp}.
\begin{proposition}
\label{prop:ding_192}
One has the following equivalence of the almost $T$-homomorphisms
\begin{align*}
(\eqref{eq:first_tautology_of_F_mark}: F_{\rm mark} \to T^\sharp_{\rm ab}) \simeq (\eqref{eq:second_tautology_of_F_mark} : F_{\rm mark} \to K^\sharp_{\rm ab}),
\end{align*}
via the isomorphism ${\bf F}^\sharp_{\rm ab} : T^\sharp_{\rm ab} \to {\bf F}^\sharp_{\rm ab}(T^\sharp_{\rm ab}) \subset K^\sharp_{\rm ab}$ \eqref{eq:bold_F_sharp_ab}, in the sense of Def.\ref{def:equivalent_almost_group_homomorphisms}.
\end{proposition}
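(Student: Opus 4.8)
The plan is to verify the hypotheses of Def.\ref{def:equivalent_almost_group_homomorphisms} directly, taking the intertwining isomorphism to be ${\bf F}^\sharp_{ab}$ from \eqref{eq:bold_F_sharp_ab}. Write $\eta_1 : F_{mark} \to T^\sharp_{ab}$ for the map \eqref{eq:first_tautology_of_F_mark} and $\eta_2 : F_{mark} \to K^\sharp_{ab}$ for \eqref{eq:second_tautology_of_F_mark}, and put $G_1 := \eta_1(F_{mark}) = \langle \til{\alpha}^\sharp, \til{\beta}^\sharp\rangle$ and $G_2 := \eta_2(F_{mark})$. First I would record that $\eta_1$ and $\eta_2$ are indeed almost $T$-homomorphisms in the sense of Def.\ref{def:almost_G-homomorphism}: the map $\eta_1$ is the tautological almost $T$-homomorphism attached to the central extension $T^\sharp_{ab}$ (Def.\ref{def:tautological_almost_group_homomorphisms}), and $\eta_2$ is the precomposition of the tautological $K$-homomorphism \eqref{eq:F_dot_to_K_sharp_ab} with the injection $F_{mark} \to F_{dot}$ of \eqref{eq:F_mark_to_F_dot}; the latter sends $R_{mark}$ into $R_{dot}$ since it descends to the well-defined homomorphism ${\bf F} : T \to K$, so $\eta_2$ is an almost $T$-homomorphism by the first part of Lem.\ref{lem:compositions_of_equivalent_almost_group_homomorphisms}.

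The core of the argument is then a one-line check on generators. By Prop.\ref{prop:bol_F_sharp_ab} the homomorphism ${\bf F}^\sharp_{ab} : T^\sharp_{ab} \to K^\sharp_{ab}$ is injective, hence so is its restriction to the subgroup $G_1 \subseteq T^\sharp_{ab}$; and by the explicit formula \eqref{eq:bold_F_sharp_ab} it carries the generators $\til{\alpha}^\sharp, \til{\beta}^\sharp$ of $G_1$ to $\til{A}_{[-1]}^\sharp \til{T}_{[-1][1]}^{\sharp \, -1} \til{A}_{[1]}^\sharp \til{P}_{\gamma_\alpha}^\sharp$ and $\til{A}_{[-1]}^\sharp \til{P}_{\gamma_\beta}^\sharp$, which by \eqref{eq:second_tautology_of_F_mark} are exactly $\eta_2(\alpha)$ and $\eta_2(\beta)$; thus ${\bf F}^\sharp_{ab}(G_1) = \langle \eta_2(\alpha), \eta_2(\beta)\rangle = G_2$, so ${\bf F}^\sharp_{ab}$ restricts to an isomorphism $\Phi_{12} : G_1 \to G_2$. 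Since $F_{mark}$ is free on $\alpha, \beta$, the required identity $\Phi_{12}\circ \eta_1 = \eta_2$ follows by evaluating both sides on $\alpha$ and $\beta$, where $\Phi_{12}(\eta_1(\alpha)) = {\bf F}^\sharp_{ab}(\til{\alpha}^\sharp) = \eta_2(\alpha)$ and $\Phi_{12}(\eta_1(\beta)) = {\bf F}^\sharp_{ab}(\til{\beta}^\sharp) = \eta_2(\beta)$. This is precisely the equivalence asserted in the statement.

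I do not anticipate a real obstacle here: the proposition is essentially built into the construction of ${\bf F}^\sharp_{ab}$ in \S\ref{subsec:bolf_F_sharp}, which was arranged to commute with the two relative abelianization maps, together with its injectivity from Prop.\ref{prop:bol_F_sharp_ab}. The one point worth flagging, although it is not logically needed for the present proposition, is that $\Phi_{12}$ restricts to the identity on the common central kernel $\mathbb{Z} \cong B_\infty/[B_\infty,B_\infty]$ of $T^\sharp_{ab}$ and $K^\sharp_{ab}$, because ${\bf F}^\sharp$ is the identity on the subgroup $B_\infty$; this compatibility is what later allows Prop.\ref{prop:equivalent_almost_group_homomorphisms} to turn the equivalence into an explicit isomorphism of the induced central extensions of $T$ as part of the proof of Prop.\ref{prop:T_sharp_ab}. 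Should one additionally want $\eta_1$ to be literally tautological, i.e. $G_1 = T^\sharp_{ab}$, this reduces to the easy observation — from Prop.\ref{prop:FuKa_T_sharp_ab_presentation}, or already from the relations $(\til{\beta}^\sharp\til{\alpha}^\sharp)^5 = z^3$ and $(\til{\alpha}^\sharp)^4 = z^2$ of Prop.\ref{prop:some_relations_of_Pt_sharp} — that $z$ lies in $\langle \til{\alpha}^\sharp, \til{\beta}^\sharp\rangle$.
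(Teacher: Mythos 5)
Your argument is correct and is essentially the paper's own proof: the paper verifies the same commutative triangle (that \eqref{eq:second_tautology_of_F_mark} factors as ${\bf F}^\sharp_{ab}$ composed with \eqref{eq:first_tautology_of_F_mark}, checked on the generators $\alpha,\beta$) and then invokes the injectivity of ${\bf F}^\sharp_{ab}$ together with the second part of Lem.\ref{lem:compositions_of_equivalent_almost_group_homomorphisms}, whose content you have simply unwound by hand. The additional remarks about the central element $z$ and about $G_1=T^\sharp_{ab}$ are, as you say, not needed for the statement.
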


\begin{proof}

By looking at the formulas \eqref{eq:F_mark_to_F_dot}, \eqref{eq:first_tautology_of_F_mark}, \eqref{eq:F_dot_to_K_sharp_ab}, and \eqref{eq:bold_F_sharp_ab}, and since \eqref{eq:second_tautology_of_F_mark} was defined to be the composition of \eqref{eq:F_mark_to_F_dot} and \eqref{eq:F_dot_to_K_sharp_ab}, one can see that the following diagram commutes:
\begin{align*}
\xymatrix@C+2pc{
F_{\rm mark} \ar[r]^{\eqref{eq:F_mark_to_F_dot}} \ar[d]_{\eqref{eq:first_tautology_of_F_mark}} \ar@{.>}[dr]^{\eqref{eq:second_tautology_of_F_mark}} & F_{\rm dot} \ar[d]^{\eqref{eq:F_dot_to_K_sharp_ab}} \\
T^\sharp_{\rm ab} \ar@{^{(}->}[r]_{{\bf F}^\sharp_{\rm ab} } & K^\sharp_{\rm ab} 
}
\end{align*}
As the bottom map ${\bf F}^\sharp_{\rm ab}$ \eqref{eq:bold_F_sharp_ab} is injective (Prop.\ref{prop:bol_F_sharp_ab}), we get the desired result by Lem.\ref{lem:compositions_of_equivalent_almost_group_homomorphisms}.
\end{proof}

As mentioned earlier, the second equivalence \ding{193} of \eqref{eq:actual_strategy} is just by inspection of the relations of Kashaev's operators and the presentation of the $K^\sharp_{\rm ab}$:
\begin{proposition}
\label{prop:ding_193}
One has the following equivalence of the almost $T$-homomorphisms
\begin{align}
\label{eq:equiv3}
(\eqref{eq:second_tautology_of_F_mark} : F_{\rm mark} \to K^\sharp_{\rm ab}) \simeq (\eqref{eq:rho_Kash} \, \rho^{\rm Kash} : F_{\rm mark} \to {\rm GL}(\mathscr{M})),
\end{align}
via the group homomorphism
\begin{align}
\label{eq:K_sharp_ab_to_GL_mathscr_M}
K^\sharp_{\rm ab} \to {\rm GL}(\mathscr{M}) ~: \quad 
\til{A}_{[j]}^\sharp \mapsto {\bf A}_{[j]}, \,\,\,~ ~
\til{T}_{[j][k]}^\sharp \mapsto {\bf T}_{[j][k]}, \,\,\,~ ~
\til{P}_\gamma^\sharp \mapsto {\bf P}_\gamma, \,\,\,~ ~
z \mapsto \zeta^{-1}.
\end{align}
in the sense of Def.\ref{def:equivalent_almost_group_homomorphisms}, where ${\bf A}_{[j]}$, ${\bf T}_{[j][k]}$, ${\bf P}_\gamma$, $\zeta$ are as in \eqref{eq:rho_A_j}, \eqref{eq:rho_T_jk}, \eqref{eq:rho_P_gamma}, \eqref{eq:zeta}.
\end{proposition}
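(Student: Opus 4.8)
The plan is to take the isomorphism $\Phi_{12}$ demanded by Def.\ref{def:equivalent_almost_group_homomorphisms} to be the restriction of the map \eqref{eq:K_sharp_ab_to_GL_mathscr_M}, which I will denote $\Theta:K^\sharp_{ab}\to GL(\mathscr{M})$. I would then proceed in three steps: (i) verify that $\Theta$ is a well-defined group homomorphism; (ii) verify the intertwining identity $\Theta\circ(\eqref{eq:second_tautology_of_F_mark})=\rho^{Kash}$ as maps $F_{mark}\to GL(\mathscr{M})$; (iii) verify that $\Theta$ restricts to an isomorphism of $G_1:=(\eqref{eq:second_tautology_of_F_mark})(F_{mark})$ onto $G_2:=\rho^{Kash}(F_{mark})$. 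Granting (i)--(iii), the map $\Theta|_{G_1}$ is exactly the $\Phi_{12}$ of Def.\ref{def:equivalent_almost_group_homomorphisms}, so the two almost $T$-homomorphisms are equivalent and \eqref{eq:equiv3} holds.

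Step (i) is immediate from Prop.\ref{prop:lifted_Kashaev_relations}: under $\til A^\sharp_{[j]}\mapsto{\bf A}_{[j]}$, $\til T^\sharp_{[j][k]}\mapsto{\bf T}_{[j][k]}$, $\til P^\sharp_\gamma\mapsto{\bf P}_\gamma$, $z\mapsto\zeta^{-1}$, each defining relation of $K^\sharp_{ab}$ listed in Prop.\ref{prop:K_sharp_ab_presentation} is carried onto one of the operator identities of Prop.\ref{prop:lifted_Kashaev_relations} --- the cubic, pentagon and $AT\!A=AT\!A$ relations onto those in \eqref{eq:lifted_Kashaev_relations_major}, the relation $\til T^\sharp_{[j][k]}\til A^\sharp_{[j]}\til T^\sharp_{[k][j]}=z^{-1}\til A^\sharp_{[j]}\til A^\sharp_{[k]}\til P^\sharp_{(jk)}$ onto ${\bf T}_{[j][k]}{\bf A}_{[j]}{\bf T}_{[k][j]}=\zeta\,{\bf A}_{[j]}{\bf A}_{[k]}{\bf P}_{(jk)}$ (note $z^{-1}\mapsto\zeta$), the trivial relations onto \eqref{eq:lifted_Kashaev_relations_P}--\eqref{eq:lifted_Kashaev_relations_commutation}, and the commuting relations $[z,\,\cdot\,]=1$ onto trivialities since $\zeta^{-1}$ is a scalar. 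Step (ii) is a one-line check on the free generators $\alpha,\beta$: comparing \eqref{eq:second_tautology_of_F_mark} with \eqref{eq:rho_Kash_images}, $\Theta$ sends the image of $\alpha$ to ${\bf A}_{[-1]}{\bf T}^{-1}_{[-1][1]}{\bf A}_{[1]}{\bf P}_{\gamma_\alpha}=\wh{\alpha}$ and that of $\beta$ to ${\bf A}_{[-1]}{\bf P}_{\gamma_\beta}=\wh{\beta}$. In particular $\Theta(G_1)=\rho^{Kash}(F_{mark})=G_2$, so the restriction $\Theta|_{G_1}:G_1\to G_2$ in step (iii) is automatically surjective.

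Thus the only real content is the injectivity of $\Theta|_{G_1}$, and I would establish it structurally rather than by computation. By the commuting square in the proof of Prop.\ref{prop:ding_192}, the homomorphism \eqref{eq:second_tautology_of_F_mark} factors as ${\bf F}^\sharp_{ab}$ composed with the tautological map $F_{mark}\to T^\sharp_{ab}$, $\alpha\mapsto\til\alpha^\sharp$, $\beta\mapsto\til\beta^\sharp$; this tautological map is onto because $T^\sharp_{ab}$ is generated by $\til\alpha^\sharp,\til\beta^\sharp$ (Prop.\ref{prop:FuKa_T_sharp_ab_presentation}), and ${\bf F}^\sharp_{ab}$ is injective (Prop.\ref{prop:bol_F_sharp_ab}), so $G_1={\bf F}^\sharp_{ab}(T^\sharp_{ab})\cong T^\sharp_{ab}$. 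Hence $G_1$ sits in a central extension $1\to\mathbb{Z}\to G_1\to T\to 1$ whose kernel is $\langle z\rangle\subset K^\sharp_{ab}$ --- because ${\bf F}^\sharp_{ab}$ restricts to the identity on the $B_\infty/[B_\infty,B_\infty]$ subgroups --- and which is moreover the whole center of $G_1$, since $T$ is simple and hence centerless. Checking on the generators $\til\alpha^\sharp,\til\beta^\sharp$ that $\Theta|_{G_1}$ is compatible with the projections of $G_1$ and of $G_2\cong\wh{T}^{Kash}$ onto $T$, one sees $\ker(\Theta|_{G_1})\subseteq\langle z\rangle$; and since $\Theta(z)=\zeta^{-1}$ has infinite order (the standing genericity of $b$, under which $\wh{T}^{Kash}$ really is a $\mathbb{Z}$-extension, cf.\ \S\ref{subsec:algebraic_proof}), $\Theta$ is injective on $\langle z\rangle$. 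Therefore $\ker(\Theta|_{G_1})=1$, $\Theta|_{G_1}$ is an isomorphism, and we are done.

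The main obstacle is exactly this injectivity step --- though it is soft: it amounts to the standard fact that a homomorphism of $\mathbb{Z}$-central extensions of the centerless group $T$ that covers $\mathrm{id}_T$ and sends a kernel generator to an infinite-order element is an isomorphism (equivalently, a five-lemma argument once $\langle\zeta^{-1}\rangle$ is identified as the central $\mathbb{Z}$ of $G_2\cong\wh{T}^{Kash}$, which is part of \S\ref{subsec:algebraic_proof}). An equally valid route would be to prove the equivalence first at the level of the Kashaev groups --- showing $\Theta$ identifies the tautological almost $K$-homomorphism \eqref{eq:F_dot_to_K_sharp_ab} with $\rho_{dot}$ \eqref{eq:rho_F_dot_to_GL_mathscr_M}, using that $K^\sharp_{ab}$ is generated by $\til A^\sharp_{[j]},\til T^\sharp_{[j][k]},\til P^\sharp_\gamma$ --- and then descend to $T$ by precomposing with \eqref{eq:F_mark_to_F_dot} via Lem.\ref{lem:compositions_of_equivalent_almost_group_homomorphisms}; but the crux is the same either way.
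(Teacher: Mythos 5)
Your proposal is correct in substance, but it proves the proposition by a different route than the paper. The paper's proof is exactly the ``equally valid route'' you sketch in your closing paragraph: it first establishes the equivalence at the level of the Kashaev group, namely $(\eqref{eq:F_dot_to_K_sharp_ab}: F_{dot} \to K^\sharp_{ab}) \simeq (\rho_{dot} : F_{dot} \to GL(\mathscr{M}))$ via the map \eqref{eq:K_sharp_ab_to_GL_mathscr_M}, by matching the presentation of $K^\sharp_{ab}$ in Prop.\ref{prop:K_sharp_ab_presentation} against the operator identities of Prop.\ref{prop:lifted_Kashaev_relations}, and then pre-composes with \eqref{eq:F_mark_to_F_dot} using the first statement of Lem.\ref{lem:compositions_of_equivalent_almost_group_homomorphisms} (noting, as you do elsewhere, that \eqref{eq:F_mark_to_F_dot} carries $R_{mark}$ into $R_{dot}$). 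What the paper's route buys is that the relevant image group $G_1$ is then {\em all} of $K^\sharp_{ab}$, so the isomorphism $\Phi_{12}$ is read off from two complete presentations and no separate analysis of the subgroup ${\bf F}^\sharp_{ab}(T^\sharp_{ab}) \subset K^\sharp_{ab}$ is needed. Your direct route at the $T$-level instead identifies $G_1 \cong T^\sharp_{ab}$ (correctly, via Prop.\ref{prop:bol_F_sharp_ab} and the surjectivity of \eqref{eq:first_tautology_of_F_mark}) and then must supply an injectivity argument; in exchange it makes the central-extension structure of both sides explicit, which is closer in spirit to the final identification $\wh{T}^{Kash} \cong T^\sharp_{ab}$.

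One caveat on your injectivity step. The appeal to ``$G_2 \cong \wh{T}^{Kash}$, which is part of \S\ref{subsec:algebraic_proof}'' is not quite available: that subsection produces $\wh{T}^{Kash}$ as a presented group together with a surjection onto $\rho^{Kash}(F_{mark})$, and the existence of a projection $G_2 \to T$ covering your $\Theta|_{G_1}$ amounts to the unproved assertion that $\wh{\alpha}, \wh{\beta}$ satisfy no relations beyond the normal closure of the lifted ones. Your argument can be closed without this: $\ker(\Theta|_{G_1})$ is normal in $G_1$, its image in $T \cong G_1/\langle z \rangle$ is a normal subgroup of the simple group $T$, and it cannot be all of $T$ since that would give $G_2 = \Theta(\langle z \rangle) = \langle \zeta^{-1}\rangle$, a group of scalars, whereas $\wh{\alpha}$ is visibly not scalar; hence $\ker(\Theta|_{G_1}) \subseteq \langle z \rangle$, and injectivity on $\langle z\rangle$ follows from $\zeta$ having infinite order, as you say. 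To be fair, the paper's own ``by inspection'' at the $K$-level makes a parallel implicit assertion (that the operators satisfy no relations beyond those presented in Prop.\ref{prop:K_sharp_ab_presentation}), so the two arguments sit at a comparable level of rigor on this point.
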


\begin{proof}
By inspection of the equations appearing in Propositions \ref{prop:lifted_Kashaev_relations} and \ref{prop:K_sharp_ab_presentation}, the tautological almost $K$-homomorphism $F_{\rm dot} \to K^\sharp_{\rm ab}$ \eqref{eq:F_dot_to_K_sharp_ab} is equivalent to the almost $K$-homomorphism $\rho : F_{\rm dot} \to {\rm GL}(\mathscr{M})$ \eqref{eq:rho_F_dot_to_GL_mathscr_M}, i.e. we have the equivalence
\begin{align}
\label{eq:equiv2}
(\eqref{eq:F_dot_to_K_sharp_ab}: F_{\rm dot} \to K^\sharp_{\rm ab}) \simeq (\eqref{eq:rho_F_dot_to_GL_mathscr_M} \, \rho : F_{\rm dot} \to {\rm GL}(\mathscr{M}))
\end{align}
via the group homomorphism $K^\sharp_{\rm ab} \to {\rm GL}(\mathscr{M})$ \eqref{eq:K_sharp_ab_to_GL_mathscr_M}. By pre-composing this equivalence \eqref{eq:equiv2} with the group homomorphism $F_{\rm mark} \to F_{\rm dot}$ \eqref{eq:F_mark_to_F_dot}, we get the desired result \eqref{eq:equiv3}, by the first statement of Lem.\ref{lem:compositions_of_equivalent_almost_group_homomorphisms}; we already saw that \eqref{eq:F_mark_to_F_dot} satisfies the condition of the Lem.\ref{lem:compositions_of_equivalent_almost_group_homomorphisms} because it sends $R_{\rm mark}$ to $R_{\rm dot}$.
\end{proof}

Since the equivalence of almost group homomorphisms is an equivalence relation (Prop.\ref{prop:equivalent_almost_group_homomorphisms}), from  Propositions \ref{prop:ding_192} and \ref{prop:ding_193} (i.e. \ding{192} and \ding{193} of \eqref{eq:actual_strategy}) we get:
\begin{corollary}
\label{cor:equivalence_of_two_T-homomorhisms}
One has the following equivalence of the almost $T$-homomorphisms
\begin{align*}
(\eqref{eq:first_tautology_of_F_mark}: F_{\rm mark} \to T^\sharp_{\rm ab}) \simeq
(\eqref{eq:rho_Kash} \, \rho^{\rm Kash} : F_{\rm mark} \to {\rm GL}(\mathscr{M})),
\end{align*}
via the group homomorphism
\begin{align}
\nonumber
T^\sharp_{\rm ab} \to {\rm GL}(\mathscr{M}) :\quad
\til{\alpha}^\sharp \mapsto 
\widehat{\alpha}, \qquad
\til{\beta}^\sharp \mapsto 
\widehat{\beta}
\end{align}
which is obtained as the composition of \eqref{eq:bold_F_sharp_ab} and \eqref{eq:K_sharp_ab_to_GL_mathscr_M}. \qed 
\end{corollary}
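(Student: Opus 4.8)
The plan is to deduce the Corollary purely formally from the two equivalences of almost $T$-homomorphisms already established, using that equivalence of almost $T$-homomorphisms is an equivalence relation, in particular transitive (Prop.\ref{prop:equivalent_almost_group_homomorphisms}). Concretely, I would first recall from Prop.\ref{prop:ding_192} the equivalence $(\eqref{eq:first_tautology_of_F_mark}:F_{mark}\to T^\sharp_{ab})\simeq(\eqref{eq:second_tautology_of_F_mark}:F_{mark}\to K^\sharp_{ab})$, witnessed by the injective homomorphism ${\bf F}^\sharp_{ab}$ of \eqref{eq:bold_F_sharp_ab}, and from Prop.\ref{prop:ding_193} the equivalence $(\eqref{eq:second_tautology_of_F_mark}:F_{mark}\to K^\sharp_{ab})\simeq(\rho^{Kash}:F_{mark}\to GL(\mathscr{M}))$, witnessed by the homomorphism $K^\sharp_{ab}\to GL(\mathscr{M})$ of \eqref{eq:K_sharp_ab_to_GL_mathscr_M}. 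Transitivity then produces the desired equivalence $(\eqref{eq:first_tautology_of_F_mark})\simeq(\rho^{Kash})$, with witnessing map the composite of these two, restricted to the image of $F_{mark}$ under \eqref{eq:first_tautology_of_F_mark}; the only point that needs a word is that this composite is again an isomorphism onto $\rho^{Kash}(F_{mark})$, which is immediate since each factor is, by construction in Prop.\ref{prop:ding_192} and Prop.\ref{prop:ding_193}.

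The remaining step is to read off the composite map $T^\sharp_{ab}\to GL(\mathscr{M})$ on the generators $\til{\alpha}^\sharp,\til{\beta}^\sharp$. Tracing \eqref{eq:bold_F_sharp_ab} and then \eqref{eq:K_sharp_ab_to_GL_mathscr_M} one gets
$$
\til{\alpha}^\sharp \longmapsto \til{A}_{[-1]}^\sharp \til{T}_{[-1][1]}^{\sharp\,-1} \til{A}_{[1]}^\sharp \til{P}_{\gamma_\alpha}^\sharp \longmapsto {\bf A}_{[-1]}{\bf T}_{[-1][1]}^{-1}{\bf A}_{[1]}{\bf P}_{\gamma_\alpha} = \wh{\alpha},
$$
and similarly $\til{\beta}^\sharp\mapsto \til{A}_{[-1]}^\sharp\til{P}_{\gamma_\beta}^\sharp \mapsto {\bf A}_{[-1]}{\bf P}_{\gamma_\beta}=\wh{\beta}$, where the final identifications are \eqref{eq:rho_Kash_images}. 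This is exactly the homomorphism asserted in the statement, so the Corollary follows.

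I do not expect any genuine obstacle in the Corollary itself: all the substantive work has been front-loaded into the construction of the $\sharp$-punctured Kashaev group $K^\sharp$ and its relative abelianization $K^\sharp_{ab}$ (in particular the braided fourth relation of Prop.\ref{prop:K_sharp_relations}), into the map ${\bf F}^\sharp_{ab}$ (Prop.\ref{prop:bol_F_sharp_ab}), and into the matching of Kashaev's lifted relations (Prop.\ref{prop:lifted_Kashaev_relations}) with the presentation of $K^\sharp_{ab}$ (Prop.\ref{prop:K_sharp_ab_presentation}) carried out in Prop.\ref{prop:ding_193}; the Corollary is merely the assembly of these pieces via transitivity. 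Finally, I would note the payoff: combining this equivalence with Prop.\ref{prop:tautological_leads_to_itself} (the tautological almost $T$-homomorphism \eqref{eq:first_tautology_of_F_mark} recovers $T^\sharp_{ab}$), Prop.\ref{prop:appropriate_generalization} ($\rho^{Kash}$ yields $\wh{T}^{Kash}$), and Prop.\ref{prop:equivalent_almost_group_homomorphisms} (equivalent almost $T$-homomorphisms give isomorphic central extensions) yields the identification $\wh{T}^{Kash}\cong T^\sharp_{ab}$, i.e. Prop.\ref{prop:T_sharp_ab}, which is the real target of the section.
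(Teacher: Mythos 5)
Your proposal is correct and matches the paper's own treatment: the corollary is stated with no separate proof precisely because it follows by transitivity of the equivalence relation (Prop.\ref{prop:equivalent_almost_group_homomorphisms}) applied to Propositions \ref{prop:ding_192} and \ref{prop:ding_193}, with the witnessing map being the composite of \eqref{eq:bold_F_sharp_ab} and \eqref{eq:K_sharp_ab_to_GL_mathscr_M}. Your explicit tracing of $\til{\alpha}^\sharp\mapsto\wh{\alpha}$ and $\til{\beta}^\sharp\mapsto\wh{\beta}$ through the two maps is exactly the check implicit in the paper's statement.
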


Let us wrap up the results. Cor.\ref{cor:equivalence_of_two_T-homomorhisms} gives the equivalence of the two almost $T$-homomorphisms, $F_{\rm mark} \to T^\sharp_{\rm ab}$ \eqref{eq:first_tautology_of_F_mark} and $\rho^{\rm Kash}: F_{\rm mark} \to {\rm GL}(\mathscr{M})$ \eqref{eq:rho_Kash}. 
The first one is the tautological almost $T$-homomorphism, hence yields by the procedure in \S\ref{subsec:minimal_central_extensions} the central extension $T^\sharp_{\rm ab}$ of $T$ (Prop.\ref{prop:tautological_leads_to_itself}). The second one yields the central extension $\wh{T}^{\rm Kash}$ of $T$ by the procedure in \S\ref{subsec:minimal_central_extensions}. Thus, from Prop.\ref{prop:equivalent_almost_group_homomorphisms} we can deduce the following group isomorphism
$$
\wh{T}^{\rm Kash} \stackrel{\sim}{\longrightarrow} T^\sharp_{\rm ab} ~ : \quad \ol{\alpha} \longmapsto \til{\alpha}^\sharp, \quad \ol{\beta} \longmapsto \til{\beta}^\sharp,
$$
where $\ol{\alpha}$ and $\ol{\beta}$ are generators of $\wh{T}^{\rm Kash}$ in the sense of its presentation \eqref{eq:wh_T_Kash_presentation}, and $\til{\alpha}^\sharp$ and $\til{\beta}^\sharp$ are as in Def.\ref{def:relative_abelianization_of_T_diamond}. This proves the desired Prop.\ref{prop:T_sharp_ab}, with an explicit isomorphism.


\begin{thebibliography}{FuKas14}

\bibitem[Ba01]{B} E. W. Barnes, {\it Theory of the double gamma function}, Phil. Trans. R. Soc. Ser. A {\bf 196} (1901) 265--388.

\bibitem[BeFu04]{BeFu} P. Bellingeri and L. Funar, {\it Braids on surfaces and finite type invariants}, C. R. Math. Acad. Sci. Paris {\bf 338} no. 2 (2004) 157--162.

\bibitem[CFo99]{FC} L. Chekhov and V. V. Fock, {\it A quantum Teichm\"{u}ller space}, Theor. Math. Phys. {\bf 120} (1999) 511--528.

\bibitem[Fa95]{F} L. D. Faddeev, {\it Discrete Heisenberg-Weyl group and modular group}, Lett. Math. Phys. {\bf 34} (1995) 249--254.

\bibitem[FaKas94]{FK} L. D. Faddeev and R. M. Kashaev, {\it Quantum dilogarithm}, Modern Phys. Lett. {\bf A9} (1994) 427--434.

\bibitem[Fo97]{Fo} V. V. Fock, {\it Dual Teichm\"{u}ller spaces}, arXiv:dg-ga/9702018.

\bibitem[FoG06]{FG06} V. V. Fock and A. B. Goncharov, {\em Moduli spaces of local systems and higher Teichm\"uller theory}, Publ. Math. Inst. Hautes \'Etudes Sci. 103 (2006) 1--211, [math/0311149v4]


\bibitem[FoG09]{FG} V. V. Fock and A. B. Goncharov, {\it The quantum dilogarithm and representations of the quantum cluster varieties}, Invent. Math. {\bf 175} (2009) 223--286.


\bibitem[FrKi12]{FrKi} I. B. Frenkel and H. Kim, {\it Quantum Teichm\"uller space from the quantum plane}, Duke Math. J. {\bf 161} no. 2 (2012) 305--366.


\bibitem[FuKap08]{FuKa2} L. Funar and C. Kapoudjian, {\it The braided Ptolemy-Thompson group is finitely presented}, Geom. Topol. {\bf 12} (2008) 475--530.

\bibitem[FuKapS]{FuKaS} L. Funar, C. Kapoudjian, and V. Sergiescu, ``Asymptotically rigid mapping class groups and Thompson's groups'' in {\it Handbook of Teichm\"uller theory} Vol. III, IRMA Lect. Math. Theor. Phys., {\bf 17}, Eur. Math. Soc., Z\"urich, 2012, pp. 595--664. Also arXiv: 1105.0559.


\bibitem[FuKas14]{FuKas} L. Funar and R. M. Kashaev, {\it Centrally extended mapping class groups from quantum Teichm\"uller theory}, Adv. Math. {\bf 252} (2014) 260--291. 

\bibitem[FuS10]{FuS} L. Funar and V. Sergiescu, {\it Central extensions of the Ptolemy-Thompson group and quantized Teichm\"uller theory}, J. Topol. {\bf 3} (2010) 29--62.

\bibitem[GhS87]{GS} E. Ghys and V. Sergiescu, {\it Sur un groupe remarquable de diff\'{e}omorphismes du cercle}, Comment. Math. Helv. {\bf 62} (1987) 185--239.

\bibitem[G07]{G} A. B. Goncharov, ``Pentagon relation for the quantum dilogarithm and quantized $\mcal{M}_{0,5}$'' in {\it Geometry and Dynamics of Groups and Spaces} (Special volume dedicated to the memory of Alexander Reznikov). Progr. Math., vol. 265, pp. 316--329. Birkh\"auser, Basel (2007) (arXiv:math.QA/0706405)

\bibitem[GuLi09]{GuLi} R. Guo and X. Liu, {\it Quantum Teichm\"{u}ller space and Kashaev algebra}, Algebr. Geom. Topol. {\bf 9} (2009) 1791--1824.

\bibitem[I97]{I} M. Imbert, ``Sur l'isomorphisme du groupe de Richard Thompson avec le groupe de Ptol\'em\'ee'' in {\it Geometric Galois Actions, 2. The inverse Galois problem, moduli spaces, and mapping class groups}, LMS Lecture Notes {\bf 243}, Cambridge Univ. Press., Cambridge, 1997, pp. 313--324.


\bibitem[Kas98]{Kash98} R. M. Kashaev, {\it Quantization of Teichm\"{u}ller spaces and the quantum dilogarithm}, Lett. Math. Phys. {\bf 43} (1998) 105--115.

\bibitem[Kas00]{Kash00} R. M. Kashaev, ``On the spectrum of Dehn twists in quantum Teichm\"{u}ller theory" in {\it Physics and Combinatorics (Nagoya, 2000)}, World Sci. Publ., River Edge, NJ, 2001, pp. 63--81.

\bibitem[Ki13]{Ki} H. Kim, {\it Quantum Teichm\"uller space and universal modular groupoid}, Ph.D. thesis, Yale University, New Haven, CT, 2013. (see arXiv:1211.4300v1)

\bibitem[Ki14]{Ki14} H. Kim, {\it Ratio coordinates for higher Teichm\"uller spaces}, to appear in Math. Z., arXiv:1407.3074

\bibitem[Ki16]{K16} H. Kim, {\it Phase constants in the Fock-Goncharov quantization of cluster varieties}, long version is arXiv:1602.00361, short version for journal submission is arXiv:1602.00797


\bibitem[LS97]{LSc} P. Lochak and L. Schneps, ``The universal Ptolemy-Teichm\"uller groupoid'' in {\it Geometric Galois Actions}, 2, LMS Lecture Note Ser. {\bf 243}, Cambridge Univ. Press, Cambridge, 1997, pp. 325--347.



\bibitem[P87]{Penner} R. C. Penner, {\it The decorated Teichm\"{u}ller space of punctured surfaces}, Comm. Math. Phys. {\bf 113} (1987) 299--339.

\bibitem[P93]{Penner2} R. C. Penner, {\it Universal constructions in Teichm\"{u}ller theory}, Adv. Math. {\bf 98} (1993), 143--215.

\bibitem[RSi80]{RSi80} M. Reed and B. Simon, {\it Methods of modern mathematical physics. I: Functional analysis}, Revised and Enlarged ed. (1980), Academic Press, New York--London, 1972.

\bibitem[S93]{S} V. Sergiescu, {\it Graphes planaires et pr\'esentations des groups de tresses}, Math. Z. {\bf 214} (1993) 477--490. \quad MR1245207

\bibitem[Te07]{T} J. Teschner, ``An analog of a modular functor from quantized Teichm\"uller theory'' in {\it Handbook of Teichm\"uller theory} Vol. I, IRMA Lect. Math. Theor. Phys., {\bf 11}, Eur. Math. Soc., Z\"urich, 2007, pp. 685--760. Also arXiv:0510174.

\bibitem[Th80]{Th} W. P. Thurston, {\it The geometry and topology of $3$-manifolds}, lecture notes, Princeton University, 1980, avaliable at \url{http://library.msri.org/books/gt3m} 

\bibitem[Xu14]{Xu14} B. Xu, {\it Central extension of mapping class group via Chekhov-Fock quantization}, arXiv:1410.5551


\end{thebibliography}
\end{document}